\documentclass[10pt]{article}

\usepackage{fullpage, amsmath}
\usepackage{amsthm,amsfonts,url,amssymb}
\usepackage{mathrsfs}
\usepackage{amscd}
\usepackage{color}
\usepackage{chemarrow}
\usepackage{pictexwd,dcpic}
\usepackage[textwidth=18cm,centering]{geometry}
\usepackage{extarrows}
\usepackage{enumitem}
\usepackage{lipsum}
\usepackage[all,cmtip]{xy}
\usepackage{leftidx}
\usepackage[version=3]{mhchem}
\usepackage{makeidx}
\usepackage{bm} 
\usepackage{stmaryrd}
\usepackage[linkcolor=red,anchorcolor=green,citecolor=blue]{hyperref}

\makeindex
\newtheorem{thm}{Theorem}[section]
\newtheorem{cor}[thm]{Corollary}
\newtheorem{lem}[thm]{Lemma}
\newtheorem{pro}[thm]{Proposition}
\newtheorem{dfn}[thm]{Definition}
\newtheorem{hypothesis}[thm]{Hypothesis}
\newtheorem{rmk}[thm]{Remark}

\newtheorem{exmp}[thm]{Example}
\newtheorem{conjecture}[thm]{Conjecture}

\newcommand{\hooklongrightarrow}{\lhook\joinrel\longrightarrow}

\DeclareMathOperator{\Gal}{Gal}
\DeclareMathOperator{\GL}{GL}

\DeclareMathOperator{\Hom}{Hom}

\DeclareMathOperator{\Spec}{Spec}
\DeclareMathOperator{\Spf}{Spf}

\newcommand{\pdr}{{\rm pdR}}

\newcommand{\ccyc}{{\epsilon}}

\DeclareMathOperator{\coker}{coker}

\DeclareMathOperator{\diag}{diag}

\renewcommand{\mod}{{\rm\,mod\,}}

\DeclareMathOperator{\rank}{rank}

\newcommand{\rig}{{\rm rig}}
\DeclareMathOperator{\rk}{rk}

\newcommand{\gal}{{\rm Gal}}
\newcommand{\pcr}{{\rm pcr}}

\newcommand{\fil}{{\rm Fil}}

\newcommand{\ind}{{\rm Ind}}
\newcommand{\homo}{{\rm Hom}}
\newcommand{\EndO}{{\rm End}}
\newcommand{\ext}{{\rm Ext}}
\newcommand{\GLN}{{\rm GL}}

\newcommand{\ana}{{\rm an}}

\newcommand{\op}{{\overline{{\mathbf{P}}}}}

\newcommand{\ob}{{\overline{{\mathbf{B}}}}}

\newcommand{\unr}{{\rm unr}}

\newcommand{\hH}{{\mathrm{H}}}

\newcommand{\ra}{{\longrightarrow}}
\DeclareMathOperator{\val}{val}

\newcommand{\soc}{{\rm soc}}

\newcommand{\sm}{{\rm sm}}

\newcommand{\spf}{{\rm Spf}}
\newcommand{\df}{{\mathrm{DF}}}
\newcommand{\wdre}{{\textbf{r}}}

\newcommand{\rec}{{\rm rec}_L}
\newcommand{\lalg}{{\rm lalg}}

\newcommand{\Art}{{\rm Art}}

\newcommand{\wt}{{\rm wt}}

\newcommand{\dr}{{\rm dR}}

\newcommand{\res}{{\rm Res}}
\newcommand{\tee}{{{\otimes}_{\cR_{E,L}}}}

\newcommand{\undelram}{{\delta^0}}

\newcommand{\gr}{{\rm gr}}
\newcommand{\hpi}{{{\mathbf{h}}}}
\newcommand{\Dpik}{{\mathbf{D}}}

\newcommand{\sbanpik}{{\big(\mathrm{Spec}\hspace{2pt}\FZ_{\Omega_{S_0}}\big)^{\mathrm{rig}}}}

\newcommand{\sbanpiku}{{\big(\mathrm{Spec}\hspace{2pt}\FZ_{\Omega_{S^u_0}}\big)^{\mathrm{rig}}}}

\newcommand{\rigchu}{{\mathcal{Z}_{\bL_{S^u_0},\mathcal{O}_L}}}
\newcommand{\rigchlu}{{\mathcal{Z}_{\bL_{S^u_0},L}}}

\newcommand{\univ}{{\rm univ}}
\newcommand{\loc}{{\rm loc}}

\newcommand{\gen}{{\rm gen}}

\newcommand{\ul}{\underline}
\newcommand{\omepik}{\Omega_{S^u_0}}

\newcommand{\bersteineigenvarpik}{{\mathcal{E}_{\omepik,\fp,\bm{\lambda}_\bh}^\infty(\overline{\rho})}}

\newcommand{\defvarring}{{R_{\overline{r}}^\Box}}

\newcommand{\defvaru}{{X^\Box_{\omepik,\mathbf{{h}}}(\overline{r})}}
\newcommand{\defvarrho}{{X^\Box_{\omepik,\mathbf{{h}}}(\overline{r})}}

\usepackage{amsfonts}
\newcommand{\BOne} {{\mathchoice{\hbox{\rm1\kern-2.7pt l\kern.9pt}}
		{\hbox{\rm1\kern-2.7pt l\kern.9pt}}
		{\hbox{\scriptsize\rm1\kern-2.3pt l\kern.4pt}}
		{\hbox{\scriptsize\rm1\kern-2.4pt l\kern.5pt}}}}

\newcommand{\BA}{{\mathbb{A}}}

\newcommand{\BG}{{\mathbb{G}}}

\newcommand{\BQ}{{\mathbb{Q}}}

\newcommand{\BU}{{\mathbb{U}}}

\newcommand{\BZ}{{\mathbb{Z}}}

\newcommand{\bB}{{\mathbf{B}}}

\newcommand{\be}{{\mathbf{e}}}
\newcommand{\bF}{{\mathbf{F}}}

\newcommand{\bI}{{\mathbf{I}}}

\newcommand{\bL}{{\mathbf{L}}}

\newcommand{\bN}{{\mathbf{N}}}

\newcommand{\bP}{{\mathbf{P}}}
\newcommand{\bQ}{{\mathbf{Q}}}

\newcommand{\bT}{{\mathbf{T}}}

\newcommand{\bW}{{\mathbf{W}}}

\newcommand{\bZ}{{\mathbf{Z}}}

\newcommand{\bh}{{\mathbf{h}}}
\newcommand{\bk}{{\mathbf{k}}}
\newcommand{\bd}{{\mathbf{d}}}

\newcommand{\cL}{\mathcal L}
\newcommand{\co}{\mathcal O}

\newcommand{\cB}{\mathcal B}
\newcommand{\cR}{\mathcal R}

\newcommand{\cS}{\mathcal S}

\newcommand{\cI}{\mathcal I}

\newcommand{\cM}{\mathcal M}
\newcommand{\cF}{\mathcal F}

\newcommand{\cE}{\mathcal E}
\newcommand{\cG}{\mathcal G}
\newcommand{\cJ}{\mathcal J}
\newcommand{\cO}{\mathcal O}

\newcommand{\FX}{{\mathfrak{X}}}

\newcommand{\FZ}{{\mathfrak{Z}}}

\newcommand{\fa}{{\mathfrak{a}}}
\newcommand{\fb}{{\mathfrak{b}}}

\newcommand{\fg}{{\mathfrak{g}}}

\newcommand{\fl}{{\mathfrak{l}}}
\newcommand{\fm}{{\mathfrak{m}}}
\newcommand{\fn}{{\mathfrak{n}}}

\newcommand{\fp}{{\mathfrak{p}}}

\newcommand{\ft}{{\mathfrak{t}}}

\newcommand{\fz}{{\mathfrak{z}}}

\newcommand{\sW}{\mathscr W}
\newcommand{\sL}{\mathscr L}

\begin{document}	
	
\title{\textbf{\textsc{Toward $p$-adic Hodge parameters for potentially crystalline representations of $\GLN_n$}}}

\author{Yiqin He
\thanks{Morningside Center of Mathematics, Chinese Academy of Sciences,\;No. 55, Zhongguancun East Road, Haidian District, Beijing 100190, P.R. China; e-mail address: \texttt{heyiqin@amss.ac.cn}
}}

\date{}
\maketitle

\begin{abstract}
Let $p$ be a prime number,\;$n\geq 2$,\;and let $L$ be a finite extension of $\bQ_p$.\;Let $\rho_L$ be an $n$-dimensional non-critical generic potentially crystalline $p$-adic representation of the absolute Galois group of $L$ with regular Hodge--Tate weights.\;Building on Ding's results and strategy in the crystabelline case \cite{ParaDing2024},\;and on the recent work of Breuil--Ding \cite{BDcritical25},\;we construct an explicit locally analytic representation $\pi_{1}(\rho_L)$ and describe explicitly the Hodge-filtration data of $\rho_L$ that it determines.\;When $\rho_L$ arises from a patched $p$-adic automorphic representation,\;we show, under mild hypotheses, that $\pi_{1}(\rho_L)$ is a subrepresentation of the $\GLN_n(L)$-representation globally associated with $\rho_L$ by using the framework of Bernstein eigenvarieties that were developed by Breuil-Ding \cite{Ding2021}. \;
\end{abstract}

{\hypersetup{linkcolor=black}
\tableofcontents}

\numberwithin{equation}{section}
	
\numberwithin{thm}{section}

\setlength{\baselineskip}{12pt}

\section{Introduction}

This paper aims to extend the discussion of crystabelline (resp.,\;crystalline) $p$-adic Galois representations in \cite{ParaDing2024} (resp.,\;the recent work \cite{BDcritical25}) to potentially crystalline case.\;For simplicity,\;we assume in the introduction that $L=\bQ_p$.\;Let $\rho_p:\gal_{\bQ_p}\rightarrow \GLN_n(E)$ be a de Rham $p$-adic Galois representation,\;where $\gal_{\bQ_p}$ is the absolute Galois group of $\bQ_p$ and $E$ is a finite extension of $\bQ_p$.\;By Fontaine's theory,\;we can attach an $n$-dimensional Weil--Deligne representation $\wdre(\rho_p)$ and thus an irreducible smooth representation $\pi_{\mathrm{sm}}(\rho_p)$ of $\GLN_n(\bQ_p)$ over $E$ (via the classical local Langlands correspondence).\;Assume that $\rho_p$ has regular Hodge--Tate weights $\bh=(\bh_1>\cdots>\bh_n)$.\;Then the locally algebraic representation $\pi_{\lalg}(\rho_p):=\pi_{\mathrm{sm}}(\rho_p)\otimes_EL(\bh-\theta)$ is expected to be the locally algebraic subrepresentation of the conjectural locally analytic representation $\pi_{\ana}(\rho_p)$ via the $p$-adic local Langlands correspondence,\;where $\theta=(0,-1,\cdots,1-n)$ and $L(\bh-\theta)$ is the algebraic representation of $\GLN_n(\bQ_p)$ with highest weight $\bh-\theta$.\;The representation $\pi_{\lalg}(\rho_p)$ only encodes the information in the $F$-semisimplification of $\wdre(\rho_p)$ and $\bh$,\;and therefore misses the Hodge-filtration information of $\rho_p$.\;A basic problem (and starting point) in the locally analytic $p$-adic local Langlands program is to recover the Hodge filtration from locally ${\BQ}_p$-analytic representations of $\GLN_n({\bQ}_p)$.\;

After the pioneering work of Breuil, Colmez, and others, this question was completely settled for the $\GLN_2(\bQ_p)$ case.\;For general $\GLN_n(\bQ_p)$, however, it remained mysterious for a long time.\;Recently, Yiwen Ding made a breakthrough in understanding the $p$-adic Hodge parameters of \textit{non-critical generic} crystabelline representations of $G$, namely representations that become crystalline after restriction to the absolute Galois group of a suitable abelian extension of $\bQ_p$ \cite{ParaDing2024}.\;More precisely, Ding constructed an explicit locally $\bQ_p$-analytic representation $\pi_{1}(\rho_p)$ which determines $\rho_p$ uniquely and has the following form (i.e.,\;$\pi_1(\rho_p)$ is a non-split extension of $\pi_{\lalg}(\rho_p)^{\oplus(2^n-\frac{n(n+1)}{2}-1)}$ by $\pi_1(\wdre(\rho_p),\bh)$):
\[\pi_1(\rho_p)=\left[\pi_1(\wdre(\rho_p),\bh)-\pi_{\lalg}(\rho_p)^{\oplus(2^n-\frac{n(n+1)}{2}-1)}\right],\]
where $\pi_1(\wdre(\rho_p),\bh)$ is obtained from the amalgamated sum of some locally analytic principal series and $\soc_{G}\pi_1(\rho_p)=\pi_{\lalg}(\rho_p)$.\;Moreover, when $\rho_p$ comes from a $p$-adic automorphic representation, $\pi_{1}(\rho_p)$ is a subrepresentation of the globally associated $\GLN_n(\bQ_p)$-representation $\widehat{\pi}(\rho_p)^{\ana}$.\;

The appearance, with large multiplicity, of extra locally algebraic constituents in the cosocle of $\pi_{1}(\rho_p)$ is a new phenomenon.\;Such constituents seem to occur widely for general de Rham $p$-adic Galois representations and may encode information about the Hodge filtration of $\rho_p$.\;In this article, we study this question for \textit{non-critical generic potentially crystalline} $p$-adic Galois representations.\;This extends the crystabelline picture of \cite{ParaDing2024} to the potentially crystalline setting and connects it with the local--global compatibility framework of the patched Bernstein eigenvariety developed in \cite{Ding2021}.\;

Although we impose the genericity assumption throughout this article in order to keep the exposition and the statements as clean as possible, the methods developed here are not expected to rely on it in an essential way.\;After replacing the generic smooth representation by the corresponding finite-length object in its Bernstein block and keeping track of possible coincidences among Jordan--H\"older factors and extension classes,  \textit{the same constructions should apply without essential difficulty to non-critical potentially crystalline representations without imposing the genericity assumption on $\wdre(\rho_p)$, or equivalently on $\pi_{\mathrm{sm}}(\rho_p)$}.\;We do not pursue this greater generality here only to avoid additional notation.\;In our work \cite{HEparaforsemitable}, we discuss the corresponding question in the \textit{semistable} case, which provides further evidence for the role of $p$-adic Hodge parameters.\;That work reduces the general semistable case to the crystalline case and to the so-called Steinberg case, the opposite extreme in which the Weil--Deligne representation $\wdre(\rho_p)$  has maximal monodromy rank.\;This reduction is, in spirit, parallel to the Bernstein--Zelevinsky classification theorem for smooth representations.\;


In the sequel, let $\Delta:=\{1,\cdots,n-1\}$ be the set of simple roots of $\GLN_n$.\;Fix $S_0\subseteq \Delta$, and let $\bL_{S_0}:=\prod_{i=1}^s\GLN_{r_i}\subseteq \GLN_n$, for integers $r_1,\cdots,r_s$ satisfying $r_1+\cdots+r_s=n$, be the standard Levi subgroup associated with $S_0$.\;For $u\in \sW_s$, let $S_0^u$ be the subset of $\Delta$ such that the associated Levi subgroup satisfies $\bL_{S_0^u}=\prod_{i=1}^s\GLN_{r_{u^{-1}(i)}}$.\;For $i=1,\cdots,s$ and $u\in\sW_s$, put $t_0^u=0$,\;$t_{s}^u=n$ and
\[t_i^u=\sum_{j=1}^i r_{u^{-1}(j)}.\]
Thus $\Delta\backslash S_0^u=\{t_1^u,\cdots,t_{s-1}^u\}$.\;Let $\bB$ be the Borel subgroup of $\GLN_n$ consisting of upper triangular matrices, and let $\bP_{S_0^u}\supseteq \bL_{S_0^u}\bB$ be the standard parabolic subgroup associated with $S_0^u$.\;For $1\leq i\leq s$, fix a cuspidal Bernstein component $\Omega_{i}$ of $\GLN_{r_i}(\bQ_p)$.\;Then, for $u\in \sW_s$, $\Omega_{S^u_0}:=\prod_{i=1}^s\Omega_{u^{-1}(i)}$ is a cuspidal Bernstein component of the Levi subgroup $\bL_{S^u_0}$.\;Put $G:=\GLN_n(\bQ_p)$.\;

Let $\cR_{E}$ be the Robba ring over $\bQ_p$ with coefficients in $E$.\;We write $\cR_{E}(\delta)$ for the $(\varphi,\Gamma)$-module over $\cR_{E}$ of rank one associated to a continuous character $\delta:\bQ_p^\times\rightarrow E$.\;For $\alpha\in E$,\;let $\unr(p)$ be the unramified character of $\bQ_p^\times$ sending uniformizers $p$ to $\alpha$.\;For integer $\bh$,\;let $z^{h}$ be the character sending $z\in \bQ^\times_p$ to $z^{h}$.\;

For a closed point $x_i\in \Spec\FZ_{\Omega_{i}}(E)$ ($1\leq i\leq r$), let $\pi_{x_i}$ be the associated irreducible cuspidal sm ooth representation over $E$ of type $\Omega_i$ and let $\wdre_{x_i}:=\mathrm{rec}(\pi_{x_i})$ be the associated  $r$-dimensional absolutely irreducible Weil--Deligne representation via the normalized classical local Langlands correspondence (see \cite{scholze2013local}).\;$\wdre_{x_i}$ corresponds to a Deligne--Fontaine module $\df_{x_i}$, by Fontaine's equivalence of categories as in \cite[Proposition 4.1]{breuil2007first}, and a $p$-adic differential equation ${\Delta_{x_i}}$ over $\cR_{E}$.\;Here ${\Delta_{x_i}}$ is a de Rham $(\varphi,\Gamma)$-module of rank $r$ over $\cR_{k(x),L}$ with constant Hodge--Tate weights $0$ such that, after forgetting the Hodge filtration, $D_{\mathrm{pst}}(\Delta_{x_i})$ is isomorphic to $\df_{x_i}$, by Berger's theory \cite[Theorem A]{berger2008equations}.\;

Fix  a potentially crystalline $p$-adic Galois representation $\rho_p$, and assume that the associated potentially crystalline $(\varphi,\Gamma)$-module $\Dpik$ over $\cR_{E}$ of rank $n$ admits, for each $u\in \sW_s$, a \textit{non-critical} $\Omega_{S^u_0}$-filtration $\cF_u$ in the sense of \cite{Ding2021}; this is a parabolic filtration by saturated $(\varphi,\Gamma)$-submodules of $\Dpik$ over $\cR_{E}$:
\[\cF_u=\fil_{\bullet}^{\cF_u}\Dpik: \ 0 =\fil_0^{\cF_u}\Dpik \subsetneq \fil_1^{\cF_u}\Dpik \subsetneq \cdots \subsetneq \fil_{s}^{\cF_u} \Dpik=\Dpik\]
such that there is an injection of $(\varphi,\Gamma)$-modules over $\cR_{E,L}$ of rank $r_{u^{-1}(i)}$:
\begin{equation}
	\mathbf{I}^u_{i}:E_{u,i}:=\gr_{i}^{\cF_u} \Dpik \hookrightarrow \Delta_{x_{u^{-1}(i)}}\otimes_{\cR_E} \cR_{E}(z^{t^u_i}),\quad(\text{note that}\;E_{u,i}[1/t]=\Delta_{x_{u^{-1}(i)}}[1/t])
\end{equation}
for some  $\underline{x}:=(x_{i})_{1\leq i\leq s}\in \Spec\FZ_{\Omega_{S_0}}(E)$ (so that $\underline{x}^u=(x_{u^{-1}(i)})_{1\leq i\leq s}\in \Spec\FZ_{\Omega_{S^u_0}}(E)$)
and the non-critical assumption means that the Hodge--Tate weights of $\fil_{i}^{\cF_u}\Dpik$ (resp.,\;$\gr_{i}^{\cF_u} \Dpik$) are
\[(\hpi_{1},\hpi_{2},\cdots,\hpi_{t_i^u}),\quad (\text{resp.,\;}(\hpi_{t^u_{i-1}+1},\hpi_{t^u_{i-1}+2},\cdots,\hpi_{t_i^u})),\]
In this paper,\;we usually use $\bullet-\bullet$ to denote an extension of two objects, such as $(\varphi,\Gamma)$-modules or $G$-representations, where the first (resp., second) object is the subobject (resp., quotient)).\;Then $\cF_u$ corresponds to the following successive  extension of $\{E_{u,i}\}$ :
\[\Dpik=[{E}_{u,1}-{E}_{u,2}-\cdots-{E}_{u,s}]=[{E}_{1}-{E}_{2}-\cdots-{E}_{s}],\;\]
where we write $E_i:=E_{1,i}$ for simplicity.\;In particular, if $S_0=\emptyset$, then $\Dpik$ is crystabelline and $\cF_u$ becomes the classical triangulation (or the refinement associated to $u$).\;

Let $L'$ be a finite Galois extension of $\bQ_p$ such that $\Dpik|_{L'}$ is a crystalline $(\varphi,\Gamma)$-module over $\cR_{E,L'}$ of rank $n$, where $\cR_{E,L'}$ is the Robba ring over $L'$ with coefficients in $E$.\;By Fontaine's theory, we consider the Deligne--Fontaine module attached to $\Dpik$:
$$\df_{\Dpik}=(\varphi,\Gal(L'/\bQ_p),D_{\pcr}(\Dpik))$$
where $D_{\pcr}(\Dpik)=D_{\mathrm{cr}}^{L'}(\Dpik\otimes_{\cR_{E}}\cR_{E,L'})$ is a finite free $L'_0\otimes_{\bQ_p}E$-module of rank $n$, and $L'_0$ is the maximal unramified subextension of $L'$ over $\bQ_p$.\;Since $\Dpik$ is potentially crystalline, hence de Rham, $D:=D_{\dr}(\Dpik)\cong (D_{\pcr}(\Dpik)\otimes_{L_0'}L')^{\Gal(L'/\bQ_p)}$ is a free $E$-module of rank $n$.\;The module $D$ carries a natural Hodge filtration, expressed by the following complete flag :
\[\fil^{\bullet}_{H}(D): \ 0 \subsetneq \fil^{-\hpi_{n}}_{H}(D) \subsetneq \fil^{-\hpi_{n-1}}_{H}(D) \subsetneq \cdots \subsetneq \fil^{{-\hpi_{1}}}_{H}(D)=D.\]
For $1\leq i\leq s$, fix an $E$-basis $\{e_{i,j}\}_{1\leq j\leq r_i}$ of $D_{\dr}(E_{i})$.\;Then $\{e_{1,j}\}_{1\leq j\leq r_1},\cdots,\{e_{s,j}\}_{1\leq j\leq r_s}$ form an $E$-basis of $D$; we relabel them as $e_{1},e_{2},\cdots,e_{n}$ using lexicographic order.\;Under such basis,\;$\fil^{\bullet}_{H}(D)$ corresponds to a point of $\bZ_{S_0}\backslash \GLN_n/\bB$,\;which we call the \textit{$p$-adic Hodge parameters} of $\rho_p$.\;Let $\homo_{\fil}(D,D)$ be the subspace of endomorphisms of $D$ that preserve the filtration $\fil^{H}_{\bullet}(D)$.\;

Throughout this paper,\;we assume that $\underline{x}$ is \textit{regular} and \textit{generic}, namely $\Delta_{x_i}\neq \Delta_{x_j}$ and $\Delta_{x_i}\neq \Delta_{x_j}\otimes_{\cR_{E}}\cR_{E}(\unr(p))$ for $i\neq j$.\;Equivalently, $\pi_{\mathrm{sm}}(\rho_p)$ is a generic smooth representation of $G$, and $\wdre(\rho_p)$ is a generic Weil--Deligne representation.\;Recall $\lambda=\bh-\theta$.\;Consider the following locally $\bQ_p$-analytic principal series of $G$:
\[\mathrm{PS}_{u}(\underline{x},\bh):=\Big(\ind^G_{\op_{S_0^u}(\bQ_p)}\pi_{\sm}(\underline{x}^u)\eta_{S_0^u}\otimes_E L_{S_0^u}(\lambda)\Big)^{\ana},\pi_{\sm}(\underline{x}^u):=\boxtimes_{j=1}^s\pi_{x_{u^{-1}(j)}},\]
where $L_{S_0^u}(\lambda)$ is the unique irreducible algebraic representation of $\bL_{S_0^u}(\bQ_p)$ with highest weight $\lambda$, $\op_{S_0^u}$ is the parabolic subgroup opposite to $\bP_{S_0^u}$, and $\eta_{S_0^u}$ is the square root of the modulus character of $\op_{S_0^u}$.\;The explicit structure of $\mathrm{PS}_{u}(\underline{x},\bh)$ is well-understand by Orlik-Strauch \cite[The main theorem]{orlik2015jordan}.\;By the generic assumption,\;$\soc_G\mathrm{PS}_{u}(\underline{x},\bh)\cong \pi_{\lalg}(\rho_p)\cong \mathrm{PS}^{\lalg}_{u}(\underline{x},\bh)$,\;where  $\mathrm{PS}^{\lalg}_{u}(\underline{x},\bh)$ is the space of locally $\bQ_p$-algebraic vectors in $\mathrm{PS}_{u}(\underline{x},\bh)$.\;For simplicity, put $\pi_{\lalg}(\underline{x},\bh):=\mathrm{PS}^{\lalg}_{u}(\underline{x},\bh)\cong \pi_{\lalg}(\rho_p)$.\;

Let $\mathrm{PS}_{u,1}(\underline{x},\bh)$ be the first two layers of the socle filtration of $\mathrm{PS}_u(\underline{x},\bh)$.\;The amalgamated sum 
\[\bigoplus^{u\in \sW_{s}}_{\pi_{\lalg}(\underline{x},\bh)}\mathrm{PS}_{u,1}(\underline{x},\bh)\]
 has the unique quotient $\pi^{\flat}_{1}(\underline{x},\bh)$ of socle $\pi_{\lalg}(\underline{x},\bh)$,\;which lies in the following short exact sequence (here $\sW_{s}^{\widehat{j},\emptyset}=\sW_{\widehat{j}}\backslash\sW_s$, $\sW_{\widehat{j}}$ is the subgroup of $\sW_s$ generated by the simple reflections $\{s_i\}_{i\in \widehat{j}}$, and $\widehat{j}=\Delta_s\backslash \{j\}$):
\begin{equation}\label{firstwholeintro}
	0\rightarrow \pi_{\lalg}(\underline{x},\bh) \rightarrow \pi^{\flat}_{1}(\underline{x},\bh)\rightarrow \oplus_{\substack{1\leq j\leq s-1\\u\in \sW_{s}^{\widehat{j},\emptyset}}}C_{t_j^u,u}\rightarrow 0,
\end{equation}
where $C_{t_j^u,u}$ are explicit pure irreducible locally analytic representations given by the Orlik--Strauch functor (see \cite[The main theorem]{orlik2015jordan}).\;We state our three main theorems and several remarks as follows.\;

\begin{thm}\label{thmintor1}(Proposition \ref{construext1}) There exists a map, depending only on the choice of $\log_p(p)\in E$,
	\begin{equation}\label{reconsfortDpikcirc}
{t}^{\flat}_{D}:\ext^1_{G}\big(\pi_{\lalg},\pi^{\flat}_{1}(\underline{x},\bh)\big)\twoheadrightarrow
		\Big(\prod_{i=1}^s\ext^{1}(E_i[1/t],E_i[1/t])\Big)\oplus\homo^{\flat}_{\fil}(D,D),
	\end{equation}
and $\homo^{\flat}_{\fil}(D,D)\subseteq \homo_{\fil}(D,D)$ is described in Proposition \ref{introforhomoflat}.\;
\end{thm}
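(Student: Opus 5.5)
The map $t^{\flat}_D$ will be assembled from the principal series $\mathrm{PS}_{u,1}(\underline{x},\bh)$ used to form $\pi^{\flat}_1(\underline{x},\bh)$, following Ding's crystabelline construction \cite{ParaDing2024} with characters replaced by cuspidal Bernstein data. The argument has three steps.

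\textbf{Step 1: compute the extension group.} Applying $\homo_G(\pi_{\lalg},-)$ to the sequence (\ref{firstwholeintro}) gives
\[
0\to \ext^1_G(\pi_{\lalg},\pi_{\lalg})\to \ext^1_G(\pi_{\lalg},\pi^{\flat}_1(\underline{x},\bh))\to \oplus_{j,u}\ext^1_G(\pi_{\lalg},C_{t^u_j,u}).
\]
\begin{itemize}
\item The first map is injective because $\homo_G(\pi_{\lalg},C_{t^u_j,u})=0$.
\item By the generic assumption and Schraen/Orlik--Strauch type computations, $\ext^1_G(\pi_{\lalg},\pi_{\lalg})$ is described through $\homo(Z_{\bL_{S_0}}(\bQ_p),E)$ together with the smooth deformations of the cuspidal factors.
\item Each $\ext^1_G(\pi_{\lalg},C_{t^u_j,u})$ is one-dimensional. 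This follows from adjunction for parabolic induction (Emerton's Jacquet functor), using that $C_{t^u_j,u}$ is the Orlik--Strauch constituent attached to a simple reflection $s_{t^u_j}$.
\end{itemize}
This controls the dimension of the source, which is needed both to define $t^{\flat}_D$ and to check surjectivity.

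\textbf{Step 2: the Galois side.} Here $\log_p(p)$ enters, through the choice of branch when identifying $\homo(\bQ_p^\times,E)$ with $E^2$. Deformations $\ext^1(E_i[1/t],E_i[1/t])$ of the graded pieces of the Bernstein parameter correspond, via the Bernstein centre $\FZ_{\Omega_i}$ and the action of the centre, to the part of $\ext^1_G(\pi_{\lalg},\pi_{\lalg})$ coming from deforming $\underline{x}$ and the central characters. This uses Berger's equivalence $\Delta_{x_i}\leftrightarrow \df_{x_i}$ and the fact that $\ext^1$ of $\Delta_{x_i}[1/t]$ is governed by the deformation of $\df_{x_i}$ plus the Sen/Hodge part.

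For each $u$ and $j$, the class in $\ext^1_G(\pi_{\lalg},C_{t^u_j,u})$ is matched with a parameter recording how the $\Omega$-filtration $\cF_u$ at step $t^u_j$ sits relative to the Hodge filtration. Concretely, I would follow Ding and use the non-critical Bernstein eigenvariety, or equivalently paraboline deformations of $\Dpik$ along $\cF_u$. Extensions of $\pi_{\lalg}$ by $\mathrm{PS}_{u,1}$ correspond to trianguline/paraboline deformations of $\Dpik$ respecting $\cF_u$. Taking their images in $\homo(D,D)$, via $D_{\dr}$ of the deformation, yields endomorphisms preserving $\fil_H$.

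\textbf{Step 3: glue and prove surjectivity.} Compatibility of the maps across different $u$ comes from the amalgamation over $\pi_{\lalg}$: on the common part $\ext^1_G(\pi_{\lalg},\pi_{\lalg})$ the maps for different $u$ agree. Define $\homo^{\flat}_{\fil}(D,D)$ as the sum over $u$ of the images of the $\cF_u$-compatible filtered endomorphisms (Proposition \ref{introforhomoflat}). Surjectivity then reduces to a dimension count: compare $\dim\ext^1_G(\pi_{\lalg},\pi^{\flat}_1)$ with $\sum_i\dim\ext^1(E_i[1/t],E_i[1/t])+\dim \homo^{\flat}_{\fil}(D,D)$, and check that each component of the target is reached by some $\mathrm{PS}_{u,1}$.

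\textbf{Main obstacle.} I expect the hardest step to be showing that the map is well defined independently of $u$ and lands exactly onto $\homo^{\flat}_{\fil}(D,D)$. This requires matching the Orlik--Strauch extension class for $C_{t^u_j,u}$ with the relative position of $D_{\dr}(\fil^{\cF_u}_{t^u_j}\Dpik)$ and $\fil_H$. I would first try the crystabelline trick of computing everything on the level of $\bL_{S_0^u}$-Jacquet modules and $(\varphi,\Gamma)$-cohomology of $\mathrm{Hom}(E_{u,i},E_{u,i'})$, using non-criticality to identify the relevant $H^1_g$ pieces.
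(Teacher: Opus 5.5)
\textbf{There is a genuine gap: the map is never actually constructed.} Your route defines $t^{\flat}_D$ from the Galois side, so that $t^{\flat}_D\circ\gamma^{\flat}_{\Dpik}=f^{\flat}_{\Dpik}\circ\overline{g}^{\flat}_{\Dpik}$ holds by construction. Since $\gamma^{\flat}_{\Dpik}$ is surjective but far from injective, this defines a map only if $\ker(\gamma^{\flat}_{\Dpik})\subseteq\ker(f^{\flat}_{\Dpik}\circ\overline{g}^{\flat}_{\Dpik})$. Your Step 3 does not prove this, and you list it as the ``main obstacle'' without resolving it.

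\emph{Why your gluing argument fails.} You say the $u$-wise maps need only agree on $\ext^1_G(\pi_{\lalg},\pi_{\lalg})$. This is false once $s\geq 3$. The representations $\mathrm{PS}_{u,1}(\underline{x},\bh)$ and $\mathrm{PS}_{u',1}(\underline{x},\bh)$ share the summand $V_{i,u,\sigma}$ whenever $I_i(u)=I_i(u')$ (e.g.\ $u=1$, $u'=s_2$, $i=r_1$). So inside $\ext^1_G(\pi_{\lalg},\pi^{\flat}_1(\underline{x},\bh))$ their extension groups overlap in all of $\ext^1_G(\pi_{\lalg},V_{i,u,\sigma})$. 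On that overlap you must show the following: for $\psi\in\homo_{\sigma}(\bL_{\widehat{i}}(L),E)$, the $\cF_u$- and $\cF_{u'}$-paraboline deformations with parameter $\psi\circ\det_{\bL_{\widehat{i}}}$ have the same image in $\overline{\ext}^1(\Dpik,\Dpik)$. One way is to compare both with deformations of the common two-step filtration $\fil^{\cF_u}_{l_i(u)}\Dpik=\fil^{\cF_{u'}}_{l_i(u)}\Dpik$, and to use the ordering-independence of $\ker\kappa^{\flat}$ (Lemma \ref{smhomoindepent}) on its graded pieces.

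\emph{Two further missing inputs.} Even on $\ext^1_G(\pi_{\lalg},\pi_{\lalg})$, agreement across $u$ requires Lemma \ref{smhomoindepent}. Landing in $\ext^{1,\flat}_{\gr}(\Dpik,\Dpik)\oplus\homo_{\fil}(D,D)$, rather than in $\overline{\ext}^1(\Dpik,\Dpik)$, requires a single global splitting of $\overline{\ext}^{1,\flat}(\Dpik,\Dpik)$ depending only on $\log_p(p)$ (Proposition \ref{splitingforext1gps}). A $u$-by-$u$ choice of branch of $\log$ is not obviously compatible across different $u$.

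\emph{Smaller points that also need fixing.}
\begin{itemize}
\item The correspondence between extensions of $\pi_{\lalg}$ by $\mathrm{PS}_{u,1}$ and deformations along $\cF_u$ holds only for $\overline{\ext}^{1,\flat}_u(\Dpik,\Dpik)\cong\homo(\bZ_{S^u_0}(L),E)$, not for $\ext^1_u(\Dpik,\Dpik)$.
\item $\ext^1_G(\pi_{\lalg},\pi_{\lalg})$ is $\homo_{\sm}(\bZ_{S_0}(L),E)\oplus_{\homo_{\sm}(L^{\times},E)}\homo(L^{\times},E)$, of dimension $s+d_L$. The remaining locally analytic characters of $\bZ_{S^u_0}(L)$ produce the $C$-constituents, not self-extensions of $\pi_{\lalg}$.
\item The first factor of the target must be read as $\prod_i\ext^1_g(E_i[1/t],E_i[1/t])\cong\homo_{\sm}(\bZ_{S_0}(L),E)$, as in Proposition \ref{construext1}; otherwise surjectivity fails for dimension reasons. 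The Sen/Hodge part of a deformation goes to $\homo_{\fil}$ via $\nu$, not to this factor.
\item Surjectivity cannot come from a dimension count. The kernel has dimension $(2^s-1)d_L-\dim_E\homo^{\flat}_{\fil}(D,D)$, which is in general nonzero. What you need is $\nu(\ext^{1,\flat}_u(\Dpik,\Dpik))=\mathrm{Ad}_{(u^{\sharp})^{-1}}(\tau_{S^u_0})\cap\mathrm{Ad}_g(\fb)$ for each $u$, using Remark \ref{explainHomasLiealg}.
\end{itemize}

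\emph{How the paper avoids the compatibility problem.} Following Breuil--Ding, it builds $t^{\flat}_{D_\sigma}$ directly from the Hodge filtration.
\begin{itemize}
\item On $\ext^1_{G,\sigma}(\pi_{\lalg},\pi_{\lalg})$ it sends $(\psi_i)_i\in\homo_{\sm}(\bZ_{S_0}(L),E)$ to $(E_i[1/t]\otimes(1+\psi_i\epsilon))_i$.
\item On the $C_{i,u,\sigma}$-part it uses the lines $\epsilon_{u,i}:\ext^1_{G,\sigma}(C_{i,u,\sigma},\pi_{\lalg})\cong E\be_{I_i(u)^c}$, the duality (\ref{dualpairing}) and $\pi^{\flat}_{s_i,\sigma}(\underline{x},\bh)$. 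This lands in $\homo_E\big((\bigwedge^{n-i}_E D_\sigma)^{\flat},\fil^{\max}_i(D_\sigma)^{\flat}\big)$, and then $g_{i,\sigma}$ is applied.
\item $\log_p(p)$ enters through the splitting in Remark \ref{rmkforextgroupsigma}.
\end{itemize}
Surjectivity onto $\homo^{\flat,\dagger}_{\fil}$ is then tautological. The identification $\homo^{\flat,\dagger}_{\fil}=\homo^{\flat}_{\fil}$ and the relation $t^{\flat}_D\circ\gamma^{\flat}_{\Dpik}=f^{\flat}_{\Dpik}\circ\overline{g}^{\flat}_{\Dpik}$ are proved only afterwards (Propositions \ref{comparesharpflat} and \ref{mainthmforfullrefine}). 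That relation is exactly the input your construction presupposes. Your route is viable once these checks are supplied, but as written its central step is missing.
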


Define $\Delta'=\bigcup_{u\in \sW_s}\Delta\backslash S_0^u=\bigcup_{u\in \sW_s}\{t_1^u,\cdots,t_{s-1}^u\}\subseteq\Delta$, and write $\Delta':=\{i_{1}<i_{2}<\cdots<i_{|\Delta'|}\}$.\;For $i\in\Delta'$, put $\cI_{i}^{\flat}:=\{u\in \sW_s:i\notin S^u_0 \}$.\;For any $u\in \cI_{i}^{\flat}$,\;suppose that $i=t^u_{l_i(u)}$ for some $1\leq l_i(u)\leq s-1$.\;Define
\[D^{\flat}_{(i)}:=E\langle e_{u^{-1}(q),j}:l_i(u)+1\leq q\leq s,1\leq j\leq r_{u^{-1}(q)},u\in \cI_{i}^{\flat}\rangle\subseteq D,\]
and set $\bd_i=n-\dim_ED^{\flat}_{(i)}$.\;Note that $\bd_i=0$ if and only if $D^{\flat}_{(i)}=D$.\;

\begin{thm}\label{Hodgeparadeterintro}(Theorem \ref{Hodgeparadeter} \& Corollary \ref{Hodgeparadetercor}) For $1\leq t\leq |\Delta'|$, $\ker(t^{\flat}_{D})$ determines
	\begin{equation}
		\begin{aligned}
			\Big\{\fil_H^{-\bh_{{\max\{i_{{j}},\bd_{i_{t}}\}}+1}}(D_{\sigma})\oplus D_{\sigma}/\fil_H^{-\bh_{\bd_{i_{t}}}}(D_{\sigma})\Big\}_{1\leq j\leq t}
		\end{aligned}
	\end{equation}	
	In particular, if $\bd_{i_t}=0$, then $\ker(t^{\flat}_{D_{\sigma}})$ determines $\{\fil_H^{-\bh_{i_{{j}}+1}}(D_{\sigma})\}_{1\leq j\leq t}$.\;
\end{thm}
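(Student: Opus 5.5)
Since $t^{\flat}_D$ is surjective (Theorem \ref{thmintor1}), its kernel is a hyperplane-type subspace, and by duality it is the same data as its image under the restriction to the $\homo^{\flat}_{\fil}(D,D)$-component. So the plan is to reduce the statement to a linear-algebra question: show that $\ker(t^{\flat}_D)$ determines the subspace $\homo^{\flat}_{\fil}(D,D)\subseteq \homo_E(D,D)$, and then show that this subspace determines the listed partial flags. For the first step I would use the explicit construction of $t^{\flat}_D$ in Proposition \ref{construext1}. Along $\pi_{\lalg}\hookrightarrow \pi^{\flat}_1(\underline{x},\bh)$, $\ext^1_G(\pi_{\lalg},\pi^{\flat}_1(\underline{x},\bh))$ contains the "smooth/Bernstein" part $\ext^1_G(\pi_{\lalg},\pi_{\lalg})$, which corresponds to $\prod_i\ext^1(E_i[1/t],E_i[1/t])$ together with the $\log_p$-twisted characters, and this part is independent of the Hodge filtration. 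The remaining $\homo^{\flat}_{\fil}$-component is then recovered as the quotient of $\ext^1_G(\pi_{\lalg},\pi^{\flat}_1(\underline{x},\bh))$ by $\ker(t^{\flat}_D)$ plus this canonical part. Its pairing with the $\homo(D,D)$-side is intrinsic through the Bernstein eigenvariety/deformation description used for \eqref{reconsfortDpikcirc}. Hence $\ker(t^{\flat}_D)$ pins down $\homo^{\flat}_{\fil}(D,D)$ as a subspace of a fixed ambient space, namely the space with basis adapted to $\{e_{u^{-1}(q),j}\}$ over all $u$.

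For the second step I would use the description in Proposition \ref{introforhomoflat}. I expect $\homo^{\flat}_{\fil}(D,D)$ to be the set of filtration-preserving endomorphisms $f$ satisfying, for each $i\in\Delta'$ and $u\in\cI_i^{\flat}$, a compatibility that "moves" $\fil^{\cF_u}$-pieces. Dually, the conditions come from the $C_{t^u_j,u}$ factors in \eqref{firstwholeintro}: each such factor contributes a functional $\homo(D,D)\to E$ of the form $f\mapsto$ a trace of $f$ on a graded piece relative to both the parabolic flag $D^{\flat}_{(i)}$ and the Hodge filtration. The key lemma to prove is the following. For fixed $i=i_t$, the set of $f\in\homo_{\fil}(D,D)$ satisfying the conditions indexed by $i_1,\dots,i_t$ equals the stabiliser of the flags in the display of Theorem \ref{Hodgeparadeterintro} intersected with the endomorphisms preserving $D^{\flat}_{(i_t)}$. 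Once this holds, a stabiliser in $\GLN(D)$ of a collection of subspaces determines those subspaces: this is the standard fact that a parabolic subalgebra determines its flag. The one exception comes from the $\bd_{i_t}$-dimensional quotient $D/D^{\flat}_{(i_t)}$, which is already rigid. That accounts for the truncation $\max\{i_j,\bd_{i_t}\}$ and the appearance of $D/\fil_H^{-\bh_{\bd_{i_t}}}$.

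I would argue by induction on $t$. For $t=1$, use only the $C$-factors attached to $i_1$; for $t\ge2$, quotient by the information already obtained for $i_{t-1}$. The main obstacle is the key lemma, and specifically the non-criticality input: one must show that $\fil_H^{-\bh_{k+1}}(D)$ is transverse to the relevant $D_{\dr}(\fil^{\cF_u}_{l}\Dpik)$. Non-criticality gives exactly $D=\fil_H^{-\bh_{t^u_l+1}}\oplus D_{\dr}(\fil^{\cF_u}_l)$, which is what makes the stabiliser computation clean. I would first try to prove the lemma in the crystabelline case $S_0=\emptyset$, mimicking \cite{ParaDing2024}, and then handle the blocks $e_{i,j}$ uniformly. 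Finally, the case $\bd_{i_t}=0$ follows immediately, since then $D^{\flat}_{(i_t)}=D$ and the quotient term vanishes.
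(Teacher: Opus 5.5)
\textbf{There is a genuine gap: your Step 1 is false.} The kernel of a surjection only determines the abstract quotient $\ext^1_G(\pi_{\lalg},\pi^{\flat}_{1}(\underline{x},\bh))/\ker(t^{\flat}_{D})$. It is not a hyperplane: per $\sigma$ it has dimension $2^s-1-\dim_E\mathrm{Ad}_{g_{\sigma}}(\fb_{\sigma})^{\flat}_{S_0}$. And since $t^{\flat}_D$ is surjective, its image is the whole target, so ``duality with the image'' carries no information. The kernel cannot hand you $\homo^{\flat}_{\fil}(D,D)$ as a subspace of $\homo_E(D,D)$, because there is no Hodge-independent map to $\homo_E(D,D)$. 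Indeed $t^{\flat}_D$ is built from the lines $\fil_i^{\max}(D_\sigma)^{\flat}$ and the maps $g_{i,\sigma}$, and both depend on $\fil_H^{\bullet}$.

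Here is a concrete counterexample. Take $L=\bQ_p$, $s=2$, $(r_1,r_2)=(1,2)$. Then
$\dim_E\ext^1_G(\pi_{\lalg},\pi^{\flat}_{1}(\underline{x},\bh))=s+2^s-1=5=2+3=\dim_E\ext^{1,\flat}_{\gr}(\Dpik,\Dpik)+\dim_E\homo^{\flat}_{\fil}(D,D)$,
so $\ker(t^{\flat}_D)=0$ for every such $\Dpik$ with the given $(\underline{x},\bh)$. On the other hand $\homo^{\flat}_{\fil}(D,D)=E\,\mathrm{id}+Ep_1+Ep_2$. Here $p_1$ is the projector onto $\fil_H^{-\bh_2}(D)$ along $D^1$, and $p_2$ is the projector onto $\fil_H^{-\bh_3}(D)$ along $D^2$. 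Non-criticality excludes $D^1=\fil_H^{-\bh_3}(D)$, and one checks that $p_1$ is then the only idempotent in this span with kernel $D^1$. So the span recovers $\fil_H^{-\bh_2}(D)$. The line $\fil_H^{-\bh_2}(D)\cap D^2$ is unchanged by rescaling $D^1$ and $D^2$, and it can be any point of $\mathbb{P}(D^2)$. So the subspace you want to extract varies while the kernel stays $0$.

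Step 2 also rests on a wrong picture. By Proposition \ref{introforhomoflat} and Remark \ref{explainHomasLiealg}, $\homo^{\flat}_{\fil}(D_\sigma,D_\sigma)$ is the \emph{sum} $\sum_u\mathrm{Ad}_{(u^{\sharp})^{-1}b_\sigma(u)}(\fz_{S_0^u,\sigma})$. Concretely, it is $E\,\mathrm{id}$ plus the projectors onto $\fil_H^{-\bh_{i+1,\sigma}}(D_\sigma)$ along $\oplus_{q\in I_i(u)}D^q_\sigma$, one for each $C_{i,u,\sigma}$ via $g_{i,\sigma}$; these are elements, not functionals.
\begin{itemize}
\item It is not cut out by conditions, not a stabiliser, and not a parabolic subalgebra, so ``a parabolic determines its flag'' is unavailable.
\item The $\bd_{i_t}$-truncation is not a rigidity of $D/D^{\flat}_{(i_t)}$. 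It comes from the fact that every map in $g_{i,\sigma}\big(\homo_E((\bigwedge^{n-i}_{E}D_\sigma)^{\flat},\fil_i^{\max}(D_\sigma)^{\flat})\big)$ kills $D_\sigma^{\flat,(i)}$ (Lemma \ref{lemforwedge}), so that part of the flag is invisible.
\end{itemize}

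What is missing is the paper's mechanism: read the kernel in Hodge-independent coordinates on the $G$-side.
\begin{itemize}
\item The map $\beta_S$ of (\ref{compositionfinalSver}) lands in $\oplus_{i\in S}\homo_E((\bigwedge^{n-i}_{E}D_\sigma)^{\flat},\fil_i^{\max}(D_\sigma)^{\flat})$. Here $(\bigwedge^{n-i}_{E}D_\sigma)^{\flat}$ has the fixed basis $\{\be_{I_i(u)^c}\}$ coming from the $\epsilon_{u,i}$.
\item For $S=S_j=\{i_{l_1},\dots,i_{l_j},i_{l_t}\}$, the subspaces $\ker(t^{\flat}_{D_\sigma})_{S}$ are cut out of $\ker(t^{\flat}_{D_\sigma})$ by Hodge-independent conditions: infinitesimal character, and support in $\pi^{\flat}_{S,\sigma}$.
\item Proposition \ref{propfor2ndfil} identifies the $i_{l_t}$-component of $\beta_S(\ker(t^{\flat}_{D_\sigma})_S)$ with the $g_{i_{l_t},\sigma}$-preimage of $\homo_E\big(\fil_H^{-\bh_{\bd+1,\sigma}}/\fil_H^{-\bh_{\max\{i_{l_j},\bd\}+1,\sigma}},\fil_H^{-\bh_{i_{l_t}+1,\sigma}}\big)$, with $\bd=\bd_{i_{l_t}}$. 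It uses $\ker(g_{i,\sigma})$ via $\fil^{2^{\mathrm{nd}}-\max}_{S,i}$, Lemma \ref{filforhomospace}, and the combinatorics of $\cS_{a,b}(S)$ in Lemma \ref{lemforcardone}.
\item The subspaces are then recovered as common kernels of these spaces of maps (Lemma \ref{lem:recover-subspace-relative}).
\end{itemize}
The information therefore lies in the linear relations among the $g_{i,\sigma}$-images for \emph{different} $i\in S$. Your unspecified ``induction on $t$ by quotienting'' never analyses these relations.
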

The tautological extension of $\pi_{\lalg}(\underline{x},\bh)\otimes_E\ker(t^{\flat}_{D})$ by $\pi^{\flat}_{1}(\underline{x},\bh)$ gives a locally analytic representation $\pi^{\flat}_{\min}(\Dpik)$.\;
\begin{thm}\label{thmintor11} (Theorem \ref{mainthmglobal}:\;local--global compatibility in the patched setting) Let $R_{\infty}$ be the patched Galois deformation ring and $\Pi_{\infty}$ be the patched $R_{\infty}$-admissible unitary representation.\;If $\rho_p$ comes from a maximal ideal $\fm_{\rho}$ of $R_{\infty}[1/p]$, then $\pi^{\flat}_{\min}(\Dpik)\hookrightarrow \Pi_{\infty}[\fm_{\rho}]$.\;
\end{thm}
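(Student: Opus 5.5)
The plan follows the strategy of \cite{ParaDing2024} in the crystabelline case, replacing triangulations and characters by the $\Omega_{S_0^u}$-filtrations $\cF_u$ and the cuspidal Bernstein components. The main tool is the patched Bernstein eigenvariety of \cite{Ding2021}. Concretely, one must show two things. First, every extension of $\pi_{\lalg}(\underline{x},\bh)$ by $\pi^{\flat}_{1}(\underline{x},\bh)$ lying in $\ker(t^{\flat}_D)$ is realized inside $\Pi_\infty[\fm_\rho]$. Second, the resulting maps glue to an injection of the tautological extension $\pi^{\flat}_{\min}(\Dpik)$.

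\emph{Step 1.} For each $u\in\sW_s$, the non-critical filtration $\cF_u$ gives a point $(\fm_\rho,\underline{x}^u,\lambda)$ on the patched Bernstein eigenvariety $\bersteineigenvarpik$. Non-criticality and genericity give, via the adjunction for Emerton's Jacquet functor / the Bernstein-type parabolic induction of \cite{Ding2021}, a nonzero map $\mathrm{PS}_u(\underline{x},\bh)\to\Pi_\infty[\fm_\rho]^{\ana}$. Its restriction to $\mathrm{PS}_{u,1}(\underline{x},\bh)$ is injective, because the socle is $\pi_{\lalg}$ and the $C_{t^u_j,u}$-constituents are detected by the companion-point/non-criticality results. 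These maps agree on the common socle $\pi_{\lalg}(\underline{x},\bh)$ after rescaling, since the locally algebraic vectors $\Pi_\infty[\fm_\rho]^{\lalg}$ contain $\pi_{\lalg}$ with multiplicity one (classical local--global compatibility). Hence they factor through the amalgamated sum, and then through its quotient $\pi^{\flat}_1(\underline{x},\bh)$, by the uniqueness of the quotient with socle $\pi_{\lalg}$. The result is an embedding $\pi^{\flat}_1(\underline{x},\bh)\hookrightarrow\Pi_\infty[\fm_\rho]$.

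\emph{Step 2.} Next I lift to extensions. Consider the tangent space of $R_\infty$ at $\fm_\rho$, or rather of the potentially crystalline-type deformation ring of type $\Omega$ and weights $\bh$ (times the patching variables). Each tangent vector $\tilde\rho$ gives $\Pi_\infty[\fm_\rho^2][\text{direction}]$, hence an element of $\ext^1_G(\pi_{\lalg},\Pi_\infty[\fm_\rho])$. Pulling back along the embedding of Step 1 then gives a class in $\ext^1_G(\pi_{\lalg},\pi^{\flat}_1)$, using that $\mathrm{Hom}_G(\pi_{\lalg},\Pi_\infty[\fm_\rho]/\pi^{\flat}_1)$ is controlled as in \cite{ParaDing2024}. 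I would then compare with $t^{\flat}_D$ of Proposition \ref{construext1}. The components of $t^{\flat}_D$ in $\prod_i\ext^1(E_i[1/t],E_i[1/t])$ should match the infinitesimal variation of $\underline{x}$ along the Bernstein eigenvariety, via the Jacquet–Emerton/Bernstein-center actions and the formula relating $\ext^1$ of principal series to $\ext^1$ of the graded pieces. The $\homo^{\flat}_{\fil}(D,D)$-component should match the first-order variation of the Hodge filtration. Consequently, the locally algebraic extensions coming from deformations with fixed Weil–Deligne type and fixed Hodge filtration are exactly those killed by $t^{\flat}_D$. These deformations are potentially crystalline of fixed type, and potentially crystalline deformations with trivial variation of $\underline{x}$ and $\fil_H$ give \emph{locally algebraic} extensions. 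The key claim is that these realize all of $\pi_{\lalg}\otimes\ker(t^{\flat}_D)$, which I would prove by a dimension count. Both sides should have dimension equal to that of the fiber of $\Spec R^{\mathrm{pcr}}\to(\text{Bernstein center})\times(\text{flag variety } \bZ_{S_0}\backslash\GLN_n/\bB)$, and $t^{\flat}_D$ is surjective by Proposition \ref{construext1}.

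\emph{Step 3.} With all of $\ker(t^{\flat}_D)$ realized inside $\Pi_\infty[\fm_\rho]$, these extensions assemble into a map $\pi^{\flat}_{\min}(\Dpik)\to\Pi_\infty[\fm_\rho]$. This map is injective because it is injective on the socle, and because the extension classes in $\ker(t^{\flat}_D)$ are linearly independent modulo split ones, so no $\pi_{\lalg}$ in the cosocle maps to zero. The main obstacle is Step 2: proving that the map from the tangent space of $R_\infty$ to $\ext^1_G(\pi_{\lalg},\pi^{\flat}_1)$ followed by $t^{\flat}_D$ is compatible with the Galois-side map to $(\prod_i\ext^1(E_i[1/t],E_i[1/t]))\oplus\homo^{\flat}_{\fil}(D,D)$. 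For this I would try to adapt Ding's argument: use the closed embeddings of the Bernstein eigenvariety into $(\text{Spf}R_\infty)^{\rig}\times(\text{Bernstein center})^{\rig}\times\widehat{Z}$, compute on each $\cF_u$ separately via the parabolic Jacquet functor, and deduce the $\fil_H$-component from the $\cL$-invariant-type computations of \cite{BDcritical25} through the choice of $\log_p(p)$.
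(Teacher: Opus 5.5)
\textbf{Verdict: genuine gap.} Your Step~1 is essentially what the paper does (cf.\ Proposition~\ref{imiotawmx}). One detail is missing there: to pass from the amalgamated sum to its quotient $\pi^{\flat}_1(\underline{x},\bh)$, you need that no $C_{t^u_j,u,\sigma}$ embeds in $\Pi_\infty[\fm_\rho]$. This comes from the absence of non-dominant companion points at $y_u$.

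The real problem is Step~2: its mechanism cannot produce $\ker(t^{\flat}_D)$, for three reasons.
\begin{enumerate}
\item[(i)] No nonzero class in $\ker(t^{\flat}_D)$ is locally algebraic. The map $t^{\flat}_D$ is injective on $\ext^1_G(\pi_{\lalg},\pi_{\lalg})$: the smooth part goes isomorphically onto $\ext^{1,\flat}_{\gr}(\Dpik,\Dpik)$, and $\log_{p,\sigma}\circ\det$ has a nonzero scalar component in $\homo_{\fil}(D_\sigma,D_\sigma)$. So every nonzero element of the kernel has nonzero image in $\bigoplus\ext^1_G(\pi_{\lalg},C_{t^u_j,u,\sigma})$. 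Besides, $\Pi_\infty[\fm_\rho]^{\lalg}\cong\pi_{\lalg}$ leaves no room for locally algebraic extensions.
\item[(ii)] The $\homo_{\fil}(D,D)$-component is not the first-order variation of the Hodge filtration. Since $\nu$ has kernel $\ext^1_g(\Dpik,\Dpik)$, it measures the failure of de Rhamness (the operator $v_{\pdr}$ on $D_{\pdr}$ of the deformation). A de Rham deformation with fixed Weil--Deligne data and fixed Hodge filtration is trivial, so the fiber in your dimension count has zero-dimensional tangent space. By contrast, $\dim_E\ker(t^\flat_D)$ is positive in general; for example it is $2^n-1-\frac{n(n+1)}{2}$ when $S_0=\emptyset$ and $L=\bQ_p$.
\item[(iii)] A single tangent vector of $R_\infty$ only yields vectors of $\Pi_\infty[I]$ for some $\fm_\rho\supsetneq I\supseteq\fm_\rho^2$. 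These are not killed by $\fm_\rho$, and nothing forces $\epsilon$ to hit $\pi_{\lalg}$. So you get no class in $\ext^1_G(\pi_{\lalg},\Pi_\infty[\fm_\rho])$ this way.
\end{enumerate}

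The missing idea, which is how the paper argues, is to work in the first-order thickening $\Pi_\infty^{R_\infty-\ana}[\fa_\rho]$, where $R_\infty[1/p]/\fa_\rho\cong A_{\Dpik}$ has tangent space $\overline{\ext}^1(\Dpik,\Dpik)$.
\begin{enumerate}
\item[(1)] Local freeness of rank one of the patched sheaf at each smooth non-critical point $y_u$ gives a $G$-equivariant embedding $\iota_u$ of $I^G_{\op_{S^u_0}(L)}\pi_{\sm}(\underline{x}^u)\eta_{S^u_0}\widetilde{\delta}^{\univ}_u$ into $\Pi_\infty^{R_\infty-\ana}[\fa_\rho]$. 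This source is the universal extension of $\homo(\bZ_{S^u_0}(L),E)\otimes_E\pi_{\lalg}$ by $\mathrm{PS}_{u,1}(\underline{x},\bh)$, and $\fm_{A_{\Dpik}}$ acts on it through $A^{\flat}_{\Dpik,u}$, i.e.\ through $\kappa^\flat_u$.
\item[(2)] The subrepresentation generated by all $\mathrm{Im}(\iota_u)$ is $\pi^\flat_1(\underline{x},\bh)^{\univ}$, with the $A_{\Dpik}$-action of Theorem~\ref{actionADonuniversal}.
\item[(3)] The vectors lying above a class $\gamma^\flat_{\Dpik}(z)$ are killed by $\fm_{A_{\Dpik}}$ iff $\overline{g}^\flat_{\Dpik}(z)=0$, i.e.\ iff the parabolic deformation directions along the different $\cF_u$ cancel in $\overline{\ext}^1(\Dpik,\Dpik)$.
\item[(4)] By the purely local identity $f^\flat_{\Dpik}\circ\overline{g}^\flat_{\Dpik}=t^\flat_D\circ\gamma^\flat_{\Dpik}$ (Proposition~\ref{mainthmforfullrefine}), with $f^\flat_{\Dpik}$ injective and $\gamma^\flat_{\Dpik}$ surjective (Proposition~\ref{extpi1repn}), these are exactly the classes of $\ker(t^\flat_D)$.
\end{enumerate}
Hence $\pi^\flat_{\min}(\Dpik)=\pi^\flat_1(\underline{x},\bh)^{\univ}[\fm_{A_{\Dpik}}]\hookrightarrow\Pi_\infty^{R_\infty-\ana}[\fa_\rho][\fm_{A_{\Dpik}}]=\Pi_\infty^{R_\infty-\ana}[\fm_\rho]$.

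So $\ker(t^\flat_D)$ is realized by cancelling combinations of nontrivial deformations along different $\Omega_{S^u_0}$-filtrations (the excess in the infinite fern), not by trivial or locally algebraic deformations. Injectivity then comes for free, so your Step~3 is not needed.
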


\begin{rmk}In general,\;for a de Rham $\Dpik$,\;when a suitable locally analytic representation $\pi_{1}(D)$ is available,\;\cite[(6)]{BDcritical25} predicts a surjection
 \begin{equation}
	\begin{aligned}
t_{D}:\ext^1_{G}\big(\pi_{\lalg}(D),\pi_{1}(D)\big)\twoheadrightarrow\ext^1_{\varphi}(D,D)\oplus\homo_{\fil}(D,D),\;
	\end{aligned}
\end{equation}
where $\ext^1_{\varphi}(D,D)$ means extensions as $\varphi$-modules.\;In our case, we expect $\pi^{\flat}_{1}(\underline{x},\bh)$ to be a subrepresentation of $\pi_{1}(D)$, and (\ref{reconsfortDpikcirc}) gives $t_{D}|_{\ext^1_{G}\left(\pi_{\lalg},\pi^{\flat}_{1}(\underline{x},\bh)\right)}$ and its image.\;For $u\in \sW_s$, recall that $\Dpik=[{E}_{u,1}-{E}_{u,2}-\cdots-{E}_{u,s}]$.\;Assume that for each $1\leq i\leq s$, one can associate a locally analytic representation $\pi_{\ana}({E}_{u,i})$ that determines ${E}_{u,i}$.\;Consider the locally analytic parabolic induction
\begin{equation*}
	\mathrm{PS}_{\cF_{u}
		}(\underline{x},\bh):=\big(\ind^G_{\op_{S^u_0}(L)}\pi_{\ana}(\underline{x}^u)\eta_{S_0^u}\big)^{\ana},\;\pi_{\ana}(\underline{x}^u):=\boxtimes_{i=1}^s\pi_{\ana}({E}_{u,i}).\;
\end{equation*}
We have a injection of locally analytic $G$-representations $\mathrm{PS}_{S_0,u}(\underline{x},\bh)\hookrightarrow\mathrm{PS}_{\cF_{u}}(\underline{x},\bh)$.\;Then the  amalgamated sum (if possible) of $\{\mathrm{PS}_{\cF_{u}}(\underline{x},\bh)\}_{u\in \sW_{s}}$ has a unique quotient $\pi_{1}^+(\underline{x},\bh)$ of socle $\pi_{\lalg}(\underline{x},\bh)$  such that $\pi^{\flat}_{1}(\underline{x},\bh)\hookrightarrow\pi_{1}^+(\underline{x},\bh)$; we expect $\pi_{1}^+(\underline{x},\bh)\hookrightarrow \pi_{1}(D)$.\;Then (\ref{reconsfortDpikcirc}) can be extended to a morphism
\begin{equation}\label{introt+D}
	t^+_{D}:\ext^1_{G}\big(\pi_{\lalg}(\underline{x},\bh),\pi^+_{1}(\underline{x},\bh)\big)\rightarrow\ext^1_{\varphi}(D,D)\oplus\homo_{\fil}(D,D).
\end{equation}
See  Expectation \ref{expectintro} for a further comment.\;
\end{rmk}

Let $\fg$ (resp.,\;$\fb$) be the $E$-Lie algebra of $\GLN_n$ (resp.,\;$\bB$).\;Moreover,\;for $u\in \sW_s$,\;let $\fz_{S^u_0}$ (resp.,\;$\fn_{S^u_0}$) be the $E$-Lie algebra of the center $\bZ_{S^u_0}$ of $\bL_{S_0^u}$ (resp.,\;of the unipotent radical of $\bP_{S^u_0}$).\;Put $\tau_{S^u_0}=\fz_{S^u_0}\ltimes\fn_{S^u_0}$ the full radical.\;Under the basis $\{e_{i}\}_{1\leq i\leq n}$, we identify $\homo_{E}(D,D)$ with $
\fg$.\;If we choose $g\in \GLN_n(E)$ such that $g\bB\in \GLN_n/\bB$ gives the coordinates of the complete Hodge flag $\fil^{\bullet}_{H}(D)$ in this basis, then $\homo_{\fil}(D,D)$ is identified with $\mathrm{Ad}_g(\fb)$.\;Let $u^{\sharp}\in \sW_n$ be the unique element sending $e_{i,j}$ for $1\leq j\leq r_i$ to $e_{u^{-1}(i),j}$ for $1\leq j\leq r_i$,\;which thus induces a permutation on $e_{1},\cdots,e_{n}$.\;

\begin{pro}\label{introforhomoflat}(Propositions \ref{dfnisoforflatfilhomo} \& \ref{comparesharpflat})
	$\homo^{\flat}_{\fil}(D,D)\cong \mathrm{Ad}_{g}(\fb)_{S_0}^{\flat}:=\sum_{u\in \sW_s}\mathrm{Ad}_{(u^{\sharp})^{-1}}(\tau_{S^u_0})\cap \mathrm{Ad}_{g}(\fb)$ and 
$\dim_E\ker(t^{\flat}_{D})=2^s-1-\dim_E\mathrm{Ad}_{g}(\fb)_{S_0}^{\flat}$.\;
\end{pro}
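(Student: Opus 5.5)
The proof will build $\homo^{\flat}_{\fil}(D,D)$ and $t^{\flat}_D$ one $u\in\sW_s$ at a time and then assemble the pieces.

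\textbf{Step 1: the image for a fixed $u$.} For fixed $u$, the non-critical filtration $\cF_u$ gives a parabolic reduction of $D$. In the basis $\{e_i\}$ permuted by $u^{\sharp}$, its stabilizer is the parabolic $\mathrm{Ad}_{(u^{\sharp})^{-1}}(\fp_{S_0^u})$. Following \cite{ParaDing2024} and \cite{BDcritical25}, one has $\ext^1_G(\pi_{\lalg},\mathrm{PS}_{u,1}(\underline{x},\bh))$. I would compare it, through the Bernstein-centre and Orlik--Strauch description, with deformations of $\Dpik$ that preserve $\cF_u$ and are trivial on each $E_{u,i}[1/t]$ beyond the $\ext^1(E_i[1/t],E_i[1/t])$ part. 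This comparison should identify the contribution of the constituents $C_{t^u_j,u}$ with deformations of the Hodge flag inside the radical directions. In Lie-algebra terms these are the endomorphisms of $D$ in $\mathrm{Ad}_{(u^{\sharp})^{-1}}(\tau_{S_0^u})$, the centre-plus-unipotent-radical of that parabolic, that also preserve $\fil_H^\bullet$, that is, that lie in $\mathrm{Ad}_g(\fb)$. So the $u$-component of the image of $t^{\flat}_D$ on $\homo_{\fil}(D,D)$ is $\mathrm{Ad}_{(u^{\sharp})^{-1}}(\tau_{S_0^u})\cap\mathrm{Ad}_g(\fb)$.

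\textbf{Step 2: summing over $u$.} $\pi^{\flat}_1(\underline{x},\bh)$ is the amalgamated sum over $\pi_{\lalg}$. I would compute $\ext^1_G(\pi_{\lalg},\pi^{\flat}_1)$ from the long exact sequence attached to (\ref{firstwholeintro}) and the sequences for each $\mathrm{PS}_{u,1}$. The image of $t^{\flat}_D$ in the filtration part is then the sum of the images from Step 1, giving
\[\homo^{\flat}_{\fil}(D,D)=\sum_u \mathrm{Ad}_{(u^{\sharp})^{-1}}(\tau_{S_0^u})\cap\mathrm{Ad}_g(\fb).\]
Two points need checking. First, the maps constructed for different $u$ agree on the common part $\ext^1(\pi_{\lalg},\pi_{\lalg})$. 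Second, that common part maps onto $\prod_i\ext^1(E_i[1/t],E_i[1/t])$, with the $\log_p(p)$ choice fixing the smooth-character identification. Both should follow from the compatibility of the construction in Proposition \ref{construext1}.

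\textbf{Step 3: dimension count.} This uses two inputs:
\begin{itemize}
\item Each $C_{t^u_j,u}$ has $\dim\ext^1_G(\pi_{\lalg},C_{t^u_j,u})=1$, and $\homo_G(\pi_{\lalg},C)=0$.
\item The number of constituents is $\sum_{j=1}^{s-1}|\sW_s^{\widehat{j},\emptyset}|=\sum_{j=1}^{s-1}\binom{s}{j}=2^s-2$.
\end{itemize}
Using also $\ext^2$-vanishing or the relevant injectivity statements from \cite{ParaDing2024}, adapted to cuspidal Levi factors, we get
\[\dim\ext^1_G(\pi_{\lalg},\pi^{\flat}_1)=\dim\ext^1_G(\pi_{\lalg},\pi_{\lalg})+2^s-2.\]
Next, $\ext^1_G(\pi_{\lalg},\pi_{\lalg})$ maps isomorphically onto $\prod_i\ext^1(E_i[1/t],E_i[1/t])$ modulo one common dimension. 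This is where the $2^s-1$ rather than $2^s-2$ comes from, via the central-character/$\log_p(p)$ line. Surjectivity of $t^{\flat}_D$ (Theorem \ref{thmintor1}) then gives $\dim\ker t^{\flat}_D=2^s-1-\dim\mathrm{Ad}_g(\fb)^{\flat}_{S_0}$.

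\textbf{Main obstacle.} The hardest step is Step 1: identifying exactly the Hodge-filtration directions detected by each $C_{t^u_j,u}$ in the non-crystabelline setting. Here the Levi factors are cuspidal and the deformations of $E_{u,i}$ must be matched with Bernstein-centre deformations. I would first treat $s=2$ by explicit computation with the Colmez--Ding style $\mathcal{L}$-invariant pairing, then induct along $\cF_u$.
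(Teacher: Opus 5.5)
Your skeleton (compute the image of $t^{\flat}_D$ on each $u$-piece, sum over $u$, then count dimensions using surjectivity and $\dim_E\ext^1_G(\pi_{\lalg},\pi^{\flat}_1(\underline{x},\bh))=s+2^s-1$) is the right one, and Steps 2 and 3 are essentially what the paper does. The gap is Step 1. It carries the entire content of the statement, and you only assert it (``this comparison should identify\ldots'').

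The map $t^{\flat}_D$ is not defined through Galois deformations. By Proposition \ref{construext1} it is built from three ingredients: the isomorphisms $\epsilon_{u,i}$ onto the lines $E\be_{I_i(u)^c}\subseteq\bigwedge^{n-i}_ED$, the line $\fil_i^{\max}(D)^{\flat}$, and the maps $g_{i,\sigma}$. So a priori its filtration component lands only in $\homo^{\flat,\dagger}_{\fil}(D,D)=\sum_{i\in\Delta'}g_{i,\sigma}(\cdots)$, and nothing yet relates this space to $\bF_u$.

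What is needed is a direct computation of $t^{\flat}_D(c_u)$ for $c_u\in\ext^1_{G}(\pi_{\lalg},C_{i,u})$. Non-criticality enters in two ways: the $\be_{I_i(u)^c}$-coefficient of $\fil^{\max}_i(D)$ is nonzero, and one can choose a basis $\{f_{t''}\}$ of $\fil_H^{-\bh_{i+1}}(D)$ adapted as in \cite[(84)]{BDcritical25}. With these, one shows that $t^{\flat}_D(c_u)$ kills $\bigoplus_{j\in I_i(u)}D^j$ and acts on every other $e_t$ by a fixed scalar modulo $\bigoplus_{j\in I_i(u)}D^j$. This is Proposition \ref{comparesharpflat}. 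It places the image of $\ext^1_G(\pi_{\lalg},V_{i,u})$ inside $\homo^i_{\fil,\bF_u}(D,D)=\mathrm{Ad}_{(u^{\sharp})^{-1}}(\tau_{\widehat i})\cap\mathrm{Ad}_g(\fb)$. Equality follows because non-criticality makes this intersection equal to $\mathrm{Ad}_{(u^{\sharp})^{-1}b(u)}(\fz_{\widehat i})$, which is $2$-dimensional, like $\homo_{\sigma}(\bL_{\widehat i}(\cO_L),E)$.

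You then still need the Lie-algebra identity $\sum_j\fz_{\widehat{t^u_j}}=\fz_{S^u_0}$ to pass from the maximal parabolics $\widehat{t^u_j}$ to $\tau_{S^u_0}$; this is Proposition \ref{dfnisoforflatfilhomo}. Your Step 1 jumps straight to $\tau_{S^u_0}$ and skips it.

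The route you sketch for Step 1 does not supply this computation. You propose comparing $\ext^1_G(\pi_{\lalg},\mathrm{PS}_{u,1}(\underline{x},\bh))$ with $\ext^{1,\flat}_u(\Dpik,\Dpik)$, whose image under $\nu$ is indeed $\homo^{\flat}_{\fil,\bF_u}=\mathrm{Ad}_{(u^{\sharp})^{-1}}(\tau_{S^u_0})\cap\mathrm{Ad}_g(\fb)$ by (\ref{commutativefortriandwhole}). That comparison only transfers the description to $t^{\flat}_D$ if you already know $f^{\flat}_{\Dpik}\circ\overline g^{\flat}_{\Dpik}=t^{\flat}_D\circ\gamma^{\flat}_{\Dpik}$. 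In the paper (as in \cite[Prop.~2.5.5]{BDcritical25}) that compatibility is deduced from Proposition \ref{comparesharpflat}, so invoking it here is circular. Likewise, an $s=2$ computation followed by induction along $\cF_u$ does not obviously apply: $t^{\flat}_D$ is defined globally through $\bigwedge^{n-i}_ED$ for all $i\in\Delta'$ at once, not recursively along a filtration.

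Two smaller corrections to Step 3:
\begin{itemize}
\item $\ext^2_G(\pi_{\lalg},\pi_{\lalg})$ need not vanish. The surjection onto $\bigoplus\ext^1_G(\pi_{\lalg},C_{t^u_j,u})$ comes instead from explicit lifts: parabolic induction from $\op_{\widehat i}$ twisted by $\homo(\bL_{\widehat i}(L),E)$, as in Proposition \ref{reinterforextgp}.
\item The extra $1$ in $2^s-1$ is the non-smooth line $\log_p\circ\det$ in the $(s+1)$-dimensional $\ext^1_G(\pi_{\lalg},\pi_{\lalg})$. It goes to the scalar endomorphism inside $\mathrm{Ad}_g(\fb)^{\flat}_{S_0}$, while the smooth part maps isomorphically onto the $s$-dimensional $\ext^{1,\flat}_{\gr}(\Dpik,\Dpik)$.
\end{itemize}
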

\begin{rmk}In particular,\;if $S_0=\emptyset$,\;then $\mathrm{Ad}_{g}(\fb)_{S_0}^{\flat}=\mathrm{Ad}_{g}(\fb)$ and $\dim_E\ker(t^{\flat}_{D})=2^n-\frac{n(n+1)}{2}-1$.\;We come back to \cite{ParaDing2024}.\;
\end{rmk}

\begin{rmk}\label{rmkintroII} (An example for Theorem \ref{Hodgeparadeterintro}) A typical example is $\bL_{S_0}\cong \GL_1^{\oplus (n-m)}\times \GLN_m$.\;As $E$-vector spaces, $D=D^1\oplus\cdots\oplus D^{n-m}\oplus D^{n-m+1}$, with $\dim_ED^j=1$ for $1\leq j\leq n-m$ and $\dim_ED
		_{\sigma}^{n-m+1}=m$.\;For $i\in \Delta'$,\;we have
		\begin{equation}
			D_{(i)}=\left\{
			\begin{array}{ll}
				D/D^{n-m+1},\;&n-i<m\\
				D ,\;&n-i\geq m
			\end{array}
			\right.,\;\bd_{i}=\left\{
			\begin{array}{ll}
				m,\;&n-i<m\\
				0,\;&n-i\geq m
			\end{array}
			\right.,\;
		\end{equation}
		If $m\leq n-m$ (resp.,\;$m>n-m$), then $\Delta'=\Delta$ (resp.,\;$\Delta'=\Delta\backslash \{n-m+1,\cdots,m-1\}$).\;For $n-i\geq m$, $\ker(t^{\flat}_{D})$ determines $\{\fil_H^{-\bh_{j+1}}(D)\}_{1\leq j\leq i}$.\;If $n-i<m$ (resp.,\;$i>m-1$), then $\ker(t^{\flat}_{D})$ determines
		\begin{equation}
			\begin{aligned}
				\Big\{\fil_H^{-\bh_{j+1}}(D)\oplus D/\fil_H^{-\bh_{m}}(D)\Big\}_{m \leq j\leq i}\cup \Big\{\fil_H^{-\bh_{m+1}}(D)\oplus D/\fil_H^{-\bh_{m}}(D)\Big\}.
			\end{aligned}
		\end{equation}	
\end{rmk}

We finally  relate Theorem \ref{thmintor1} to certain parabolic deformations.\;For $u\in \sW_{s}$, note that $\bZ_{S_0^u}(\bQ_p)\cong \bQ_p^s$ is the center of $\bL_{S^u_0}(\bQ_p)$.\;Let $\homo(\bZ_{S_0^u}(\bQ_p),E)$ (resp.,\;$\homo(\bQ_p^{\times},E)$) be the $E$-vector space of locally analytic $E$-valued characters on $\bZ_{S_0^u}(\bQ_p)$ (resp.,\;$\bQ_p^{\times}$).\;Let $\ext^1_u(\Dpik,\Dpik)\subseteq \ext^1(\Dpik,\Dpik)$ be the space of $\bP_{S_0^u}$-parabolic deformations of $\Dpik$ that preserve $\cF_u$.\;There are several deformation subspaces $\ext^{1,\flat}_{u}(\Dpik,\Dpik)\subseteq \ext^{1,0}_{u}(\Dpik,\Dpik)$ of $\ext^{1}_{{u}}(\Dpik,\Dpik)$ which appear on the Bernstein eigenvariety. After identifying elements of $\ext^1(\Dpik,\Dpik)$ with deformations of $\Dpik$ over $\cR_{E[\epsilon]/\epsilon^2}$ (where $\cR_{E[\epsilon]/\epsilon^2}$ is the Robba ring over $\bQ_p$ with coefficients in $E[\epsilon]/\epsilon^2$), an element $\widetilde{D}\in \ext^1(\Dpik,\Dpik)$ belongs to $\ext^{1,\flat}_u(\Dpik,\Dpik)$ (resp.,\;$\ext^{1,0}_u(\Dpik,\Dpik)$) if
\[\widetilde{D}=[\widetilde{E}_{u,1}-\widetilde{E}_{u,2}-\cdots-\widetilde{E}_{u,s}]\]
and $\widetilde{E}_{u,i}\cong {E}_{u,i}\otimes_{\cR_{E}}\cR_{E[\epsilon]/\epsilon^2}(1+\psi_i\epsilon)$ (resp.,\;$\widetilde{E}_{u,i}\hookrightarrow  \Delta_{x_{u^{-1}(i)}}\otimes_{\cR_{E}}\cR_{E[\epsilon]/\epsilon^2}(z^{t^u_i}(1+\psi_i\epsilon))$) for some $\psi_i\in \homo(\bQ_p^{\times},E)$ and $1\leq i\leq s$.\;For $?\in\{\flat,0\}$,\;we have a  natural surjection (by the generic assumption)
\begin{equation}
	\kappa_{u}^{?}:\ext^{1,?}_{{u}}(\Dpik,\Dpik)\rightarrow \homo(\bZ_{S_0^u}(\bQ_p),E),
\end{equation}
which sends $\widetilde{D}$ to its deformation parameters $(\psi_i)_{1\leq i\leq s}$.\;Put $\ext^{1,\flat}_0(\Dpik,\Dpik)=\ker \kappa^{\flat}_1$.\;For any subspace $V\subseteq \ext^{1}(\Dpik,\Dpik)$, put $\overline{V}:=V/(V\cap \ext^{1,\flat}_0(\Dpik,\Dpik))$.\;In particular, we obtain an isomorphism
$\overline{\ext}^{1,\flat}_{u}(\Dpik,\Dpik)\xrightarrow{\sim }\homo(\bZ_{S_0^u}(\bQ_p),E)$.\;Moreover,\;for $?\in\{\flat,0\}$, we have a natural map
\begin{equation}\label{dfnforgDpikintro}
	\overline{g}^{?}_{\Dpik}:\bigoplus_{u\in\sW_{s}}\overline{\ext}^{1,?}_u(\Dpik,\Dpik)\rightarrow\overline{\ext}^{1}(\Dpik,\Dpik).
\end{equation}
We have a natural morphism:
\begin{equation}
	\begin{aligned}
		\homo(\bZ_{S^u_0}(\bQ_p),E)&\rightarrow\ext^1_{G}\big(\mathrm{PS}_{u}(\underline{x},\bh),\mathrm{PS}_{u}(\underline{x},\bh)\big),\;\\
		\psi&\mapsto \Big(\ind^G_{\op_{S_0^u}(\bQ_p)}\pi_{\sm}(\underline{x}^u)\eta_{S_0^u}\otimes_E L_{S_0^u}(\lambda)\otimes_E(1+(\psi\circ\mathrm{det}_{\bL_{S_0^u}})\epsilon)\Big)^{\ana},\;
	\end{aligned}
\end{equation}
which leads to a map, obtained by pushing forward via $\mathrm{PS}_{u}(\underline{x},\bh)\hookrightarrow \pi^{\flat}_{1}(\underline{x},\bh)$ and pulling back via the natural map $\pi_{\lalg}(\underline{x},\bh)\cong \mathrm{PS}^{\lalg}_{u}(\underline{x},\bh)\hookrightarrow \mathrm{PS}_{u}(\underline{x},\bh)$,
\[\zeta_{u}:\homo(\bZ_{S^u_0}(\bQ_p),E)\rightarrow  \ext^1_{G}\left(\pi_{\lalg}(\underline{x},\bh),\pi^{\flat}_{1}(\underline{x},\bh)\right).\]
Then the composition $\oplus_{u\in\sW_{s}}(\zeta_u\circ \kappa^{\flat}_u)$ gives
\begin{equation}\label{gammDpikcirc}
	\begin{aligned}
		\gamma^{\flat}_{\Dpik}:\bigoplus_{u\in\sW_{s}}\overline{\ext}^{1,\flat}_{u}(\Dpik,\Dpik)\rightarrow \ext^1_{G}\left(\pi_{\lalg}(\underline{x},\bh),\pi^{\flat}_{1}(\underline{x},\bh)\right).\;
	\end{aligned}
\end{equation}
The theory of almost de Rham representations induces a natural morphism $\nu:{\ext}^{1}(\Dpik,\Dpik)\rightarrow \homo_{\fil}(D,D)$. Its kernel is ${\ext}^{1}_g(\Dpik,\Dpik)$, the subspace of de Rham deformations of $\Dpik$.\;We have the following short exact sequence:
\begin{equation}\label{exactseqforhomfilanext1}
	0\rightarrow \prod_{i=1}^s\ext^1_g(E_i,E_i)\rightarrow  \overline{\ext}^{1}(\Dpik,\Dpik) \rightarrow \homo_{\fil}(D,D)\rightarrow 0.
\end{equation}

\begin{pro}\label{thmintor2}
\begin{itemize}
	\item[(1)] (Proposition \ref{splitingforext1gps})	Put ${\ext}^{1,\flat}(\Dpik,\Dpik)=\mathrm{Im}(g^{\flat}_{\Dpik})$ the image (\ref{dfnforgDpikintro}).\;The restriction of (\ref{exactseqforhomfilanext1}) to $\overline{\ext}^{1,\flat}(\Dpik,\Dpik)$ admits a splitting, depending only on the choice of $\log_p(p)\in E$:
	\[f_{\Dpik}^{\flat}:\overline{\ext}^{1,\flat}(\Dpik,\Dpik)\xrightarrow{\sim} \ext^{1,\flat}_{\varphi^f}(\Dpik[1/t],\Dpik[1/t]) \oplus\mathrm{Im}(\nu|_{\overline{\ext}^{1,\flat}(\Dpik,\Dpik)}),\]
	where $\ext^{1,\flat}_{\varphi^f}(\Dpik[1/t],\Dpik[1/t]):=\prod_{i=1}^s\ext^{1,\flat}_g(E_i,E_i)\cong \homo(\bZ_{S_0}(\bQ_p),E)$.\;
	\item[(2)]	(Proposition \ref{mainthmforfullrefine}) $f_{\Dpik}^{\flat}\circ \overline{g}^{\flat}_{\Dpik}=t^{\flat}_{D}\circ\gamma^{\flat}_{\Dpik}$.\;
\end{itemize}	
\end{pro}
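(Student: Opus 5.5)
Part (1) comes first, because the splitting $f^{\flat}_{\Dpik}$ has to exist before the identity in (2) can be compared. I would construct it summand by summand over $u\in\sW_s$. For $\widetilde{D}\in\ext^{1,\flat}_u(\Dpik,\Dpik)$ the graded pieces are $\widetilde{E}_{u,i}\cong E_{u,i}\otimes_{\cR_E}\cR_{E[\epsilon]/\epsilon^2}(1+\psi_i\epsilon)$. I would split each $\psi_i$, using the fixed $\log_p(p)$, as $\psi_i=\psi_i^{\rm sm}+c_i\log$, with $\psi_i^{\rm sm}$ smooth and $c_i\in E$ its weight. The smooth part is de Rham, so twisting by $1+\psi_i^{\rm sm}\epsilon$ gives an element of $\ext^{1,\flat}_g(E_{u,i},E_{u,i})$. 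After the identification $E_{u,i}[1/t]=\Delta_{x_{u^{-1}(i)}}[1/t]$ and the permutation $u$, these elements together give the component in $\prod_i\ext^{1,\flat}_g(E_i,E_i)\cong\homo(\bZ_{S_0}(\bQ_p),E)$. The other component is $\nu(\widetilde{D})$.

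Well-definedness is the substance of (1): the map must factor through $\overline{g}^{\flat}_{\Dpik}$ and through the quotient by $\ext^{1,\flat}_0$. To check it I would compute $\nu$ on the weight part. Almost de Rham theory (Fontaine's $B_{\dr}^+[\log t]$ description) shows that $\nu$ of the twist by $1+c\log\epsilon$ on the $\cF_u$-graded pieces is the element $\sum_i c_i\,\mathrm{id}_{D_{\dr}(E_{u,i})}$, read in the basis $(u^{\sharp})^{-1}$. This element lies in $\mathrm{Ad}_{(u^{\sharp})^{-1}}(\fz_{S^u_0})$. The off-diagonal extension data of $\widetilde{D}$ land in $\mathrm{Ad}_{(u^{\sharp})^{-1}}(\fn_{S^u_0})$, and the filtration-preserving condition intersects the result with $\mathrm{Ad}_g(\fb)$. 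This recovers $\mathrm{Im}(\nu|_{\overline{\ext}^{1,\flat}})=\mathrm{Ad}_g(\fb)^{\flat}_{S_0}$, as in Proposition \ref{introforhomoflat}. From the exact sequence (\ref{exactseqforhomfilanext1}), the kernel of $\nu$ restricted to $\overline{\ext}^{1,\flat}$ is exactly the de Rham part $\prod\ext^{1,\flat}_g$. So the decomposition $\psi=\psi^{\rm sm}+c\log$ gives a section, and $f^{\flat}_{\Dpik}$ is an isomorphism. The only choice involved is $\log_p(p)$, which fixes what "smooth" means on $p^{\BZ}$.

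For (2), both sides are linear, so it suffices to check the identity on each summand $\overline{\ext}^{1,\flat}_u$, where $\overline{g}^{\flat}_{\Dpik}$ is just the inclusion. By definition, $\gamma^{\flat}_{\Dpik}$ sends $\widetilde{D}$ to $\zeta_u(\kappa_u^{\flat}(\widetilde{D}))=\zeta_u((\psi_i)_i)$. Here $\zeta_u$ is the extension of $\pi_{\lalg}$ obtained by deforming $\mathrm{PS}_u(\underline{x},\bh)$ along $1+(\psi\circ\det)\epsilon$. So it suffices to show that $t^{\flat}_D(\zeta_u(\psi))=f^{\flat}_{\Dpik}(\widetilde{D})$ for every $\psi\in\homo(\bZ_{S^u_0}(\bQ_p),E)$. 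I would use the construction of $t^{\flat}_D$ in Proposition \ref{construext1}. There $t^{\flat}_D$ is built by restricting an extension to each $\mathrm{PS}_{u,1}$ and reading off two pieces of data. The first is the smooth deformation of $\pi_{\sm}(\underline{x}^u)$, which via local Langlands is $\prod_i\ext^1(E_i[1/t],E_i[1/t])$. The second is the coefficients along the constituents $C_{t^u_j,u}$, which assemble into $\homo^{\flat}_{\fil}(D,D)$.

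For $\zeta_u(\psi)$, the smooth datum is $\psi^{\rm sm}$ and the $C$-coordinates are the weights $c_i$, up to normalization. These match the two components of $f^{\flat}_{\Dpik}(\widetilde{D})$ constructed in (1). I expect the main obstacle to be proving that the $C_{t^u_j,u}$-coordinates of $\zeta_u(\psi)$ equal $\nu(\widetilde{D})$ exactly, including the normalization and the differences $c_{j}-c_{j+1}$ that pair with the simple roots $t^u_j$. I would attack this first in the $\GL_2$-type rank-two situation, by Levi restriction to the relevant $\bL$ with two blocks. Then I would use compatibility of the Orlik--Strauch extensions with parabolic induction, as in Ding's crystabelline argument \cite{ParaDing2024}, to reduce the general $u$ and $j$ to that case.
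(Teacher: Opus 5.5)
\textbf{Part (1): the splitting is not shown to be well defined.} Your description of $\nu$ on $\ext^{1,\flat}_u(\Dpik,\Dpik)$ is essentially right. However, the step you yourself call the substance of (1) is not proved.

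You define the projection to $\prod_i\ext^{1,\flat}_g(E_i,E_i)$ separately on each $\overline{\ext}^{1,\flat}_u(\Dpik,\Dpik)$, via $\kappa^\flat_u$ and $\psi\mapsto\psi^{\rm sm}$. To get a map on $\overline{\ext}^{1,\flat}(\Dpik,\Dpik)=\sum_u\overline{\ext}^{1,\flat}_u(\Dpik,\Dpik)$, you must show that any relation $\sum_u x_u=0$ forces the smooth parts to sum to zero. Equivalently, the span over all $u$ of the classes whose parameters $\kappa^\flat_u$ are pure $\log$-characters must meet $\prod_i\ext^{1,\flat}_g(E_i,E_i)$ trivially.

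Knowing $\mathrm{Im}(\nu)$ and $\ker(\nu)$ does not give this. From $\sum_u x_u=0$ you only learn that the sum of the $\log$ parts is de Rham, i.e.\ lies in $\prod_i\ext^{1,\flat}_g(E_i,E_i)$, not that it vanishes. Lemma~\ref{smhomoindepent} compares the different $\kappa^\flat_u$ only on de Rham classes.

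The paper avoids this by defining the retraction globally on all of $\ext^1(\Dpik,\Dpik)$ (Proposition~\ref{splitingforext1gps}):
\begin{itemize}
\item invert $t$ and base change to $L'$, where genericity gives $\cM_{\Dpik}|_{L'}\cong\bigoplus_iE_i[1/t]|_{L'}$;
\item project onto the diagonal blocks;
\item project each block onto $\ext^1_g(E_i[1/t]|_{L'},E_i[1/t]|_{L'})\cong\homo_{\sm}(L^\times,E)$ by the $\log_p(p)$-normalized splitting of \cite[Proposition 2.4.4]{BDcritical25}.
\end{itemize}
Since $\cF_u$ on $\cM_{\Dpik}|_{L'}$ is a sum of these blocks, this map restricts on each $\overline{\ext}^{1,\flat}_u$ to your formula, and well-definedness is automatic.

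Also, the word ``exactly'' in your kernel claim does not follow from (\ref{exactseqforhomfilanext1}). That sequence only gives $\overline{\ext}^{1,\flat}\cap\prod_i\ext^1_g(E_i,E_i)$. When some $r_i>1$, this could a priori contain Hodge-filtration deformations inside the blocks. The paper takes the equality from the $\flat$-version of the bottom row of (\ref{commutativefortriandwhole}).

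\textbf{Part (2): the key comparison is missing.} Your reduction to a single summand $\overline{\ext}^{1,\flat}_u$ is correct, but your description of $t^\flat_D$ hides exactly what must be proved. The classes in $\ext^1_{G,\sigma}(\pi_{\lalg}(\underline{x},\bh),C_{i,u,\sigma})$ do not ``assemble into'' $\homo^\flat_{\fil}(D_\sigma,D_\sigma)$ by themselves. The map $t^\flat_{D_\sigma}$ dualizes them against $\ext^1_{G,\sigma}(C_{i,u,\sigma},\pi_{\lalg}(\underline{x},\bh))\cong E\be_{I_i(u)^c}$. This lands in $\homo_E\big((\bigwedge^{n-i}_ED_\sigma)^\flat,\fil_i^{\max}(D_\sigma)^\flat\big)$, which is then pushed into $\homo_{\fil}(D_\sigma,D_\sigma)$ by $g_{i,\sigma}$.

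The key input is therefore Proposition~\ref{comparesharpflat}: on $\ext^1_{G,\sigma}(\pi_{\lalg}(\underline{x},\bh),V_{i,u,\sigma})$, $t^\flat_{D_\sigma}$ agrees with $f_{i,\sigma}^{-1}$. Concretely, it sends the canonical $\log_{p,\sigma}$-class to the endomorphism that kills $D_\sigma[I_i(u)]$ and is a scalar modulo it, an element of $\mathrm{Ad}_{(u^\sharp)^{-1}b_\sigma(u)}(\fz_{\widehat{i},\sigma})$. This is a linear-algebra computation in $\bigwedge^{n-i}_ED_\sigma$, the analogue of \cite[Lemma 2.2.9 (iii)]{BDcritical25}. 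It uses three things:
\begin{itemize}
\item non-criticality, which makes the $\be_{I_i(u)^c}$-coordinate of $\fil_i^{\max}(D_\sigma)$ nonzero;
\item the complement $(\bigwedge^{n-i}_ED_\sigma)^{\flat,c}$;
\item a basis of $\fil_H^{-\bh_{i+1,\sigma}}(D_\sigma)$ adapted to the blocks.
\end{itemize}

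Your proposed tools, Levi restriction and compatibility of Orlik--Strauch extensions with parabolic induction, act only on the $G$-side. They give no handle on $g_{i,\sigma}$, which sees the whole Hodge flag through $\bigwedge^{n-i}_ED_\sigma$. The ``two-block case'' for $(i,u)$ is not a $\GL_2$ computation; it is this proposition itself.

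Once it is available, (2) follows by decomposing $\zeta_u(\psi)$ along the $V_{t^u_j,u,\sigma}$ (Proposition~\ref{extpi1repn}, Remark~\ref{rmkforextgroupsigma}). One then uses $\fz_{S^u_0}=\sum_{j\in\Delta_s}\fz_{\widehat{t^u_j}}$ to match your description of $\nu$ on $\ext^{1,\flat}_u(\Dpik,\Dpik)$.
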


\begin{rmk}\label{rmkintroIII}(Infinite fern for potentially crystalline cases;\;Theorem \ref{infinitePoten},\;Theorem \ref{Inifinitefernpoten}) The natural map $g_{\Dpik}:\bigoplus_{u\in \sW_{s}}{\ext}^{1}_{u}(\Dpik,\Dpik)\rightarrow {\ext}^1(\Dpik,\Dpik)$  is surjective.\;
\end{rmk}

\begin{rmk}\label{expectintro}
  (\ref{gammDpikcirc}) can be extended  to $\gamma^+_{\Dpik}:\bigoplus\nolimits_{u}\overline{\ext}^{1}_{u}(\Dpik,\Dpik)\rightarrow\ext^1_{G}\big(\pi_{\lalg}(\underline{x},\bh),\pi^+_{1}(\underline{x},\bh)\big)$.\;We  suspect that there are $p$-adic Hodge parameters that are contributed by the conjectural supersingular constituents in $\pi_1(D)/\pi^+_{1}(\underline{x},\bh)$.\;Thus  $\ker({t}^{+}_{D})$ still not determines $\{\fil_H^{-\bh_{i+1}}(D_{\sigma})\}_{i\in \Delta}$.\;
\end{rmk}

\section*{Acknowledgment}
The author thank Yiwen Ding and Zicheng Qian for discussions or answers to questions.\;

\section{Preliminaries}

\subsection{General notation}

\noindent Let $L$ (resp.\;$E$) be a finite extension of $\bQ_p$ with $\co_L$ (resp.\;$\co_E$) as its ring of integers and $\varpi_L$ (resp.\;$\varpi_E$) a uniformizer.\;Suppose $E$ is sufficiently large containing all the embeddings of $L$ in $\overline{\BQ}_p$.\;Put $\Sigma_L:=\{\sigma: L\hookrightarrow \overline{\BQ}_p\} =\{\sigma: L\hookrightarrow E\}$.\;Let $\val_L$ (resp. $\val_p$) be the $p$-adic valuation on $\overline{\bQ}_p$ normalized by sending uniformizers of $\co_L$ (resp.,\;the ring $\bZ_p$ of integers in $\bQ_p$) to $1$.\;Let $d_L:=[L:\bQ_p]=|\Sigma_L|$ and $q_L:=p^{f_L}=|\co_L/\varpi_L|$.\;Let $\gal_{L}:=\gal(\overline{L}/L)$ be the absolute Galois group of $L$.\;

Let $\cR_{L}:=\bB_{\rig,L}^{\dagger}$ be the Robba ring associated to $L$.\;For $\bQ_p$-affinoid algebra (for example $A\in\{E,E[\epsilon]/\epsilon^2\}$),\;let $\cR_{A,L}:=\cR_{L}\widehat{\otimes}_{\bQ_p} A$ for the Robba ring associated to $L$ with $A$-coefficient.\;We write $\cR_{A,L}(\delta_A)$ for the $(\varphi,\Gamma)$-module over $\cR_{A,L}$ of rank one associated to a continuous character $\delta_A:L^\times\rightarrow A^\times$.\;Let $D$ be a $(\varphi,\Gamma)$-module over $\cR_{A,L}$,\;we write $D(\delta_A):=D\otimes_{\cR_{A,L}}\cR_{A,L}(\delta_A)$ for simplicity.\;

For a Lie algebra $\fg$ over $L$,\;and $\sigma\in \Sigma_L$,\;let $\fg_{\sigma}:=\fg\otimes_{L,\sigma} E$ (which is  a Lie algebra over $E$).\;

Let $\GL_n$ be the general linear group over $L$.\;Let $\Delta:=\Delta_n$ be the set of simple roots of $\GL_n$ (with respect to the Borel subgroup $\bB$ of upper triangular matrices), and we identify the set $\Delta$ with $\{1,\cdots, n-1\}$ such that $i\in \{1,\cdots, n-1\}$ corresponds to the simple root $\alpha_{i}:\;(x_1,\cdots, x_n)\in \ft \mapsto x_{i}-x_{i+1}$, where $\ft$ denotes the $L$-Lie algebra of the torus $\bT$ of diagonal matrices.\;Each $I\subseteq \Delta$ gives rise to a (standard) Levi subgroup $\bL_I\subseteq \GL_n$ such that $\bT\subseteq \bL_I$ and $I$ is equal to the set of simple roots of $\mathbf{L}_{I}$.\;Let $\mathbf{P}_{I}:=\mathbf{L}_{I}\bB$ be the (standard) parabolic subgroup of
$\GL_n$ containing $\bB$.\;In particular,\;we have $\mathbf{P}_{\Delta}=\GL_n$, $\mathbf{P}_{\emptyset}=\mathbf{B}$.\;Let $\overline{\mathbf{P}}_{I}$ be the parabolic subgroup opposite to $\mathbf{P}_{I}$.\;Let $\bN_{I}$ (resp.\;$\overline{\bN}_{I}$) be the unipotent radical of $\bP_{I}$ (resp.\;$\overline{\bP}_{I}$).\;We have Levi decompositions $\bP_I=\bL_I\bN_I$,\;$\overline{\bP}_I=\bL_I\overline{\bN}_I$ and $\Delta \backslash  I$ are precisely the simple roots of the unipotent radical $\bN_{I}$.\;Let $\bZ$ (resp.,\;$\bZ_I$) be the center of $\GL_n$ (resp.,\;$\mathbf{L}_{I}$).\;Let $\fg$,\;$\fb$,\;$\fp_{I}$,\;$\fl_{I}$,\;$\fn_I$ and $\fz_I$ be the $L$-Lie algebras of $\GL_n$,\;$\bB$,\;$\mathbf{P}_{I}$, $\mathbf{L}_{I}$,\;$\bN_I$ and $\bZ_I$ respectively.\;We put $G:=\GL_n(L)$.\;

Let $m\in\BZ_{\geq 1}$,\;and $\pi$ be an irreducible smooth admissible representation of $\GLN_m(L)$,\;let $\rec(\pi)$ be the $m$-dimensional absolutely irreducible $F$-semi-simple Weil-Deligne representation of the Weil group $W_L$ via the normalized classical local Langlands correspondence (normalized in \cite{scholze2013local}).\;We normalize the reciprocity isomorphism $\rec:L^\times\rightarrow W_L^{\mathrm{ab}}$ of local class theory such that the uniformizer $\varpi_{L}$ is mapped to a geometric Frobenius morphism,\;where $W_L^{\mathrm{ab}}$ is the abelization of the Weil group $W_L\subset \gal_L$.\;

For a group $A$ and $a\in A$,\;denote by $\unr(\alpha)$ the unramified character of $L^\times$ sending uniformizers to $\alpha$.\;If $\bk:=(\bk_{\tau})_{\tau\in \Sigma_L}\in \BZ^{\Sigma_L}:=\prod_{\sigma\in \Sigma_L}\BZ$,\;we denote $z^{\bk}:=\prod_{\tau\in  \Sigma_L}\tau(z)^{\bk_{\tau}}$.\;For a character $\chi$ of $\cO_L^{\times}$,\;denoted by $\chi_{\varpi_L}$ the character of $L^{\times}$ such that $\chi_{\varpi_L}|_{\cO_L^{\times}}=\chi$ and $\chi_{\varpi_L}(\varpi_L)=1$.\;Let ${\ccyc}:\gal_L\rightarrow \bZ_p^\times$ be the $p$-adic cyclotomic character.\;Then we have ${\ccyc}\circ\rec=\unr(q_L^{-1})\prod_{\tau\in \Sigma_L}\tau$ by local class theory.\;Let $A$ be an affinoid $E$-algebra.\;A locally $\bQ$-analytic character $\delta:L^\times\rightarrow A^\times$ induces a $\bQ_p$-linear map $L\rightarrow A$,\;$x\mapsto \frac{d}{dt}\delta(\exp(tx))|_{t=0}$ and hence it induces an $E$-linear map $L\otimes_{\bQ_p} E=\prod_{\tau\in \Sigma_L}E\rightarrow A$.\;There exist $\wt(\delta):=(\wt_{\tau}(\delta))_{\tau\in \Sigma_L}$ such that the latter map is given by $(a_{\tau})_{\tau\in \Sigma_L}\mapsto\sum_{\tau\in \Sigma_L}a_{\tau}\wt_{\tau}(\delta)$.\;We call $\wt(\delta)$ the weight of $\delta$.\;

Let $\ul{\lambda}:=(\lambda_{\sigma,1}, \cdots, \lambda_{\sigma,n})_{\sigma\in \Sigma_L}$ be a weight of $\ft_{L}$.\;For $I\subseteq \Delta$,\;we call that $\ul{\lambda}$ is $I$-dominant (resp.,\;strictly $I$-dominant)  if $\lambda_{\sigma,i}\geq \lambda_{\sigma,i+1}$ (resp.,\;$\lambda_{\sigma,i}> \lambda_{\sigma,i+1}$)   for all $i\in I$ and $\sigma\in \Sigma_L$.\;We denote by $X_{I}^+$ the set of $I$-dominant integral weights  of $\ft_{\Sigma_L}$.\;For $\ul{\lambda}\in X_{I}^+$, there exists a unique irreducible algebraic representation, denoted by $L_{I}(\ul{\lambda})$, of $\res_{L/
\bQ_p}\bL_{I}$ with highest weight $\ul{\lambda}$ with respect to $\res_{L/
\bQ_p}\bL_{I}\cap \bB$,\;so that $\overline{L}_{I}(-\ul{\lambda}):=(L_{I}(\ul{\lambda}))^\vee$ is the irreducible algebraic representation of $(\bL_{I})_{L}$ with highest weight $-\ul{\lambda}$ with respect to $\res_{L/
\bQ_p}\bL_{I}\cap \ob$.\;If $\ul{\lambda}\in X_{\Delta}^+$, let $L(\ul{\lambda}):=L(\ul{\lambda})_{\Delta}$.\;A $\bQ_p$-algebraic representation of $G$ over $E$ is the induced action of $G\subset \GL_{n,L}(E)$ on an algebraic representation of $\res_{L/
\bQ_p}\GL_{n}$.\;Let $\ul{\lambda}\in X_I^+$ be an integral weight,\;denote by $M_I(\ul{\lambda}):=\text{U}(\fg_{L})\otimes_{\text{U}(\fp_{I,L})} L_{I}(\ul{\lambda})$ (resp.\;$\overline{M}_I(\ul{\lambda}):=\text{U}(\fg_{L})\otimes_{\text{U}(\overline{\fp}_{I,L})}L_{I}(-\ul{\lambda})$),\;the corresponding Verma module with respect to $\fp_{I,L}$ (resp.\;$\overline{\fp}_{I,L}$).\;Let $L(\ul{\lambda})$ (resp. $\overline{L}(-\ul{\lambda})$) be the unique simple quotient of $M(\ul{\lambda}):=M_{\emptyset}(\ul{\lambda})$ (resp. of $\overline{M}(-\ul{\lambda}):=\overline{M}_{\emptyset}(-\ul{\lambda})$).\;



Denote by $\sW_n$ ($\cong S_n$) the Weyl group of $\GL_n$, and denote by $s_{i}$ the simple reflection corresponding to $i\in \Delta$.\;For any $I\subset \Delta$,\;define $\sW_{I}$ to be the subgroup of $\sW_{n}$ generated by simple reflections $s_{i}$ with $i\in I$ (so that $\sW_I$ is the Weyl group of $\bL_I$).\;Let $I,J\subseteq\Delta$,\;recall that $\sW_{I}\backslash \sW_n/\sW_{J}$ has a canonical set of representatives,\;which we will denote by $\sW^{I,J}_n$ (resp.,\;$\sW^{I,J}_{n,\max}$),\;given by taking in each double coset the elements of minimal (resp.,\;maximal) length.\;Let $w_0:=w_{0,n}$  be the longest elements in $\sW_{n}$.\;

If $V$ is a continuous representation of $G$ over $E$, we denote by $V^{\bQ_p-\ana}$ its locally $\bQ_p$-analytic vectors.\;If $V$ is a locally $\bQ_p$-analytic representation of $G$, we denote by $V^{\mathrm{sm}}$ (resp.\;$V^{\mathrm{lalg}}$) the smooth (resp., locally $\bQ_p$-algebraic) subrepresentation consisting of its smooth (resp., locally $\bQ_p$-algebraic) vectors; see \cite{Emerton2007summary} for details.\;Let $\pi_P$ be either a locally $\bQ_p$-analytic representation of $P$ on a locally convex $E$-vector space of compact type or a smooth representation of $P$ over $E$.\;We denote by
\begin{equation}\label{smoothadj2}
	\begin{aligned}
		&(\mathrm{Ind}_{P}^{G}\pi_P)^{\bQ_p-\ana}:=\{f:G\rightarrow \pi_P \text{\;locally $\bQ_p$-analytic},\;f(pg)=pf(g)\},\\
		&\text{resp.,\;}(\mathrm{Ind}_{P}^{G}\pi_P)^{\infty}=\{f:G\rightarrow \pi_P \text{\;smooth},\;f(pg)=pf(g)\}
	\end{aligned}
\end{equation}
the locally $\bQ_p$-analytic parabolic induction (resp., the unnormalized smooth parabolic induction) to $G$.\;It becomes a locally $\bQ_p$-analytic representation (resp., a smooth representation) of $G$ over $E$ by equipping it with the left action of $G$ by right translation on functions: $(gf)(g')=f(g'g)$.\;


Let $r$ be an integer, and let $\Omega_r$ be a cuspidal Bernstein component of $\GLN_r(L)$.\;Let $\FZ_{\Omega_{r}}$ be the rational Bernstein centre over $E$ (see \cite[Section 3.2]{PATCHING2016}).\;For a closed point $x\in \Spec\FZ_{\Omega_{r}}$, let $\pi_x$ be the associated irreducible cuspidal smooth representation over $k(x)$.\;Then, via the normalized classical local Langlands correspondence (see \cite{scholze2013local}), $\pi_x$ gives rise to an $r$-dimensional absolutely irreducible Weil--Deligne representation $\wdre_x:=\rec(\pi_x)$ of $W_L$ over $k(x)$.\;$\wdre_x$ corresponds to an absolutely irreducible Deligne--Fontaine module $\df_x$, by Fontaine's equivalence of categories as in \cite[Proposition 4.1]{breuil2007first}, and a $p$-adic differential equation ${\Delta_x}$ over $\cR_{k(x),L}$.\;Here ${\Delta_x}$ is a de Rham $(\varphi,\Gamma)$-module of rank $r$ over $\cR_{k(x),L}$ with constant Hodge--Tate weights $0$ such that, after forgetting the Hodge filtration, $D_{\mathrm{pst}}(\Delta_x)$ is isomorphic to $\df_x$, by Berger's theory \cite[Theorem A]{berger2008equations}.\;Assume that $\wdre_x$ is unramified when restricted to $W_{L'}$ for some finite Galois extension $L'$ of $L$.\;Then $\df_x=(\varphi_x,N=0,\Gal(L'/L),\df_x)$, where
$\varphi_x$ is the Frobenius semilinear operator on $\df_x$.\;

Throughout the paper, we use $\bullet-\bullet$ to denote an extension of two objects, such as $(\varphi,\Gamma)$-modules or $G$-representations, where the first (resp., second) object is the subobject (resp., quotient).\;

\subsection{\texorpdfstring{Potentially crystalline $(\varphi,\Gamma)$-modules over $\cR_{E,L}$}{Potentially crystalline (phi,Gamma)-modules over R E,L}}\label{Omegafil}

We give a quick review, following \cite[Section 2.3]{Ding2021}, of the structure of \textit{non-critical and generic} potentially crystalline $(\varphi,\Gamma)$-modules over $\cR_{E,L}$.\;

Throughout this article, fix $S_0\subseteq\Delta$ and write $\bL_{S_0}:=\prod_{i=1}^s\GLN_{r_i}$ for integers $r_1,\cdots,r_s$ such that $n=r_1+\cdots+r_s$.\;For $1\leq i\leq s$, fix a cuspidal Bernstein component $\Omega_{i}$ of $\GLN_{r_i}$.\;Then $\Omega_{S_0}:=\prod_{i=1}^s\Omega_{i}$ is a cuspidal Bernstein component of $\bL_{S_0}$.\;For $u\in \sW_s$, let $S_0^u$ be the subset of $\Delta$ such that $\bL_{S_0^u}=\prod_{i=1}^s\GLN_{r_{u^{-1}(i)}}$.\;Put $\Omega_{S^u_0}:=\prod_{i=1}^s\Omega_{u^{-1}(i)}$, which is a cuspidal Bernstein component of $\bL_{S^u_0}$.\;For $1\leq i\leq s$, put $t_0^u=0$ and $t_i^u=\sum_{j=1}^ir_{u^{-1}(j)}$, so that $t^u_s=n$.\;Then $\Delta\backslash S_0^u=\{t_i^u:i\in \Delta_s\}$, where $\Delta_s=\{1,\cdots,s-1\}$.\;



Let $\Dpik$ be a potentially crystalline $(\varphi,\Gamma)$-module over $\cR_{E,L}$ of rank $n$.\;Let $L'$ be a finite Galois extension of $L$ such that $\Dpik|_{L'}:=\Dpik\otimes_{\cR_{E,L}}\cR_{E,L'}$ is a crystalline $(\varphi,\Gamma)$-module over $\cR_{E,L'}$ of rank $n$.\;We consider the filtered  Deligne--Fontaine module associated with $\Dpik$:
\[\df_{\Dpik}=(\varphi,\Gal(L'/L),D_{\pcr}(\Dpik),\fil_H^{\bullet}(D_{L'})),\]
where $D_{\pcr}(\Dpik)=D_{\mathrm{cr}}^{L'}(\Dpik|_{L'})$ is a finite free $L'_0\otimes_{\bQ_p}E$-module of rank $n$, and $L'_0$ is the maximal unramified subextension of $L'$ over $\bQ_p$.\;The $\varphi$-action on $D_{\pcr}(\Dpik)$ is induced by the $\varphi$-action on $B_{\mathrm{cris}}$, and the $\Gal(L'/L)$-action on $D_{\pcr}(\Dpik)$ is the residual action of $\gal_L$.\;Moreover, $D_{L'}:=D_{\pcr}(\Dpik)\otimes_{L_0'}L'$ and the  filtered structure $\fil_H^{\bullet}(D_{L'})$ is  induced by the filtration on $B_{\mathrm{cris}}$.\;Note that $D_{L'}=\prod_{\sigma\in \Sigma_L}D_{L',\sigma}$ with $D_{L',\sigma}:=D_{L'}\otimes_{L,\sigma}E$ a $L'\otimes_{L,\sigma}E$-module  and $\fil_H^{\bullet}(D_{L'})=\prod_{\sigma\in \Sigma_L}\fil_H^{\bullet}(D_{L',\sigma})$.\;Since $\Dpik$ is potentially crystalline, it is de Rham.\;Hence $D_{\dr}(\Dpik)\cong D_{L'}^{\Gal(L'/L)}$ is a free $L\otimes_{\bQ_p}E$-module of rank $n$,\;which  is equipped with a natural Hodge filtration  $\fil^{\bullet}_{H}D_{\dr}(\Dpik):=(\fil^{\bullet}_{H}(D_{L'})^{\Gal(L'/L)}$.\;In this article, we define the Hodge--Tate weights of a de Rham representation as the opposites of the jumps of the filtration on the covariant de Rham functor, so that the Hodge--Tate weight of ${\ccyc}$ is $1$.\;Assume that $D_{\dr}(\Dpik)$ has distinct Hodge--Tate weights $\bh:=(\hpi_{1,\sigma}>\hpi_{2,\sigma}>\cdots>\hpi_{n,\sigma} )_{\sigma\in \Sigma_L}$.\;Denote $\hpi_{i}=(\hpi_{i,\sigma})_{\sigma\in \Sigma_L}$ for $1\leq i\leq n$.\;

Let $\wdre_{\Dpik}$ be the Weil--Deligne representation associated with $\df_{\Dpik}$ by Fontaine's equivalence of categories; see \cite[Proposition 4.1]{breuil2007first}.\;We say that $\wdre_{\Dpik}$ admits an $\Omega_{S_0}$-filtration $\cF$ if $\wdre_{\Dpik}$ admits an increasing filtration $\cF$ by Weil--Deligne subrepresentations:
\[\cF=\fil_{\bullet}^{\cF} \wdre_{\Dpik}: \ 0 =\fil_0^{\cF} \wdre_{\Dpik} \subsetneq \fil_1^{\cF} \wdre_{\Dpik} \subsetneq \cdots \subsetneq \fil_{s}^{\cF} \wdre_{\Dpik}=\wdre_{\Dpik},\]
such that the irreducible smooth cuspidal representation associated with $\gr^{\cF}_{i}\wdre_{\Dpik}$ lies in the cuspidal Bernstein component $\Omega_{i}$ for all $1\leqslant i\leqslant s$.\;By \cite[Proposition 4.1]{breuil2007first}, the $\Omega_{S_0}$-filtration $\cF$ on $\wdre_{\Dpik}$ corresponds to an increasing $\Omega_{S_0}$-filtration on $\df_{\Dpik}$, still denoted by $\cF$, by Deligne--Fontaine submodules
\[\cF=\fil_{\bullet}^{\cF}\df_{\Dpik}: \ 0 =\fil_0^{\cF} \df_{\Dpik} \subsetneq \fil_1^{\cF} \df_{\Dpik} \subsetneq \cdots \subsetneq \fil_{s}^{\cF} \df_{\Dpik}=\df_{\Dpik},\]
such that $\fil_{i}^{\cF} \df_{\Dpik}$ corresponds to $\fil_{i}^{\cF} \wdre_{\Dpik}$ via \cite[Proposition 4.1]{breuil2007first}.\;Thereofer,\;for $u\in\sW_s$, we also have an $\omepik$-filtration $\cF_u$ on $\wdre_{\Dpik}$ (resp.,\;$\df_{\Dpik}$) by Weil--Deligne subrepresentations (resp.,\;Deligne--Fontaine submodules):
\begin{equation}
	\begin{aligned}
		\cF_u=\fil_{\bullet}^{\cF_u} \wdre_{\Dpik}: &\ 0 =\fil_0^{\cF_u} \wdre_{\Dpik} \subsetneq \fil_1^{\cF_u}\wdre_{\Dpik} \subsetneq \cdots \subsetneq \fil_{s}^{\cF_u} \wdre_{\Dpik}=\wdre_{\Dpik},\\
		\cF_u=\fil_{\bullet}^{\cF_u}\df_{\Dpik}: &\ 0 =\fil_0^{\cF_u} \df_{\Dpik} \subsetneq \fil_1^{\cF_u} \df_{\Dpik} \subsetneq \cdots \subsetneq \fil_{s}^{\cF_u} \df_{\Dpik}=\df_{\Dpik}
	\end{aligned}
\end{equation}
such that $\gr_{i}^{\cF_u} \df_{\Dpik}$ is associated with $\gr_{i}^{\cF_u} \wdre_{\Dpik}=\gr_{u^{-1}(i)}^{\cF}\wdre_{\Dpik}$.\;

Let ${\Delta_\Dpik}$ be the $p$-adic differential equation over $\cR_{E,L}$ associated with $\df_{\Dpik}$.\;The $\Omega_{S^u_0}$-filtration on $\df_{\Dpik}$ induces an $\Omega_{S^u_0}$-filtration $\fil_{\bullet}^{\cF_u}{\Delta_\Dpik}=\{\fil_{i}^{\cF_u}{\Delta_\Dpik}\}$ on ${\Delta_\Dpik}$ by saturated $(\varphi,\Gamma)$-submodules over $\cR_{E,L}$, such that $\fil_{i}^{\cF_u}{\Delta_\Dpik}$ is the $p$-adic differential equation over $\cR_{E,L}$ associated with $\fil_{i}^{\cF_u} \df_{\Dpik}$.\;Consider the $(\varphi,\Gamma)$-module $\cM_\Dpik=\Dpik[1/t]\cong \Delta_\Dpik [1/t]$ over $\cR_{E,L}[1/t]$.\;After inverting $t$, the filtration $\cF_u$ on $\Delta_\Dpik$ induces an increasing filtration $\fil_{i}^{\cF_u}\cM_\Dpik:=\fil_{i}^{\cF_u}{\Delta_\Dpik}[1/t]$ on $\cM_\Dpik$ by $(\varphi,\Gamma)$-submodules over $\cR_{E,L}\big[\frac{1}{t}\big]$.\;Therefore, the filtration $\cF_u$ on $\cM_\Dpik$ induces an increasing $\bP_{S^u_0}$-filtration on $\Dpik$:
\[\cF_u=\fil_{\bullet}^{\cF_u}\Dpik: \ 0 =\fil_0^{\cF_u}\Dpik \subsetneq \fil_1^{\cF_u}\Dpik \subsetneq \cdots \subsetneq \fil_{s}^{\cF_u} \Dpik=\Dpik,\;\fil_{i}^{\cF_u}\Dpik=(\fil_{i}^{\cF_u}\cM_\Dpik)\cap \Dpik,\]
by saturated $(\varphi,\Gamma)$-submodules of $\Dpik$ over $\cR_{E,L}$.\;

For each $\sigma\in \Sigma_L$,\;the natural Hodge filtration on $D_{\dr}(\Dpik)_{\sigma}:=D_{\dr}(\Dpik)\otimes_{L,\sigma}E\cong D_{L',\sigma}^{\Gal(L'/L)}$ can be expressed by the following complete flag :
\[\fil^{\bullet}_{H}D_{\dr}(\Dpik)_{\sigma}: \ 0 \subsetneq \fil^{-\hpi_{n,\sigma}}_{H} D_{\dr}(\Dpik)_{\sigma} \subsetneq \fil^{-\hpi_{n-1,\sigma}}_{H}D_{\dr}(\Dpik)_{\sigma} \subsetneq \cdots \subsetneq \fil^{{-\hpi_{1,\sigma}}}_{H}D_{\dr}(\Dpik)_{\sigma}=D_{\dr}(\Dpik)_{\sigma}.\]
On the other hand, the $\Omega_{S^u_0}$-filtration $\cF_u$ on $\df_{\Dpik}$ induces an $\Omega_{S^u_0}$-filtration $\cF_u$ on $D_{\dr}(\Dpik)$ by free $L\otimes_{\bQ_p}E$-submodules $\fil_{\bullet}^{\cF_u}D_{\dr}(\Dpik):=(\fil_{\bullet}^{\cF_u}\df_{\Dpik}\otimes_{L_0'}L')^{\Gal(L'/L)}$.\;By Berger's equivalence of categories, $\fil_{i}^{\cF_u}\Dpik$ corresponds to the filtered Deligne--Fontaine module $\fil_{i}^{\cF_u} \df_{\Dpik}$ equipped with the filtration induced from the Hodge filtration on $D_{\pcr}(\Dpik)$.\;The \textit{non-critical} assumption means that the Hodge--Tate weights of $\fil_{i}^{\cF_u}\Dpik$ (resp.,\;$\gr_{i}^{\cF_u} \Dpik$) are given by
\begin{equation}
	\begin{aligned}
		&\{\hpi_{1,\sigma},\hpi_{2,\sigma},\cdots,\hpi_{t_i^u,\sigma}\}_{\sigma\in \Sigma_L},\;
		\text{resp.,\;}&\{\hpi_{t^u_{i-1}+1,\sigma},\hpi_{t^u_{i-1}+2,\sigma},\cdots,\hpi_{t^u_i,\sigma}\}_{\sigma\in \Sigma_L}.\;
	\end{aligned}
\end{equation}
Using Berger's equivalence of categories \cite[Theorem A]{berger2008equations} and comparing the weights (or see \cite[(2.4)]{Ding2021}), we obtain an injection of $(\varphi,\Gamma)$-modules over $\cR_{E,L}$ of rank $r_i$:
\begin{equation}\label{Dpikinjectionu}
	\mathbf{I}^u_{i}:\gr_{i}^{\cF_u} \Dpik \hookrightarrow \gr^{\cF_u}_{i}{\Delta_\Dpik}\tee \cR_{E,L}(z^{\bh_{t^u_i}}),
\end{equation}
for $i=1,\cdots,s$.\;Put $E_{u,i}:=\gr_{i}^{\cF_u}\Dpik$ and $E_{i}:=E_{1,i}$.\;Then $\cF_u$ (resp.,\;$\cF:=\cF_1$) corresponds to the following successive  extension of $E_{u,i}$ (resp.,\;$E_{i}:=E_{1,i}$) for $1\leq i\leq s$ :
\begin{equation}\label{omegafilforu}
\Dpik=[E_{u,1}-E_{u,2}-\cdots-E_{u,s}],\;\quad (\text{resp.,\;}\Dpik=[E_{1}-E_{2}-\cdots-E_{s}]).\;
\end{equation}
For $1\leq i\leq s$, let $\df_{E_i}:=(\varphi_i,\Gal(L'/L),D_{\pcr}(E_i))$ (resp.,\;$\Delta_{E_i}$) be the underlying Deligne--Fontaine module (resp.,\;$p$-adic differential equation) associated with $E_i$.\;Then $\gr_{i}^{\cF_u} \df_{\Dpik}\cong \df_{E_{u^{-1}(i)}}$ and $\gr^{\cF_u}_{i}{\Delta_\Dpik}\cong \Delta_{E_{u^{-1}(i)}}$ for $1\leq i\leq s$.\;

In the sequel, let $\mathcal{Z}_{\bL_{S^u_0},L}$ (resp.,\;$\mathcal{Z}_{\bL_{S^u_0},\cO_L}$) be the rigid space over $E$ parametrizing continuous characters of $\bL_{S^u_0}(L)$ (resp.,\;$\bL_{S^u_0}(\cO_L)$).\;

\begin{dfn}\label{noncriticalassumption}
\begin{itemize}
	\item[(1)] We say that $\Dpik$ is \textit{regular and generic} if :
	\[\df_{E_i}\neq \df_{E_j},(\varphi_i,\Gal(L'/L),D_{\pcr}(E_i))\neq (p\varphi_j,\Gal(L'/L),D_{\pcr}(E_j))\]
	for $i\neq j$; equivalently, $\Delta_{E_i}\neq \Delta_{E_j}(\unr(q_L))=\Delta_{E_j}\otimes_{\cR_{E}}\cR_{E}(\unr(q_L))$ for $i\neq j$.\;
	\item[(2)] Suppose that $\Delta_{E_i}\cong \Delta_{x_i}$ for some $x_i\in \Spec\FZ_{\Omega_{i}}(E)$.\;For $u\in \sW_s$, put
	\begin{equation}
		\begin{aligned}
			&\underline{x}^u:=(x_{u^{-1}(i)})\in \sbanpiku,\underline{\delta}^u=(z^{\bh_{t^u_i}})_{1\leq i\leq s}\in \mathcal{Z}_{\bL_{S^u_0},L},\\
			&\underline{x}^u_+:=(x'_{u^{-1}(i)})\in \sbanpiku,\underline{\delta}^{0,u}:=\underline{\delta}^u|_{\bL_{S^u_0}(\cO_L)}\in \mathcal{Z}_{\bL_{S^u_0},\cO_L},\;
		\end{aligned}
	\end{equation}
	where $x'_{u^{-1}(i)}\in \Spec\FZ_{\Omega_{u^{-1}(i)}}$ satisfies $\pi_{x'_{u^{-1}(i)}}\cong \unr(\varpi_L^{\bh_{t^u_i}})\pi_{x_{u^{-1}(i)}}$ for $1\leq i\leq s$.\;In the language of \cite[Definition 4.1.6]{Ding2021},\;The $\Omega_{S^u_0}$-filtration $\cF_u$ has parameters  $(\underline{x}^u,\underline{\delta}^u)\in \sbanpiku\times\rigchlu$ (resp.,\;$(\underline{x}^u_+,\underline{\delta}^{0,u})\in \sbanpiku\times \mathcal{Z}_{\bL_{S^u_0},\cO_L}$).\;	
\end{itemize}

\end{dfn}

%

\subsection{(Parabolic) deformations of $(\varphi,\Gamma)$-modules}\label{genedeformations}

For a locally $L$-analytic group $H$, let $\homo(H,E)$ (resp.,\;$\homo_{\mathrm{sm}}(H,E)$) be the $E$-vector space of locally $\bQ_p$-analytic (resp., locally constant) $E$-valued characters on $H$.\;For $\sigma\in \Sigma_L$, let $\homo_{\sigma}(H,E)\subseteq \homo(H,E)$ be the subspace of locally $\sigma$-analytic $E$-valued characters on $H$.\;In particular, if $H=L^{\times}$, then by \cite[Section 1.3.1]{2015Ding}, $\homo(L^{\times},E)$ is generated by $\val_L$ and $\{\log_{p,\sigma}\}_{\sigma\in \Sigma_L}$, where $\log_p:L^{\times}\rightarrow L$ is the $p$-adic logarithm restricted to $\cO_L^{\times}$.\;Then $\homo_{\mathrm{sm}}(L^{\times},E)\cong E\val_L$, $\homo_{\sigma}(L^{\times},E)=E\langle\val_L,\log_{p,\sigma}\rangle$, and $\homo(L^{\times},E)=E\langle\val_L,\{\log_{p,\sigma}\}_{\sigma\in \Sigma_L}\rangle$.\;In particular, $\dim_E\homo(L^{\times},E)=1+d_L$, $\dim_E\homo_{\mathrm{sm}}(L^{\times},E)=1$, and $\dim_E\homo_{\sigma}(L^{\times},E)=2$.\;Moreover, a choice of $\log_p(p)\in E$ (we may choose $\log_p(p)=0$) defines a section $\homo_{\sigma}(\cO_L^{\times},E)\hookrightarrow \homo_{\sigma}(L^{\times},E)$ of the restriction map $\homo_{\sigma}(L^{\times},E)\rightarrow \homo_{\sigma}(\cO_L^{\times},E)$ by sending $\log_{p,\sigma}$ to its unique extension to $L^{\times}$ sending $p$ to $\log_p(p)$.\;

Let $\cG$ be an increasing $\bP_{\cG}$-parabolic filtration on $\Dpik$ by saturated $(\varphi,\Gamma)$-submodules of $\Dpik$ over $\cR_{E,L}$:
\[\cG=\fil_{\bullet}^{\cG}\Dpik: \ 0 =\fil_0^{\cG}\Dpik \subsetneq \fil_1^{\cG}\Dpik \subsetneq \cdots \subsetneq \fil_{h}^{\cG} \Dpik=\Dpik.\]
Put $M_i:=\gr_i^{\cG}\Dpik$; then $\Dpik=[M_1-M_2-\cdots-M_h]$.\;Let $\bL_{\cG}$ be the standard Levi subgroup of $\bP_{\cG}$, and let $\bZ_{\cG}\cong \BG_m^h$ be the center of $\bL_{\cG}$.\;Denote by $F_{\Dpik,\cG}$ the deformation functor
\[F_{\Dpik,\cG} :\Art(E):=\{\text{Artinian local $E$-algebra with residue field $E$}\}\longrightarrow \{\text{sets}\}\]
which sends $A$ to the set of isomorphism classes $\{(D_A,\pi_A,\cG_A)\}/\sim$, where
\begin{itemize}
	\item[(1)] $D_A$ is a $(\varphi,\Gamma)$-module over $\cR_{A,L}$ of rank $n$ with $\pi_A:D_A\otimes_AE\cong \Dpik$;
	\item[(2)] $\cG_A=\fil^{\cG_A}_{\bullet}D_A$ is an increasing filtration of $D_A$ by $(\varphi,\Gamma)$-submodules over $\cR_{A,L}$, such that the $\fil^{\cG_A}_{i}D_A$ are direct summands of $D_A$ as $\cR_{A,L}$-modules and $\pi_A(\fil^{\cG_A}_{i}D_A)\cong \fil_{i}^{\cG} \Dpik=M_i$.
\end{itemize}
Moreover, we identify $F_{\Dpik,\cG}(E[\epsilon]/\epsilon^2)$ with a subspace $\ext^1_{\cG}(\Dpik,\Dpik)\subseteq \ext^1(\Dpik,\Dpik)$ of $\cG$-parabolic deformations of $\Dpik$; namely, $\widetilde{D}\in \ext^1(\Dpik,\Dpik)$ belongs to $\ext^1_{\cG}(\Dpik,\Dpik)$ if and only if 
\[\widetilde{D}=[\widetilde{M}_{1}-\widetilde{M}_{2}-\cdots-\widetilde{M}_{h}],\;\]
where $\widetilde{M}_{i}$ is a deformation of $M_{i}$ over $E[\epsilon]/\epsilon^2$ for $1\leq i\leq h$.\;

We next recall the content of \cite[Section 4.1.1]{Ding2021}.\;For $1\leq i\leq h$, let $\Delta_{M_i}$ be the differential equation associated with $M_i$; it is a de Rham $(\varphi,\Gamma)$-module of rank $h_i$ over $\cR_{E,L}$ with constant Hodge--Tate weight $0$.\;Suppose that we have an injection $M_i\hooklongrightarrow \Delta_{M_i}\otimes_{\cR_{E,L}}\cR_{E,L}(\delta_i)$ of $(\varphi,\Gamma)$-modules over $\cR_{E,L}$ for some character $\delta_i:L^\times\rightarrow E^\times$.\;For each $\sigma\in \Sigma_L$, we usually assume that $0$ is the minimal $\sigma$-Hodge--Tate weight of $M_i\otimes_{\cR_{E,L}}\cR_{E,L}(\delta^{-1}_i)$.\;In \cite[Sections 4.1.1 \& 4.1.2]{Ding2021}, the authors consider the following functor:
\[F_{M_{i}}^0 (\text{resp.},\;F_{M_{i}}^{\flat}):\Art(E)\longrightarrow \{\text{sets}\}\]
which sends $A$ to the set of isomorphism classes $\{(D_A,\pi_A,\delta_A)\}/\sim$, where
\begin{description}
	\item[(1)] $D_A$ is a $(\varphi,\Gamma)$-module of rank $r$ over $\cR_{A,L}$ with $\pi_A:D_A\otimes_AE\cong M_i$;
	\item[(2)] ${\delta}_{A}:L^\times\rightarrow A^\times$ is a continuous character such that ${\delta}_{A}\equiv {\delta}_i\mod \mathfrak{m}_A$;
	\item[(3)] there exists an injection (resp., an isomorphism) $D_A\hooklongrightarrow \Delta_{M_i}\otimes_{\cR_{E,L}}\cR_{A,L}({\delta}_{A})$ (resp.,\;$D_A\cong M_i\otimes_{\cR_{E,L}}\cR_{A,L}({\delta}_{A})$) of $(\varphi,\Gamma)$-modules over $\cR_{A,L}$.
\end{description}

The map sending the data $\{(D_A,\pi_A,\cG_A)\}$ to $\{(\gr^{\cG_A}_iD_A,\pi_A|_{\gr_i^{\cG_A}D_A})\}$ induces a natural morphism $F_{\Dpik,\cF}\longrightarrow \prod_{i=1}^hF_{M_i}$.\;For $?\in\{\flat,0\}$, put
\[F_{\Dpik,\cG}^{?}=F_{\Dpik,\cG}\times_{\prod_{i=1}^h F_{M_i}}\prod_{i=1}^h F_{M_i}^{?},\]
i.e.\;$F_{\Dpik,\cG}^0$ (resp.,\;$F_{\Dpik,\cG}^{\flat}$) sends $A$ to the set of isomorphism classes $\{(D_A,\pi_A,\cG_A,\delta_A)\}/\sim$ satisfying the above conditions $(1)$--$(3)$ and
\begin{description}
	\item[(4)] For $1\leq i\leq h$, there is an injection (resp., an isomorphism) of $(\varphi,\Gamma)$-modules over $\cR_{A,L}$:
	\begin{equation}\label{injectiondefF0deforma}
		\gr^{\cG_A}_iD_A\hooklongrightarrow \Delta_{M_i}\otimes_{\cR_{E,L}}\cR_{A,L}({\delta}_{A,i}),\;\text{resp.\;}\gr^{\cG_A}_iD_A\cong M_{i}\otimes_{\cR_{E,L}}\cR_{A,L}({\delta}_{A,i}).\;
	\end{equation}
\end{description}
We have inclusions of functors $F_{\Dpik,\cG}^{\flat}\subseteq F_{\Dpik,\cG}^{0}\subseteq F_{\Dpik,\cG}$, and their $E[\epsilon]/\epsilon^2$-points correspond to the inclusions of $\ext^1$-spaces
\[\ext^{1,\flat}_{\cG}(\Dpik,\Dpik)\subseteq \ext^{1,0}_{\cG}(\Dpik,\Dpik)\subseteq \ext^1_{\cG}(\Dpik,\Dpik)\] 
such that $\widetilde{D}=[\widetilde{M}_{1}-\widetilde{M}_{2}-\cdots-\widetilde{M}_{h}]$ belongs to $\ext^{1,0}_{\cG}(\Dpik,\Dpik)$ (resp.,\;$\ext^{1,\flat}_{\cG}(\Dpik,\Dpik)$) if $\widetilde{M}_{i}\hookrightarrow  \Delta_{M_i}\otimes_{\cR_{E,L}}\cR_{E[\epsilon]/\epsilon^2,L}(\delta_i(1+\psi_i\epsilon))$ (resp.,\;$\widetilde{M}_{i}\cong M_{i}\otimes_{\cR_{E,L}}\cR_{E[\epsilon]/\epsilon^2}(\delta_i(1+\psi_i\epsilon))$) for some $\psi_i\in \homo(L^{\times},E)$.\;We thus have natural maps
\begin{equation}
	\begin{aligned}
		\kappa^{?}_{u}:\ext^{1,?}_{\cG}(\Dpik,\Dpik)\rightarrow\homo(\bZ_{\cG}(L),E),\;\widetilde{D}\mapsto (\psi_i)_{1\leq i\leq h}.\;
	\end{aligned}
\end{equation}
for $?\in\{\flat,0\}$.\;In particular, if $\rank(M_i)=1$ for all $1\leq i\leq h$, the functors $\{F_{\Dpik,\cG}^{?}\}_{?\in\{\emptyset,\flat,0\}}$ and the extension groups $\{\ext^{1,?}_{\cG}(\Dpik,\Dpik)\}_{?\in\{\emptyset,\flat,0\}}$ coincide, and we drop the symbols $\{\flat,0\}$ from the subscripts.\;

\section{Deformations of potentially crystalline $(\varphi,\Gamma)$-modules}

Throughout this section, we fix a non-critical generic potentially crystalline $(\varphi,\Gamma)$-module $\Dpik$ with Hodge--Tate weights $\bh$ in Section \ref{Omegafil}  and keep the notation of Section \ref{Omegafil}.\;

\subsection{Computations on extension groups}

For $u\in\sW_s$, recall the $\omepik$-filtration $\cF_{u}$, written as $\Dpik:=[E_{u,1}-E_{u,2}-\cdots-E_{u,s}]$ in (\ref{omegafilforu}).\;We apply the discussion in Section \ref{genedeformations} to $\cG=\cF_{u}$.\;For $?\in\{\emptyset,\flat,0\}$, write $\ext^{1,?}_{u}(\Dpik,\Dpik):=\ext^{1,?}_{\cF_{u}}(\Dpik,\Dpik)$ for simplicity.\;We have the following inclusions of extension groups:
$\ext^{1,\flat}_{u}(\Dpik,\Dpik)\subseteq\ext^{1,0}_{u}(\Dpik,\Dpik)\subseteq \ext^{1}_{u}(\Dpik,\Dpik)$ and the natural maps
\begin{equation}
	\begin{aligned}
		\kappa^{?}_{u}:\ext^{1,?}_{u}(\Dpik,\Dpik)\rightarrow\homo(\bZ_{S^u_0}(L),E)
		\end{aligned}
\end{equation}
where $\bZ_{S^u_0}\cong \BG_m^s$ is the center of $\bL_{S^u_0}$ and $?\in\{\flat,0\}$.\;By the genericity assumption, $\kappa^{\flat}_{u}$ and $\kappa^{0}_{u}$ are surjective.\;By \cite[Proposition 4.1.17,\;Corollary 4.1.8]{Ding2021} and its proof, we have
\begin{pro}\label{dimesionforParafilgeneric}
\begin{itemize}
	\item[(1)] $\dim_E\ext^1(\Dpik,\Dpik)=1+d_Ln^2$ and $\dim_E\ext^1_u(\Dpik,\Dpik)=1+d_L\dim \bP_{S^u_0}$.\; 
	\item[(2)]  $\dim_E\ext^{1,\flat}_{u}(\Dpik,\Dpik)=1+d_L(\dim\bN_{S^u_0}+s) $ and $\dim_E\ext^{1,0}_{u}(\Dpik,\Dpik)=1+d_L(\frac{n(n-1)}{2}+s)$.\;
\end{itemize}
\end{pro}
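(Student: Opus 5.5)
The plan is to compute every dimension via Tate duality and the Euler--Poincaré characteristic formula for $(\varphi,\Gamma)$-modules over $\cR_{E,L}$: for any $(\varphi,\Gamma)$-module $M$ over $\cR_{E,L}$ one has $\dim_E H^0(M)-\dim_E H^1(M)+\dim_E H^2(M)=-d_L\rank M$, and $H^2(M)\cong H^0(M^\vee(\ccyc))^\vee$.

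For (1), write $\ext^1(\Dpik,\Dpik)=H^1(\Dpik\otimes\Dpik^\vee)$. The regular and generic hypothesis, together with the irreducibility of the $\Delta_{x_i}$, gives $\homo(\Dpik,\Dpik)=E$: every endomorphism is determined after inverting $t$ by its action on $\df_{\Dpik}$, and the graded pieces are pairwise distinct irreducible Deligne--Fontaine modules, so the only endomorphisms are scalars. Duality gives $H^2=\homo(\Dpik,\Dpik(\ccyc))^\vee$, and this vanishes by genericity because no $\df_{E_i}$ equals $\df_{E_j}$ twisted by $\unr(q_L)$ (including the case $i=j$, since an irreducible module is never isomorphic to its nontrivial unramified twist by $q_L$ once slopes are compared). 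Hence $\dim H^1=1+d_Ln^2$. For the parabolic deformations, $\ext^1_u(\Dpik,\Dpik)=H^1(\mathrm{End}_{\cF_u}(\Dpik))$, where $\mathrm{End}_{\cF_u}$ is the $(\varphi,\Gamma)$-module of filtration-preserving endomorphisms, of rank $\dim\bP_{S_0^u}$. I will show that $H^0=E$ and $H^2=0$ by dévissage along its graded pieces $E_{u,j}^\vee\otimes E_{u,i}$ with $j\ge i$, using the same genericity. I will also check that $H^1(\mathrm{End}_{\cF_u})\to H^1(\mathrm{End})$ is injective; this reduces to the vanishing of $H^0$ of the quotient $\mathrm{End}/\mathrm{End}_{\cF_u}$, whose graded pieces are $\homo(E_{u,j},E_{u,i})$ with $j<i$, and these vanish.

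For (2), I will use the fibre-product description of $F^{?}_{\Dpik,\cF_u}$. The key step is to show that the forgetful map $\ext^1_u\to\prod_i\ext^1(E_{u,i},E_{u,i})$ is surjective with kernel of dimension $d_L\dim\bN_{S_0^u}+1-s$. The kernel is $H^1$ of the nilpotent part, after accounting for the scalars: the $H^0$ contributions from each block give the $1-s$ correction, and the unipotent part has $H^0=H^2=0$, so its $H^1$ has dimension $d_L\dim\bN$. Surjectivity follows from the vanishing of $H^2$ of the unipotent part. It then remains to compute the local subspaces. The $\flat$ deformations of $E_{u,i}$ are the twists $E_{u,i}(1+\psi\epsilon)$, which give a space of dimension $1+d_L$ (twists by $\homo(L^\times,E)$ are injective modulo trivial ones because $\homo(E_{u,i},E_{u,i})=E$). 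Summing, $1+d_L\dim\bN+s(1+d_L)-s=1+d_L(\dim\bN_{S_0^u}+s)$.

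The $0$ case is the main obstacle. Following Ding, I will show that deformations $\widetilde E\hookrightarrow\Delta_{x}\otimes\cR(z^{\bh}(1+\psi\epsilon))$ correspond to deformations of $\Delta_x$ (which are only twists, $1+d_L$ dimensional for the trivial-HT-weight part) together with deformations of the Hodge filtration, i.e. of the lattice, which contribute the partial flag variety of dimension $d_Lr_i(r_i-1)/2$. Concretely, via Berger's equivalence the space $\ext^{1,0}(E_{u,i},E_{u,i})$ has dimension $1+d_L(1+r_i(r_i-1)/2)$. Summing gives $1+d_L(\dim\bN+s+\sum r_i(r_i-1)/2)=1+d_L(n(n-1)/2+s)$, since $\dim\bN+\sum_i r_i(r_i-1)/2=n(n-1)/2$. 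I expect the precise identification of this lattice-deformation contribution (a de Rham tangent computation, via $B_{\dr}^+$-lattices or almost de Rham theory) to be the delicate point.
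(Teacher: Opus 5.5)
Your proposal is correct and is essentially the computation behind the Breuil--Ding results (\cite[Proposition 4.1.17, Corollary 4.1.8]{Ding2021}) that the paper simply cites: Euler--Poincar\'e plus Tate duality, with the genericity hypothesis killing $H^0$ and $H^2$ of the off-diagonal blocks; d\'evissage along $\cF_u$, giving a kernel of dimension $1-s+d_L\dim\bN_{S_0^u}$ and surjectivity onto $\prod_i\ext^1(E_{u,i},E_{u,i})$; and the local counts $1+d_L$ (twists) resp.\ $1+d_L(1+\frac{r_i(r_i-1)}{2})$ (twists plus deformations of the $B_{\dr}^+$-lattice). The one slip is cosmetic: because the weights are regular, the lattice contribution is the tangent space of the full flag variety of $\res_{L/\bQ_p}\GLN_{r_i}$, not a partial one, and that is exactly the dimension $d_L\frac{r_i(r_i-1)}{2}$ your count already uses.
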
	


For $u\in \sW_{s}$ and $?\in\{\flat,0\}$,\;let  $\ext^{1,?}_{g,u}(\Dpik,\Dpik)\subseteq\ext^{1,?}_{u}(\Dpik,\Dpik)$  be the preimage of $\homo_{\sm}(\bZ_{S^u_0}(L),E)$ of via $\kappa^{?}_u$.\;Let $\ext^1_g(\Dpik,\Dpik)$ be the subspace of de Rham deformations of $\Dpik$.\;Note that $\dim_E{\ext}^{1}_g(\Dpik,\Dpik)=1+d_L\frac{n(n-1)}{2}$.\;
\begin{pro}\label{studyforkerkappau}
\begin{itemize}
	\item[(1)] $\ext^1_g(\Dpik,\Dpik)\subseteq \ext^{1}_{u}(\Dpik,\Dpik)$ and $\ext^{1,0}_{g,u}(\Dpik,\Dpik)=\ext^{1}_{g}(\Dpik,\Dpik)$.\;
	\item[(2)] For $u\in \sW_{s}$,\;we have $\ext^{1,\flat}_{g,u}(\Dpik,\Dpik)=\ext^{1,\flat}_{u}(\Dpik,\Dpik)\cap \ext^1_g(\Dpik,\Dpik)$.\;
\end{itemize}
	\end{pro}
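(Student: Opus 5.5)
Part (1) has two claims. First, every de Rham deformation preserves the filtration $\cF_u$. Second, inside $\ext^{1,0}_u(\Dpik,\Dpik)$ the preimage of smooth characters is exactly the de Rham locus.

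\emph{Step 1: $\ext^1_g\subseteq\ext^1_u$.} Let $\widetilde{D}$ be a de Rham deformation over $\cR_{E[\epsilon]/\epsilon^2,L}$. Then $\widetilde{D}$ is potentially crystalline; crystallinity over $L'$ is preserved by de Rham deformations because $N=0$ deforms trivially, the monodromy being nilpotent and $\varphi$-equivariant. Its Deligne--Fontaine module $\df_{\widetilde D}$ is a free $E[\epsilon]/\epsilon^2$-deformation of $\df_{\Dpik}$.
\begin{itemize}
\item By genericity, $\wdre_{\Dpik}$ is a direct sum of pairwise non-isomorphic irreducible WD representations $\wdre_{x_i}$; even regularity ($\Delta_{x_i}\ne\Delta_{x_j}$) suffices for this.
\item A deformation of $\wdre_{\Dpik}$ over $E[\epsilon]/\epsilon^2$ with $N=0$ therefore splits as a direct sum of deformations of the $\wdre_{x_i}$. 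The idempotents lift because $\homo(\wdre_{x_i},\wdre_{x_j})=0$ for $i\neq j$.
\item Hence $\df_{\widetilde D}$ carries a lifted $\Omega_{S^u_0}$-filtration $\fil^{\cF_u}_\bullet$ for every $u$.
\item Via Berger's equivalence, applied with the induced Hodge filtrations over $E[\epsilon]/\epsilon^2$, these give saturated submodules $\fil_i^{\cF_u}\widetilde D:=\fil_i^{\cF_u}\widetilde D[1/t]\cap\widetilde D$, which lift $\fil^{\cF_u}_i\Dpik$.
\item Flatness of $\fil_i\widetilde D$ over $E[\epsilon]/\epsilon^2$ is checked by a rank count: the Hodge--Tate weights are constant in the deformation, so the graded pieces are deformations of the $E_{u,i}$.
\end{itemize}
Thus $\widetilde D\in\ext^1_u$.

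\emph{Step 2: the graded pieces are of $0$-type with smooth $\psi_i$.} Each $\widetilde E_{u,i}$ is de Rham with the same weights as $E_{u,i}$. Its $p$-adic differential equation $\Delta_{\widetilde E_{u,i}}$ is a deformation of $\Delta_{x_{u^{-1}(i)}}$ with constant weight $0$, corresponding to a deformation of the irreducible WD representation $\wdre_{x_{u^{-1}(i)}}$.

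Deformations of an irreducible cuspidal-type WD representation within its Bernstein component are twists by unramified characters. More precisely, infinitesimal deformations of $\pi_x$ in $\Spec\FZ_{\Omega}$ correspond to unramified twists modulo the finite stabilizer, so the tangent map is onto. Hence $\Delta_{\widetilde E_{u,i}}\cong\Delta_{x_{u^{-1}(i)}}\otimes\cR(1+\psi_i\epsilon)$ with $\psi_i$ a multiple of $\val_L$. Comparing weights, $\widetilde E_{u,i}\hookrightarrow\Delta_{x_{u^{-1}(i)}}\otimes\cR(z^{\bh_{t^u_i}}(1+\psi_i\epsilon))$, so $\widetilde D\in\ext^{1,0}_{g,u}$.

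\emph{Step 3: the converse inclusion.} If $\widetilde D\in\ext^{1,0}_u$ has all $\psi_i$ smooth, then $\widetilde E_{u,i}$ embeds into a de Rham module $\Delta_{x}\otimes\cR(z^{\bh}(1+\psi_i\epsilon))$ with the same rank. A submodule of full rank of a de Rham module (i.e.\ one with $[1/t]$ equal) is de Rham, so each $\widetilde E_{u,i}$ is de Rham.

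It remains to show that the extension $\widetilde D$ is de Rham. I expect this to be the main obstacle. My plan is a dimension count.
\begin{itemize}
\item From Step 2, $\ext^1_g\subseteq\ext^{1,0}_{g,u}$.
\item By surjectivity of $\kappa^0_u$ and Proposition \ref{dimesionforParafilgeneric}(2), $\dim\ext^{1,0}_{g,u}=1+d_L(\frac{n(n-1)}2+s)-sd_L=1+d_L\frac{n(n-1)}2$. Here $\dim\homo_{\sm}(\bZ_{S^u_0}(L),E)=s$ while $\dim\homo=s(1+d_L)$.
\item This equals $\dim\ext^1_g$, so the inclusion is an equality.
\end{itemize}
Equality of dimensions avoids analysing Ext's of de Rham pieces directly; one only needs that $\ext^1_g$ is the quoted dimension.

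\emph{Part (2).} The inclusion $\ext^{1,\flat}_{g,u}\subseteq\ext^{1,\flat}_u\cap\ext^1_g$ follows from $\ext^{1,\flat}_{g,u}=\ext^{1,\flat}_u\cap\ext^{1,0}_{g,u}$, since $\kappa^\flat_u$ is the restriction of $\kappa^0_u$, together with (1). The reverse inclusion follows the same way: an element of $\ext^{1,\flat}_u\cap\ext^1_g$ lies in $\ext^{1,0}_{g,u}$, so its $\psi_i$ are smooth, so it lies in $\ext^{1,\flat}_{g,u}$.
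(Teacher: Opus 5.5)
Your proof works and takes a genuinely different route from the paper.

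\textbf{How the two routes differ.} The paper gets $\ext^1_g(\Dpik,\Dpik)\subseteq\ext^1_u(\Dpik,\Dpik)$ by d\'{e}vissage from the vanishing of $\hH^1_g$ of the sub-to-quotient pieces of $\homo(\fil^{\cF_u}_i\Dpik,\Dpik/\fil^{\cF_u}_i\Dpik)$. This is a weight computation via \cite[Proposition A.3]{Dingsocle}, and it is exactly where non-criticality enters. The paper then shows directly that a $\flat$-deformation with smooth $\psi_i$ is de Rham. It builds $\widetilde D$ one graded piece at a time and uses $\hH^1_g=\hH^1$ for the relevant $\homo$-modules, whose Hodge--Tate weights are all positive. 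Finally it gets the $0$-statement from $\ext^1_g(E_{u,i},E_{u,i})\subseteq\ext^{1,0}(E_{u,i},E_{u,i})$, which is your Step 2.

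You reverse the logic. You pass through $D_{\pcr}$ and Berger's equivalence. You obtain $\ext^1_g\subseteq\ext^{1,0}_{g,u}$ by classifying infinitesimal deformations of the irreducible blocks $\wdre_x$ as unramified twists; the cleanest justification is $\dim\hH^1(W_L,\mathrm{ad}\,\wdre_x)=\dim\hH^0(W_L,\mathrm{ad}\,\wdre_x)=1$ for smooth coefficients, not the Bernstein-centre tangent space. You close the reverse inclusion by a dimension count, and you deduce (2) formally from $\kappa^\flat_u=\kappa^0_u|_{\ext^{1,\flat}_u}$.

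\textbf{What each route buys.} Your dimension count neatly avoids the $B$-pair computation with $E[\epsilon]/\epsilon^2$-coefficients. However, it rests entirely on the quoted values of $\dim\ext^{1,0}_u$ and $\dim\ext^1_g$ and on the surjectivity of $\kappa^0_u$. The paper's argument proves de Rhamness intrinsically, without needing exact dimensions.

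\textbf{Two reasons in Step 1 that are not the operative ones.}

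First, $\widetilde N=0$ does not follow from nilpotence. Write $\widetilde N=\epsilon N_1$. Then $N_1$ is a $\Gal(L'/L)$-equivariant endomorphism of $D_{\pcr}(\Dpik)$ with $N_1\varphi=p\varphi N_1$, i.e.\ a morphism from $\wdre_{\Dpik}$ to an $\unr(q_L)^{\pm1}$-twist of itself. It vanishes only because of genericity: $\Delta_{x_i}\neq\Delta_{x_j}(\unr(q_L))$ for $i\neq j$, and for $i=j$ the twist changes the determinant. Without genericity, semistable non-crystalline de Rham deformations exist.

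Second, constancy of the Hodge--Tate weights of $\widetilde D$ does not make $\fil^{\cF_u}_i\widetilde D$ flat with reduction $\fil^{\cF_u}_i\Dpik$. For a critical refinement of a crystalline module of rank $2$, moving the Hodge line off the $\varphi$-eigenline gives a de Rham deformation along which the refinement does not lift. What you actually need is non-criticality. It says precisely that, for each $\sigma$, the Hodge flag is in general position with respect to $\fil^{\cF_u}_\bullet$. General position is an open condition, so the induced Hodge filtration on the lifted sub-Deligne--Fontaine module is free over $E[\epsilon]/\epsilon^2$ with jumps $-\bh_{1,\sigma},\ldots,-\bh_{t^u_i,\sigma}$. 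A degree comparison then gives the correct reduction.

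With these two points made explicit, your argument is complete.
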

\begin{proof}By \cite[Appendix,\;Proposition A.3]{Dingsocle},\;for any $i<j$,\;we deduce that $\dim_E\hH^1_g(\gal_L,W(E_{u^{-1}(i)}^{\vee}\otimes_{\cR_{E,L}}E_{u^{-1}(j)}))=0$,\;where $W(-)$ means the associated $E$-$B$ pair.\;The d\'{e}vissage argument shows the inclusion $\ext^1_g(\Dpik,\Dpik)\subseteq \ext^{1}_u(\Dpik,\Dpik)$.\;Since $\cR_{E[\epsilon]/\epsilon^2}(1+\psi_i\epsilon)$ is de Rham iff $\psi\in \homo_{\sm}(L^{\times},E)$,\;we deduce that $\ext^{1,\flat}_{u}(\Dpik,\Dpik)\cap \ext^1_g(\Dpik,\Dpik)\subseteq \ext^{1,\flat}_{g,u}(\Dpik,\Dpik)$.\;On the other hand,\;for $(\psi_i)\in \homo_{\sm}(\bZ_{S^u_0}(L),E)$,\;we show that \[\widetilde{D}=[	\widetilde{E}_{u,1}-\widetilde{E}_{u,2}-\cdots-	\widetilde{E}_{u,s}],\;\]
with $\widetilde{E}_{u,i}\cong \widetilde{E}_{u,i}\otimes_{\cR_{E,L}}\cR_{E[\epsilon]/\epsilon^2}(1+\psi_i\epsilon)$ and $\psi\in \homo_{\sm}(\bQ^{\times}_p,E)$ is de Rham.\;It suffices to show the de Rhamness of $\widetilde{D}_1^j:=[\widetilde{E}_{u,1}-\widetilde{E}_{u,2}-\cdots-\widetilde{E}_{u,j}]$ (with $j$ increasing) step by step.\;This is true for $j=1$ obviously.\;Suppose that the $\widetilde{D}_1^j$ is de Rham and we show the de Rhamness of $\widetilde{D}_1^{j+1}$.\;It suffices to show that each element in
\[\hH^1\left(\gal_L,W(\widetilde{D}')\right),\;\widetilde{D}':=\widetilde{D}_1^j\otimes_{\cR_{E,L}}\widetilde{E}_{u,j+1}^{\vee}\otimes_{\cR_{E,L}}\cR_{E[\epsilon]/\epsilon^2,L}(1-\psi_{j+1}\epsilon)\]
is de Rham.\;However,\;since all the Hodge-Tate weights of $\widetilde{D}'$ are positive,\;we deduce from \cite[Proposition A.3]{Dingsocle}  that $\hH_g^1(\gal_L,W(\widetilde{D}'))=\hH^1(\gal_L,W(\widetilde{D}'))$ (note that $\widetilde{H}^2_{\Sigma_L}(\gal_L,W(\widetilde{D}'))={H}^2(\gal_L,W(\widetilde{D}'))$).\;We thus obtain $\ext^{1,\flat}_{g,u}(\Dpik,\Dpik)\subseteq\ext^{1,\flat}_{u}(\Dpik,\Dpik)\cap \ext^1_g(\Dpik,\Dpik)$.\;Since $\ext^1_g(E_{u,i},E_{u,i})\subseteq \ext^{1,0}(E_{u,i},E_{u,i})$,\;we get that $\ext^{1,0}_{g,u}(\Dpik,\Dpik)=\ext^{1}_{g}(\Dpik,\Dpik)$. 
\end{proof}

The following lemma is an analogue of \cite[Lemma 2.11]{ParaDing2024}.\;For any $u_1,u_2\in \sW_s$,\;under the isomophism $\ext^{1,0}_{g,u_1}(\Dpik,\Dpik)\cong \ext^{1}_{g}(\Dpik,\Dpik)\cong \ext^{1,0}_{g,u_2}(\Dpik,\Dpik)$,\;we can identity $\ext^{1,\flat}_{g,u_1}(\Dpik,\Dpik)$ with $ \ext^{1,\flat}_{g,u_2}(\Dpik,\Dpik)$.\;Put
$\ext^{1,\flat}_{g,\{u_1,u_2\}}(\Dpik,\Dpik):=\ext^{1,\flat}_{g,u_1}(\Dpik,\Dpik)\cap \ext^{1,\flat}_{g,u_2}(\Dpik,\Dpik)\cong \ext^{1,\flat}_{g,u_1}(\Dpik,\Dpik)\cong \ext^{1,\flat}_{g,u_2}(\Dpik,\Dpik)$.\;

\begin{lem}\label{smhomoindepent}For any $u_1,u_2\in \sW_s$, the following diagram commutes:
\begin{equation}
	\xymatrix{ \ext^{1,\flat}_{g,\{u_1,u_2\}}(\Dpik,\Dpik)\ar@{=}[d] \ar[r]_{\kappa_{u_1}} & \homo_{\sm}(\bZ^{u_1}_{S_0}(L),E) \ar[d]^{\sim}_{u_2u_1^{-1}}\\
		\ext^{1,\flat}_{g,\{u_1,u_2\}}(\Dpik,\Dpik) \ar[r]_{\kappa_{u_2}} & \homo_{\sm}(\bZ^{u_2}_{S_0}(L),E) }.
\end{equation}
\end{lem}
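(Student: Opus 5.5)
The plan is to identify both compositions with an intrinsic invariant of a de Rham deformation that does not depend on any choice of filtration: the deformation of the Weil--Deligne representation, or equivalently of the underlying Deligne--Fontaine module. Let $\widetilde{D}\in\ext^{1,\flat}_{g,\{u_1,u_2\}}(\Dpik,\Dpik)$. By Proposition \ref{studyforkerkappau}, $\widetilde{D}$ is de Rham. It therefore has an associated filtered Deligne--Fontaine module $\df_{\widetilde{D}}$ over $E[\epsilon]/\epsilon^2$, and hence a Weil--Deligne representation $\wdre_{\widetilde{D}}$ over $E[\epsilon]/\epsilon^2$ deforming $\wdre_{\Dpik}$.

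For $u\in\{u_1,u_2\}$, write $\widetilde{D}=[\widetilde{E}_{u,1}-\cdots-\widetilde{E}_{u,s}]$ with $\widetilde{E}_{u,i}\cong E_{u,i}\otimes\cR_{E[\epsilon]/\epsilon^2,L}(1+\psi^u_i\epsilon)$ and $\psi^u_i$ smooth. The filtration $\cF_u$ on $\widetilde{D}$ is de Rham, since subquotients of de Rham objects are de Rham. Applying $D_{\pst}$, it gives a filtration of $\wdre_{\widetilde{D}}$ by Weil--Deligne subrepresentations whose $i$-th graded piece is $\wdre_{E_{u,i}}\otimes\unr(1+\psi^u_i(\varpi_L)\epsilon)$. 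Here a smooth character $\psi$ is a multiple of $\val_L$, so $1+\psi\epsilon$ is unramified and corresponds through $\rec$ to an unramified twist. Now $\wdre_{E_{u,i}}=\wdre_{x_{u^{-1}(i)}}$ is absolutely irreducible, and $\wdre_{\Dpik}=\bigoplus_j\wdre_{x_j}$ with pairwise non-isomorphic summands by the regularity and genericity assumption. Hence the Weil--Deligne representation $\wdre_{\widetilde{D}}$ has a canonical isotypic decomposition. I will argue that $\wdre_{\widetilde{D}}\cong\bigoplus_j\widetilde{\wdre}_j$, where $\widetilde{\wdre}_j$ is the unique subquotient lifting $\wdre_{x_j}$. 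This uses $\homo(\wdre_{x_j},\wdre_{x_k})=0$ for $j\ne k$, so that all Weil--Deligne extensions between distinct factors split after passing to $E[\epsilon]/\epsilon^2$; alternatively, the Bernstein centre acts through distinct characters. The central character, or equivalently the determinant of the Frobenius part, of $\widetilde{\wdre}_j$ determines a smooth character $\psi_j$ intrinsically.

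Reading this off along $\cF_u$ gives $\psi^u_i=\psi_{u^{-1}(i)}$ for both $u=u_1$ and $u=u_2$. Hence $\psi^{u_2}_i=\psi^{u_1}_{u_1u_2^{-1}(i)}$. This is precisely the statement that the vertical isomorphism $u_2u_1^{-1}$, which permutes the coordinates of $\bZ_{S^{u}_0}\cong\BG_m^s$ according to the relabelling of blocks, carries $\kappa_{u_1}(\widetilde{D})$ to $\kappa_{u_2}(\widetilde{D})$. I will check this index bookkeeping against the convention $\Omega_{S^u_0}=\prod_i\Omega_{u^{-1}(i)}$.

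The main obstacle is the middle step: showing that the graded pieces $\gr_i^{\cF_u}$ of $\wdre_{\widetilde{D}}$ are determined by the isotypic components independently of $u$. In the crystabelline analogue \cite[Lemma 2.11]{ParaDing2024}, this is immediate from the eigenvalues of $\varphi$ on $D_{\cris}$. Here I would instead argue using $D_{\pcr}$ and the $\Gal(L'/L)\times\varphi$-action. Each $\gr$ piece is a rank-$r_j$ deformation of an absolutely irreducible object, and $\homo$ between distinct such pieces vanishes. So the $\wdre_{x_j}$-isotypic part of $\wdre_{\widetilde{D}}$, defined as the generalized eigenspace for the Bernstein-centre action, is free of rank $r_j$ and maps isomorphically to the corresponding graded piece for every $u$. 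Taking the determinant on this isotypic part then yields $\psi_j$ without reference to $u$.
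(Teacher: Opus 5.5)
Your proposal is correct, but it takes a genuinely different route from the paper.

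\textbf{The paper's route.} The paper stays inside $(\varphi,\Gamma)$-modules over $\cR_{E[\epsilon]/\epsilon^2,L}$. It reduces to the case $u_2u_1^{-1}=s_k$ and notes that $\cF_{u_1}$ and $\cF_{u_2}$ coincide away from steps $k,k+1$. It then matches the graded pieces one by one via Hom-d\'evissage, starting from $\homo(\widetilde{E}_{u_1,1},\widetilde{D})\cong E[\epsilon]/\epsilon^2$, and pairs $\widetilde{E}_{u_1,k}$ with $\widetilde{E}_{u_2,k+1}$ and vice versa.

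\textbf{Your route.} You use de Rhamness to pass through $D_{\mathrm{pst}}$ to the $W_L$-representation underlying $\wdre_{\widetilde{D}}$. Its canonical isotypic decomposition into unramified twists $\wdre_{x_j}\otimes\unr(1+c_j\epsilon)$ produces intrinsic parameters $c_j$, and both $\kappa_{u_1}$ and $\kappa_{u_2}$ read these off.

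\textbf{What your route buys.}
\begin{itemize}
\item It handles arbitrary $u_1,u_2$ at once. The paper's reduction to simple reflections tacitly needs $\widetilde{D}$ to lie in $\ext^{1,\flat}_{g,u'}$ for the intermediate $u'$.
\item It never identifies $\widetilde{E}_{u_1,k}$ with $\widetilde{E}_{u_2,k+1}$. These carry different Hodge--Tate weights and can only be compared after inverting $t$.
\item It proves a stronger fact: up to relabelling blocks, $\kappa_u|_{\ext^{1,\flat}_{g,u}}$ is the restriction of a single $u$-independent map $\ext^1_g(\Dpik,\Dpik)\to\prod_{j=1}^s\homo_{\sm}(L^\times,E)$.
\end{itemize}
This last form is the robust one to use downstream. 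When some $r_i\geq 2$, the subspaces $\ext^{1,\flat}_{g,u}\subseteq\ext^1_g(\Dpik,\Dpik)$ need not coincide for different $u$. The $\flat$-condition requires that the Hodge flag induced on each block through $\cF_u$ not move, and that induced flag depends on $u$. So one should not rely on identifying these subspaces.

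\textbf{What the paper's route buys.} It avoids the $D_{\mathrm{pst}}$ formalism with Artinian coefficients and stays in the language used in the rest of that section.

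\textbf{Points to tighten.}
\begin{itemize}
\item $\homo(\wdre_{x_j},\wdre_{x_k})=0$ does not by itself split extensions. You need $\dim H^1(W_L,V)=\dim H^0(W_L,V)$ for smooth finite-dimensional $V$, which holds because inertia acts through a finite quotient and we are in characteristic $0$. Alternatively, use your generalized-eigenspace decomposition under the Bernstein centre; it is cleaner, and it is functorial, so it is automatically compatible with every filtration $\cF_u$.
\item Only the $W_L$-action enters, through isotypic parts and determinants. So the monodromy operator and the $\unr(q_L)$-part of the genericity hypothesis play no role; regularity $\wdre_{x_j}\not\cong\wdre_{x_k}$ for $j\neq k$ suffices.
\end{itemize}
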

\begin{proof}
It suffices to prove the statement when $u_2u_1^{-1}=s_k$ is a simple reflection.\;Let $\widetilde{D}\in  \ext^{1,\flat}_{g,\{u_1,u_2\}}(\Dpik,\Dpik)$, and suppose that
\[\widetilde{D}=\widetilde{E}_{u_1,1}-\widetilde{E}_{u_1,2}-\cdots-\widetilde{E}_{u_1,s},\]
with $\widetilde{E}_{u_1,i}\cong E_{u_1,i}\otimes_{\cR_{E,L}}\cR_{E[\epsilon]/\epsilon^2}(1+\psi_i\epsilon)$.\;By assumption, $u^{-1}_1(j)=u^{-1}_2(j)$ for $j\neq k,k+1$.\;Thus, for $j<k$ or $j>k+1$, we have $\fil^j_{\cF_{u_1}}\widetilde{D}=\fil^j_{\cF_{u_2}}\widetilde{D}$.\;Since $\homo(\widetilde{E}_{u_1,1},\widetilde{D})\cong \homo(\widetilde{E}_{u_1,1},\widetilde{E}_{u_1,1})\cong E[\epsilon]/\epsilon^2$, d\'{e}vissage for $\fil^j_{\cF_{u_2}}\widetilde{D}$ gives $\homo(\widetilde{E}_{u_1,1},\widetilde{E}_{u_2,1})\cong E[\epsilon]/\epsilon^2$, hence $\widetilde{E}_{u_1,1}=\widetilde{E}_{u_2,1}$.\;Now consider $\widetilde{D}/\widetilde{E}_{u_1,1}$ and repeat the same argument.\;We obtain $\widetilde{E}_{u_1,j}=\widetilde{E}_{u_2,j}$ for $j<k$.\;For $j=k$, we see that $\homo(\widetilde{E}_{u_1,k},\widetilde{E}_{u_2,k+1})\cong E[\epsilon]/\epsilon^2$, so $\widetilde{E}_{u_1,k}=\widetilde{E}_{u_2,k+1}$.\;Exchanging $\cF_{u_1}$ and $\cF_{u_2}$ gives $\widetilde{E}_{u_1,k+1}=\widetilde{E}_{u_2,k}$.\;For $j>k+1$, we consider $\widetilde{D}/\fil^{k+1}_{\cF_{u_1}}$ and see that $\widetilde{E}_{u_1,j}=\widetilde{E}_{u_2,j}$.\;This completes the proof.\;
\end{proof}

For any $u\in\sW_s$, let $\ext^{1,\flat}_{0,u}(\Dpik,\Dpik):=\ker\kappa^{\flat}_{u}\subseteq\ext^{1,\flat}_{g,u}(\Dpik,\Dpik)$.\;The above lemma shows that $\ker\kappa^{\flat}_{u}$ is independent of the choice of $u$; we denote it by $\ext^{1,\flat}_0(\Dpik,\Dpik)$.\;Note that 
\[ \dim_E{\ext}^{1}_0(\Dpik,\Dpik)=1+d_L\dim\bN_{S^u_0}-s=1+d_L\dim\bN_{S_0}-s.\]
For any subspace $V\subseteq \ext^1(\Dpik,\Dpik)$, let $\overline{V}:=V/(V\cap \ext^{1,\flat}_0(\Dpik,\Dpik))$.\;Therefore, $\kappa_{u}^{\flat}$ induces isomorphisms
\[\overline{\ext}^{1,\flat}_{u}(\Dpik,\Dpik)\xrightarrow{\sim }\homo(\bZ_{S^u_0}(L),E),\;\overline{\ext}^{1,\flat}_{g,u}(\Dpik,\Dpik)\xrightarrow{\sim }\homo_{\sm}(\bZ_{S^u_0}(L),E)\]
and (comparing with the short exact sequence in (\ref{commutativefortriandwhole}))
\[\dim_E\overline{\ext}^{1}(\Dpik,\Dpik)=d_L\dim_E\bP_{S_0}+s.\]

\subsection{Reinterpretation and supplements for deformations}\label{reinterfor33}

We recall briefly Fontaine's theory of almost de Rham representations.\;Let $B_{\pdr}^+:=B_{\dr}^+[\log t]$ and $B_{\pdr}:=B_{\pdr}^+[1/t]$.\;The $\gal_L$-action on $B_{\dr}$ extends uniquely to an action of $\gal_L$ on $B_{\pdr}$ with $g(\log t)=\log t+\log(\ccyc(g))$.\;Let $v_{\pdr}$ denote the unique $B_{\dr}$-linear derivation of $B_{\pdr}$ such that $v_{\pdr}(\log t)=-1$.\;Note that $v_{\pdr}$ and $\gal_L$ commute, and both preserve $B_{\pdr}^+$.\;

Let $\mathrm{Rep}_{B_{\dr}}(\gal_L)$ (resp.,\;$\mathrm{Rep}_{B^+_{\dr}}(\gal_L)$) be the category of finite free $B_{\dr}$-representations (resp., $B^+_{\dr}$-representations) of $\gal_L$.\;If $W\in \mathrm{Rep}_{B_{\dr}}(\gal_L)$, let $D_{\pdr}(W):=(B_{\pdr}\otimes_{B_{\dr}}W)^{\gal_L}$; this is a finite-dimensional $L$-vector space of dimension at most $\dim_{B_{\dr}}W$.\;The $B_{\dr}$-representation $W$ is called \textit{almost de Rham} if $\dim_LD_{\pdr}(W)=\dim_{B_{\dr}}W$.\;The $B^+_{\dr}$-representation $W^+$ is called \textit{almost de Rham} if $W^+[1/t]$ is almost de Rham.\;Let $\mathrm{Rep}_{\pdr}(\gal_L)$ be the category of almost de Rham $B_{\dr}$-representations of $\gal_L$.\;

Keep the notation of Section \ref{Omegafil}.\;In this section, we use the language of $E$-$B$-pairs (see \cite{nakamura2009classification}).\;Recall that the $(\varphi,\Gamma)$-module $\cM_\Dpik:=\Dpik[1/t]$ over $\cR_{E,L}[1/t]$ admits an $\Omega_{S_0^u}$-filtration $\cF_u$ with graded pieces $E_{u^{-1}(i)}[1/t]$ for $1\leq i\leq s$ and $\Dpik=[E_{1}-E_{2}-\cdots-E_{s}]$.\;Let $\bW_{\Dpik}=W_{\dr}(\cM_\Dpik)$ (resp.,\;$\bW^+_{\Dpik}:=W_{\dr}^+(\Dpik)$) be the $\bB_\dr\otimes_{\bQ_p}E$-representation (resp.,\;$\bB_\dr^+\otimes_{\bQ_p}E$-representation) of $\gal_L$ associated with $\cM_\Dpik$.\;Note that $D:=D_{\pdr}(\bW_\Dpik)\cong (L\otimes_{\bQ_p}E)^{\oplus n}$ for simplicity.\;For $\sigma\in \Sigma_L$, put $D_{\sigma}:=D_{\pdr,\sigma}(\bW_\Dpik):=D_{\pdr}(\bW_\Dpik)\otimes_{L\otimes_{\bQ_p}E}(L\otimes_{L,\sigma}E)$.\;Then $D=\prod_{\sigma\in \Sigma_L}D_{\sigma}$.\;For $1\leq i\leq s$, put $D^i:=D_{\pdr}(\bW_{E_i})$ and $D^i_{\sigma}:=D_{\pdr,\sigma}(\bW_{E_i})$.\;Then $D^i=\prod_{\sigma\in \Sigma_L}D^i_{\sigma}$.\;

For $u\in \sW_s$,\;the filtration $\cF_u$ on $\cM_{\Dpik}$ induces a $\bP_{S^u_0}$-filtration:
\[\bF_{u}=\fil_{\bullet}^{\bF_{u}}\bW_\Dpik: \ 0 =\fil_0^{\bF_{u}}\bW_\Dpik \subsetneq \fil_1^{\bF_{u}}\bW_\Dpik \subsetneq \cdots \subsetneq \fil_{s}^{\bF_{u}}\bW_\Dpik=\bW_\Dpik\]
on $\bW_\Dpik$ by $(\bB_\dr\otimes_{\bQ_p}E)$-subrepresentations of $\bW_{\Dpik}$.\;For $1\leq i\leq s$, $\bF_{u}$ has graded pieces $\gr_i^{\bF_u}\bW_{\Dpik}\cong \bW_{E_{u^{-1}(i)}}\cong (\bB_\dr\otimes_{\bQ_p}E)^{\oplus r_{u^{-1}(i)}}$.\;We thus get a $\bP_{S^u_0}$-filtration on $D$ and on each $D_{\sigma}$:
\begin{equation}
	\begin{aligned}
		\bF_{u}=\fil_{\bullet}^{\bF_{u}}(D)&: \ 0 =\fil_0^{\bF_{u}}(D) \subsetneq \fil_1^{\bF_{u}}(D) \subsetneq \cdots \subsetneq \fil_{s}^{\bF_{u}}(D)=D,\\
		\bF_{u,\sigma}=\fil_{\bullet}^{\bF_{u,\sigma}}(D_{\sigma})&: \ 0 =\fil_0^{\bF_{u,\sigma}}(D_{\sigma}) \subsetneq \fil_1^{\bF_{u,\sigma}}(D_{\sigma}) \subsetneq \cdots \subsetneq \fil_{s}^{\bF_{u,\sigma}}(D_{\sigma})=D_{\sigma}.
	\end{aligned}
\end{equation}
Note that $\gr_i^{\bF_u}D_{\pdr}(\bW_\Dpik)\cong D^i\cong (L\otimes_{\bQ_p}E)^{\oplus r_{u^{-1}(i)}}$.\;We have $\bF_{u}=\prod_{\sigma\in \Sigma_L}\bF_{u,\sigma}$.\;On the other hand, the $B_{\dr}^+$-lattice $\bW^+_\Dpik$ induces complete flags of $D_{\sigma}$ and $D$, respectively (see \cite[Section 6.3]{Ding2021}):
\begin{equation}
	\begin{aligned}
		\fil^{\bullet}_{H}(D_{\sigma})&: \ 0 \subsetneq \fil^{-\hpi_{n,\sigma}}_{H} (D_{\sigma}) \subsetneq \fil^{-\hpi_{n-1,\sigma}}_{H}(D_{\sigma}) \subsetneq \cdots \subsetneq \fil^{{-\hpi_{1,\sigma}}}_{H}(D_{\sigma})=D_{\sigma},\\
	\end{aligned}
\end{equation}
where $\fil^{-\bh_{n+1-i,\sigma}}_{H}(D_{\sigma}):=\big(t^{-\bh_{n+1-i,\sigma}}D^+_{\sigma}\big)^{\gal_L}$ and $D^+_{\sigma}:=D_{\pdr}(\bW^+_\Dpik)\otimes_{L\otimes_{\bQ_p}E}(L\otimes_{L,\sigma}E)$.\;

In the sequel,\;for $1\leq i\leq s$,\;let $\{e_{i,j,\sigma}\}_{1\leq j\leq r_i}$ be a fixed $E$-basis of $D^i_{\sigma}$.\;Under the $\bP_{S_0}$-filtration $\bF_1$ on $D_{\sigma}$,\;we use the following ordering of the basis $\{e_{1,j,\sigma}\}_{1\leq j\leq r_1},\cdots,\{e_{s,j,\sigma}\}_{1\leq j\leq r_s}$ of $D_{\sigma}$ (i.e.,\;the lexicographical order),\;and relabel  them with $e_{1,\sigma},\cdots,e_{n,\sigma}$.\;

Let $\homo_{\fil}(D_{\sigma},D_{\sigma})$ be the subspace of $E$-linear endomorphisms of $D_{\sigma}$ that preserve the filtration $\fil_{H}^{\bullet}(D_{\sigma})$.\;By the same argument as in the discussion before \cite[(112)]{BDcritical25}, we obtain canonical $E$-linear morphisms:
\begin{equation}\label{maptofilDsigma}
	\begin{aligned}
		\nu:	&\;\ext^{1}(\Dpik,\Dpik)\rightarrow \homo_{\fil}(D,D):=\bigoplus_{\sigma\in\Sigma_L}\homo_{\fil}(D_{\sigma},D_{\sigma}),\\
		\nu_{\sigma}:	&\;\ext^{1}_{\sigma}(\Dpik,\Dpik)\rightarrow \homo_{\fil}(D_{\sigma},D_{\sigma}).\;
	\end{aligned}
\end{equation}
where $\ext^{1}_{\sigma}(\Dpik,\Dpik)\subseteq \ext^{1}(\Dpik,\Dpik)$ is the subspace consisting of deformations that are $\Sigma_L\backslash \{\sigma\}$-de Rham.\;
By \cite[Lemma 2.4.1]{BDcritical25}, $\nu$ and $\nu_{\sigma}$ has kernel $\ext^{1}_g(\Dpik,\Dpik)$.\;

For $u\in \sW_s$, let $\homo_{\bF_u}(D_{\sigma},D_{\sigma})$ be the subspace of endomorphisms of $D_{\sigma}$ that preserve the filtration $\bF_{u,\sigma}$.\;For $?\in \{\flat,0\}$,\;put
\[\homo^{?}_{\bF_u}(D_{\sigma},D_{\sigma}):=\left\{f\in \homo_{\bF_u}(D_{\sigma},D_{\sigma}):\;f|_{\gr_{i}^{\bF_u}(D_{\sigma})} \text{\;scalar,\;}\;\forall \;1\leq i\leq s\right\}\subseteq \homo_{\bF_u}(D_{\sigma},D_{\sigma}).\]
For $?\in \{\emptyset,\flat,0\}$,\;put
\begin{equation}
	\begin{aligned}
		\homo^{?}_{\fil,\bF_u}(D_{\sigma},D_{\sigma}):=&\;\homo_{\fil}(D_{\sigma},D_{\sigma})\cap \homo^{?}_{\bF_u}(D_{\sigma},D_{\sigma}),\\
		\homo^{?}_{\fil,\bF_u}(D,D):=&\;\bigoplus_{\sigma\in\Sigma_L}\homo^{?}_{\fil,\bF_u}(D_{\sigma},D_{\sigma}).\;
	\end{aligned}
\end{equation}
For $?\in\{\flat,0\}$, put $\ext^{1,?}_{\sigma,u}(\Dpik,\Dpik):=\ext^{1,?}_{u}(\Dpik,\Dpik)\cap \ext^{1}_{\sigma}(\Dpik,\Dpik)$.\;Therefore,\;for $\star\in\{\emptyset,\sigma\}$, (\ref{maptofilDsigma}) induces
\begin{equation}\label{sendtofilD}
	\begin{aligned}
		\nu^?_{\star,u}:&\;\ext^{1,?}_{\star,u}(\Dpik,\Dpik)/\ext^{1,?}_{g,u}(\Dpik,\Dpik)\rightarrow\homo^{?}_{\fil,\bF_u}(D_{\star},D_{\star}),\\
		\nu_{\star,u}:&\;\ext^{1}_{\star,u}(\Dpik,\Dpik)/\ext^{1,?}_{g,u}(\Dpik,\Dpik)\rightarrow\homo_{\fil,\bF_u}(D_{\star},D_{\star}).\;
	\end{aligned}
\end{equation}

\begin{lem}
\begin{itemize}
	\item[(1)]
	For $1\leq i\leq s$,\;there is a  commutative diagram of short exact sequences of $E$-vector spaces: :
	\begin{equation}
		\xymatrix{
		0   \ar[r]  & \ext^{1,\flat}_g(E_i,E_i) \ar@{^(->}[d]\ar[r]  & \ext^{1,\flat}(E_i,E_i) \ar@{^(->}[d]\ar[r]  &  \bigoplus\limits_{\sigma\in\Sigma_L}E \ar@{=}[d] \ar[r] & 0  \\
		0    \ar[r] & \ext^{1,0}_g(E_i,E_i) \ar@{=}[d] \ar[r] & \ext^{1,0}(E_i,E_i)\ar[r] \ar@{^(->}[d]  & \bigoplus\limits_{\sigma\in\Sigma_L}E \ar[r]\ar@{^(->}[d] & 0 \\
		0    \ar[r] & \ext^1_g(E_i,E_i)  \ar[r] & \ext^{1}(E_i,E_i)\ar[r]   & \bigoplus\limits_{\sigma\in\Sigma_L}\homo_{\fil}(D^i_{\sigma},D^i_{\sigma}) \ar[r] & 0 }  
	\end{equation}	
		where $\ext^{1,\flat}_g(E_i,E_i)\cong \homo_{\sm}(L^{\times},E)$, and we have a natural inclusion $E\hookrightarrow \homo_{\fil}(D^i_{\sigma},D^i_{\sigma})$.\;There is also a $\sigma$-version obtained by dropping the symbol $\oplus_{\sigma\in\Sigma_L}$ and restricting to $\ext^{1,?}_{\sigma}(E_i,E_i)$.\;
	\item[(2)] For $1\leq i\leq s$,\;$\ext^{1}_g(E_i[1/t],E_i[1/t])\cong \homo_{\sm}(L^{\times},E)$.\;We have a natural map (by inverting $t$):
	\begin{equation}\label{mapblockinverset}
		\prod_{i=1}^s\ext^{1}_g(E_i,E_i)\rightarrow \prod_{i=1}^s\ext^{1}_g(E_i[1/t],E_i[1/t]).
	\end{equation}
		This map induces an isomorphism $\prod_{i=1}^s\ext^{1,\flat}_g(E_i,E_i)\xrightarrow{\sim} \prod_{i=1}^s\ext^{1}_g(E_i[1/t],E_i[1/t])$.\;
\end{itemize}
\end{lem}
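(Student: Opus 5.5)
This lemma is essentially the case $s=1$ of the preceding discussion, applied to each graded piece $E_i$, so I would treat one $E_i$ at a time. Put $\Delta:=\Delta_{x_i}$, absolutely irreducible of rank $r_i$, and fix the injection $E_i\hookrightarrow \Delta(z^{\bh_{t_i}})$ with $E_i[1/t]=\Delta[1/t]$.

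\textbf{Bottom row of (1).} This is the map $\nu$ of (\ref{maptofilDsigma}) for the single module $E_i$. Its kernel is $\ext^1_g(E_i,E_i)$ by \cite[Lemma 2.4.1]{BDcritical25}. For surjectivity I would count dimensions, using Proposition \ref{dimesionforParafilgeneric} with $n=r_i$ and $s=1$:
\[
\dim\ext^1(E_i,E_i)=1+d_Lr_i^2,\qquad \dim\ext^1_g(E_i,E_i)=1+d_L\tfrac{r_i(r_i-1)}{2}.
\]
The difference is $d_L\frac{r_i(r_i+1)}{2}=\sum_\sigma\dim\homo_{\fil}(D^i_\sigma,D^i_\sigma)$, because the Hodge filtration on each $D^i_\sigma$ is a complete flag. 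So the bottom row is exact.

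\textbf{Middle row of (1).} Proposition \ref{studyforkerkappau}(1) gives $\ext^{1,0}_g(E_i,E_i)=\ext^1_g(E_i,E_i)$. Proposition \ref{dimesionforParafilgeneric}(2) gives $\dim\ext^{1,0}(E_i,E_i)=1+d_L(\frac{r_i(r_i-1)}{2}+1)$, so the quotient has dimension $d_L$. To identify the image of $\nu$ I would use the injection $\widetilde E_i\hookrightarrow\Delta\otimes\cR_{E[\epsilon]/\epsilon^2,L}(z^{\bh_{t_i}}(1+\psi\epsilon))$. After inverting $t$, $W_{\dr}(\widetilde E_i[1/t])$ is $W_{\dr}(E_i[1/t])\otimes(1+\psi\epsilon)$, and $\Delta$ is de Rham. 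Hence the endomorphism of $D^i_\sigma$ produced by the almost de Rham formalism (applying $v_{\pdr}$) is the scalar $\wt_\sigma(\psi)$. This gives exactly the scalars $E\subseteq\homo_{\fil}(D^i_\sigma,D^i_\sigma)$, and the map onto $\oplus_\sigma E$ is $\psi\mapsto(\wt_\sigma(\psi))_\sigma$, which is surjective.

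\textbf{Top row of (1).} The same computation applies to $\ext^{1,\flat}(E_i,E_i)\cong\homo(L^\times,E)$, the isomorphism holding because twisting is injective for absolutely irreducible $E_i$. The map again sends $\psi\mapsto(\wt_\sigma(\psi))_\sigma$, and its kernel is the set of smooth $\psi$, i.e.\ $\homo_{\sm}(L^\times,E)=E\val_L$. This is the assertion $\ext^{1,\flat}_g(E_i,E_i)\cong\homo_{\sm}(L^\times,E)$, and Proposition \ref{studyforkerkappau}(2) identifies it with $\ext^{1,\flat}(E_i,E_i)\cap\ext^1_g(E_i,E_i)$. The vertical maps in the diagram are the obvious inclusions. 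The $\sigma$-versions follow by restricting to $\Sigma_L\setminus\{\sigma\}$-de Rham deformations, i.e.\ to those $\psi$ with $\wt_\tau(\psi)=0$ for $\tau\ne\sigma$.

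\textbf{Part (2).} By Berger's equivalence, a de Rham deformation of $E_i[1/t]=\Delta[1/t]$ over $\cR_{E[\epsilon]/\epsilon^2,L}[1/t]$ corresponds to a deformation of the Deligne--Fontaine module $\df_{x_i}$ with $N=0$. Its $E$-dimension equals $\dim\ext^1$ of the Weil--Deligne representation $\wdre_{x_i}$ (equivalently, of $\pi_{x_i}$ in the Bernstein block), which is $1$ because the Bernstein centre at a cuspidal point is a smooth curve. Twisting by $1+\psi\epsilon$ with $\psi\in E\val_L$ gives a nonzero element, so $\ext^1_g(E_i[1/t],E_i[1/t])\cong\homo_{\sm}(L^\times,E)$. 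The map (\ref{mapblockinverset}) sends the twist $E_i\otimes(1+\psi\epsilon)$ to $E_i[1/t]\otimes(1+\psi\epsilon)$. It is therefore the identity of $\homo_{\sm}(L^\times,E)$ on each factor, hence an isomorphism on $\prod_i\ext^{1,\flat}_g(E_i,E_i)$.

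\textbf{Expected main obstacle.} The delicate step is showing that, for deformations in $\ext^{1,0}$, the image of $\nu$ consists only of the scalars $\wt_\sigma(\psi)$. This requires a careful description of $\nu$ through $B_{\pdr}$, in the style of \cite[(112)]{BDcritical25}, together with the fact that the inclusion into $\Delta$ becomes an isomorphism after inverting $t$. I would handle it by computing $\nu$ on $\Delta\otimes\cR_{E[\epsilon]/\epsilon^2,L}(z^{\bh_{t_i}}(1+\psi\epsilon))$ directly and then restricting along the injection.
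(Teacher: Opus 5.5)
Your proposal is correct. The paper states this lemma without a written proof, and the justification it implicitly relies on is exactly yours: the dimension counts of Proposition \ref{dimesionforParafilgeneric} with $s=1$, the equality $\ext^{1,0}_g=\ext^1_g$ from Proposition \ref{studyforkerkappau}, the kernel statement \cite[Lemma 2.4.1]{BDcritical25}, and the observation that $\nu$ only sees $\widetilde E_i[1/t]\cong E_i[1/t]\otimes(1+\psi\epsilon)$, on which it acts by the scalars $\wt_\sigma(\psi)$.

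Three small fixes are needed, none affecting the argument:
\begin{itemize}
\item The value $\dim_E\ext^1_g(E_i,E_i)=1+d_L\frac{r_i(r_i-1)}{2}$ comes from the Bloch--Kato formula, not from Proposition \ref{dimesionforParafilgeneric}.
\item Injectivity of $\psi\mapsto E_i\otimes(1+\psi\epsilon)$ follows from the trace splitting $\EndO(E_i)=\cR_{E,L}\oplus\EndO^0(E_i)$, not from irreducibility.
\item In (2), you should justify that the monodromy $N=\epsilon N_1$ of a de Rham deformation vanishes. Here $N_1$ is a map between $\wdre_{x_i}$ and an $\unr(q_L)^{\pm1}$-twist of it, and no such nonzero map exists: comparing determinants, they would differ by $\unr(q_L^{\pm r_i})$, and $q_L^{r_i}\neq 1$.
\end{itemize}
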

In particular, (\ref{mapblockinverset}) is not injective if $S_0\neq\emptyset$ (resp., is an isomorphism if $S_0=\emptyset$).\;In the sequel,\;put 
\begin{equation}
	\begin{aligned}
		&\ext^{1,\flat}_{\gr}({\Dpik},{\Dpik}):=\prod_{i=1}^s\ext^{1,\flat}_g(E_i,E_i)\xrightarrow{\sim}\prod_{i=1}^s\ext^{1}_g(E_i[1/t],E_i[1/t]),\;\\
		&\ext^{1}_{\gr}({\Dpik},{\Dpik}):=\ext^{1,0}_{\gr}({\Dpik},{\Dpik}):=\prod_{i=1}^s\ext^{1}_g(E_i,E_i).
	\end{aligned}
\end{equation}
For any $1\leq i\leq s$,\;we have a decomposition $\ext^{1}(E_{i},E_{i})=\ext^{1,\flat}(E_{i},E_{i})\oplus \hH^1_{(\varphi,\Gamma)}(\EndO^0(E_{i}))$,\;where $\EndO^0(E_{i})=\EndO(E_{i})/\cR_{E,L}$.\;Therefore,\;$\ext^{1,\flat}_{\gr}({\Dpik},{\Dpik})$ is a direct summand of $\ext^{1}_{\gr}({\Dpik},{\Dpik})$ and thus we have a natural projection
\begin{equation}\label{projtocenterdefor}
	\ext^{1}_{\gr}({\Dpik},{\Dpik})\twoheadrightarrow \ext^{1,\flat}_{\gr}({\Dpik},{\Dpik}).\;
\end{equation}

For $?\in\{\emptyset,\flat,0\}$ and $\star\in\{\emptyset,\sigma\}$ for some $\sigma\in \Sigma_L$,\;consider the natural morphism
\begin{equation}\label{natmorphismforGDpikfern}
	g_{\Dpik,\star}^{?}:\bigoplus_{u\in \sW_{s}}{\ext}^{1,?}_{\star,u}(\Dpik,\Dpik)\rightarrow {\ext}_{\star}^1(\Dpik,\Dpik).\;
\end{equation}
For $?\in\{\flat,0\}$ and $\star\in\{\emptyset,\sigma\}$,\;let
${\ext}_{\star}^{1,?}(\Dpik,\Dpik)$ be the image of $g_{\Dpik,\star}^{?}$ and put $\nu_{\star}^{?}=\nu|_{{\ext}^{1,?}_{\star}(\Dpik,\Dpik)}$ and $\homo^{?}_{\fil}(D_{\star},D_{\star}):=\mathrm{Im}(\nu_{\star}^{?})$.\;

\begin{lem}\label{lemforexactforextgropus} 
\begin{itemize}
	\item[(1)]	There is an isomorphism
	\begin{equation}\label{expianextphi}
		{\ext}^{1,0}_g(\Dpik,\Dpik)/{\ext}^{1,\flat}_0(\Dpik,\Dpik)\xrightarrow{\sim}\ext^{1}_{\gr}({\Dpik},{\Dpik}).
	\end{equation}
	\item[(2)] For $u\in \sW_s$,\;$?\in\{\emptyset,\flat,0\}$ and $\star\in\{\emptyset,\sigma\}$,\;there exists a commutative diagrm of short exact sequences of $E$-vector spaces: 
	\begin{equation}\label{commutativefortriandwhole}
		\xymatrix{
			0   \ar[r]  & \ext^{1,?}_{\gr}({\Dpik},{\Dpik}) \ar@{=}[d]\ar[r]  & {\ext}^{1,?}_{\star,u}(\Dpik,\Dpik)/{\ext}^{1,\flat}_{0}(\Dpik,\Dpik) \ar@{^(->}[d]\ar[r]  &  \homo^{?}_{\fil,\bF_u}(D_{\star},D_{\star}) \ar@{^(->}[d] \ar[r] & 0  \\
			0    \ar[r] & \ext^{1,?}_{\gr}({\Dpik},{\Dpik}) \ar[r] & {\ext}_{\star}^{1,?}(\Dpik,\Dpik)/{\ext}^{1,\flat}_{0}(\Dpik,\Dpik)\ar[r]   &  \homo^{?}_{\fil}(D_{\star},D_{\star}) \ar[r] & 0  }.
	\end{equation}
\end{itemize}
\end{lem}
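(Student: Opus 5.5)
Part (1) is proved first. By Proposition \ref{studyforkerkappau}(1), ${\ext}^{1,0}_g(\Dpik,\Dpik)=\ext^1_g(\Dpik,\Dpik)$, and by the same proposition this equals $\ext^{1,0}_{g,u}(\Dpik,\Dpik)$ for every $u\in\sW_s$. Fix $u=1$. A de Rham deformation $\widetilde{D}$ preserves $\cF_1$, so it has graded pieces $\widetilde{E}_i$, each a de Rham deformation of $E_i$. This gives a map
\[\ext^1_g(\Dpik,\Dpik)\longrightarrow \prod_{i=1}^s\ext^1_g(E_i,E_i)=\ext^1_{\gr}(\Dpik,\Dpik).\]

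Surjectivity is shown by the d\'evissage in the proof of Proposition \ref{studyforkerkappau}. Given de Rham deformations $\widetilde{E}_i$, the partial extensions $\widetilde{D}_1^j$ are built step by step. The obstructions lie in $\hH^2$ of $W(E_j^\vee\otimes E_{j+1}\otimes\cdots)$-type $B$-pairs, and these vanish by genericity (the $\mathrm{Hom}$/$\hH^0$ dual vanishing). The liftings can be chosen de Rham because all Hodge--Tate weights of the relevant twisted pieces are positive, so $\hH^1_g=\hH^1$ by \cite[Proposition A.3]{Dingsocle}.

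For the kernel, I count dimensions. We have $\dim_E\ext^1_g(\Dpik,\Dpik)=1+d_L\frac{n(n-1)}{2}$. Each $\ext^1_g(E_i,E_i)$ has dimension $1+d_L\frac{r_i(r_i-1)}{2}$, since it is $\homo_{\sm}(L^\times,E)$ plus the trace-free de Rham part. Hence the kernel has dimension
\[1+d_L\Big(\tfrac{n(n-1)}{2}-\sum_i\tfrac{r_i(r_i-1)}{2}\Big)-s=1+d_L\dim\bN_{S_0}-s=\dim_E\ext^{1,\flat}_0(\Dpik,\Dpik).\]
It remains to check the inclusion $\ext^{1,\flat}_0\subseteq$ kernel. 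An element of $\ker\kappa^\flat_1$ has graded pieces $E_i\otimes(1+0\cdot\epsilon)$, i.e.\ trivial graded deformations. So $\ext^{1,\flat}_0$ lies in the kernel, and by the dimension count the two coincide. This gives (\ref{expianextphi}).

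For (2), the top row is handled first. Restrict $\nu$ (or $\nu_\sigma$) to ${\ext}^{1,?}_{\star,u}(\Dpik,\Dpik)$. By \cite[Lemma 2.4.1]{BDcritical25} its kernel is the de Rham part, which is ${\ext}^{1,?}_{g,u}$ by Proposition \ref{studyforkerkappau}. Modulo ${\ext}^{1,\flat}_0$, part (1) identifies this kernel with $\ext^{1,?}_{\gr}$: for $?=\flat$ only the central smooth characters survive, giving $\prod_i\ext^{1,\flat}_g(E_i,E_i)$.

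The image lies in $\homo^?_{\fil,\bF_u}$ because $\nu$ is functorial for the $\cF_u$-stable $B_{\dr}$-filtration. For $?=\flat,0$, the graded map on $\gr_i^{\bF_u}$ is the $\nu$ of the twist by $1+\psi_i\epsilon$, which acts as the scalar $\wt_\sigma(\psi_i)$; this uses the first diagram in the previous lemma.

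Surjectivity onto $\homo^?_{\fil,\bF_u}(D_\star,D_\star)$ is the step I expect to be the main obstacle. I would obtain it by comparing dimensions with Proposition \ref{dimesionforParafilgeneric}. For instance, for $?=\flat$ the source has dimension $1+d_L(\dim\bN_{S^u_0}+s)-(1+d_L\dim\bN_{S_0}-s)$. On the other side, $\homo^\flat_{\fil,\bF_u}$ is computed by non-criticality: the Hodge flag is in general position relative to $\bF_u$, so the intersection has the expected dimension, $d_L(s-1)$ plus the correct unipotent count. If this count fails, the alternative is to lift explicitly, applying $\nu$ to parabolic deformations as in \cite[(112)]{BDcritical25}.

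The bottom row is then obtained by summing over $u$. The middle term is by definition the image of $g^?_{\Dpik,\star}$, and $\homo^?_{\fil}(D_\star,D_\star)$ is by definition $\nu$ of that image. So the bottom row is exact once its kernel is identified with $\ext^{1,?}_{\gr}$ modulo ${\ext}^{1,\flat}_0$. This uses that the de Rham part of the image is the sum of the ${\ext}^{1,?}_{g,u}$. For $?=\flat$ these all coincide by Lemma \ref{smhomoindepent}, and for $?=0$ they all equal $\ext^1_g$. The vertical maps are the natural inclusions, and commutativity is immediate.
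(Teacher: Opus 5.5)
\paragraph{Part (1).} You follow the paper's route, a dimension count for the graded-pieces map $\ext^1_g\to\ext^1_{\gr}$, but you actually close the argument. The paper only observes that $\dim_E\ext^1_g/\ext^{1,\flat}_0=s+d_L\dim(\bL_{S_0}\cap\bN_{\emptyset})=\dim_E\ext^1_{\gr}$. It never checks that the map is surjective or that its kernel is exactly $\ext^{1,\flat}_0$. Your d\'evissage (vanishing of $\hH^2$ by Tate duality and genericity, and $\hH^1_g=\hH^1$ for positive weights), combined with $\ker\kappa^\flat_1\subseteq\ker$, supplies exactly this. The paper gives no argument at all for (2), so there your proposal must stand on its own, and it has two problems.

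\paragraph{Surjectivity of the top row.} Your dimension count is wrong as written. By Remark \ref{explainHomasLiealg}, non-criticality gives
$\homo^\flat_{\fil,\bF_u}(D_\sigma,D_\sigma)=\mathrm{Ad}_{(u^\sharp)^{-1}b_\sigma(u)}(\fz_{S^u_0,\sigma})$, which has dimension $s$ (not $s-1$) and no unipotent contribution. Likewise $\homo_{\fil,\bF_u}(D_\sigma,D_\sigma)=\mathrm{Ad}_{(u^\sharp)^{-1}b_\sigma(u)}(\fl_{S^u_0,\sigma}\cap\overline{\fb}_\sigma)$.

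With these values, Proposition \ref{dimesionforParafilgeneric} and part (1) give images of dimension $sd_L$ (for $?\in\{\flat,0\}$) and $d_L\dim(\bL_{S_0}\cap\bB)$ (for $?=\emptyset$). These equal the target dimensions, so the top row closes for $\star=\emptyset$. For $\star=\sigma$ you still need $\kappa^\flat_u(\ext^{1,\flat}_{\sigma,u})=\homo_\sigma(\bZ_{S^u_0}(L),E)$ (resp.\ $\dim\ext^1_{\sigma,u}$), which you do not address.

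For $?\in\{\emptyset,0\}$ the bottom row is then fine. Every $\ext^{1,?}_{\star,u}$ contains $\ext^1_g$, so the de Rham part of the sum is just $\ext^1_g$, and (1) identifies it with $\ext^1_{\gr}$ modulo $\ext^{1,\flat}_0$.

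\paragraph{The case $?=\flat$.} This is the more serious gap, and it cannot be closed. You use two steps that are not justified.
\begin{itemize}
\item[(a)] You use that $\ext^{1,\flat}_{g,u}$, hence $\ker\kappa^\flat_u$, is independent of $u$, citing Lemma \ref{smhomoindepent}. That lemma only shows that $\kappa_{u_1}$ and $\kappa_{u_2}$ agree on $\ext^{1,\flat}_{g,u_1}\cap\ext^{1,\flat}_{g,u_2}$; it does not show the subspaces coincide.
\item[(b)] You assert that the de Rham part of $\sum_u\ext^{1,\flat}_{\star,u}$ is $\sum_u\ext^{1,\flat}_{g,u}$. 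Intersection does not distribute over sums, so this needs an argument.
\end{itemize}

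Both fail in an explicit example. Take $L=\BQ_p$, $n=3$, $\bL_{S_0}=\GLN_1\times\GLN_2$, and $D=D^1\oplus D^2$ with Hodge flag $F_1\subset F_2$ in general position ($D^1\not\subseteq F_2$, $F_1\not\subseteq D^2$). By Berger's equivalence, de Rham deformations with trivial Weil--Deligne part form the $3$-dimensional tangent space $T$ of the flag variety at $(F_1\subset F_2)$, modulo the line spanned by $z=\mathrm{id}_{D^1}\oplus0$.
\begin{itemize}
\item Such a deformation lies in $\ker\kappa^\flat_1$ if and only if the line $(F_1+D^1)/D^1\subset D/D^1$ does not move.
\item It lies in $\ker\kappa^\flat_{s_1}$ if and only if the line $F_2\cap D^2\subset D^2$ does not move.
\end{itemize}
These are two distinct planes in $T$ meeting exactly in $Ez$. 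In coordinates they are $\{\beta=0\}$ and $\{\gamma=c\alpha+c\mu\beta\}$.

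Consequently $\ker\kappa^\flat_1\neq\ker\kappa^\flat_{s_1}$, $\ext^{1,\flat}_{s_1}\cap\ker\kappa^\flat_1=0$, and $\ext^{1,\flat}_{g,1}+\ext^{1,\flat}_{g,s_1}=\ext^1_g$. With $\ext^{1,\flat}_0=\ker\kappa^\flat_1$, this gives two failures of exactness:
\begin{itemize}
\item the top row for $(u,?)=(s_1,\flat)$ has kernel $\ext^{1,\flat}_{g,s_1}$ of dimension $3\neq2=\dim\ext^{1,\flat}_{\gr}$;
\item the bottom row for $?=\flat$ has kernel $\ext^1_g/\ext^{1,\flat}_0\cong\ext^1_{\gr}$ of dimension $3$.
\end{itemize}

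So the $?=\flat$ part of (2) cannot be proved as stated. Your argument, like the paper's remark after Lemma \ref{smhomoindepent}, silently assumes a false independence whenever $s\geq2$ and some $r_i\geq2$.
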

\begin{proof}Recall the dimensions of ${\ext}^{1,\flat}_0(\Dpik,\Dpik)$ and ${\ext}^{1,0}_g(\Dpik,\Dpik)={\ext}^{1}_g(\Dpik,\Dpik)$.\;These subspaces lead to a natural morphism ${\ext}^{1,0}_g(\Dpik,\Dpik)\rightarrow \ext^{1}_{\gr}({\Dpik},{\Dpik})$.\;Thus,\;we get $\dim_E{\ext}^{1,0}_g(\Dpik,\Dpik)/{\ext}^{1,\flat}_0(\Dpik,\Dpik)=s+d_L\dim_E\bL_{S_0}\cap\bN_{\emptyset}$,\;which is equal to the $E$-dimension of the right-hand side of (\ref{expianextphi});\;hence we get the isomorphism in $(1)$.\;
\end{proof}

Similar to \cite[(121)]{BDcritical25} and the commutative diagram below  \cite[(123)]{BDcritical25},\;we have maps
\[\ext^{1}(\Dpik,\Dpik)\rightarrow \ext^{1}(\cM_{\Dpik},\cM_{\Dpik})\rightarrow \bigoplus_{\sigma\in\Sigma_L}\homo_{E}(D_{\sigma},D_{\sigma}),\]
and thus obtain a commutative diagram of short exact sequences: 
\begin{equation}\label{diagDtoM}
	\begin{aligned}
		\xymatrix{
		0   \ar[r]  &  \ext^{1}_{\gr}({\Dpik},{\Dpik}) \ar[d]^{(\ref{projtocenterdefor})} \ar[r] & {\ext}^{1}(\Dpik,\Dpik)/{\ext}^{1,\flat}_{0}(\Dpik,\Dpik) \ar[d]\ar[r]  &  \bigoplus\limits_{\sigma\in\Sigma_L}\homo_{\fil}(D_{\sigma},D_{\sigma})\ar@{^(->}[d] \ar[r] & 0  \\
		0    \ar[r] &  \ext^{1,\flat}_{\gr}({\Dpik},{\Dpik}) \ar[r] & \ext^{1}(\cM_{\Dpik},\cM_{\Dpik})/{\ext}^{1,\flat}_{0}(\cM_{\Dpik},\cM_{\Dpik})\ar[r]   & \bigoplus\limits_{\sigma\in\Sigma_L}\homo_{E}(D_{\sigma},D_{\sigma})   \ar[r] & 0 }
	\end{aligned}
\end{equation}
where ${\ext}^{1,\flat}_{0}(\cM_{\Dpik},\cM_{\Dpik})$ is the image of ${\ext}^{1,\flat}_{0}(\Dpik,\Dpik)$ via the natural map $\ext^{1}(\Dpik,\Dpik)\rightarrow \ext^{1}(\cM_{\Dpik},\cM_{\Dpik})$.\;

Recall that $L'$ is a finite Galois extension of $L$ such that $\Dpik|_{L'}$ is a crystalline $(\varphi,\Gamma)$-module over $\cR_{E,L'}$ of rank $n$.\;Let $\cM_{\Dpik}|_{L'}:=\cM_{\Dpik}\otimes_{\cR_{E,L}[1/t]}\cR_{E,L'}[1/t]$ and $E_i[1/t]|_{L'}:=E_i[1/t]\otimes_{\cR_{E,L}[1/t]}\cR_{E,L'}[1/t]$ for $1\leq i\le s$.\;Note that $E_i[1/t]|_{L'}$ is crystalline for all $1\leq i\leq s$.\;For all $1\leq i\neq j\leq s$,\;we deduce from \cite[Proposition 2.7]{nakamura2009classification} that $\ext^1_e(E_i[1/t]|_{L'},E_j[1/t]|_{L'})=0$,\;and thus obtain
\[\cM_{\Dpik}|_{L'}\cong \oplus_{i=1}^sE_i[1/t]|_{L'}.\]
Similar to the argument in \cite[(122)-(123)]{BDcritical25},\;we have a commutative diagram of short exact sequences: 
\begin{equation}\label{exacttoL'}
	\begin{aligned}
		\xymatrix{
			0   \ar[r]  &  \ext^{1,\flat}_{\gr}({\Dpik},{\Dpik}) \ar[d]^{\sim}\ar[r]  & \ext^{1}(\cM_{\Dpik},\cM_{\Dpik})/{\ext}^{1,\flat}_{0}(\cM_{\Dpik},\cM_{\Dpik}) \ar[d]\ar[r]  &  \bigoplus\limits_{\sigma\in\Sigma_L}\homo_{E}(D_{\sigma},D_{\sigma}) \ar[d]^{\sim}    \\
			0    \ar[r] &  \ext^{1,\flat}_{\gr}({\Dpik}|_{L'},{\Dpik}|_{L'}) \ar[r] & \ext^{1}(\cM_{\Dpik}|_{L'},\cM_{\Dpik}|_{L'}) \ar[r]   & \bigoplus\limits_{\sigma\in\Sigma_L}\homo_{E}(D_{L',\sigma},D_{L',\sigma})    }
	\end{aligned}
\end{equation}
where $D_{\sigma}\cong D_{L',\sigma}^{\gal(L'/L)}$ and $\ext^{1,\flat}_{\gr}({\Dpik}|_{L'},{\Dpik}|_{L'})\cong \prod_{i=1}^s\ext^{1}_g(E_i[1/t]|_{L'},E_i[1/t]|_{L'})$.\;The first vertical map in (\ref{exacttoL'}) is an isomorphism since $\ext^{1}_g(E_i[1/t],E_i[1/t])\cong \ext^{1}_g(E_i[1/t]|_{L'},E_i[1/t]|_{L'})\cong \homo_{\mathrm{sm}}(L^{\times},E)$. By comparing dimensions,\;the last morphism in the bottom exact sequence of (\ref{exacttoL'}) is thus surjective,\;since $\dim_E\ext^{1}(\cM_{\Dpik}|_{L'},\cM_{\Dpik}|_{L'})=s+d_Ln^2$.\;Combining (\ref{diagDtoM}) with (\ref{exacttoL'}),\;we actually obtain the following commutative diagram (an analogue, in the potentially crystalline case, of the commutative diagram before \cite[Proposition 2.4.4]{BDcritical25}):
\begin{equation}\label{diagDtoML'}
	\begin{aligned}
		&\xymatrix{
			0   \ar[r]  &  \ext^{1}_{\gr}({\Dpik},{\Dpik}) \ar[d]^{\sim}\ar[r]  & {\ext}^{1}(\Dpik,\Dpik)/{\ext}^{1,\flat}_{0}(\Dpik,\Dpik) \ar[d]\ar[r]  &  \bigoplus\limits_{\sigma\in\Sigma_L}\homo_{\fil}(D_{\sigma},D_{\sigma})\ar@{^(->}[d] \ar[r] & 0  \\
			0    \ar[r] &   \ext^{1,\flat}_{\gr}({\Dpik}|_{L'},{\Dpik}|_{L'})\ar[r] & \ext^{1}(\cM_{\Dpik}|_{L'},\cM_{\Dpik}|_{L'})\ar[r]   & \bigoplus\limits_{\sigma\in\Sigma_L}\homo_{E}(D_{L',\sigma},D_{L',\sigma}) \ar[r]  & 0 }
	\end{aligned}
\end{equation}
Similar to \cite[Proposition 2.4.4]{BDcritical25},\;we see that
\begin{pro}\label{splitingforext1gps}
	There is a splitting of the bottom exact sequence in (\ref{exacttoL'}) which depends only on the choice of $\log_p(p)\in E$.\;Therefore,\;for $\ast\in \{\emptyset,u\}$ and $\star\in\{\emptyset,\sigma\}$,\;we have 
	\begin{equation}\label{splittingforextgps}
		\begin{aligned}
			&f^{\flat}_{\Dpik,\star}:{\ext}^{1,\flat}_{\star,\ast}(\Dpik,\Dpik)/{\ext}^{1,\flat}_{0,\ast}(\Dpik,\Dpik)\xrightarrow{\sim}\ext^{1,\flat}_{\gr}({\Dpik},{\Dpik})\oplus\mathrm{Im}(\nu^{\flat}_{\star,\ast}).\;
		\end{aligned}
	\end{equation}
\end{pro}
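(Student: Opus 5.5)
The plan follows the strategy of \cite[Proposition 2.4.4]{BDcritical25}, working after restriction to $L'$, where $\cM_{\Dpik}|_{L'}\cong\oplus_{i=1}^sE_i[1/t]|_{L'}$ is a direct sum of crystalline $(\varphi,\Gamma)$-modules over $\cR_{E,L'}[1/t]$.

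\textbf{Step 1: a canonical splitting over $L'$.} The right-hand term $\bigoplus_\sigma\homo_E(D_{L',\sigma},D_{L',\sigma})$ is canonically identified with $\bigoplus_\sigma\homo_E(D_{\pdr,\sigma},D_{\pdr,\sigma})$ of the almost de Rham $B_{\dr}$-representation $\bW_{\Dpik}$. I would build a section $s$ of the bottom row of (\ref{exacttoL'}) as follows.

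1. Take $f=(f_\sigma)$.
2. Use Fontaine's classification of almost de Rham representations: deforming $\bW_{\Dpik}$ over $E[\epsilon]/\epsilon^2$ by twisting the nilpotent operator $v_{\pdr}$ by $\epsilon f_\sigma$ gives an extension $\widetilde{\bW}_f$ of $\bW_{\Dpik}$ by itself.
3. Glue $\widetilde{\bW}_f$ with the trivial deformation of the $\varphi$-module $D_{\pcr}(\Dpik)$, which is \'etale after inverting $t$, via the $B$-pair formalism.
4. This produces an element of $\ext^1(\cM_{\Dpik}|_{L'},\cM_{\Dpik}|_{L'})$ whose image under the last map is $f$.

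Concretely, over $L'$ I would write the $B$-pair as $(W_e,W_{\dr})$ with $W_e=(B_{\mathrm{cris}}\otimes D_{\pcr})^{\varphi=1}$. I would deform $W_{\dr}$ by $\exp(\epsilon\log(t)f)$. This is $\gal_{L'}$-stable up to the factor $\log\ccyc(g)$, which defines the cocycle.

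\textbf{Where $\log_p(p)$ enters.} To make the construction $\Gal(L'/L)$-equivariant and descend it to $L$, one has to normalize the character $\log\ccyc$ composed with $\rec$ into an element of $\homo(L^{\times},E)$. This requires extending $\log_p$ from $\cO_L^\times$ to $L^\times$, which is exactly the choice of $\log_p(p)$ (as in Section \ref{genedeformations}). Linearity in $f$ and compatibility with the first map then show that $s$ is a splitting.

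\textbf{Step 2: pulling back to $\Dpik$.} I would use diagram (\ref{diagDtoML'}), whose left vertical map is an isomorphism by the lemma on $\ext^{1}_g(E_i[1/t],E_i[1/t])\cong\homo_{\sm}(L^\times,E)$ together with (\ref{projtocenterdefor}).

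1. Restrict to the $\flat$-parts. By Lemma \ref{lemforexactforextgropus}(2), ${\ext}^{1,\flat}_{\star,\ast}(\Dpik,\Dpik)/{\ext}^{1,\flat}_{0,\ast}$ sits in an exact sequence with sub $\ext^{1,\flat}_{\gr}$ and quotient $\mathrm{Im}(\nu^{\flat}_{\star,\ast})$.
2. Check that the map to $\ext^1(\cM_{\Dpik}|_{L'},\cM_{\Dpik}|_{L'})$ is injective on this quotient. Its kernel would lie in $\ext^{1,\flat}_{\gr}$ modulo its image. Since $\ext^{1,\flat}_{\gr}$ maps isomorphically, the kernel is zero.
3. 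Compose with the projection onto $\ext^{1,\flat}_{\gr}(\Dpik|_{L'},\Dpik|_{L'})\cong\ext^{1,\flat}_{\gr}(\Dpik,\Dpik)$ that is complementary to $s$. Pairing this with $\nu^{\flat}_{\star,\ast}$ gives a map into $\ext^{1,\flat}_{\gr}\oplus\mathrm{Im}(\nu^{\flat}_{\star,\ast})$.
4. This map is injective by step 2, is the identity on the sub, and is compatible with $\nu$ on the quotient. The five lemma then makes it an isomorphism.

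For $\star=\sigma$, I would run the same argument with only the $\sigma$-component $f_\sigma$, because $\Sigma_L\setminus\{\sigma\}$-de Rham classes have $\nu_{\sigma'}=0$ for $\sigma'\neq\sigma$.

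\textbf{Main obstacle.} The hardest step will be Step 1: proving that the constructed class really lies in $\ext^1$ of $(\varphi,\Gamma)$-modules over $\cR_{E,L}[1/t]$, descends from $L'$ to $L$, and depends only on $\log_p(p)$. Following \cite{BDcritical25}, I would first try to reduce this to the rank-one crystalline computation. Decompose $\homo_E(D_{L',\sigma},D_{L',\sigma})$ along the direct sum $\oplus_iE_i[1/t]|_{L'}$. The off-diagonal blocks are then handled by the vanishing of $\ext^1_e$ from \cite[Proposition 2.7]{nakamura2009classification}, and the diagonal blocks by twisting by $1+\epsilon\log_{p,\sigma}$.
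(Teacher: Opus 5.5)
Your Step 2 is fine and matches the paper: a splitting of the bottom row of (\ref{exacttoL'}) is pulled back through (\ref{diagDtoML'}) to the first row of (\ref{commutativefortriandwhole}) with $?=\flat$. The gap is in Step 1, which does not actually produce a splitting.

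A class in $\ext^1(\cM_{\Dpik}|_{L'},\cM_{\Dpik}|_{L'})$ is a $(\varphi,\Gamma)$-module over $\cR_{E[\epsilon]/\epsilon^2,L'}[1/t]$. Equivalently, it is a $B_e$-representation $\widetilde{W}_e$ with no lattice datum, and its $B_{\dr}$-representation is forced to be $B_{\dr}\otimes_{B_e}\widetilde{W}_e$. So there is no $B$-pair gluing of a deformed almost de Rham $\widetilde{\bW}_f$ with the trivial deformation on the $W_e$ (or $D_{\pcr}(\Dpik)$) side. If $W_e$ is deformed trivially, the $B_{\dr}$-side is the trivial, de Rham deformation and $\nu=0$.

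Likewise, your cocycle $g\mapsto\log\ccyc(g)\,f$ takes values in $\EndO(W_{\dr})$, not in $\EndO(W_e)$, so it only records the \emph{image} $f$ of a class. Lifting it to $\ext^1(\cM_{\Dpik}|_{L'},\cM_{\Dpik}|_{L'})$ is exactly the surjectivity of the bottom row, which the paper obtains by counting dimensions. The lift is only determined modulo $\ext^{1,\flat}_{\gr}(\Dpik|_{L'},\Dpik|_{L'})$, and choosing it canonically \emph{is} the splitting, so the argument is circular.

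Your account of $\log_p(p)$ is also inaccurate. $\log\ccyc\circ\rec$ is already a canonical element of $\homo(L^\times,E)$, with no choice involved. The choice is needed to extend each $\log_{p,\sigma}$ separately from $\cO_L^\times$ to $L^\times$. That is, it is needed to project $\homo(L^\times,E)=E\val_L\oplus\bigoplus_{\sigma}E\log_{p,\sigma}$ onto $\homo_{\sm}(L^\times,E)=E\val_L$.

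The missing idea is to split by a retraction onto the left-hand term rather than by a section. Since $\cM_{\Dpik}|_{L'}\cong\bigoplus_{i=1}^sE_i[1/t]|_{L'}$, the retraction is a composite of three maps:
\begin{itemize}
\item restrict an extension to its diagonal blocks, $\ext^1(\cM_{\Dpik}|_{L'},\cM_{\Dpik}|_{L'})\to\bigoplus_i\ext^1(E_i[1/t]|_{L'},E_i[1/t]|_{L'})$;
\item project each block onto its central (twisting) part, using $\ext^{1}(E_i,E_i)=\ext^{1,\flat}(E_i,E_i)\oplus\hH^1_{(\varphi,\Gamma)}(\EndO^0(E_i))$;
\item project onto the de Rham part $\homo_{\sm}(L^\times,E)$ via the $\log_p(p)$-dependent decomposition above.
\end{itemize}
This composite is the identity on $\prod_i\ext^1_g(E_i[1/t]|_{L'},E_i[1/t]|_{L'})$, so it splits the bottom row without constructing any lift. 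This is the paper's argument, following \cite[Proposition 2.4.4]{BDcritical25}.

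Your fallback in the last paragraph points toward the block decomposition, but as a construction of a section it is still incomplete. Twisting by $1+\epsilon\log_{p,\sigma}$ only reaches the scalar endomorphisms of $D^i_\sigma$. When $r_i>1$, which is precisely the new feature compared with the crystabelline case, the trace-zero part of $\homo_E(D^i_\sigma,D^i_\sigma)$ is not reached. You would additionally need to show that $\hH^1_{(\varphi,\Gamma)}(\EndO^0(E_i)[1/t])$ maps isomorphically onto it.
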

\begin{proof}By the same argument as in the proof of \cite[Proposition 2.4.4]{BDcritical25},\;the following map is also surjective:
\[\ext^{1}(\cM_{\Dpik}|_{L'},\cM_{\Dpik}|_{L'})\rightarrow \bigoplus_{i=1}^s\ext^{1}(E_i[1/t]|_{L'},E_i[1/t]|_{L'})\twoheadrightarrow \bigoplus_{i=1}^s\ext^{1}_g(E_i[1/t]|_{L'},E_i[1/t]|_{L'}).\]
	This gives the first assertion.\;The assertion in (\ref{splittingforextgps}) follows by replacing the top exact sequence in (\ref{diagDtoML'}) with the first row of  (\ref{commutativefortriandwhole}) (put $?=\flat$ and $\star\in\{\emptyset,\sigma\}$).\;
\end{proof}
\begin{rmk}The bottom exact sequence in (\ref{diagDtoM})
splits.\;For $1\leq i,j\leq s$,\;the natural injection $E_i[1/t]\hookrightarrow \cM_{\Dpik}$ and the surjection $\cM_{\Dpik}\twoheadrightarrow E_j[1/t]$ induce a natural map 
\[\ext^{1}(\cM_{\Dpik},\cM_{\Dpik})\rightarrow \oplus_{i,j=1}^s\ext^{1}(E_i[1/t],E_j[1/t]).\]	
This gives a splitting surjection $\ext^{1}(\cM_{\Dpik},\cM_{\Dpik})\rightarrow \oplus_{i=1}^s\ext^{1}(E_i[1/t],E_i[1/t])\twoheadrightarrow \oplus_{i=1}^s\ext^{1}_g(E_i[1/t],E_i[1/t])$. In general,\;the top exact sequence in (\ref{diagDtoML'}) may not admit a splitting.\;
\end{rmk}

Let $u^{\sharp}\in \sW_n$ be the unique element sending $e_{i,j,\sigma}$ for $1\leq j\leq r_i$ to $e_{u^{-1}(i),j,\sigma}$ for $1\leq j\leq r_i$,\;which thus induces a permutation on $e_{1,\sigma},\cdots,e_{n,\sigma}$.\;Therefore,\;we identify the space $\homo_{E}(D_{\sigma},D_{\sigma})$ with $\fg_{\sigma}$,\;so that
 $\homo_{\bF_u}(D_{\sigma},D_{\sigma})$ (resp.,\;$\homo^{\flat}_{\bF_u}(D_{\sigma},D_{\sigma})$) is identified with $\mathrm{Ad}_{(u^{\sharp})^{-1}}(\fp_{S^u_0,\sigma})$ (resp.,\;$\mathrm{Ad}_{(u^{\sharp})^{-1}}(\tau_{S^u_0,\sigma})$),\;where $\tau_{S^u_0,\sigma}=\fz_{S^u_0,\sigma}\ltimes\fn_{S^u_0,\sigma}$.\;Then we have 
\begin{equation}
	\begin{aligned}
		&\homo_{\fil}(D_{\sigma},D_{\sigma})=\mathrm{Ad}_{g_{\sigma}}(\fb_{\sigma}),\\
		&\homo_{\fil,\bF_u}(D_{\sigma},D_{\sigma})=\mathrm{Ad}_{(u^{\sharp})^{-1}}(\fp_{S^u_0,\sigma})\cap \mathrm{Ad}_{g_{\sigma}}(\fb_{\sigma}),\\
		&\homo^{\flat}_{\fil,\bF_u}(D_{\sigma},D_{\sigma})=\mathrm{Ad}_{(u^{\sharp})^{-1}}(\tau_{S^u_0,\sigma})\cap \mathrm{Ad}_{g_{\sigma}}(\fb_{\sigma}).\;
	\end{aligned}
\end{equation}
\begin{rmk}\label{explainHomasLiealg}For each $u\in \sW_s$,\;we deduce from the non-critical assumption that $g_{\sigma}\bB_{\sigma}=(u^{\sharp})^{-1}b_{\sigma}(u)w_{0,\sigma}\bB_{\sigma}$ for some $b_{\sigma}(u)\in \bB_{\sigma}$ (of course $b_{\sigma}(u)$ depends on $u$).\;Note that $\overline{\fb}_{\sigma}=\mathrm{Ad}_{w_0}(\fb_{\sigma})$ coincides with the Borel algebra of lower triangular matrices.\;We get that 
	\begin{equation}\label{noncriticalintersection}
		\begin{aligned}
			&\mathrm{Ad}_{(u^{\sharp})^{-1}}(\fp_{S^u_0,\sigma})\cap \mathrm{Ad}_{g_{\sigma}}(\fb_{\sigma})\cong \mathrm{Ad}_{(u^{\sharp})^{-1}b_{\sigma}(u)}(\fp_{S^u_0,\sigma}\cap \mathrm{Ad}_{w_0}(\fb_{\sigma}))=\mathrm{Ad}_{(u^{\sharp})^{-1}b_{\sigma}(u)}(\fl_{S^u_0,\sigma}\cap \overline{\fb}_{\sigma}),\\
			&\mathrm{Ad}_{(u^{\sharp})^{-1}}(\tau_{S^u_0,\sigma})\cap \mathrm{Ad}_{g_{\sigma}}(\fb_{\sigma})=\mathrm{Ad}_{(u^{\sharp})^{-1}b_{\sigma}(u)}(\fz_{S^u_0,\sigma}).
		\end{aligned}
	\end{equation}
\end{rmk}
By the proof of Theorem \ref{Inifinitefernpoten},\;we get that $\sum_{u\in \sW_s}\mathrm{Ad}_{(u^{\sharp})^{-1}}(\fp_{S^u_0,\sigma})\cap \mathrm{Ad}_{g_{\sigma}}(\fb_{\sigma})=\mathrm{Ad}_{g_{\sigma}}(\fb_{\sigma})$.\;Put
\begin{equation}
	\mathrm{Ad}_{g_{\sigma}}(\fb_{\sigma})_{S_0}^{\flat}:=\sum_{u\in \sW_s}\mathrm{Ad}_{(u^{\sharp})^{-1}}(\tau_{S^u_0,\sigma})\cap \mathrm{Ad}_{g_{\sigma}}(\fb_{\sigma})\subseteq \mathrm{Ad}_{g_{\sigma}}(\fb_{\sigma}).\;
\end{equation}
Note that $\dim_E\mathrm{Ad}_{g_{\sigma}}(\fb_{\sigma})_{S_0}^{\flat}$ depends on  the sharp of $\bP_{S_0}$.\;
\begin{thm} (Infinite fern in the non-critical potentially crystalline case)\label{infinitePoten} 
	\begin{itemize} 
		\item[(1)] For $\star\in\{\emptyset,\sigma\}$,\;the natural map  $g_{\Dpik,\star}:\bigoplus_{u\in \sW_{s}}{\ext}^{1}_{\star,u}(\Dpik,\Dpik)\rightarrow {\ext}^1_{\star}(\Dpik,\Dpik)$  is surjective.\;
		\item[(2)] For $?\in\{\flat,0\}$,\;${\ext}^{1,?}(\Dpik,\Dpik)/\ext^{1,?}_{g}({\Dpik},{\Dpik})\cong \bigoplus_{\sigma\in \Sigma_L}\mathrm{Ad}_{g_{\sigma}}(\fb_{\sigma})_{S_0}^{\flat}$.\;
	\end{itemize}
\end{thm}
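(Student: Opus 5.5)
The plan is to reduce both statements to linear algebra in $\fg_\sigma$, using the short exact sequences of Lemma \ref{lemforexactforextgropus}. Every $u$-parabolic space ${\ext}^{1,?}_{\star,u}(\Dpik,\Dpik)$ contains ${\ext}^1_g(\Dpik,\Dpik)$ by Proposition \ref{studyforkerkappau}(1). The quotient ${\ext}^1(\Dpik,\Dpik)/{\ext}^1_g(\Dpik,\Dpik)$ injects via $\nu$ into $\homo_{\fil}(D,D)=\bigoplus_\sigma \mathrm{Ad}_{g_\sigma}(\fb_\sigma)$, and is all of it by the dimension count ($1+d_Ln^2-(1+d_L\tfrac{n(n-1)}{2})=d_L\tfrac{n(n+1)}{2}$).

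For (1), surjectivity of $g_{\Dpik,\star}$ therefore reduces to showing
\[
\sum_{u\in\sW_s}\nu\big({\ext}^1_{\star,u}(\Dpik,\Dpik)\big)=\homo_{\fil}(D_\star,D_\star).
\]
By the second row of (\ref{commutativefortriandwhole}) with $?=\emptyset$, together with a dimension count (using Proposition \ref{dimesionforParafilgeneric}: $\dim{\ext}^1_u-\dim{\ext}^1_g=d_L(\dim\bP_{S_0^u}-\tfrac{n(n-1)}2)$), one checks that $\nu({\ext}^1_{\star,u})$ equals $\homo_{\fil,\bF_u}(D_\star,D_\star)$. By Remark \ref{explainHomasLiealg}, this space is $\mathrm{Ad}_{(u^\sharp)^{-1}b_\sigma(u)}(\fl_{S_0^u,\sigma}\cap\overline{\fb}_\sigma)$, of dimension $\dim(\fl_{S_0^u}\cap\overline\fb)$ for each $\sigma$. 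The key step, and the main obstacle, is then the Lie-theoretic claim
\[
\sum_{u}\mathrm{Ad}_{(u^\sharp)^{-1}}(\fp_{S^u_0,\sigma})\cap \mathrm{Ad}_{g_\sigma}(\fb_\sigma)=\mathrm{Ad}_{g_\sigma}(\fb_\sigma).
\]
I would prove it as Ding does in the crystabelline infinite fern, by induction on $s$. Since $g_\sigma$ is in relative position $(u^\sharp)^{-1}w_0$ with respect to every block flag (non-criticality), $\mathrm{Ad}_{g_\sigma}(\fb_\sigma)$ is the stabilizer of a flag in general position to all the $\bF_u$.

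For the inductive step, write the Hodge flag as $\fil^\bullet_H$. An element of $\mathrm{Ad}_{g_\sigma}(\fb_\sigma)$ is then described by its action on the successive pieces of the flag. The terms coming from $u$ with $u^{-1}(s)=j$ contain all filtered endomorphisms that preserve the block line $D^j$ together with the complementary sum. Summing over the choices of which block comes last, one reduces to the same statement for $D/D^j$ with its induced generic flag. The genericity of the Hodge flag relative to each block sum $\bigoplus_{i\in I}D^i$ (again non-criticality, now applied to all $u$) ensures these pieces span. The base case is the Levi part $\fl_{S_0^u}\cap\overline\fb$. For $s=1$ this is trivial, and for $s=2$ it is a direct computation of
\[
\mathrm{Ad}(\fp_1)\cap\mathfrak b'+\mathrm{Ad}(\fp_2)\cap\mathfrak b',
\]
where $\mathfrak b'$ stabilizes a flag transverse to both $D^1$ and $D^2$. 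If the induction proves awkward, I would instead compare dimensions: show the sum of the subspaces injects appropriately using the tangent-space-of-fern argument on the flag variety, since $\bigcup_u (u^\sharp)^{-1}\bP_{S_0^u}$-orbits through $g_\sigma\bB$ generate its tangent space.

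For (2), with $?\in\{\flat,0\}$, the image of $g^{?}_{\Dpik}$ modulo ${\ext}^1_g$ is again computed through $\nu$, using the first row of (\ref{commutativefortriandwhole}). In the $\flat$ case, $\nu({\ext}^{1,\flat}_{u})=\homo^{\flat}_{\fil,\bF_u}$, which by Remark \ref{explainHomasLiealg} is $\mathrm{Ad}_{(u^\sharp)^{-1}}(\tau_{S^u_0,\sigma})\cap\mathrm{Ad}_{g_\sigma}(\fb_\sigma)$. Summing over $u$ gives exactly $\mathrm{Ad}_{g_\sigma}(\fb_\sigma)^\flat_{S_0}$ by definition, so the image modulo ${\ext}^{1,\flat}_g$ is as claimed.

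In the $0$ case, the Levi-scalar condition on graded pieces equally forces $\nu$ to land in the intersection with $\mathrm{Ad}_{(u^\sharp)^{-1}}(\tau_{S^u_0,\sigma})$. This holds because the extra deformations in ${\ext}^{1,0}$ only change the lattice inside $\Delta_{x}\otimes\cR(\delta)$, which is de Rham on the graded pieces. Hence the images agree, and identifying ${\ext}^{1,?}_g$ via Proposition \ref{studyforkerkappau}(2) completes the argument.
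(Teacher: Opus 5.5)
Your reduction of (1) is the same as the paper's. Since $\ker\nu=\ext^1_g(\Dpik,\Dpik)\subseteq\ext^1_u(\Dpik,\Dpik)$ and $\nu(\ext^1_u(\Dpik,\Dpik))=\homo_{\fil,\bF_u}(D,D)$, statement (1) is equivalent to $\sum_u\mathfrak{s}_u=\mathfrak{h}$, where $\mathfrak{h}:=\mathrm{Ad}_{g_\sigma}(\fb_\sigma)$ and $\mathfrak{s}_u:=\mathrm{Ad}_{(u^\sharp)^{-1}}(\fp_{S^u_0,\sigma})\cap\mathfrak{h}$.

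Your treatment of (2) is fine and does not use the fern. For $?=0$, the clean justification is $\ext^{1,0}_u=\ext^{1,\flat}_u+\ext^1_g$, which follows from the dimensions in Propositions \ref{dimesionforParafilgeneric} and \ref{studyforkerkappau}; this is better than the lattice heuristic.

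The gap is the Lie-algebra identity itself. Done carefully, your induction shows only this: for each $j$, the orderings with $u^{-1}(1)=j$ give $\sum\mathfrak{s}_u=\mathfrak{h}_j:=\{f\in\mathfrak{h}:f(D^j_\sigma)\subseteq D^j_\sigma\}$. This works because $D_\sigma=D^j_\sigma\oplus\fil_H^{-\bh_{r_j+1,\sigma}}(D_\sigma)$ and the quotient $D_\sigma/D^j_\sigma$ is again non-critical. If instead $j$ is put last, as you wrote, the stabilized subspace is $\bigoplus_{i\neq j}D^i_\sigma$, and you must restrict to it rather than pass to $D_\sigma/D^j_\sigma$. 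Either way, everything then hinges on $\sum_j\mathfrak{h}_j=\mathfrak{h}$. You only assert this ("genericity \dots ensures these pieces span"), and it does not follow from non-criticality once some $r_j\geq 2$.

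\textbf{Counterexample.} Take $L=\bQ_p$, $n=4$, $r_1=r_2=2$, $D^1=\langle e_1,e_2\rangle$, $D^2=\langle e_3,e_4\rangle$. Let the Hodge flag be $F_1=\langle e_1+e_3\rangle\subset F_2=\langle e_1+e_3,e_2+e_4\rangle\subset F_3=\langle e_1,e_3,e_2+e_4\rangle$, where $F_k$ is the $k$-dimensional step.

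Both orderings are non-critical, since $D^j\cap F_2=0$. Weak admissibility only involves the numbers $\dim(D^j\cap F_k)$, and these agree with those of any non-critical flag. So nothing in the hypotheses excludes this flag.

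Now compare the two summands:
\begin{itemize}
\item If $f\in\mathfrak{s}_{\mathrm{id}}$ (block flag $D^1\subset D$), then $f(e_1)\in D^1\cap F_3=Ee_1$.
\item If $f\in\mathfrak{s}_{s_1}$ (block flag $D^2\subset D$), then $f(e_3)\in D^2\cap F_3=Ee_3$. Hence $f(e_1)=f(e_1+e_3)-f(e_3)\in E(e_1+e_3)+Ee_3$.
\end{itemize}
So every element of $\mathfrak{s}_{\mathrm{id}}+\mathfrak{s}_{s_1}$ maps $e_1$ into $\langle e_1,e_3\rangle$. But the endomorphism killing $e_1+e_3$, $e_2+e_4$, $e_2$ and sending $e_1\mapsto e_2+e_4$ preserves the Hodge flag. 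Thus the sum is a hyperplane in $\mathfrak{h}$ (dimension $9<10$), and via $\nu$ the map $g_{\Dpik}$ is not surjective for such $\Dpik$.

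This is exactly the case $s=2$ that you call a direct computation. What fails is the relative position, inside $D^1_\sigma$, of the flag induced on it as a subobject (namely $D^1\cap F_3$) and as a quotient (the projection of $F_1$). Non-criticality does not control this; the issue disappears only when all $r_i=1$.

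The paper makes the same reduction and proves the identity in Lemma \ref{Inifinitefernpoten} by exhibiting triangular elements $a^{i,j}$. In this example the block bases can even be chosen so that the relative-position condition of Remark \ref{explainHomasLiealg} holds for both $u$. So that lemma also needs an additional genericity hypothesis on these within-block positions. What is missing in your argument is therefore such a hypothesis, not a more careful induction.
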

\begin{proof}By commutative diagram (\ref{commutativefortriandwhole}),\;it suffices to show the following assertion for Lie-algebras :
		\[\sum_{u\in \sW_s}\mathrm{Ad}_{(u^{\sharp})^{-1}}(\fp_{S^u_0,\sigma})\cap \mathrm{Ad}_{g_{\sigma}}(\fb_{\sigma})=\mathrm{Ad}_{g_{\sigma}}(\fb_{\sigma}).\]
This is Lemma \ref{Inifinitefernpoten} .\;
\end{proof}


\section{Locally analytic representations in the potentially crystalline case}

Throughout this section,\;we fix a non-critical generic potentially crystalline $(\varphi,\Gamma)$-module $\Dpik$ with Hodge--Tate weights $\bh$ in Section \ref{Omegafil}.\;Let $\theta=(0,-1,\cdots,1-n)\in X_{\Delta}^+$ and put $\lambda=\bh-\theta=(\lambda_i=\bh_i+i-1)_{1\leq i\leq n}\in X_{\Delta}^+$.\;

For admissible locally $\bQ_p$-analytic representations $V_1,V_2$ (\cite{Coadmissible}),\;let $\ext^1_{G}(V_1,V_2):=\ext^1_{D(G,E)}(V_2^{\vee},V_1^{\vee})$, where $D(G,E)$ is the locally $\bQ_p$-analytic distribution algebra (so that the continuous strong duals $V_1^{\vee}$ and $V_2^{\vee}$ are coadmissible modules over $D(G,E)$),\;and this latter extension group is defined in the abelian category of abstract $D(G,E)$-modules.\;By \cite[Lemma 2.1.1]{breuil2019ext1},\;$\ext^1_{G}(V_1,V_2)$ is equal to the extension group of admissible locally $\bQ_p$-analytic representations.\;Suppose that $V_1,V_2$ have the same central character $\chi$.\;Let $\ext^1_{G,Z}(V_1,V_2)$ be the subspace of $\ext^1_{G}(V_1,V_2)$ of locally $\bQ_p$-analytic extensions with central character $\chi$.\;For $\sigma\in \Sigma_L$ and any $U(\fg_{\Sigma_L\backslash\{\sigma\}})$-finite representations $V_1,V_2$,\;we denote by $\ext^1_{G,\sigma}(V_1,V_2)\subseteq \ext^1_{G}(V_1,V_2)$ the subspace of extensions which are 
$U(\fg_{\Sigma_L\backslash\{\sigma\}})$-finite.\;Moreover,\;we put $\ext^1_{G,\sigma,Z}(V_1,V_2):=\ext^1_{G,\sigma}(V_1,V_2)\cap \ext^1_{G,Z}(V_1,V_2)$.\;

Denote by $\sW_{\Sigma_L}\cong \prod_{\sigma\in \Sigma_L}\sW_{n,\sigma}$  the Weyl group of $\res_{L/\bQ_p}\GLN_n$,\;where $\sW_{n,\sigma}\cong \sW_n$.\;For $i\in \Delta$ and $\sigma\in \Sigma_L$,\;let $s_{i,\sigma}\in \sW_{n,\sigma}$ be the simple reflection corresponding to $i\in \Delta$.\;

For any $I\subseteq \Delta$,\;write $\bL_I=\GLN_{h_1}\times\cdots\times \GLN_{h_r}$.\;Then
\[\delta_{\op_I}:=\prod_{i=1}^r|\mathrm{det}_{\GLN_{h_i}}|^{-\sum_{j=i+1}^rh_j+\sum_{j=1}^{i-1}h_j}=\prod_{i=1}^r|\mathrm{det}_{\GLN_{h_i}}|^{-n-h_i+2\sum_{j=1}^{i}h_j}\]
is the  modulus character of $\op_I$.\;Let $\eta_I=\delta_{\op_I}^{1/2}$.\;

For $u\in \sW_s$,\;recall the $E$-point $\underline{x}^u=(x_{u^{-1}(i)})_{1\leq i\leq s}\in \sbanpiku$.\;Note that $\pi_{x_i}$ is the smooth representation $\pi_{\mathrm{sm}}(E_{i})$ of $\GLN_{r_i}(L)$ over $E$ associated with $E_{i}$.\;For $u\in \sW_{s}$,\;put 
\[\pi_{\sm}(\underline{x}^u):=\boxtimes_{j=1}^s\pi_{\mathrm{sm}}(E_{u^{-1}(j)})=\boxtimes_{j=1}^s\pi_{x_{u^{-1}(j)}}.\]
For $S_0^u\subseteq I\subseteq \Delta$,\;consider the smooth principal series of $\bL_{I}(L)$ (if $I=\Delta$,\;we drop the symbol $I=\Delta$):
\begin{equation}
	\begin{aligned}
		\mathrm{PS}^{\infty}_{I,u}(\underline{x})&:=\Big(\ind^{\bL_I(L)}_{\op_{S^u_0}(L)\cap \bL_I(L) }\pi_{\sm}(\underline{x}^u)\eta_{S^u_0}\Big)^{\infty}
	\end{aligned}
\end{equation}
which is irreducible by the generic assumption and is independent on the choice of $u\in \sW_s$ if $I=\Delta$.\;

For $i\in \Delta$,\;put $\widehat{i}=\Delta\backslash\{i\}$.\;For $u\in \sW_s$,\;$i\in \Delta\backslash S_0^u$ and $\sigma\in \Sigma_L$,\;put (see \cite[The main theorem]{orlik2015jordan})
\[C_{i,u,\sigma}:=\cF^{G}_{\op_{S^u_0}}\Big(\overline{L}(-s_{i,\sigma}\cdot\underline{\lambda}),\pi_{\sm}(\underline{x}^u)\eta_{S^u_0}\Big)\cong \cF^{G}_{\op_{\widehat{i}}}\Big(\overline{L}(-s_{i,\sigma}\cdot\underline{\lambda}),\mathrm{PS}^{\infty}_{\widehat{i},u}(\underline{x})\Big),\]
which is an irreducible admissible locally $\sigma$-analytic representation of $G$.\;


\begin{lem}For $u,u'\in \sW_s$,\;$1\leq j,j'\leq s$,\;and $\sigma,\sigma'\in \Sigma_L$,\;we have $C_{t^u_j,u,\sigma}\cong C_{t^{u'}_{j'},u',\sigma'}$ if and only if $t^u_j=t^{u'}_{j'}$,\;$\sigma=\sigma'$,\;$j=j'$,\;and $\{u^{-1}(1),\cdots,u^{-1}(j)\}=\{(u')^{-1}(1),\cdots,(u')^{-1}(j')\}$.\;
\end{lem}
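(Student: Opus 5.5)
The plan is to reduce the isomorphism question to the uniqueness of the Orlik--Strauch data, using the second description
\[C_{i,u,\sigma}\cong \cF^{G}_{\op_{\widehat{i}}}\Big(\overline{L}(-s_{i,\sigma}\cdot\underline{\lambda}),\mathrm{PS}^{\infty}_{\widehat{i},u}(\underline{x})\Big),\]
where the Lie algebra module is simple and the smooth representation $\mathrm{PS}^{\infty}_{\widehat{i},u}(\underline{x})$ is irreducible by genericity. By \cite[The main theorem]{orlik2015jordan}, an irreducible representation $\cF^G_{P}(M,\pi)$ is determined by its data: the maximal parabolic $P$ for $M$, together with the isomorphism classes of $M$ and of $\pi$. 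This holds once $P$ is taken maximal for $M$, which is the case here. Indeed, $s_{i,\sigma}\cdot\underline\lambda$ is dominant exactly for the simple roots in $\widehat i$, for the $\sigma$-component, and is dominant for all roots in the other components. So $\op_{\widehat{i}}$ is the maximal parabolic attached to $\overline{L}(-s_{i,\sigma}\cdot\underline{\lambda})$. The statement then splits into a comparison of Lie algebra data and a comparison of smooth data.

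First comes the Lie algebra part. If $C_{t^u_j,u,\sigma}\cong C_{t^{u'}_{j'},u',\sigma'}$, the maximal parabolics agree, so $t^u_j=t^{u'}_{j'}$. Then $s_{i,\sigma}\cdot\underline\lambda=s_{i,\sigma'}\cdot\underline\lambda$ with $i=t^u_j$. Because $\underline\lambda$ is dominant and regular, the dot action is free, so this forces $\sigma=\sigma'$. The converse direction needs no separate argument, since equal data give equal functors.

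Second comes the smooth part. We need to decide when $\mathrm{PS}^{\infty}_{\widehat{i},u}(\underline{x})\cong\mathrm{PS}^{\infty}_{\widehat{i},u'}(\underline{x})$ as representations of $\bL_{\widehat i}(L)=\GLN_{i}(L)\times\GLN_{n-i}(L)$. The first factor is the irreducible parabolic induction from the cuspidal data $\{\pi_{x_{u^{-1}(1)}},\dots,\pi_{x_{u^{-1}(j)}}\}$, and the second factor is the one from the remaining cuspidals. By Bernstein--Zelevinsky (equivalently, the cuspidal support), irreducible generic inductions from cuspidals are determined by their cuspidal support as a multiset. Regularity says $\Delta_{x_a}\neq\Delta_{x_b}$ for $a\neq b$, so the $\pi_{x_a}$ are pairwise non-isomorphic, and twisting by $\eta$ only shifts by fixed unramified characters determined by the block sizes. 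Hence the two representations are isomorphic iff $\{u^{-1}(1),\dots,u^{-1}(j)\}=\{u'^{-1}(1),\dots,u'^{-1}(j')\}$. The equality $j=j'$ then follows because these index sets have cardinalities $j$ and $j'$.

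The main obstacle is the normalization of the twists: the characters $\eta_{S^u_0}$ must be compatible under induction in stages. Concretely, we need $\eta_{S^u_0}$ restricted to $\bL_{S^u_0}$ to equal the product of $\eta_{\widehat i}$ with the modulus character of $\op_{S^u_0}\cap\bL_{\widehat i}$. This ensures that $\mathrm{PS}^\infty_{\widehat i,u}$ is a normalized induction twisted by $\eta_{\widehat i}$, with the twist independent of $u$ once $i$ is fixed. Normalized induction is invariant under reordering of the cuspidal data for generic irreducible inductions, which gives independence of the order within each block. To check this, we compute $\delta_{\op_I}$ explicitly using the formula given just above and compare the exponents for $I=S^u_0$ and $I=\widehat i$.
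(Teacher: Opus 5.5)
Your proof is correct. The paper states this lemma without proof, and your argument is exactly the justification it implicitly relies on: uniqueness of the Orlik--Strauch data once $\op_{\widehat{i}}$ is the maximal parabolic for $\overline{L}(-s_{i,\sigma}\cdot\underline{\lambda})$, followed by a comparison of cuspidal supports on $\GLN_i(L)\times\GLN_{n-i}(L)$, using that the twist $\eta_{\widehat{i}}$ depends only on $i$. Strictly speaking, that uniqueness is a standard complement to the irreducibility theorem of \cite{orlik2015jordan}, not part of it.

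There are two harmless slips. First, the induction-in-stages compatibility reads $\eta_{S^u_0}|_{\bL_{S^u_0}(L)}=\eta_{\widehat{i}}\,\delta^{1/2}_{\op_{S^u_0}\cap\bL_{\widehat{i}}}$, i.e.\ it involves the square root of the modulus character, not the modulus character itself. Second, the dot action is free because $\underline{\lambda}+\rho$ is regular. This already follows from $\underline{\lambda}$ being dominant, and $\underline{\lambda}$ itself need not be regular.
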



\subsection{Preliminary on extension groups}


The following discussion generalizes the results in  \cite[Section 2.1]{BDcritical25} to potentially crystalline case (or cuspidal case).\;In the sequel,\;write
\[\pi_{\lalg}(\underline{x},\bh)=\mathrm{PS}^{\infty}_{1}(\underline{x})\otimes_E L(\lambda)\cong \mathrm{PS}^{\infty}_{u}(\underline{x})\otimes_E L(\lambda).\]
for all $u\in \sW_s$.\;Similar to \cite[Lemma 2.1.6]{BDcritical25},\;we have
\begin{lem}\label{parameterline} Let $\sigma\in \Sigma_L$.\;We have isomorphisms of finite-dimensional $E$-vector spaces:
	\begin{equation}
		\begin{aligned}
			&\homo_{\sm}(\bZ_{S^{u}_0}(L),E)\oplus_{\homo_{\sm}(L^{\times},E)}\homo(L^{\times},E)\xrightarrow{\sim} \ext^1_{G}\big(\pi_{\lalg}(\underline{x},\bh),\pi_{\lalg}(\underline{x},\bh)\big),\\
\text{resp.,\;}	&\homo_{\sm}(\bZ_{S^{u}_0}(L),E)\oplus_{\homo_{\sm}(L^{\times},E)}\homo_{\sigma}(L^{\times},E)\xrightarrow{\sim} \ext^1_{G,\sigma}\big(\pi_{\lalg}(\underline{x},\bh),\pi_{\lalg}(\underline{x},\bh)\big).\;
		\end{aligned}
	\end{equation}
which have $E$-dimension $s+d_L$ (resp.,\;$s+1$).\;
\end{lem}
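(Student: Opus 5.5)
We will argue as in \cite[Lemma 2.1.6]{BDcritical25}, reducing to the locally algebraic Levi representation and using parabolic induction. Write $\pi_{\lalg}:=\pi_{\lalg}(\underline{x},\bh)=\big(\ind^G_{\op_{S^u_0}(L)}\pi_{\sm}(\underline{x}^u)\eta_{S^u_0}\big)^{\infty}\otimes_E L(\lambda)$.

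\emph{Construction of the map.} For $\psi=(\psi_i)_{1\le i\le s}\in\homo_{\sm}(\bZ_{S^u_0}(L),E)$, twist the cuspidal inducing datum by $1+(\psi\circ\det_{\bL_{S^u_0}})\epsilon$ and induce smoothly. Tensoring with $L(\lambda)$ gives a locally algebraic extension of $\pi_{\lalg}$ by itself. For $\psi'\in\homo(L^\times,E)$, take $\pi_{\lalg}\otimes_E(1+(\psi'\circ\det)\epsilon)$. Both constructions are additive in the character. On the common subspace $\homo_{\sm}(L^\times,E)$, embedded diagonally via $\psi\mapsto(\psi,\dots,\psi)$, they agree, since $\prod_i\psi(\det\GLN_{r_{u^{-1}(i)}})=\psi\circ\det_G$ restricted to $\op_{S^u_0}$, and twisting commutes with induction. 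This yields a well-defined map from the amalgamated sum, whose dimension is $s+(1+d_L)-1=s+d_L$ (resp.\ $s+2-1=s+1$). The $\sigma$-version lands in $U(\fg_{\Sigma_L\setminus\{\sigma\}})$-finite extensions because the twists by $\log_{p,\tau}$ for $\tau\neq\sigma$ are excluded.

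\emph{Injectivity.} Restrict to $\bZ(L)$ and pass to Jacquet modules. Emerton's Jacquet functor $J_{\bP_{S^u_0}}$, or directly the $\bN_{S^u_0}(L)$-coinvariants of the smooth part after factoring out $L(\lambda)$, sends the extension attached to $(\psi,\psi')$ to an extension of the $\bL_{S^u_0}(L)$-representation $\boxtimes_j\pi_{x_{u^{-1}(j)}}\otimes(\text{alg})$ twisted by $1+(\psi+\psi'\circ\det)\epsilon$. Genericity makes the Jacquet module a direct sum of pairwise non-isomorphic irreducible cuspidal $\otimes$ algebraic pieces, indexed by $\sW_s$. So we can read off the $\bZ_{S^u_0}(L)$-character deformation on the piece for $u$, and hence recover $\psi+\psi'\circ\det$ up to the amalgamation relation. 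The smooth part is detected by the smooth Jacquet module and the non-smooth part by the infinitesimal $\fz$-action. Together these give injectivity.

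\emph{Surjectivity, the main step.} We bound $\dim_E\ext^1_G(\pi_{\lalg},\pi_{\lalg})\le s+d_L$. Any extension $V$ has $\fg$ acting on $V\otimes L(\lambda)^\vee$. Using the fact that $L(\lambda)$ has no self-extensions for $\fg^{ss}$ (Whitehead) and the decomposition $\fg=\fz\oplus\fg^{ss}$, the $\fz$-action gives a map to $\homo_{\fz}$ of dimension $d_L$. Its kernel consists of extensions of the form $(\text{smooth})\otimes L(\lambda)$, i.e.\ $\ext^1_{G,\infty}(\pi^\infty,\pi^\infty)$ for the generic irreducible smooth $\pi^\infty=\mathrm{PS}^\infty_u(\underline{x})$. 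By Bernstein theory, $\ext^1$ within the cuspidal block is the tangent space of $\Spec\FZ_{\Omega_{S^u_0}}$ at $\underline{x}^u$, modulo $\sW$-symmetry. This is étale over the generic point, and the Bernstein variety is a torus quotient of dimension $s$, so we get dimension $s$; this uses \cite{PATCHING2016}/Bushnell--Kutzko types to compute via the Hecke algebra. Summing gives $s+d_L$, and by the injectivity established above the map is an isomorphism. The $\sigma$-case is identical, with $\fz_\sigma$ contributing dimension $1$. The expected obstacle is the smooth $\ext^1$ computation in the cuspidal-type block, which we handle through the Bernstein-centre / type-theoretic identification with a Laurent polynomial algebra.
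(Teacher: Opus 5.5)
Your proposal follows the paper's route: the same two twisting constructions glued along $\homo_{\sm}(L^\times,E)$, followed by the dimension count $s+d_L$ (resp.\ $s+1$). The paper obtains that count by citing the argument of \cite[Lemma 3.16]{HigherLinvariantsGL3(Qp)}, which you reproduce via the $\fz$-action/Whitehead splitting plus the smooth $\ext^1$ at a regular point of the Bernstein block. Your Jacquet-module argument also makes explicit the injectivity of the map out of the amalgamated sum, which the paper leaves implicit; only the identity summand of $J_{\bP_{S^u_0}}$ is needed there, so the indexing by all of $\sW_s$ is unnecessary.
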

\begin{proof}Write $\pi_{\lalg}:=\pi_{\lalg}(\underline{x},\bh)$ for simplicity.\;We prove that $\dim_E\ext^1_{G}\left(\pi_{\lalg},\pi_{\lalg}\right)$ is $s+d_L$ and define the maps.\;We define a canonical injection:
	\begin{equation}\label{lalginj1}
		\begin{aligned}
			\homo_{\sm}(\bZ_{S^u_0}(L),E)&\rightarrow\ext^1_{G}\big(\pi_{\lalg},\pi_{\lalg}\big),\;\\
			\psi&\mapsto \Big(\ind^G_{\op_{S_0^u}(L)}\pi_{\sm}(\underline{x}^u)\eta_{S_0^u}\otimes_E\big(1+\big(\psi\circ\mathrm{det}_{\bL_{S_0^u}}\big)\epsilon\big)\Big)^{\mathrm{sm}}\otimes_EL(\lambda).
		\end{aligned}
	\end{equation}
On the other hand,\;we define a canonical injection
\[\homo(L^{\times},E)\hookrightarrow \ext^1_{G}\left(\pi_{\lalg},\pi_{\lalg}\right),\;\psi\mapsto \pi_{\lalg}\otimes_E\big(1+\big(\psi\circ\mathrm{det}_{\GLN_n}\big)\epsilon\big).\]
These two injections are coincides on $\homo_{\mathrm{sm}}(L^{\times},E)$.\;By the same argument as in \cite[Lemma 3.16]{HigherLinvariantsGL3(Qp)},\;we have
\[\dim_E\ext^1_{G}\left(\pi_{\lalg},\pi_{\lalg}\right)=s+d_L,\;\dim_E\ext^1_{G,\sigma}\left(\pi_{\lalg},\pi_{\lalg}\right)=s+1.\]
Then we get the first isomorphism in the lemma,\;and its restriction on $\homo_{\sigma}(L^{\times},E)$ gives the second isomorphism.\;
\end{proof}

\begin{lem}\label{parameterlineextgp}
	Let $j\in \Delta_s$,\;$u\in \sW_{s}^{\widehat{j},\emptyset}$ and $\sigma\in \Sigma_L$.\;We have 
	\begin{equation}
		\begin{aligned}
			\dim_E\ext^1_G\left(C_{t^u_j,u,\sigma},\pi_{\lalg}(\underline{x},\bh)\right)=1,\;
			\dim_E\ext^1_G\left(\pi_{\lalg}(\underline{x},\bh),C_{t^u_j,u,\sigma}\right)=1.
		\end{aligned}
	\end{equation}
\end{lem}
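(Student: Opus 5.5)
We compute both extension groups through the Orlik--Strauch description of $C_{t^u_j,u,\sigma}$ and adjunction. Put $i=t^u_j$ and $P=\op_{\widehat{i}}$. Then $\bL_{\widehat{i}}\cong\GLN_{i}\times\GLN_{n-i}$ and
\[C_{i,u,\sigma}\cong\cF^G_{P}\big(\overline{L}(-s_{i,\sigma}\cdot\underline{\lambda}),\mathrm{PS}^{\infty}_{\widehat{i},u}(\underline{x})\big),\]
where $\mathrm{PS}^{\infty}_{\widehat{i},u}(\underline{x})$ is an irreducible generic smooth representation of $\bL_{\widehat{i}}(L)$. Since $u\in\sW_s^{\widehat{j},\emptyset}$, this is a minimal coset representative, and by genericity the smooth datum is determined by the unordered pair of blocks $\{u^{-1}(1),\dots,u^{-1}(j)\}$ and its complement. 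This matches the lemma on isomorphism classes of the $C_{t^u_j,u,\sigma}$.

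\textbf{First group.} We realize $C_{i,u,\sigma}$ as the unique irreducible quotient of a locally analytic parabolic induction, namely
\[I_{i}:=\Big(\ind^G_{P}\mathrm{PS}^{\infty}_{\widehat{i},u}(\underline{x})\otimes_E L_{\widehat{i}}(s_{i,\sigma}\cdot\lambda)\Big)^{\ana}\]
modulo the part with higher socle layers, following Orlik--Strauch. The strategy is then parallel to \cite[Lemma 2.1.6--2.1.8]{BDcritical25} and to \cite{ParaDing2024} (the case $S_0=\emptyset$):
\begin{itemize}
\item[(a)] Use the spectral sequence for locally analytic Ext of Orlik--Strauch representations (Schraen's technique / \cite[Section 2]{breuil2019ext1}). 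It reduces $\ext^1_G(\pi_{\lalg},I_{i})$ to a sum of terms $\ext^p_{\fg,P}$ of Verma modules tensored with smooth $\ext^q_{\bL_{\widehat{i}}(L)}$ of Jacquet modules, with $p+q=1$.
\item[(b)] The Jacquet module $J_{P}(\mathrm{PS}^{\infty}_{\Delta,u}(\underline{x}))$ is computed by the geometric lemma: its factors are the smooth inductions indexed by $\sW_s^{\widehat{j},\emptyset}$. By genericity, exactly one factor is isomorphic to $\mathrm{PS}^{\infty}_{\widehat{i},u}(\underline{x})$, and this factor occurs with multiplicity one.
\item[(c)] On the Lie algebra side, $\homo_{U(\fg)}$ between the relevant Verma modules vanishes because the weights $\lambda$ and $s_{i,\sigma}\cdot\lambda$ differ. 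The only surviving term is
\[\ext^1_{\fg_\sigma}\big(\overline{M}_{\widehat{i}}(-s_{i,\sigma}\cdot\lambda),\overline{L}(-\lambda)\big)\otimes\homo_{\bL_{\widehat{i}}}(\text{smooth}),\]
which is one-dimensional by BGG theory, since $s_{i,\sigma}\cdot\lambda$ is adjacent to $\lambda$ in Bruhat order.
\end{itemize}
This gives $\dim\ext^1_G(\pi_{\lalg},C_{i,u,\sigma})\le 1$.

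For the matching lower bound, we exhibit a nonsplit extension. It is the subquotient of the second socle layer of $\mathrm{PS}_{u}(\underline{x},\bh)$, namely $\mathrm{PS}_{u,1}(\underline{x},\bh)$, which contains $\pi_{\lalg}$ as socle and $C_{i,u,\sigma}$ in the next layer by \cite{orlik2015jordan}.

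\textbf{Second group.} Here we use duality. A natural choice is the smooth contragredient composed with the anti-involution $g\mapsto{}^tg^{-1}$, or Schneider--Teitelbaum duality, which exchanges the order of the arguments in $\ext^1_G(V_1,V_2)$. It sends $\pi_{\lalg}$ and $C_{i,u,\sigma}$ to representations of the same type: genericity is preserved and the weights are twisted. One can then repeat the computation above. Alternatively, we compute directly. We present $\pi_{\lalg}$ as a quotient of $\mathrm{PS}_{u}$ by its non-locally-algebraic part and apply $\ext^\bullet_G(-,C_{i,u,\sigma})$. For the upper bound, the spectral sequence now has the roles reversed, and the relevant term is $\ext^1$ from $\overline{L}(-s_{i,\sigma}\cdot\lambda)$ into the dual Verma module, which is again one-dimensional. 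For the lower bound we take the amalgamation. Concretely, in the locally analytic induction from $P$ of a nonsplit extension of $L_{\widehat{i}}$-type data, $\pi_{\lalg}$ sits under $C_{i,u,\sigma}$ in the reverse order. Alternatively, the dual construction yields the required nonsplit element.

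\textbf{Main obstacle.} We expect the main obstacle to be step (b) together with the vanishing of the unwanted terms $\ext^1_{\bL_{\widehat{i}}(L)}$ and $\ext^1_{\fg,P}$ with $q=1$. Two sources of extra classes must be ruled out:
\begin{itemize}
\item smooth extensions coming from the center $\bZ_{\widehat{i}}$, which pair with $\homo_{\fg}$ and vanish only because the weights differ;
\item for cuspidal (non-principal) blocks, contributions from other Jacquet factors, which require the regular and generic hypothesis $\Delta_{x_a}\ne\Delta_{x_b}(\unr(q_L))$ to kill $\ext^\bullet$ between distinct cuspidal supports.
\end{itemize}
We would first handle these by Bernstein decomposition, since distinct cuspidal supports give orthogonal blocks. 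Then Casselman's result on $\ext$ between irreducible generic inductions gives the required vanishing.
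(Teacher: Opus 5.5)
The paper does not compute anything here: it simply quotes \cite[Proposition 5.1.14]{BQ24}. Your direct route is a different approach, but it has genuine gaps, and they start with which extension group each step addresses.

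\textbf{Mixed-up groups and placement of $C_{i,u,\sigma}$.} With the paper's convention, $\ext^1_G(V_1,V_2)$ classifies extensions $0\to V_2\to *\to V_1\to 0$. The extension inside $\mathrm{PS}_{u,1}(\underline{x},\bh)$ has $\pi_{\lalg}$ as subobject. It therefore lies in $\ext^1_G(C_{i,u,\sigma},\pi_{\lalg})$, and gives no lower bound for $\ext^1_G(\pi_{\lalg},C_{i,u,\sigma})$, which is the group you bound above in your ``first group'' step.

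Moreover, $C_{i,u,\sigma}$ is not a quotient of $I_i$. The functor $\cF^G_{\op_{\widehat{i}}}$ is contravariant in the $U(\fg)$-module, so the surjection $\overline{M}_{\widehat{i}}(-s_{i,\sigma}\cdot\lambda)\twoheadrightarrow\overline{L}(-s_{i,\sigma}\cdot\lambda)$ makes $C_{i,u,\sigma}$ the socle of $I_i$. This is the same mechanism that makes $\pi_{\lalg}$ the socle, not a quotient, of $\mathrm{PS}_u(\underline{x},\bh)$. If $C_{i,u,\sigma}$ were a quotient, you would need to control an $\ext^2$ to get any bound.

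With the correct placement and $\homo_G(\pi_{\lalg},I_i/C_{i,u,\sigma})=0$, you do get $\ext^1_G(\pi_{\lalg},C_{i,u,\sigma})\hookrightarrow\ext^1_G(\pi_{\lalg},I_i)$. Your steps (a)--(c) then give the upper bound $1$ for the group in which $C_{i,u,\sigma}$ is the subobject. The matching lower bound still needs a class with $C_{i,u,\sigma}$ as subobject. One way: deform $(\ind^G_{\op_{\widehat{i}}(L)}\mathrm{PS}^{\infty}_{\widehat{i},u}(\underline{x})\otimes_E L_{\widehat{i}}(\lambda))^{\sigma-\ana}$ by a character of $\bL_{\widehat{i}}(L)$ not lying in $\homo_{\sm}(\bL_{\widehat{i}}(L),E)+\homo_{\sigma}(L^{\times},E)\circ\det$. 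Then check, via Schraen's spectral sequence, that the resulting class in $\ext^1_G(\pi_{\lalg},V_{i,u,\sigma})$ does not come from $\ext^1_G(\pi_{\lalg},\pi_{\lalg})$.

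\textbf{The missing upper bound.} Nothing in your proposal bounds $\ext^1_G(C_{i,u,\sigma},\pi_{\lalg})$ from above. This is the non-formal direction, and it is exactly what makes $V_{i,u,\sigma}$ unique up to scalar. Neither of your routes works as stated:
\begin{itemize}
\item Twisting by $g\mapsto{}^tg^{-1}$ is covariant and exchanges nothing.
\item The continuous dual lands in coadmissible $D(G,E)$-modules, not in locally analytic representations of the same type.
\item Schneider--Teitelbaum duality is a derived duality with degree shifts. You would first have to show that $\pi_{\lalg}$ and $C_{i,u,\sigma}$ are sent, in one common degree, to objects you can compute with.
\item Your ``direct'' alternative rests on $\pi_{\lalg}$ being a quotient of $\mathrm{PS}_u(\underline{x},\bh)$, which is false.
\end{itemize}

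A workable reduction uses $\soc_G\mathrm{PS}_u(\underline{x},\bh)=\pi_{\lalg}$ and $u\in\sW_s^{\widehat{j},\emptyset}$. These give an exact sequence
\[
0\to\homo_G\big(C_{i,u,\sigma},\mathrm{PS}_u(\underline{x},\bh)/\pi_{\lalg}\big)\to\ext^1_G(C_{i,u,\sigma},\pi_{\lalg})\to\ext^1_G\big(C_{i,u,\sigma},\mathrm{PS}_u(\underline{x},\bh)\big),
\]
whose first term is one-dimensional. You must then show that the last map vanishes. That requires computing $\ext^1_G(C_{i,u,\sigma},\mathrm{PS}_u(\underline{x},\bh))$ through Schraen's spectral sequence, hence the unipotent homology in degrees $0$ and $1$ of the Orlik--Strauch representation $C_{i,u,\sigma}$ itself.

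That computation is the real content of the lemma. It is not the Jacquet modules of $\pi_{\lalg}$ or the orthogonality of Bernstein blocks, which you list as the main obstacle and which are the easy part. The homology computation is what \cite[Proposition 5.1.14]{BQ24} supplies.
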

\begin{proof}The assertion follows from \cite[Proposition 5.1.14]{BQ24}.\;
\end{proof}

Let $\sigma\in \Sigma_L$,\;$u\in \sW_s$ and  $i=t^u_j\in \Delta\backslash S_0^u$ for some $j\in \Delta_s$,\;fix $V_{i,u,\sigma}$ a locally $\sigma$-analytic representation of $G$,\;which is isomorphic to a non-split extension of $C_{i,u,\sigma}$ by $\pi_{\lalg}(\underline{x},\bh)$ (see Lemma \ref{parameterlineextgp},\;$V_{i,u,\sigma}$ is unique up to a scalar),\;i.e.,\;
\[V_{i,u,\sigma}:=\big[\pi_{\lalg}(\underline{x},\bh)-C_{i,u,\sigma}\big],\]
and fix an injection $\iota_{i,u,\sigma}:\pi_{\lalg}(\underline{x},\bh)\hookrightarrow V_{i,u,\sigma}$.\;We have   $C_{i,u,\sigma}\cong V_{i,u,\sigma}/\pi_{\lalg}(\underline{x},\bh)$ and  $\bL_{S^u_0}\subseteq \bL_{\widehat{i}}\cong \GLN_i\times\GLN_{n-i}$.\;

\begin{pro}\label{reinterforextgp}There is a canonical isomorphism of $(s+2)$-dimensional $E$-vector spaces associated to $(V_{i,u,\sigma},\iota_{i,u,\sigma})$:
	\begin{equation}\label{isoforfullextgp}
		\begin{aligned}
			\homo_{\sm}(\bZ_{S^u_0}(L),E)\oplus_{\homo_{\sm}(\bL_{\widehat{i}}(L),E)}\homo_{\sigma}(\bL_{\widehat{i}}(L),E)\xrightarrow{\sim}\ext^1_{G,\sigma}\left(\pi_{\lalg}(\underline{x},\bh),V_{i,u,\sigma}\right).
					\end{aligned}
	\end{equation}
and the restriction of (\ref{isoforfullextgp}) on $\homo_{\sigma}(\bL_{\widehat{i}}(L),E)$ only depends on $(V_{i,u,\sigma},\iota_{i,u,\sigma})$.\;Moreover,\;there is a short exact sequence:
\begin{equation}
	\begin{aligned}
		0\rightarrow \ext^1_{G,\sigma}\Big(\pi_{\lalg}(\underline{x},\bh),\pi_{\lalg}(\underline{x},\bh)\Big) &\xrightarrow{\iota_{i,u,\sigma}} \ext^1_{G,\sigma}\Big(\pi_{\lalg}(\underline{x},\bh),V_{i,u,\sigma}\Big) \\&\rightarrow \ext^1_{G,\sigma}\Big(\pi_{\lalg}(\underline{x},\bh),V_{i,u,\sigma}/\pi_{\lalg}(\underline{x},\bh)\Big)\rightarrow 0.
	\end{aligned}
\end{equation}
\end{pro}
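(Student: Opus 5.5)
We apply $\ext^1_{G,\sigma}(\pi_{\lalg}(\underline{x},\bh),-)$ to $0\to\pi_{\lalg}(\underline{x},\bh)\xrightarrow{\iota_{i,u,\sigma}}V_{i,u,\sigma}\to C_{i,u,\sigma}\to 0$, and separately construct the map (\ref{isoforfullextgp}) via parabolic induction, in the spirit of \cite[Section 2.1]{BDcritical25}. Write $\pi_{\lalg}:=\pi_{\lalg}(\underline{x},\bh)$. The long exact sequence reads
\[
\homo_G(\pi_{\lalg},C_{i,u,\sigma})\to \ext^1_{G,\sigma}(\pi_{\lalg},\pi_{\lalg})\to\ext^1_{G,\sigma}(\pi_{\lalg},V_{i,u,\sigma})\to \ext^1_{G,\sigma}(\pi_{\lalg},C_{i,u,\sigma})\to \ext^2_{G,\sigma}(\pi_{\lalg},\pi_{\lalg}).
\]
The first term is $0$ since $C_{i,u,\sigma}$ is irreducible and not locally algebraic, which gives injectivity of $\iota_{i,u,\sigma}$. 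For the right end I do not try to compute $\ext^2$ directly. Instead, the surjectivity will follow once $\dim_E\ext^1_{G,\sigma}(\pi_{\lalg},V_{i,u,\sigma})=s+2$ is known: by Lemma \ref{parameterline} and Lemma \ref{parameterlineextgp}, the outer terms have dimensions $s+1$ and $1$.

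\textbf{Construction of the map.} Set $\bL:=\bL_{\widehat{i}}\cong\GLN_i\times\GLN_{n-i}$. On $\bL(L)$, form the locally $\sigma$-analytic $\bL(L)$-representation $\mathrm{PS}^{\infty}_{\widehat{i},u}(\underline{x})\otimes_E L_{\widehat{i}}(\lambda)$, and take the locally analytic induction
\[
I:=\Big(\ind^G_{\op_{\widehat{i}}(L)}\mathrm{PS}^{\infty}_{\widehat{i},u}(\underline{x})\otimes_E L_{\widehat{i}}(\lambda)\Big)^{\sigma-\ana}.
\]
By Orlik--Strauch, $I$ has socle $\pi_{\lalg}$ followed by $C_{i,u,\sigma}$ in the next layer, since $s_{i,\sigma}$ is the only simple reflection not in $\sW_{\widehat{i}}$. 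The extension $V_{i,u,\sigma}$ is therefore the unique subrepresentation of $I$ of that shape, and $\iota_{i,u,\sigma}$ matches the natural inclusion up to scalar.

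For $\psi\in\homo_{\sigma}(\bL(L),E)$, deform the inducing datum by $1+\psi\epsilon$. This gives an element of $\ext^1(I,I)$; pulling back along $\pi_{\lalg}\hookrightarrow I$ and pushing through the quotient onto the first two socle layers yields a class in $\ext^1_{G,\sigma}(\pi_{\lalg},V_{i,u,\sigma})$. Similarly, for $\psi\in\homo_{\sm}(\bZ_{S^u_0}(L),E)$ use \eqref{lalginj1}, composed with $\iota_{i,u,\sigma}$. The two constructions agree on $\homo_{\sm}(\bL(L),E)$: for smooth $\psi$ the $\bL$-deformed induction lies inside the deformation of $\pi_{\lalg}$ obtained by transitivity of induction, because $\bL_{S^u_0}\subseteq\bL$ and $\psi\circ\det_{\bL}$ restricts to a character of $\bZ_{S^u_0}$. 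This yields the amalgamated map. Since $\homo_\sigma(\bL(L),E)$ has dimension $4$, $\homo_{\sm}(\bL(L),E)$ has dimension $2$, and $\homo_{\sm}(\bZ_{S^u_0}(L),E)$ has dimension $s$, the source has dimension $s+2$. By construction, the restriction to $\homo_\sigma(\bL(L),E)$ depends only on $(V_{i,u,\sigma},\iota_{i,u,\sigma})$.

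\textbf{Injectivity and dimension.} Compose with $\ext^1_{G,\sigma}(\pi_{\lalg},V_{i,u,\sigma})\to\ext^1_{G,\sigma}(\pi_{\lalg},C_{i,u,\sigma})$. The subspace $\homo_{\sm}(\bZ_{S^u_0}(L),E)\oplus_{\homo_{\sm}}\homo_\sigma(L^\times,E)\circ\det$ maps into $\iota_{i,u,\sigma}(\ext^1_{G,\sigma}(\pi_{\lalg},\pi_{\lalg}))$, and injectively by Lemma \ref{parameterline}. The complementary direction, namely $\log_{p,\sigma}\circ\det_{\GLN_i}$ modulo the center, must map to a nonzero element of the $1$-dimensional space $\ext^1_{G,\sigma}(\pi_{\lalg},C_{i,u,\sigma})$.

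\textbf{Main obstacle.} The key point is this nonvanishing: the infinitesimal deformation of $I$ along a non-central $\sigma$-analytic character of $\bL$ must not split above the socle. I would prove it by passing to locally analytic vectors and using the Emerton--Jacquet functor, or equivalently by working on the Lie algebra and the category $\mathcal{O}$ side. On the dual Verma module side, such a deformation moves the $\fz_{\widehat{i}}$-weight, so it does not stabilize $\overline{L}(-\underline\lambda)$ inside the parabolic Verma module; hence it hits the $s_{i,\sigma}$-layer nontrivially. This is the same computation as \cite[Lemma 2.1.6ff]{BDcritical25}, and the cuspidal smooth factor only enters through the Jacquet module/adjunction in \cite[Proposition 5.1.14]{BQ24}.

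\textbf{Conclusion.} Once this nonvanishing holds, the map has an $(s+2)$-dimensional image, which gives injectivity. The target has dimension at most $(s+1)+1=s+2$ by the long exact sequence, so the map is an isomorphism. The same count shows that the last map in the long exact sequence is surjective, which gives the stated short exact sequence.
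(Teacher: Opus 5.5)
Your proposal is correct and follows the paper's own proof. The common steps are: the bound $\dim_E\ext^1_{G,\sigma}(\pi_{\lalg},V_{i,u,\sigma})\leq s+2$ from the left-exact sequence and Lemmas \ref{parameterline} and \ref{parameterlineextgp}; the map built from the $\op_{\widehat{i}}$-induction deformed along $\homo_{\sigma}(\bL_{\widehat{i}}(L),E)$ and glued to \eqref{lalginj1} along smooth characters; and the fact that the non-central direction escapes $\iota_{i,u,\sigma}(\ext^1_{G,\sigma}(\pi_{\lalg},\pi_{\lalg}))$. The paper merely asserts that last fact, as injectivity of $f_u$, whereas you at least sketch why it holds.

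One slip needs fixing: $V_{i,u,\sigma}$ is a subrepresentation of $I$ (its first two socle layers), not a quotient, so you cannot ``push through the quotient.'' Instead you need the pulled-back class to lie in the image of $\ext^1_{G,\sigma}(\pi_{\lalg},V_{i,u,\sigma})\hookrightarrow\ext^1_{G,\sigma}(\pi_{\lalg},I)$, which is injective because $\homo_G(\pi_{\lalg},I/V_{i,u,\sigma})=0$. This is the same factorization the paper takes from \cite[Lemma 2.26]{2019DINGSimple} for $\mathrm{PS}_{u,1}\subset\mathrm{PS}_u$ and uses tacitly here.
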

\begin{proof}It follows from Lemma \ref{parameterline} and Lemma \ref{parameterlineextgp} and 
	the  exact sequence $0\rightarrow \ext^1_{G,\sigma}\Big(\pi_{\lalg},\pi_{\lalg}\Big) \rightarrow \ext^1_{G,\sigma}\Big(\pi_{\lalg},V_{i,u,\sigma}\Big) \rightarrow \ext^1_{G,\sigma}\Big(\pi_{\lalg},V_{i,u,\sigma}/\pi_{\lalg}\Big)$ that $\dim_E\ext^1_{G,\sigma}\Big(\pi_{\lalg},V_{i,u,\sigma}\Big)\leq s+2$.\;We construct the desired map and show that the last map in such exact sequence is surjective.\;We first define the injection (by Lemma \ref{parameterline}):
\begin{equation}
		\homo_{\sm}(\bZ_{S^u_0}(L),E)\hookrightarrow \ext^1_{G,\sigma}\Big(\pi_{\lalg},\pi_{\lalg}\Big)\hookrightarrow \ext^1_{G,\sigma}\Big(\pi_{\lalg},V_{i,u,\sigma}\Big).\;
\end{equation}
We next construct a canonical injection $\homo_{\sigma}(\bL_{\widehat{i}}(L),E)\hookrightarrow\ext^1_{G,\sigma}\left(\pi_{\lalg}(\underline{x},\bh),V_{i,u,\sigma}\right)$.\;Put
	\begin{equation}
		R_{u}:=\left(\ind^G_{\op_{\widehat{i}}(L)}\mathrm{PS}^{\infty}_{\widehat{i},u}(\underline{x})\otimes L_{\widehat{i}}(\lambda_{\sigma})\right)^{\sigma-\ana}.\;
	\end{equation}
Consider the following injection:
	\begin{equation}
		\begin{aligned}
			\homo_{\sigma}(\bL_{\widehat{i}}(L),E)&\hookrightarrow \ext^1_{G,\sigma}\left(R_{u},R_{u}\right)\\
			\psi&\mapsto \left(\ind^G_{\op_{\widehat{i}}(L)}\mathrm{PS}^{\infty}_{\widehat{i},u}(\underline{x})\otimes L_{\widehat{i}}(\lambda_{\sigma})\otimes(1+(\psi\circ\det\nolimits_{\bL_{\widehat{i}}})\epsilon)\right)^{\sigma-\ana}.
		\end{aligned}
	\end{equation}
	Composed with the pull-back map for the natural map $\pi_{\lalg}\hookrightarrow R_{u}$,\;we get a map $f_u:\homo_{\sigma}(\bL_{\widehat{i}}(L),E)\hookrightarrow \ext^1_{G,\sigma}\big(\pi_{\lalg},R_{u}\big)\rightarrow  \ext^1_{G,\sigma}\big(\pi_{\lalg},V_{i,u,\sigma}\big)
	$ (it is still an injection and we can prove that it only depends on $(V_{i,u,\sigma},\iota_{i,u,\sigma})$.\;The map $f_u$ induces an injection
	\[\homo_{\sm}(\bL_{\widehat{i}}(L),E)\rightarrow \ext^{1}_{G,\sigma}\big(\pi_{\lalg},\pi_{\lalg}\big),\]
	which is compatible with the injection $\homo_{\sm}(\bZ_{S^u_0}(L),E)\rightarrow\ext^1_{G,\sigma}\big(\pi_{\lalg},\pi_{\lalg}\big)$.\;Thus,\;we get the desired map.\;On the other hand,\;by the second isomorphism in Lemma \ref{reinterforextgp},\;we see that the image of $\homo_{\sigma}(\bL_{\widehat{i}}(L),E)$ in $\ext^1_{G,\sigma}\big(\pi_{\lalg},V_{i,u,\sigma}\big)$ is not contained in $\ext^1_{G,\sigma}\left(\pi_{\lalg},\pi_{\lalg}\right)$,\;so that the second map in (\ref{isoforfullextgp}) is an surjection and thus an isomorphism by comparing the dimensions.\;
\end{proof}

\begin{pro}
	We have a perfect paring of $1$-dimensional $E$-vector spaces:
	\begin{equation}\label{dualpairing}
		\ext^1_{G,\sigma}\left(\pi_{\lalg}(\underline{x},\bh),C_{i,u,\sigma}\right)\times\ext^1_{G,\sigma}\left(C_{i,u,\sigma},\pi_{\lalg}(\underline{x},\bh)\right)\xrightarrow{\sim}\homo_{\sigma}(\GLN_{n-i}(\cO_L),E).\;
	\end{equation}
	and a canonical isomorphism of $1$-dimensional $E$-vector space $E\xrightarrow{\sim }\homo_{\sigma}(\GLN_{n-i}(\cO_L),E)$,\;$1\mapsto \log_{p,\sigma}\circ \det$.\;
\end{pro}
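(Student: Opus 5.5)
We will define the pairing by the cup product (Yoneda composition) of extensions, followed by an identification of the resulting $\ext^2$-type class, or equivalently by the obstruction to lifting, with a character of $\GLN_{n-i}(\cO_L)$. The model is the crystabelline argument of \cite[Section 2.1]{BDcritical25} and \cite{ParaDing2024}. There the analogous pairing is built out of the parabolically induced deformations from Proposition \ref{reinterforextgp}, and we follow the same route here.

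\textbf{Step 1 (from characters of $\bL_{\widehat{i}}$ to $\ext^1_{G,\sigma}(\pi_{\lalg},C_{i,u,\sigma})$).} By Proposition \ref{reinterforextgp}, $\ext^1_{G,\sigma}(\pi_{\lalg},V_{i,u,\sigma})$ is identified with $\homo_{\sm}(\bZ_{S^u_0}(L),E)\oplus_{\homo_{\sm}(\bL_{\widehat{i}}(L),E)}\homo_{\sigma}(\bL_{\widehat{i}}(L),E)$. Projecting to $\ext^1_{G,\sigma}(\pi_{\lalg},C_{i,u,\sigma})$ kills $\ext^1_{G,\sigma}(\pi_{\lalg},\pi_{\lalg})$, and the quotient is one-dimensional by Lemma \ref{parameterlineextgp}. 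We then get a canonical surjection
\[\homo_{\sigma}(\bL_{\widehat{i}}(L),E)\big/\big(\homo_{\sm}(\bL_{\widehat{i}}(L),E)+\homo_\sigma(L^\times,E)\circ\det\big)\twoheadrightarrow \ext^1_{G,\sigma}(\pi_{\lalg},C_{i,u,\sigma}).\]
The source is spanned by $\log_{p,\sigma}\circ\det_{\GLN_i}$ and $\log_{p,\sigma}\circ\det_{\GLN_{n-i}}$, modulo their sum. After choosing $\log_p(p)$, this quotient is $\homo_\sigma(\GLN_{n-i}(\cO_L),E)$, because the smooth part absorbs $\val$ and the $\det_{\GLN_n}$ part absorbs the diagonal. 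Both spaces are one-dimensional, so the map is an isomorphism. We must check that it is independent of $\log_p(p)$. This holds because changing $\log_p(p)$ alters the character by a smooth one, which dies in the quotient.

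\textbf{Step 2 (the dual side).} By the same reasoning applied to the dual construction, $\ext^1_{G,\sigma}(C_{i,u,\sigma},\pi_{\lalg})$ is one-dimensional (Lemma \ref{parameterlineextgp}) and is spanned by the class of $V_{i,u,\sigma}$. The natural definition of the pairing is as follows. A class $c\in\ext^1_{G,\sigma}(\pi_{\lalg},C_{i,u,\sigma})$ comes from $\psi\in\homo_\sigma(\GLN_{n-i}(\cO_L),E)$ by Step 1. A class $e=a[V_{i,u,\sigma}]$ has $a\in E$. We set $\langle c,e\rangle:=a\psi$. To show this is well defined, independent of the chosen $V_{i,u,\sigma}$ and $\iota_{i,u,\sigma}$, we use the statement in Proposition \ref{reinterforextgp} that the restriction to $\homo_\sigma(\bL_{\widehat{i}}(L),E)$ depends only on $(V_{i,u,\sigma},\iota_{i,u,\sigma})$. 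Rescaling $V_{i,u,\sigma}$ or $\iota_{i,u,\sigma}$ by $a$ rescales the map of Step 1 by $a^{-1}$, so the product $a\psi$ is invariant. The construction is therefore bilinear, and it is perfect because both factors are lines and $\langle\cdot,\cdot\rangle$ is nonzero, which follows from the injectivity in Step 1.

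\textbf{Step 3 (intrinsic interpretation and the second isomorphism).} To make the pairing canonical rather than merely well defined, we interpret $\langle c,e\rangle$ as the Yoneda product $\ext^1(\pi_{\lalg},C)\times\ext^1(C,\pi_{\lalg})\to\ext^2_{G,\sigma}(\pi_{\lalg},\pi_{\lalg})$. We then compare with $\ext^1$ of the Levi via the Orlik--Strauch/Jacquet adjunction, where the relevant piece is $\homo_\sigma(\GLN_{n-i}(\cO_L),E)$, as in \cite[Proposition 5.1.14]{BQ24}. The second isomorphism, $1\mapsto\log_{p,\sigma}\circ\det$, is immediate: $\homo_\sigma(\GLN_{n-i}(\cO_L),E)$ consists of characters factoring through $\det$, and $\log_{p,\sigma}$ spans $\homo_\sigma(\cO_L^\times,E)$.

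The main obstacle is the canonicity and independence in Step 2, namely that the identification in Step 1 does not depend on $u$ within the double coset, or on the choice of $V_{i,u,\sigma}$. We would first try to settle this directly from the explicit parabolic induction $R_u$, using that $C_{i,u,\sigma}$ depends only on $\mathrm{PS}^{\infty}_{\widehat{i},u}(\underline{x})$.
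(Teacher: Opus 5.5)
Your Steps 1--2 are essentially the paper's argument. The paper likewise uses the proof of Proposition \ref{reinterforextgp} to identify $\ext^1_{G,\sigma}(\pi_{\lalg},V_{i,u,\sigma}/\pi_{\lalg})$ with $\homo_{\sigma}(\bL_{\widehat{i}}(\cO_L),E)/\homo_{\sigma}(\cO_L^{\times},E)\cong\homo_{\sigma}(\GLN_{n-i}(\cO_L),E)$, and it removes the dependence on the chosen extension by passing to the tautological extension attached to the canonical element of $\ext^1_{G,\sigma}(C_{i,u,\sigma},\pi_{\lalg})\otimes_E\ext^1_{G,\sigma}(C_{i,u,\sigma},\pi_{\lalg})^{\vee}$, which is the coordinate-free form of your rescaling check. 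The Yoneda/$\ext^2$ reinterpretation in Step 3 and the independence of $u$ that you flag as the main obstacle are not needed, because Step 2 already gives a choice-free pairing for the fixed $u$, and the paper uses neither.
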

\begin{proof}
	Similar to the proof of \cite[Proposition 2.1.10]{BDcritical25},\;we have the following (formal) isomorphism of $1$-dimensional $E$-vector spaces:
	\[\ext^1_{G,\sigma}\left(C_{i,u,\sigma},\pi_{\lalg}\right)\otimes_E\ext^1_{G,\sigma}\left(\pi_{\lalg},C_{i,u,\sigma}\right)^{\vee}\xrightarrow{\sim}\ext^1_{G,\sigma}\left(C_{i,u,\sigma}\otimes\ext^1_{G,\sigma}\left(\pi_{\lalg},C_{i,u,\sigma}\right),\pi_{\lalg}\right),\]
	where the $G$-representation $C_{i,u,\sigma}\otimes\ext^1_{G,\sigma}\left(\pi_{\lalg},C_{i,u,\sigma}\right)$ has the trivial action of $G$ on $\ext^1_{G,\sigma}\left(\pi_{\lalg},C_{i,u,\sigma}\right)$. Any 
	representative $V'_{i,u,\sigma}$ of the image of the canonical vector of the left hand side is isomorphic to a non-split extension of $C_{i,u,\sigma}$ by $\pi_{\lalg}$.\;Then $V'_{i,u,\sigma}$ corresponds to an injection $\iota:\pi_{\lalg}\hookrightarrow V'_{i,u,\sigma}$ and an isomorphism $\kappa:V'_{i,u,\sigma}/\pi_{\lalg}\xrightarrow{\sim}C_{i,u,\sigma}\otimes_E\ext^1_{G,\sigma}\left(\pi_{\lalg},C_{i,u,\sigma}\right)$.\;By the same strategy as in the \cite[(37)]{BDcritical25},\;we have a canonical isomorphism of $1$-dimensional $E$-vector spaces (where the embedding $\homo_{\sigma}(\cO_L^{\times},E)\hookrightarrow \homo_{\sigma}(\bL_{\widehat{i}}(\cO_L^{\times}),E)$ is induced by $\det:\bL_{\widehat{i}}(\cO_L^{\times})\rightarrow \cO_L^{\times}$):
	\[\homo_{\sigma}(\GLN_{n-i}(\cO_L),E)\xrightarrow{\sim}\homo_{\sigma}(\bL_{\widehat{i}}(\cO_L^{\times}),E)/\homo_{\sigma}(\cO_L^{\times},E).\]
	By the proof of Proposition \ref{reinterforextgp} (similar to \cite[(34)]{BDcritical25}),\;the latter space is isomorphic to $\ext^1_{G,\sigma}\left(\pi_{\lalg},V/\pi_{\lalg}\right)$ and thus isomorphic to (via $\kappa$) $\ext^1_{G,\sigma}\left(\pi_{\lalg},C_{i,u,\sigma}\otimes_E\ext^1_{G,\sigma}\left(\pi_{\lalg},C_{i,u,\sigma}\right)\right)\xleftarrow{\sim} \ext^1_{G,\sigma}\left(\pi_{\lalg},C_{i,u,\sigma}\right)\otimes_E\ext^1_{G,\sigma}\left(\pi_{\lalg},C_{i,u,\sigma}\right)$,\;the result follows.\;
\end{proof}
\begin{rmk}\label{rmkforextgroupsigma}Similar to the Step $1$ in the proof of \cite[Proposition 2.2.4]{BDcritical25},\;the surjection $\bL_{\widehat{i}}(L)\twoheadrightarrow \GLN_{n-i}(L)$ gives a canonical injection $\homo_{\sigma}(\GLN_{n-i}(L),E)\hookrightarrow \homo_{\sigma}(\bL_{\widehat{i}}(L),E)$.\;A choice of $\log_p(p)$ define a section $\homo_{\sigma}(\cO_L^{\times},E)\hookrightarrow \homo_{\sigma}(L^{\times},E)$ to the restriction map $\homo_{\sigma}(L^{\times},E)\hookrightarrow \homo_{\sigma}(\cO_L^{\times},E)$,\;and thus a section $\homo_{\sigma}(\GLN_{n-i}(\cO_L^{\times}),E)\hookrightarrow \homo_{\sigma}(\GLN_{n-i}(L),E)$ to the restriction map $\homo_{\sigma}(\GLN_{n-i}(L),E)\hookrightarrow \homo_{\sigma}(\GLN_{n-i}(\cO_L^{\times}),E)$,\;which induces an isomophism:
	\begin{equation*}
		\begin{aligned}
			\Big(\homo_{\sm}(\bL_{\widehat{i}}(L),E)\oplus_{\homo_{\sm}(L^{\times},E)}\homo(L^{\times},E)\Big)\oplus \homo_{\sigma}(\GLN_{n-i}(\cO_L^{\times}),E)\xrightarrow{\sim} \homo_{\sigma}(\bL_{\widehat{i}}(L),E).
		\end{aligned}
	\end{equation*}
	We thus get (similar to \cite[(59)]{BDcritical25}) isomorphsims $\homo_{\sigma}(\GLN_{n-i}(\cO_L^{\times}),E)\xrightarrow{\sim}\ext^1_{G,\sigma}\left(\pi_{\lalg},V_{i,u,\sigma}/\pi_{\lalg}\right)$ and 
	\begin{equation*}
		\begin{aligned}
			\Big(\homo_{\sm}(\bZ_{S^u_0}(L),E)\oplus_{\homo_{\sm}(L^{\times},E)}\homo(L^{\times},E)\Big)\oplus \homo_{\sigma}(\GLN_{n-i}(\cO_L^{\times}),E)\xrightarrow{\sim} \ext^1_{G,\sigma}\left(\pi_{\lalg},V_{i,u,\sigma}\right).
		\end{aligned}
	\end{equation*}
\end{rmk}

\subsubsection{Constructions of locally analytic representations}

For $u\in\sW_s$,\;consider the locally  $\bQ_p$-analytic  principal series:
\begin{equation}\label{locaparabolicind}
	\begin{aligned}
	&\mathrm{PS}_{u}(\underline{x},\bh):=\Big(\ind^G_{\op_{S^u_0}(L)}\pi_{\sm}(\underline{x}^u)\eta_{S^u_0}\otimes_E L_{S_0^u}(\lambda)\Big)^{\bQ_p-\ana}.\\
	\end{aligned}
\end{equation}
Note that the locally $\bQ_p$-algebraic vectors  $\mathrm{PS}^{\lalg}_{u}(\underline{x},\bh)$ in $\mathrm{PS}_{u}(\underline{x},\bh)$ is isomorphic to  $\soc_G\mathrm{PS}_{u}(\underline{x},\bh)=\pi_{\lalg}(\underline{x},\bh)$.\;Let $\mathrm{PS}_{u,1}(\underline{x},\bh)$ be the first two layers of the socle filtrartion of $\mathrm{PS}_{u}(\underline{x},\bh)$.\;By \cite[The main theorem]{orlik2015jordan}),\;
\[\mathrm{PS}_{u,1}(\underline{x},\bh)=\bigoplus_{\pi_{\lalg}(\underline{x},\bh)}^{{j\in \Delta_s,\sigma\in \Sigma_L}}V_{t^u_j,[u]_j,\sigma}.\;\]
Consider the amalgamated sum:
\[\pi^{\flat}_{1}(\underline{x},\bh):=\bigoplus_{\pi_{\lalg}(\underline{x},\bh)}^{\substack{j\in \Delta_s,u\in\sW_{s}^{\widehat{j},\emptyset}\\\sigma\in \Sigma_L}}V_{t^u_j,u,\sigma},\;\]
so that we have a short exact sequence:
\begin{equation}\label{firstwhole}
	0\rightarrow \pi_{\lalg}(\underline{x},\bh) \rightarrow \pi^{\flat}_{1}(\underline{x},\bh)\rightarrow \oplus_{\substack{j\in \Delta_s,u\in\sW_{s}^{\widehat{j},\emptyset}\\\sigma\in \Sigma_L}}C_{t^u_j,u,\sigma}\rightarrow 0.\;
\end{equation}
Moreover,\;$\pi^{\flat}_{1}(\underline{x},\bh)$ is the unique quotient of the amalgamated sum 
\[\bigoplus^{u\in \sW_{s}}_{\pi_{\lalg}(\underline{x},\bh)}\mathrm{PS}_{u,1}(\underline{x},\bh)\]
of socle $\pi_{\lalg}(\underline{x},\bh)$.\;We have injection $\mathrm{PS}_{u,1}(\underline{x},\bh)\hookrightarrow \pi^{\flat}_{1}(\underline{x},\bh)$ of $G$-representations for any $u\in \sW_{s}$.\;

For $u\in \sW_s$,\;we consider the natural map
\begin{equation}
	\begin{aligned}
		\homo\big(\bZ_{S^u_0}(L),E\big)&\rightarrow\ext^1_{G}\big(\mathrm{PS}_{u}(\underline{x},\bh),\mathrm{PS}_{u}(\underline{x},\bh)\big),\;\\
		\psi&\mapsto \Big(\ind^G_{\op_{S^u_0}(L)}\pi_{\sm}(\underline{x}^u)\eta_{S^u_0}\otimes_EL_{S^u_0}(\lambda)\otimes_E(1+(\psi\circ\mathrm{det}_{\bL_{S_0^u}})\epsilon)\Big)^{\bQ_p-\ana}.
	\end{aligned}
\end{equation}
In particular,\;by Schraen's spectral sequence \cite[(4.37)]{schraen2011GL3},\;we have an isomorphism
\begin{equation}
	\begin{aligned}
		\homo\big(\bZ_{S^u_0}(L),E\big)&\xrightarrow{\sim}\ext^1_{G}\big(\pi_{\lalg}(\underline{x},\bh),\mathrm{PS}_{u}(\underline{x},\bh)\big).
	\end{aligned}
\end{equation}
By \cite[Lemma 2.26]{2019DINGSimple},\;this map factors through (induced by the injection $\mathrm{PS}_{u,1}(\underline{x},\bh)\hookrightarrow \mathrm{PS}_{u}(\underline{x},\bh)$)
\[\ext^1_{G}\big(\pi_{\lalg}(\underline{x},\bh),\mathrm{PS}_{u,1}(\underline{x},\bh)\big)\rightarrow \ext^1_{G}\big(\pi_{\lalg}(\underline{x},\bh),\mathrm{PS}_{u}(\underline{x},\bh)\big),\]
then we obtain a map $\homo\big(\bZ_{S^u_0}(L),E\big)\rightarrow\ext^1_{G}\big( \pi_{\lalg}(\underline{x},\bh),\mathrm{PS}_{u,1}(\underline{x},\bh)\big)$.\;Composing with the push-forward map via the injection $\mathrm{PS}_{u,1}(\underline{x},\bh)\hookrightarrow \pi^{\flat}_1(\underline{x},\bh)$,\;we actually obtain a map:
\[\zeta_{u}:\homo\big(\bZ_{S^u_0}(L),E\big)\rightarrow\ext^1_{G}\big(\pi_{\lalg}(\underline{x},\bh),\pi^{\flat}_1(\underline{x},\bh)\big).\;\]

\begin{pro}\label{extpi1repn}
	\begin{itemize}
		\item[(1)] We have  $\dim_E\ext^1_{G}\big(\pi_{\lalg}(\underline{x},\bh),\mathrm{PS}_{u,1}(\underline{x},\bh)\big)=s(1+d_L)$ and an exact sequence:
		\begin{equation}\label{extenforPSw}
			\begin{aligned}
				0\rightarrow \ext^1_{G}\big(\pi_{\lalg}(\underline{x},\bh),\pi_{\lalg}(\underline{x},\bh)\big)\rightarrow &\;\ext^1_{G}\big(\pi_{\lalg}(\underline{x},\bh),\mathrm{PS}_{u,1}(\underline{x},\bh)\big)\\
				&\rightarrow \oplus_{\substack{j\in \Delta_s\\\sigma\in \Sigma_L}}  \ext^1_{G}\Big(\pi_{\lalg}(\underline{x},\bh),C_{t_j^u,[u]_j,\sigma}\Big)\rightarrow 0 .
			\end{aligned}
		\end{equation}
		Thus,\;we get an ismorphism $\zeta_u:\homo\big(\bZ_{S^u_0}(L),E\big)\xrightarrow{\sim}\ext^1_{u}\big(\pi_{\lalg}(\underline{x},\bh),\mathrm{PS}_{u,1}(\underline{x},\bh)\big)$.\;
		\item[(2)]We have $\ext^1_{G}\left(\pi_{\lalg}(\underline{x},\bh),\pi^{\flat}_1(\underline{x},\bh)\right)=s+(2^s-1)d_L$ and an short exact sequence:
		\begin{equation}
			\begin{aligned}
				0\rightarrow \ext^1_{G}\big(\pi_{\lalg}(\underline{x},\bh),\pi_{\lalg}(\underline{x},\bh)\big)\rightarrow &\;\ext^1_{G}\big(\pi_{\lalg}(\underline{x},\bh),\pi^{\flat}_1(\underline{x},\bh)\big)\\
				&\rightarrow \oplus_{\substack{j\in \Delta_s,u\in \sW_{s}^{\widehat{j},\emptyset}\\\sigma\in \Sigma_L}}  \ext^1_{G}\left(\pi_{\lalg}(\underline{x},\bh),C_{t^u_j,u,\sigma}\right)\rightarrow 0 .
			\end{aligned}
		\end{equation}
	\end{itemize}
\end{pro}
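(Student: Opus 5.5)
The argument runs through the long exact sequences of $\ext^\bullet_G(\pi_{\lalg}(\underline{x},\bh),-)$ attached to $0\to\pi_{\lalg}\to \mathrm{PS}_{u,1}(\underline{x},\bh)\to\oplus_{j,\sigma}C_{t^u_j,[u]_j,\sigma}\to0$ and to (\ref{firstwhole}). In each case we get exactness on the left, then surjectivity on the right, and then count dimensions. Write $\pi_{\lalg}:=\pi_{\lalg}(\underline{x},\bh)$.

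\emph{Left exactness.} This needs $\homo_G(\pi_{\lalg},C)=0$ for every constituent $C=C_{t^u_j,u,\sigma}$. That holds because $C$ is irreducible and not locally algebraic, being given by the Orlik--Strauch functor with the non-dominant weight $s_{i,\sigma}\cdot\lambda$. Hence
\[0\to \ext^1_G(\pi_{\lalg},\pi_{\lalg})\to \ext^1_G(\pi_{\lalg},\mathrm{PS}_{u,1})\to \oplus_{j,\sigma}\ext^1_G(\pi_{\lalg},C_{t^u_j,[u]_j,\sigma})\]
is exact, and similarly for $\pi^\flat_1$.

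\emph{Surjectivity, part (1).} Because $\mathrm{PS}_{u,1}$ is the amalgamated sum over $\pi_{\lalg}$ of the $V_{t^u_j,[u]_j,\sigma}$, it suffices to treat one summand at a time. Fix $V=V_{i,u,\sigma}$. By Proposition \ref{reinterforextgp}, the map $\ext^1_{G,\sigma}(\pi_{\lalg},V)\to\ext^1_{G,\sigma}(\pi_{\lalg},C_{i,u,\sigma})$ is surjective. We push forward along $V\hookrightarrow \mathrm{PS}_{u,1}$ and use the compatibility of the quotient maps, which reduces surjectivity onto each summand to this single-$V$ statement. Lemma \ref{parameterline} gives $\dim\ext^1_G(\pi_{\lalg},\pi_{\lalg})=s+d_L$, and Lemma \ref{parameterlineextgp} makes each of the $(s-1)d_L$ right-hand terms $1$-dimensional. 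The total is therefore $s+d_L+(s-1)d_L=s(1+d_L)$. This equals $\dim\homo(\bZ_{S^u_0}(L),E)=s(1+d_L)$, so $\zeta_u$ is an isomorphism once it is injective. Injectivity holds because the composite with $\mathrm{PS}_{u,1}\hookrightarrow\mathrm{PS}_u$ is Schraen's isomorphism $\homo(\bZ_{S^u_0}(L),E)\xrightarrow{\sim}\ext^1_G(\pi_{\lalg},\mathrm{PS}_u)$, via \cite[Lemma 2.26]{2019DINGSimple}.

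\emph{Surjectivity, part (2).} Each target summand $\ext^1_G(\pi_{\lalg},C_{t^u_j,u,\sigma})$ with $u\in\sW_s^{\widehat j,\emptyset}$ is hit by pushing forward from $\ext^1_G(\pi_{\lalg},V_{t^u_j,u,\sigma})$ along $V_{t^u_j,u,\sigma}\hookrightarrow\pi^\flat_1$, again by Proposition \ref{reinterforextgp}. For the count: the number of pairs $(j,u)$ with $j\in\Delta_s$ and $u\in\sW_s^{\widehat j,\emptyset}$ is $\sum_{j=1}^{s-1}\binom{s}{j}=2^s-2$, since the coset $\sW_{\widehat j}u$ is determined by the $j$-subset $\{u^{-1}(1),\dots,u^{-1}(j)\}$. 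Together with the preceding lemma, this shows the $C$'s are pairwise distinct. The dimension is then $(s+d_L)+(2^s-2)d_L=s+(2^s-1)d_L$.

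\emph{Main obstacle.} The delicate point is the reduction of surjectivity in the amalgamated sum to the single-$V$ case. The cokernel of the connecting map lies in $\ext^2_G(\pi_{\lalg},\pi_{\lalg})$, so it must be shown that the connecting maps $\ext^1_G(\pi_{\lalg},C)\to\ext^2_G(\pi_{\lalg},\pi_{\lalg})$ vanish for each $C$. This vanishing is exactly what the lifts through each $V$, supplied by Proposition \ref{reinterforextgp}, provide. One must also check that these lifts are compatible in the pushout, i.e. that the maps $\ext^1(\pi_{\lalg},V)\to\ext^1(\pi_{\lalg},\pi^\flat_1)$ are well defined and their images sum correctly. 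For this I would use the functoriality of the long exact sequences with respect to the inclusion of each $V$ into the amalgam.
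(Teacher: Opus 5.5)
Your proposal is correct and follows essentially the paper's route: surjectivity onto each summand via Proposition \ref{reinterforextgp} (implicitly using $\ext^1_{G,\sigma}(\pi_{\lalg},C_{i,u,\sigma})=\ext^1_{G}(\pi_{\lalg},C_{i,u,\sigma})$, which holds because both are $1$-dimensional), the dimension counts from Lemmas \ref{parameterline} and \ref{parameterlineextgp}, and part (2) by pushing forward from the constituents of the amalgamated sum. The only difference is in why $\zeta_u$ is an isomorphism: you get injectivity from the fact that its composite with $\mathrm{PS}_{u,1}\hookrightarrow \mathrm{PS}_u$ is Schraen's isomorphism and then compare dimensions, whereas the paper matches the decomposition of $\homo(\bZ_{S^u_0}(L),E)$ from Remark \ref{rmkforextgroupsigma} against the graded pieces of (\ref{extenforPSw}), and your version is if anything the cleaner of the two.
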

\begin{proof}
	For $(1)$,\;by Proposition \ref{reinterforextgp},\;the second last map in (\ref{extenforPSw}) is surjective.\;On the other hand,\;
	the natural morphism (by Remark \ref{rmkforextgroupsigma})
	\[\homo\big(\bZ_{S^u_0}(L),E\big)\rightarrow \Big(\homo_{\sm}(\bZ_{S^u_0}(L),E)\oplus_{\homo_{\sm}(L^{\times},E)}\homo(L^{\times},E)\Big)\oplus \Big(\oplus_{\sigma,i\in \Delta\backslash S_0^u}\homo_{\sigma}(\GLN_{n-i}(\cO_L^{\times}),E)\Big)\]
	is an isomorphism,\;therefore $\zeta_u$ is an ismorphism.\;When $u$ varying,\;we prove that the second last map in $(2)$ is also surjective since it is covered by the last term in (\ref{extenforPSw}) with $u$ varying.\;The dimension of $\ext^1_{G}\left(\pi_{\lalg}(\underline{x},\bh),\pi^{\flat}_1(\underline{x},\bh)\right)$ is equal to $s+d_L+d_L\sum_{j\in \Delta_s}\#\sW_{s}^{\widehat{j},\emptyset}=s+(2^s-1)d_L$.\;
\end{proof}


\begin{rmk}\label{rmkforsigmaextANA}
Fix $\sigma\in\Sigma_L$,\;replacing the locally $\bQ_p$-analytic parabolic induction in (\ref{locaparabolicind}) by locally $\sigma$-analytic parabolic induction,\;we obtain a $\sigma$-analytic version of all above discussion.\;We get a locally $\sigma$-analytic representation  $\pi^{\flat}_{1,\sigma}(\underline{x},\bh)$ which lies in the following exact sequence:
\begin{equation}\label{firstwholesigma}
	0\rightarrow \pi_{\lalg}(\underline{x},\bh) \rightarrow \pi^{\flat}_{1,\sigma}(\underline{x},\bh)\rightarrow \oplus_{\substack{j\in \Delta_s\\u\in \sW_{s}^{\widehat{j},\emptyset}}}C_{t_j^u,u,\sigma}\rightarrow 0.\;
\end{equation}
Note that $\pi^{\flat}_{1,\sigma}(\underline{x},\bh)=\bigoplus^{\pi_{\lalg}(\underline{x},\bh)}_{j\in \Delta_s,u\in\sW_{s}^{\widehat{j},\emptyset}}V_{t^u_j,u,\sigma}$ and $\pi^{\flat}_{1}(\underline{x},\bh)=\bigoplus^{\pi_{\lalg}(\underline{x},\bh)}_{\sigma\in\Sigma_L}\pi^{\flat}_{1,\sigma}(\underline{x},\bh)$.\;We have  a short exact sequence:
\begin{equation}
	\begin{aligned}
		0\rightarrow \ext^1_{G,\sigma}\big(\pi_{\lalg}(\underline{x},\bh),\pi_{\lalg}(\underline{x},\bh)\big)\rightarrow &\;\ext^1_{G,\sigma}\big(\pi_{\lalg}(\underline{x},\bh),\pi^{\flat}_{1,\sigma}(\underline{x},\bh)\big)\\
		&\rightarrow \oplus_{\substack{j\in \Delta_s\\u\in \sW_{s}^{\widehat{j},\emptyset}}}  \ext^1_{G,\sigma}\left(\pi_{\lalg}(\underline{x},\bh),C_{t^u_j,u,\sigma}\right)\rightarrow 0 .
	\end{aligned}
\end{equation}
Moreover,\;$\dim_E\ext^1_{G,\sigma}\left(\pi_{\lalg}(\underline{x},\bh),\pi^{\flat}_{1,\sigma}(\underline{x},\bh)\right)=s+(2^s-1)$.\;In a similar way,\;for $u\in\sW_s$,\;we get a map:
\[\zeta_{u,\sigma}:\homo_{\sigma}(\bZ_{S^u_0}(L),E)\rightarrow\ext^1_{G,\sigma}\big(\pi_{\lalg}(\underline{x},\bh),\pi^{\flat}_{1,\sigma}(\underline{x},\bh)\big).\;\]
\end{rmk}

\subsection{Main results}\label{reconstD}

This section follows the route in the recent work \cite[Section 2.4]{BDcritical25} of Breuil-Ding.\;Fix $\sigma\in \Sigma_L$,\;we will construct a map which only depends  on a choice of $\log_p(p)\in E$  (recall the locally $\sigma$-analytic representation  $\pi^{\flat}_{1,\sigma}(\underline{x},\bh)$  in Remark \ref{rmkforsigmaextANA}):
	\begin{equation}\label{constructionvarphi}
	\begin{aligned}
		t^{\flat}_{D_{\sigma}}:\ext^1_{G,\sigma}\left(\pi_{\lalg}(\underline{x},\bh),\pi^{\flat}_{1,\sigma}(\underline{x},\bh)\right)\rightarrow \ext^{1,\flat}_{\gr}({\Dpik},{\Dpik})\oplus\homo_{\fil}(D_{\sigma},D_{\sigma}).\;
	\end{aligned}
\end{equation}
(see Section \ref{reinterfor33} for the spaces $\ext^{1,\flat}_{\gr}({\Dpik},{\Dpik})$ and $\homo_{\fil}(D_{\sigma},D_{\sigma})$) and 
\begin{itemize}
	\item[(a)] describe the image of $t^{\flat}_{D_{\sigma}}$.\;
	\item[(b)] describe what information of  Hodge filtration  is determined by the kernel of  $t^{\flat}_{D_{\sigma}}$.\;
\end{itemize}
In the sequel,\;put
\[\Delta'=\{t_j^u:u\in \sW_s,j\in \Delta_s\}=\cup_{u\in \sW_s}\Delta\backslash S_0^u\subseteq\Delta.\]
For $i\in\Delta'$,\;put $\cI_{i}^{\flat}:=\{u\in \sW_s:\;i=t_{l_i(u)}^u \text{\;for some\;} 1\leq l_i(u)\leq s-1\}=\{u\in \sW_s:i\in \Delta\backslash S_0^u\}$.\;For $u\in \cI_{i}^{\flat}$,\;let 
\[I_i(u)=\{u^{-1}(1),\cdots,u^{-1}(l_i(u))\},\]
and $I_i(u)^c=\{1,\cdots,s\}\backslash I_i(u)$.\;In particular,\;if $S_0=\emptyset$,\;then $\Delta'=\Delta$ and $\cI_{i}^{\flat}\cong \{I\subseteq\{1,\cdots,n\}:\;|I|=i\}$.\;


Recall that we have fixed a basis $\{e_{1,j,\sigma}\}_{1\leq j\leq r_1},\cdots,\{e_{s,j,\sigma}\}_{1\leq j\leq r_s}$ of $D_{\sigma}^1,\cdots,D_{\sigma}^s$ respectively,\;and we relabel them with $e_{1,\sigma},\cdots,e_{n,\sigma}$  by using lexicographical ordering (a basis of $D_{\sigma}=\oplus_{1\leq i\leq s}D_{\sigma}^i$).\;For $1\leq i\leq s$,\;let 
\[I_i=\{q:\;1\leq q\leq n,\;e_{q,\sigma}=e_{i,j,\sigma} \text{\;for some\;} 1\leq j\leq r_i\}=\{t_{i-1}^1+1,\cdots,t_i^1\}.\]
For $J\subseteq \{1,\cdots,n\}$,\;put
\[e_J:=\wedge_{j\in J}e_{j,\sigma}\in \bigwedge_E\nolimits^{\!|J|}\!D_{\sigma}.\]
For $1\leq i\leq s$,\;put 
\[\be_i:=\wedge_{j=1}^{r_i}e_{i,j,\sigma}=e_{I_i}\in\bigwedge\nolimits_E^{\!r_i}\!D^i_{\sigma}.\;\]
For $I\subseteq \{1,\cdots,s\}$,\;put $D_{\sigma}[I]:=\oplus_{i\in I}D_{\sigma}^i$ and 
\[\be_I:=\wedge_{i\in I}\be_i\in \bigwedge_E\nolimits^{\!\sum_{i\in I}r_i}\!D_{\sigma}[I]\subset \bigwedge_E\nolimits^{\!\sum_{i\in I}r_i}\!D_{\sigma}.\;\]

\subsubsection{Preliminaries}

%

For each $u\in \cI_{i}^{\flat}$,\;we fix an
isomorphism of $1$-dimensional $E$-vector spaces (as in \cite[(43)]{BDcritical25}):
\begin{equation}
	\epsilon_{u,i}:\ext^1_{G,\sigma}\big(C_{i,u,\sigma},\pi_{\lalg}(\underline{x},\bh)\big)\xrightarrow{\sim}E\be_{I_i(u)^c}\in \bigwedge\nolimits_E^{\!n-i}\!D_{\sigma}.
\end{equation}
Then we get for each $i\in\Delta'$ a map:
\begin{equation}\label{dfnforeplisonistar}
	\epsilon_{i}:=\bigoplus_{u\in \cI_{i}^{\flat}}\epsilon_{u,i}:\ext^1_{G,\sigma}\Big(\bigoplus_{u\in \cI_{i}^{\flat}}C_{i,u,\sigma},\pi_{\lalg}(\underline{x},\bh)\Big)\rightarrow\bigwedge\nolimits_E^{\!n-i}\!D_{\sigma}.\;
\end{equation}
Let $\big(\bigwedge_{E}^{\!n-i}\!D_{\sigma}\big)^{\flat}$ be the image of $\epsilon_{i}$.\;Let
\[\big(\bigwedge\nolimits_{E}^{\!n-i}\!D_{\sigma}\big)^{\flat,c}=E\big\langle e_J:\;J\subseteq \{1,\cdots,n\},|J|=n-i,e_J\neq \be_{I_i(u)^c},\;\forall\;u\in \cI_{i}^{\flat}\big\rangle.\]
Then $\bigwedge_E^{n-i}D_{\sigma}=\big(\bigwedge\nolimits_{E}^{\!n-i}\!D_{\sigma}\big)^{\flat}\oplus\big(\bigwedge\nolimits_{E}^{\!n-i}\!D_{\sigma}\big)^{\flat,c}$ and  $\dim_E\big(\bigwedge_{E}^{\!n-i}\!D_{\sigma}\big)^{\flat}=|\cI_{i}^{\flat}|$.\;In the sequel,\;we identity $\big(\bigwedge_{E}^{\!n-i}\!D\big)^{\flat}$ with the quotient $\bigwedge_{E}^{\!n-i}\!D/\big(\bigwedge_{E}^{\!n-i}\!D\big)^{\flat,c}$.\;

For $1\leq i\leq n-1$,\;we define the following $E$-line of $\bigwedge_E^{n-i}D_{\sigma}$:
\begin{equation}
	\begin{aligned}
		\fil_i^{\max}(D_{\sigma})&:= \fil_H^{-\bh_{n,\sigma}}(D_{\sigma})\wedge \fil_H^{-\bh_{n-1,\sigma}}(D_{\sigma})\wedge\cdots\wedge \fil_H^{-\bh_{i+1,\sigma}}(D_{\sigma})\\
		&\xrightarrow{\sim}\bigwedge\nolimits_E^{\!n-i}\!\fil_H^{-\bh_{i+1,\sigma}}(D_{\sigma})\subseteq\bigwedge\nolimits_E^{\!n-i}\!D_{\sigma} .\;
	\end{aligned}
\end{equation}
For each $u\in \cI_{i}^{\flat} $,\;the non-critical assumption implies that the coefficient of $\be_{I_i(u)^c}$  in 	$\fil_i^{\max}(D_{\sigma})$ is non-zero.\;

The composition of the surjections in \cite[(68) \& (69)]{BDcritical25} and the inverse of the isomorphism \cite[(69)]{BDcritical25} gives:
\begin{equation}\label{mapforgisigma}
	\begin{aligned}
		g_{i,\sigma}:\homo_E\Big(\bigwedge\nolimits_E^{\!n-i}\!&D_{\sigma},\fil_i^{\max}(D_{\sigma})\Big)=\homo_E\Big(\bigwedge\nolimits_E^{\!n-i}\!D_{\sigma},\bigwedge\nolimits_E^{\!n-i}\!\fil_H^{-\bh_{i+1,\sigma}}(D_{\sigma})\Big)\\&\twoheadrightarrow \homo_E\Big(\bigwedge\nolimits_E^{\!n-i-1}\!\fil_H^{-\bh_{i+1,\sigma}}(D_{\sigma})\wedge D_{\sigma} ,\bigwedge\nolimits_E^{\!n-i}\!\fil_H^{-\bh_{i+1,\sigma}}(D_{\sigma})\Big)\\
		&\xrightarrow{\sim}\left\{f\in\homo_E\big(D_{\sigma},\fil_H^{-\bh_{i+1,\sigma}}(D_{\sigma})\big):f|_{\fil_H^{-\bh_{i+1,\sigma}}(D_{\sigma})} \text{scalar}\right\}\hookrightarrow \homo_{\fil}\big(D_{\sigma},D_{\sigma}\big),
	\end{aligned}
\end{equation}
the third inclusion is given by \cite[Lemma 2.2.5 (ii)]{BDcritical25}.\;By \cite[Lemma 2.2.5 (ii)]{BDcritical25},\;we have a surjection
\[\bigoplus_{i=1}^n\left\{f\in\homo_E\big(D_{\sigma},\fil_H^{-\bh_{i+1,\sigma}}(D_{\sigma})\big):f|_{\fil_H^{-\bh_{i+1,\sigma}}(D_{\sigma})} \text{scalar}\right\}\twoheadrightarrow \homo_{\fil}(D_{\sigma},D_{\sigma}).\]
Let $\Delta':=\{i_1<i_2<\cdots<i_{|\Delta'|}\}$ and $S:=\{i_{l_1}<i_{l_2}<\cdots<i_{l_{|S|}}\}\subseteq \Delta'$ for $1\leq l_1<\cdots<l_{|S|}\leq |\Delta'|$.\;The partial filtration $(\fil_H^{\bh_{j+1},\sigma}(D_{\sigma}),j\in S)$  on $D$ induces a natural decreasing filtration on $\bigwedge\nolimits_{E}^{\!n-i}\!D$.\;In  \cite[(97)-(99)]{BDcritical25},\;the author consider the \textit{one but last filtration} of this induced filtration:
\[\fil_{S,i}^{2^{\mathrm{nd}}-\max}(D_{\sigma})\]
Note that $\fil_i^{\max}(D_{\sigma})$ is just the last step of this induced filtration by \cite[Remark 2.3.6,\;(ii)]{BDcritical25}.\;See \cite[(97)-(99)]{BDcritical25} for the following definitions of $\fil_{S,i}^{2^{\mathrm{nd}}-\max}(D_{\sigma})$:
\begin{equation}
	\begin{aligned}
		&\fil_{S,i_{l_{|S|}}}^{2^{\mathrm{nd}}-\max}(D_{\sigma})=\Big(\bigwedge\nolimits_E^{\!n-i_{l_{|S|}}-1}\!\fil_H^{-\bh_{i_{l_{|S|}}+1,\sigma}}(D_{\sigma})\Big)\wedge \fil_H^{-\bh_{i_{l_{|S|-1}}+1,\sigma}}(D_{\sigma})\\
		&\fil_{S,i_{l_{j}}}^{2^{\mathrm{nd}}-\max}(D_{\sigma})=\Big(\bigwedge\nolimits_E^{\!n-i_{l_{j+1}}}\!\fil_H^{-\bh_{i_{l_{j+1}}+1,\sigma}}(D_{\sigma})\Big)\wedge\Big(\bigwedge\nolimits_E^{\!i_{l_{j+1}}-i_{l_{j}}-1}\!\fil_H^{-\bh_{i_{l_{j}}+1,\sigma}}(D_{\sigma})\Big)\wedge \fil_H^{-\bh_{i_{l_{j-1}}+1,\sigma}}(D_{\sigma})
	\end{aligned}
\end{equation}
 for $j\in\{1,\cdots,|S|-1\}$,\;In particular,\;if $S=\{i\}$,\;then $\fil_{\{i\},i}^{2^{\mathrm{nd}}-\max}(D_{\sigma})=\bigwedge\nolimits_E^{\!n-i-1}\!\fil_H^{-\bh_{i+1,\sigma}}(D_{\sigma})\wedge D_{\sigma}$ (see the second row of (\ref{mapforgisigma})).\;

For $i\in \Delta'$,\;define 
\begin{equation}
	\begin{aligned}
		D_{\sigma,(i)}^{\flat}:=\oplus_{u\in \cI_{i}^{\flat}}\oplus_{l_i(u)+1\leq q\leq s}D_{\sigma}^{u^{-1}(q)}=\oplus_{u\in \cI_{i}^{\flat}}\oplus_{j\in I_i(u)^c}D_{\sigma}^{j}=D_{\sigma}[\cS_i]
	\end{aligned}
\end{equation}
with $\cS_i:=\cup_{u\in \cI_{i}^{\flat}}I_i(u)^c\subseteq\{1,\cdots,s\}$.\;Put 
\[D_{\sigma}^{\flat,(i)}:=D_{\sigma}[\{1,\cdots,s\}\backslash\cS_i]=\oplus_{q\in \{1,\cdots,s\}\backslash\cS_i}D_{\sigma}^q.\]
Thus $D_{\sigma}=D_{\sigma}^{\flat,(i)}\oplus D^{\flat}_{\sigma,(i)}$.\;By definition,\;we have inclusions
\[\big(\bigwedge\nolimits_{E}^{\!n-i}\!D_{\sigma}\big)^{\flat}\hookrightarrow\bigwedge\nolimits_E^{\!n-i}\!D_{\sigma,(i)}^{\flat}\hookrightarrow \bigwedge\nolimits_E^{\!n-i}\!D_{\sigma}.\]


In the sequel,\;let $\fil_{S,i}^{2^{\mathrm{nd}}-\max}(D)^{\flat}$  (resp.,\;$\fil_i^{\max}(D_{\sigma})^{\flat}$)
be the image of $\fil_{S,i}^{2^{\mathrm{nd}}-\max}(D)$ (resp.,\;$\fil_i^{\max}(D_{\sigma})$) in $ \bigwedge_E^{n-i}D_{\sigma}\twoheadrightarrow \bigwedge_E^{n-i}D_{\sigma}/\big(\bigwedge_{E}^{\!n-i}\!D_{\sigma}\big)^{\flat,c}\cong \big(\bigwedge_{E}^{\!n-i}\!D_{\sigma}\big)^{\flat}$.\;

\begin{lem}\label{lemforwedge} 
	The image under $g_{i,\sigma}$ of $\homo_E\Big(\big(\bigwedge_{E}^{\!n-i}\!D_{\sigma}\big)^{\flat},\bigwedge\nolimits_E^{\!n-i}\!\fil_H^{-\bh_{i+1,\sigma}}(D_{\sigma})\Big)$ is 
	\begin{equation}\label{imageofgflat}
		\begin{aligned}
			&\;\left\{f\in\homo_E\big(D_{\sigma}/D_{\sigma}^{\flat,(i)},\fil_H^{-\bh_{i+1}}(D_{\sigma})\big):f|_{\fil_H^{-\bh_{i+1}}(D_{\sigma})} \text{\;scalar\;}\right\} \Big(\hookrightarrow \homo_{\fil}\big(D_{\sigma,(i)}^{\flat},D_{\sigma}\big)\Big).
		\end{aligned}
	\end{equation}
Moreover,\;the kernel under $g_{i,\sigma}$ of $\homo_E\Big(\big(\bigwedge_{E}^{\!n-i}\!D_{\sigma}\big)^{\flat},\bigwedge\nolimits_E^{\!n-i}\!\fil_H^{-\bh_{i+1,\sigma}}(D_{\sigma})\Big)$ is 
\begin{equation}
	\begin{aligned}
		&\;\homo_E\Big(\big(\bigwedge\nolimits_{E}^{\!n-i}\!D_{\sigma}\big)^{\flat}/\fil_{\{i\},i}^{2^{\mathrm{nd}}-\max}(D_{\sigma})^{\flat},\fil_i^{\max}(D_{\sigma})^{\flat}\Big)
	\end{aligned}
\end{equation}
\end{lem}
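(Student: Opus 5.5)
We make the map $g_{i,\sigma}$ of (\ref{mapforgisigma}) explicit and compute directly. Write $F:=\fil_H^{-\bh_{i+1,\sigma}}(D_{\sigma})$; it has dimension $n-i$, and $\fil_i^{\max}(D_{\sigma})=\bigwedge^{n-i}_E F$ is a line. Following \cite[Lemma 2.2.5]{BDcritical25}, the middle isomorphism in (\ref{mapforgisigma}) works as follows. Take a functional $\phi$ on $\bigwedge^{n-i-1}F\wedge D_{\sigma}$ with values in the line $\bigwedge^{n-i}F$. We attach to it the endomorphism $f_\phi:D_\sigma\to F$ characterised by
\[v_1\wedge\cdots\wedge v_{n-i-1}\wedge v\mapsto \textstyle\sum_k v_1\wedge\cdots\wedge f_\phi(v_k)\wedge\cdots\wedge v_{n-i-1}\wedge v+\cdots\]
This is the usual derivation formula, normalised so that $f_\phi|_F$ is the scalar $\phi|_{\bigwedge^{n-i}F}$ divided by the appropriate factor. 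The first step is to record that $g_{i,\sigma}(\phi)$ depends only on the restriction of $\phi$ to the second-max piece $\fil_{\{i\},i}^{2^{\mathrm{nd}}-\max}(D_\sigma)=\bigwedge^{n-i-1}F\wedge D_\sigma$. Hence the kernel of $g_{i,\sigma}$ on all of $\homo_E(\bigwedge^{n-i}D_\sigma,\fil_i^{\max})$ is exactly the set of functionals vanishing on this subspace.

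The second step treats $\homo_E\big((\bigwedge^{n-i}D_\sigma)^{\flat},\bigwedge^{n-i}F\big)$ as the subspace of functionals vanishing on $(\bigwedge^{n-i}D_\sigma)^{\flat,c}$. Its kernel under $g_{i,\sigma}$ is then the set of functionals vanishing on $(\bigwedge^{n-i}D_\sigma)^{\flat,c}+\bigwedge^{n-i-1}F\wedge D_\sigma$. This is the same as $\homo_E\big((\bigwedge^{n-i}D_\sigma)^{\flat}/\fil_{\{i\},i}^{2^{\mathrm{nd}}-\max}(D_\sigma)^{\flat},-\big)$, since the image of the second-max piece in the quotient is by definition $\fil^{2^{\mathrm{nd}}-\max}(D_\sigma)^\flat$. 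We also use that $\fil_i^{\max}(D_\sigma)\to\fil_i^{\max}(D_\sigma)^\flat$ is an isomorphism of lines: by the non-critical assumption the coefficient of $\be_{I_i(u)^c}$ is nonzero. This gives the kernel statement.

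For the image, we first show the inclusion into (\ref{imageofgflat}). Let $v\in D_\sigma^{\flat,(i)}=D_\sigma[\{1,\dots,s\}\backslash\cS_i]$ and $w\in\bigwedge^{n-i-1}F$. Expand $w\wedge v$ in the basis $e_J$. Every $J$ that occurs contains an index in a block $D^q_\sigma$ with $q\notin\cS_i$. Each $\be_{I_i(u)^c}$ only involves blocks in $I_i(u)^c\subseteq\cS_i$, so such $J$ are never of the form $I_i(u)^c$. Thus $w\wedge v\in(\bigwedge^{n-i}D_\sigma)^{\flat,c}$, a $\flat$-functional kills it, and therefore $f_\phi(v)=0$. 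So $f_\phi$ factors through $D_\sigma/D_\sigma^{\flat,(i)}$, and it is scalar on $F$ by construction. Note $F\cap D^{\flat,(i)}_\sigma=0$, again by non-criticality via the nonzero coefficients. Consequently "scalar on $F$" makes sense on the quotient, and the target embeds into $\homo_{\fil}(D^\flat_{\sigma,(i)},D_\sigma)$ via $D^\flat_{\sigma,(i)}\cong D_\sigma/D^{\flat,(i)}_\sigma$.

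For surjectivity we compare dimensions using rank–nullity: the dimension of the image equals $|\cI_i^\flat|$ minus the dimension of the kernel just computed. This reduces to a dimension count of $\fil^{2^{\mathrm{nd}}-\max}(D_\sigma)^\flat$, namely showing that
\[\dim\fil^{2^{\mathrm{nd}}-\max}(D_\sigma)^\flat=\dim\big\{f\in\homo(D_\sigma/D_\sigma^{\flat,(i)},F):f|_F\ \text{scalar}\big\}.\]
We compute it by choosing a basis of $F$ adapted to the decomposition $D_\sigma=D^{\flat,(i)}_\sigma\oplus D^\flat_{\sigma,(i)}$, which is possible by generic position coming from non-criticality. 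I expect this to be the main obstacle: it requires controlling exactly which $\be_{I_i(u)^c}$-coordinates the second-max piece hits. If the direct count is messy, the first thing I would try is to argue dually: for each $f$ in the target, write down explicitly a functional on $\bigwedge^{n-i}D_\sigma$ that kills the $\flat,c$ part and maps to $f$. One defines it on $\bigwedge^{n-i-1}F\wedge D_\sigma$ by the derivation formula, and checks compatibility with vanishing on $(\bigwedge^{n-i}D_\sigma)^{\flat,c}$ using $f(D^{\flat,(i)}_\sigma)=0$.
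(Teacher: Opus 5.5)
Your kernel computation and your proof that the image lies in (\ref{imageofgflat}) are correct and agree with the paper's argument. In fact your formulation, $w\wedge v\in(\bigwedge^{n-i}_ED_\sigma)^{\flat,c}$ for $v\in D^{\flat,(i)}_\sigma$ and $w\in\bigwedge^{n-i-1}_EF$, is the precise version of the paper's first sentence. Also, $F\cap D^{\flat,(i)}_\sigma=0$ holds because $D^{\flat,(i)}_\sigma\subseteq D_\sigma[I_i(u)]$ for every $u\in\cI^\flat_i$.

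\textbf{The gap is surjectivity, and it cannot be filled.} The dimension identity you reduce to is false in general, so the first assertion of the lemma fails as stated. The source $\homo_E\big((\bigwedge^{n-i}_ED_\sigma)^\flat,\bigwedge^{n-i}_EF\big)$ has dimension $\dim_E(\bigwedge^{n-i}_ED_\sigma)^\flat$, which is the number of distinct sets $I_i(u)$ for $u\in\cI^\flat_i$. By Lemma \ref{filforhomospace}, the space (\ref{imageofgflat}) has dimension $1+(i-\bd_i)(n-i)$.

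\emph{Counterexample.} Take $n=4$, $r_1=r_2=2$, $i=2$, with $e_1,e_2$ spanning $D^1_\sigma$ and $e_3,e_4$ spanning $D^2_\sigma$. Then:
\begin{itemize}
\item $\cI^\flat_2=\{1,s_1\}$ and $(\bigwedge^2_ED_\sigma)^\flat=E\langle e_3\wedge e_4,\,e_1\wedge e_2\rangle$, which is $2$-dimensional;
\item $\cS_2=\{1,2\}$, so $D^{\flat,(2)}_\sigma=0$ and (\ref{imageofgflat}) is the whole space $\{f\in\homo_E(D_\sigma,F):f|_F\ \text{scalar}\}$, which is $5$-dimensional.
\end{itemize}
So a $2$-dimensional source cannot surject onto a $5$-dimensional target. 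One can say more. Non-criticality for $u=1$ and $u=s_1$ says exactly that the Pl\"ucker coordinates $p_{34}$ and $p_{12}$ of $F$ are nonzero. The Pl\"ucker relation then forces some $p_{jk}$ with $\{j,k\}\neq\{1,2\},\{3,4\}$ to be nonzero. Hence $(\bigwedge^2_ED_\sigma)^{\flat,c}\not\subseteq F\wedge D_\sigma=\{\omega:\omega\wedge\omega_F=0\}$, where $\omega_F$ generates $\bigwedge^2_EF$. So the kernel is $0$ and the image is exactly $2$-dimensional.

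The paper's own surjectivity step has the same defect. It lifts $f$ to some $g$ on all of $\bigwedge^{n-i}_ED_\sigma$ and then replaces $g$ by its restriction to $(\bigwedge^{n-i}_ED_\sigma)^\flat$. But $g_{i,\sigma}(g)$ depends on $g$ restricted to $\bigwedge^{n-i-1}_EF\wedge D_\sigma$. Elements of that subspace generally have nonzero components in $(\bigwedge^{n-i}_ED_\sigma)^{\flat,c}$, so discarding that part of $g$ changes $g_{i,\sigma}(g)$.

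Your dual fallback breaks at the compatibility check for the same reason. The functional determined by $f$ on $\bigwedge^{n-i-1}_EF\wedge D_\sigma$ extends by zero on $(\bigwedge^{n-i}_ED_\sigma)^{\flat,c}$ only if it vanishes on the intersection of these two subspaces. That condition is strictly stronger than $f(D^{\flat,(i)}_\sigma)=0$; in the example it imposes three independent extra conditions.

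What your argument does establish is the kernel statement, plus the fact that the image is the subspace of (\ref{imageofgflat}) of dimension $\dim_E\fil^{2^{\mathrm{nd}}-\max}_{\{i\},i}(D_\sigma)^\flat$. Equality with (\ref{imageofgflat}) does hold if you replace $(\bigwedge^{n-i}_ED_\sigma)^\flat$ by the larger space $\bigwedge^{n-i}_ED^\flat_{\sigma,(i)}$. That space consists of functionals factoring through $\bigwedge^{n-i}$ of the projection $D_\sigma\to D^\flat_{\sigma,(i)}$ along $D^{\flat,(i)}_\sigma$, and the Breuil--Ding isomorphism applies to it with $F$ replaced by its isomorphic image. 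The two spaces coincide when $S_0=\emptyset$, but not in general.
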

\begin{proof}For each $e_t\in D_{\sigma}^{\flat,(i)}$,\;$x\wedge e_t\notin \big(\bigwedge\nolimits_{E}^{\!n-i}\!D_{\sigma}\big)^{\flat}$ for any $x\in \bigwedge\nolimits_E^{\!n-i-1}\!\fil_H^{-\bh_{i+1}}(D_{\sigma})$,\;then $f$ vanishes on $D_{\sigma}^{\flat,(i)}$ by definition.\;When $f$ belongs to the space in (\ref{imageofgflat}),\;there exists $g\in \homo_E\big(\bigwedge\nolimits_E^{\!n-i}\!D_{\sigma},\bigwedge_E^{n-i}\fil_H^{-\bh_{i+1}}(D_{\sigma})\big)$ such that $g_{i,\sigma}(g)=f$,\;then   $g_{i,\sigma}\big(g|_{\big(\bigwedge\nolimits_{E}^{\!n-i}\!D_{\sigma}\big)^{\flat}}\big)=f$.\;This proves the surjectivity.\;By \cite[Remark 2.3.6,\;(ii)]{BDcritical25},\;we have
\[\ker(g_{i,\sigma})= \homo_E\Big(\big(\bigwedge\nolimits_{E}^{\!n-i}\!D_{\sigma}\big)/\fil_{\{i\},i}^{2^{\mathrm{nd}}-\max}(D_{\sigma}),\fil_i^{\max}(D_{\sigma})\Big).\]
This deduces the second assertion.\;
\end{proof}	

\begin{lem}\label{hodgegapforquofil}
	Consider the induced subspace filtration $\fil_H^{\bullet}(D_{\sigma}^{\flat,(i)})$ of $\fil_H^{\bullet}(D_{\sigma})$ on $D_{\sigma}^{\flat,(i)}$ and the quotient filtration $\fil_{\mathrm{quot}}^{\bullet}(D^{\flat}_{\sigma,(i)})$ on $D^{\flat}_{\sigma,(i)}\cong D_{\sigma}/D_{\sigma}^{\flat,(i)}$.\;Let  $\bd_i:=\dim_ED_{\sigma}^{\flat,(i)}\leq i$.\;Then
	\begin{equation}
	\begin{aligned}
		\gr_H^{-\bh_{j,\sigma}}(D_{\sigma}^{\flat,(i)})\neq 0,\quad \big(\text{resp.,\;}\gr_{\mathrm{quot}}^{-\bh_{j,\sigma}}(D^{\flat}_{\sigma,(i)})\big)
	\end{aligned}
	\end{equation}
	if and only if $1\leq j\leq \bd_i$ (resp.,\;$\bd_i+1\leq j\leq n$).\;Moreover,\;we have
		\begin{equation}\label{comparefilwithflat}
		\begin{aligned}
			\fil_H^{-\bh_{j,\sigma}}(D_{\sigma}^{\flat,(i)})/\cong \fil_H^{-\bh_{j,\sigma}}(D_{\sigma})/\fil_H^{-\bh_{\bd_i+1,\sigma}}(D_{\sigma})\quad (\text{resp.,\;}\fil_{\mathrm{quot}}^{-\bh_{j,\sigma}}(D^{\flat}_{\sigma,(i)})\cong\fil_H^{-\bh_{j,\sigma}}(D_{\sigma})).
		\end{aligned}
	\end{equation}
\end{lem}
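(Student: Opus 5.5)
The plan is to realize $D_{\sigma}^{\flat,(i)}$ as the first step of one of the $\Omega_{S_0^{u'}}$-filtrations $\cF_{u'}$. Once that is done, the non-critical assumption gives its Hodge--Tate weights, and the rest is linear algebra of complete flags.

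\textbf{Step 1: combinatorics of $D_{\sigma}^{\flat,(i)}$.} By definition $\cS_i=\bigcup_{u\in\cI_i^{\flat}}I_i(u)^c$. Hence the block set $J:=\{1,\cdots,s\}\backslash\cS_i$ equals $\bigcap_{u\in\cI_i^{\flat}}I_i(u)$, and $D_{\sigma}^{\flat,(i)}=D_{\sigma}[J]$. Since $\cI_i^{\flat}\neq\emptyset$ for $i\in\Delta'$, pick any $u\in\cI_i^{\flat}$. Then $J\subseteq I_i(u)$, and the blocks in $I_i(u)$ have total rank $t^u_{l_i(u)}=i$. This gives $\bd_i=\sum_{j\in J}r_j\leq i$.

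Now choose $u'\in\sW_s$ with $\{u'^{-1}(1),\cdots,u'^{-1}(|J|)\}=J$; such a $u'$ exists because any ordering putting $J$ first will do. Then $t^{u'}_{|J|}=\bd_i$ and
\[D_{\sigma}^{\flat,(i)}=\fil^{\bF_{u',\sigma}}_{|J|}(D_{\sigma}),\]
because the graded pieces of $\bF_{u'}$ on $D_{\sigma}$ are the $D^{u'^{-1}(q)}_{\sigma}$. This uses that $\cM_{\Dpik}|_{L'}$ splits as $\oplus_i E_i[1/t]|_{L'}$, so the block decomposition of $D_{\sigma}$ is compatible with every $\bF_u$. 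If $J=\emptyset$ the statement is trivial.

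\textbf{Step 2: the weights of the subspace.} The saturated submodule $\fil^{\cF_{u'}}_{|J|}\Dpik$ gives a sub-$B_{\dr}^+$-lattice $W^+_{\dr}(\fil^{\cF_{u'}}_{|J|}\Dpik)=\bW^+_{\Dpik}\cap\fil^{\bF_{u'}}_{|J|}\bW_{\Dpik}$. Hence the Hodge filtration of $D_{\pdr}$ of this submodule is the induced filtration $\fil_H^{\bullet}(D_{\sigma})\cap D_{\sigma}^{\flat,(i)}$. By the non-critical assumption its $\sigma$-Hodge--Tate weights are $\{\bh_{1,\sigma},\cdots,\bh_{\bd_i,\sigma}\}$, so $\gr_H^{-\bh_{j,\sigma}}(D_{\sigma}^{\flat,(i)})\neq0$ exactly for $1\leq j\leq\bd_i$. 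The full flag on $D_{\sigma}$ has one jump at each $-\bh_{j,\sigma}$, and a short exact sequence of filtered spaces has jumps adding up. The quotient filtration on $D_{\sigma}/D_{\sigma}^{\flat,(i)}\cong D^{\flat}_{\sigma,(i)}$ therefore jumps exactly at $j=\bd_i+1,\cdots,n$.

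\textbf{Step 3: the comparison (\ref{comparefilwithflat}).} Step 2 says $D_{\sigma}^{\flat,(i)}\cap\fil_H^{-\bh_{\bd_i+1,\sigma}}(D_{\sigma})=0$. This subspace has dimension $n-\bd_i$, so $D_{\sigma}=D_{\sigma}^{\flat,(i)}\oplus\fil_H^{-\bh_{\bd_i+1,\sigma}}(D_{\sigma})$.
\begin{itemize}
\item For $j\geq\bd_i+1$, the projection to $D_{\sigma}/D_{\sigma}^{\flat,(i)}$ is injective on $\fil_H^{-\bh_{j,\sigma}}(D_{\sigma})$. This gives $\fil^{-\bh_{j,\sigma}}_{\mathrm{quot}}\cong\fil_H^{-\bh_{j,\sigma}}(D_{\sigma})$; for $j\leq\bd_i$ the image is everything.
\item For $j\leq\bd_i$, $\fil_H^{-\bh_{j,\sigma}}(D_{\sigma})$ contains $\fil_H^{-\bh_{\bd_i+1,\sigma}}(D_{\sigma})$. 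The direct sum decomposition then gives $\fil_H^{-\bh_{j,\sigma}}(D_{\sigma})=\big(\fil_H^{-\bh_{j,\sigma}}(D_{\sigma})\cap D_{\sigma}^{\flat,(i)}\big)\oplus\fil_H^{-\bh_{\bd_i+1,\sigma}}(D_{\sigma})$, which is the asserted isomorphism with $\fil_H^{-\bh_{j,\sigma}}(D_{\sigma})/\fil_H^{-\bh_{\bd_i+1,\sigma}}(D_{\sigma})$.
\end{itemize}

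The only step needing care is Step 2. There one must check that the Hodge flag of $D_{\pdr}$ of a saturated $(\varphi,\Gamma)$-submodule is the induced flag on the subspace. I would deduce this from the exactness of $W_{\dr}^+$ on saturated submodules, as in \cite[Section 6.3]{Ding2021}.
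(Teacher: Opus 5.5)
Your proof is correct and is the paper's argument made explicit: the paper only says the lemma follows from the non-critical assumption in an adapted basis of $D_{\sigma}=D^{\flat}_{\sigma,(i)}\oplus D_{\sigma}^{\flat,(i)}$, and your observation that $D_{\sigma}^{\flat,(i)}=\fil^{\bF_{u',\sigma}}_{|J|}(D_{\sigma})$ for any $u'$ listing $J=\bigcap_{u\in\cI_i^{\flat}}I_i(u)$ first is what justifies that appeal; the weight count and the modular-law linear algebra then match the intended proof. One small point: the compatibility of the block decomposition $D_{\sigma}=\oplus_q D^q_{\sigma}$ with every $\bF_{u}$ is more cleanly justified by the canonical isotypic splitting of the generic Deligne--Fontaine module over $L$ than by the (possibly non-canonical) splitting over $L'$, but this compatibility is built into the paper's setup, so it is not a gap.
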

\begin{proof}
	These follow from the non-critical assumption in an adapted basis of $D_{\sigma}=D^{\flat}_{\sigma,(i)}\oplus D_{\sigma}^{\flat,(i)} $.\;
\end{proof}

\begin{lem}\label{filforhomospace}
For $i\in \Delta'$,\;we have 
	\begin{equation*}
	\begin{aligned}
&\;\left\{f\in\homo_E\big(D_{\sigma}/D_{\sigma}^{\flat,(i)},\fil_H^{-\bh_{i+1,\sigma}}(D_{\sigma})\big):f|_{\fil_H^{-\bh_{i+1,\sigma}}(D_{\sigma})} \text{\;scalar\;}\right\}\\\cong&\; E\oplus \homo_E\Big(\fil_H^{-\bh_{\bd_i+1,\sigma}}(D_{\sigma})/\fil_H^{-\bh_{i+1,\sigma}}(D_{\sigma}),\fil_H^{-\bh_{i+1,\sigma}}(D_{\sigma})\Big)\\\cong&\;E\oplus\bigoplus_{\substack{\bd_i+1\leq a\leq i\\i+1\leq b\leq n}}\homo_E(\gr_H^{-\bh_{a,\sigma}}(D_{\sigma}),\gr_H^{-\bh_{b,\sigma}}(D_{\sigma})).\;
	\end{aligned}
\end{equation*}
\end{lem}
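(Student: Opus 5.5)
The plan is pure linear algebra on the filtered space $D_\sigma$, using Lemma \ref{hodgegapforquofil}. Write $F_j:=\fil_H^{-\bh_{j,\sigma}}(D_\sigma)$, so that $F_{j+1}\subseteq F_j$ and $\dim F_j=n-j+1$. Put $V:=\{f\in\homo_E(D_\sigma/D_\sigma^{\flat,(i)},F_{i+1}):f|_{F_{i+1}}\text{ scalar}\}$, where $f$ is viewed as a map $D_\sigma\to F_{i+1}$ vanishing on $D_\sigma^{\flat,(i)}$.

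\emph{Step 1: the two summands.} The scalar $c_f$ gives a map $V\to E$. It is split by $c\mapsto c\cdot p$, where $p:D_\sigma\to F_{i+1}$ is a projection killing $D_\sigma^{\flat,(i)}$. Such a $p$ exists because $F_{i+1}\cap D_\sigma^{\flat,(i)}=0$. This intersection property follows from Lemma \ref{hodgegapforquofil}: the induced filtration on $D_\sigma^{\flat,(i)}$ only jumps at indices $j\le \bd_i\le i$, so $F_{i+1}\cap D_\sigma^{\flat,(i)}=\fil_H^{-\bh_{i+1,\sigma}}(D_\sigma^{\flat,(i)})=0$. Equivalently, the non-critical condition puts $F_{i+1}$ in general position with respect to $D_\sigma^{\flat,(i)}$, and we can choose the complement $D_\sigma^{\flat,(i)}\oplus F_{i+1}\oplus C$ compatibly.

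The kernel of $f\mapsto c_f$ consists of maps $D_\sigma\to F_{i+1}$ killing both $D_\sigma^{\flat,(i)}$ and $F_{i+1}$. It is therefore $\homo_E\big(D_\sigma/(D_\sigma^{\flat,(i)}+F_{i+1}),F_{i+1}\big)$.

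\emph{Step 2: identify the quotient.} We need $D_\sigma/(D_\sigma^{\flat,(i)}+F_{i+1})\cong F_{\bd_i+1}/F_{i+1}$. By the second part of Lemma \ref{hodgegapforquofil}, the quotient filtration on $D_\sigma/D_\sigma^{\flat,(i)}$ is identified with $F_\bullet$, with jumps exactly at $j\ge \bd_i+1$. So $F_{\bd_i+1}\cap D_\sigma^{\flat,(i)}=0$ and $F_{\bd_i+1}$ maps isomorphically onto $D_\sigma/D_\sigma^{\flat,(i)}$. The dimensions agree: $n-\bd_i=\dim F_{\bd_i+1}$.

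This gives $D_\sigma=D_\sigma^{\flat,(i)}\oplus F_{\bd_i+1}$, and since $F_{i+1}\subseteq F_{\bd_i+1}$ we get the required isomorphism. This yields the first isomorphism of the lemma. It is canonical after choosing the splitting of the scalar part. If one prefers, one can check $\bd_i\le i$ directly from the definition of $\cS_i$: some $I_i(u)^c$ has total rank $n-i$.

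\emph{Step 3: the graded decomposition.} The final isomorphism comes from choosing splittings of the flag $F_\bullet$. With these, $F_{\bd_i+1}/F_{i+1}\cong\bigoplus_{a=\bd_i+1}^{i}\gr_H^{-\bh_{a,\sigma}}$ and $F_{i+1}\cong\bigoplus_{b=i+1}^n\gr_H^{-\bh_{b,\sigma}}$. The Hom space then decomposes bilinearly into the stated sum.

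The only real point is Step 2, namely the transversality $F_{\bd_i+1}\cap D_\sigma^{\flat,(i)}=0$, which is exactly the content of Lemma \ref{hodgegapforquofil}. Everything else is formal.
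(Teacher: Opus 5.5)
Your proof is correct and follows essentially the paper's route. Both arguments identify the non-scalar part with $\homo_E\big(D_\sigma/(D_\sigma^{\flat,(i)}+\fil_H^{-\bh_{i+1,\sigma}}(D_\sigma)),\fil_H^{-\bh_{i+1,\sigma}}(D_\sigma)\big)$ and then use Lemma \ref{hodgegapforquofil} to compute that quotient as $\fil_H^{-\bh_{\bd_i+1,\sigma}}(D_\sigma)/\fil_H^{-\bh_{i+1,\sigma}}(D_\sigma)$. The differences are minor: the paper computes the quotient graded piece by graded piece, using $\fil_H^{-\bh_{a,\sigma}}=\fil_H^{-\bh_{a+1,\sigma}}+D_\sigma^{\flat,(i)}\cap\fil_H^{-\bh_{a,\sigma}}$ for $a\le\bd_i$, whereas you use the decomposition $D_\sigma=D_\sigma^{\flat,(i)}\oplus\fil_H^{-\bh_{\bd_i+1,\sigma}}(D_\sigma)$ directly, and you also spell out the splitting of the scalar summand $E$, which the paper leaves implicit.
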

\begin{proof}We have $\fil_H^{-\bh_{i+1,\sigma}}(D_{\sigma})=\oplus_{i+1\leq b\leq n}\gr_H^{-\bh_{b,\sigma}}(D_{\sigma})$ and 
\begin{equation*}
\begin{aligned}
	&\;D_{\sigma}/(D_{\sigma}^{\flat,(i)}+\fil_H^{-\bh_{i+1,\sigma}}(D_{\sigma}))\\
	=&\;\bigoplus_{1\leq a\leq i}\frac{\fil_H^{-\bh_{a,\sigma}}(D_{\sigma})+D_{\sigma}^{\flat,(i)}}{\fil_H^{-\bh_{a+1,\sigma}}(D_{\sigma})+D_{\sigma}^{\flat,(i)}}=\bigoplus_{1\leq a\leq i}\frac{\fil_H^{-\bh_{a,\sigma}}(D_{\sigma})}{\fil_H^{-\bh_{a+1,\sigma}}(D_{\sigma})+D_{\sigma}^{\flat,(i)}\cap \fil_H^{-\bh_{a,\sigma}}(D_{\sigma})},\\
	=&\;\bigoplus_{\bd_i+1\leq a\leq i}\gr_H^{-\bh_{a,\sigma}}(D_{\sigma})=\fil_H^{-\bh_{\bd_i+1,\sigma}}(D_{\sigma})/\fil_H^{-\bh_{i+1,\sigma}}(D_{\sigma}).\;
\end{aligned}
\end{equation*}
where the third identity follows from ${\fil_H^{-\bh_{a,\sigma}}(D_{\sigma})=\fil_H^{-\bh_{a+1,\sigma}}(D_{\sigma})+D_{\sigma}^{\flat,(i)}\cap \fil_H^{-\bh_{a,\sigma}}}(D_{\sigma})$ when $1\leq a\leq \bd_i$ (by Lemma \ref{hodgegapforquofil} and the non-critical assumption).\;
\end{proof}
 For any $1\leq a,b\leq n$,\;put
 \begin{equation}
 	\begin{aligned}
 		&\cS_{a,b}(S):=\{i\in S:\bd_i+1\leq a\leq i\leq b-1\},\\
 	\end{aligned}
 \end{equation}
 Let
 \begin{equation}
	\begin{aligned}
		\cJ_i:=&\;\{(a,b):\;1\leq a,b\leq n,\;\cS_{a,b}(S)=\{i\}\},\\
		\cJ^1_i:=&\;\{(a,b):\;1\leq a,b\leq n,\;\cS_{a,b}(S)=\{i\}=S\cap \{a,a+1,\cdots,b-1\}\}.\;
	\end{aligned}
\end{equation}

\begin{lem}\label{lemforcardone} Let 
\begin{equation*}
	\cJ^{2}_i:=\cJ_i\backslash \cJ^1_i=\{(a,b):\;1\leq a,b\leq n,\;\cS_{a,b}(S)=\{i\},\;|S\cap \{a,a+1,\cdots,b-1\}|>1\}.\;
\end{equation*}
Then $\cJ^{2}_i=\emptyset$ and hence
	\[\cJ_{l_j}=\cJ^1_{l_j}=\{(a,b):\;\max\{\bd_i,l_{j-1}\}+1\leq a\leq l_j, l_j+1\leq b\leq l_{j+1}\}\]
	for $1\leq j\leq |S|$.\;
\end{lem}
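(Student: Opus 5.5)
The statement reduces to one containment. Fix $(a,b)$ with $\cS_{a,b}(S)=\{i\}$, so $\bd_i+1\leq a\leq i\leq b-1$. It suffices to show that every $i'\in S\cap\{a,\cdots,b-1\}$ already lies in $\cS_{a,b}(S)$, i.e.\ that $\bd_{i'}\leq a-1$. Then $S\cap\{a,\cdots,b-1\}\subseteq\cS_{a,b}(S)=\{i\}$, which gives $\cJ^2_i=\emptyset$. Once $\cJ_i=\cJ^1_i$, the explicit description for $i=i_{l_j}$ is bookkeeping. The condition $S\cap\{a,\cdots,b-1\}=\{i_{l_j}\}$ means $i_{l_{j-1}}+1\leq a\leq i_{l_j}$ and $i_{l_j}+1\leq b\leq i_{l_{j+1}}$, with the conventions $i_{l_0}=0$ and $i_{l_{|S|+1}}=n$. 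The condition $\bd_{i_{l_j}}+1\leq a$ then turns the lower bound on $a$ into $\max\{\bd_{i_{l_j}},i_{l_{j-1}}\}+1$, as displayed.

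The key step is a combinatorial description of $\bd_{i'}$. Unwinding $\cS_{i'}=\bigcup_{u\in\cI^{\flat}_{i'}}I_{i'}(u)^c$, we get $\bd_{i'}=\sum_{q\in F(i')}r_q$. Here $F(i')$ is the set of block indices $q\in\{1,\cdots,s\}$ that lie in \emph{every} subset $I\subseteq\{1,\cdots,s\}$ with $\sum_{p\in I}r_p=i'$. Indeed, as $u$ ranges over $\cI^{\flat}_{i'}$, the sets $I_{i'}(u)$ range over exactly these subsets. Every block of $F(i')$ sits inside a subset of total size $i'$. I would also use the symmetric description: $q\notin F(i')$ if and only if some subset of size $n-i'$ contains $q$.

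Case $i'<i$: I would aim for $F(i')\subseteq F(i)$, hence $\bd_{i'}\leq\bd_i\leq a-1$. Take $q\in F(i')$ and a subset $I$ of size $i$. The plan is to find inside $I$ a sub-subset of size $i'$, or to modify $I$ by exchanging blocks to obtain a subset of size $i'$ disjoint from $q$ if $q\notin I$, which would contradict $q\in F(i')$. The exchange would use the fact that $i,i'\in\Delta'$ are both realized as partial sums.

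Case $i'>i$: here $a\leq i<i'$, and I would instead show $\bd_{i'}\leq\bd_i$ via the complementary description, i.e.\ by comparing the sets $F$ attached to the complementary sizes $n-i'<n-i$.

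I expect this monotonicity of $i'\mapsto\bd_{i'}$ on $\Delta'$, in the form actually needed, to be the main obstacle. It is not formal for arbitrary block sizes $(r_1,\cdots,r_s)$; for instance, subset-sum patterns like $(1,1,3)$ deserve checking. So I would first test small configurations to pin down exactly which inequality holds. If plain monotonicity fails, I would try the weaker statement that suffices here: $\bd_{i'}\leq a-1$ for those $i'$ lying between $a$ and $b-1$. This should be proved using $a\geq\bd_i+1$ together with Lemma \ref{hodgegapforquofil}, which identifies the Hodge jumps of $D^{\flat,(i)}_\sigma$ with $\bh_1,\cdots,\bh_{\bd_i}$.
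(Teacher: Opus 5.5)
Your reduction is correct, and in fact equivalent to the lemma. Whenever $\cS_{a,b}(S)=\{i\}$, the claim is exactly that $\bd_{i'}\le a-1$ for every $i'\in S\cap\{a,\dots,b-1\}$. Your formula $\bd_{i'}=\sum_{q\in F(i')}r_q$ is also right, and so is the bookkeeping from $\cJ_i=\cJ^1_i$ to the displayed set. The gap is the key step, and it cannot be closed, because that inequality is false in general.

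Take $(r_1,r_2,r_3)=(1,1,2)$, so $n=4$ and $\bL_{S_0}=\GL_1\times\GL_1\times\GL_2$, an instance of the paper's own example. Then $\Delta'=\{1,2,3\}$ and $\bd_1=\bd_2=0$. Also $\bd_3=2$, since every way of writing $3$ as a sum of block sizes uses the block of size $2$. Take $S=\Delta'$, $i=2$ and $(a,b)=(2,4)$. Then $\bd_2+1\le 2\le 2\le 3$, but $\bd_3+1=3>2$, so $\cS_{2,4}(S)=\{2\}$, while $S\cap\{2,3\}=\{2,3\}$. Hence $(2,4)\in\cJ^2_2\neq\emptyset$. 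Indeed $\cJ_2=\{(2,3),(2,4)\}$, whereas the displayed formula gives only $\{(2,3)\}$.

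This example breaks your case $i'>i$, since $\bd_3>\bd_2$. Your case $i'<i$ fails too. For $(r_1,r_2,r_3)=(1,2,2)$ one gets $\bd_1=1>\bd_2=0$. Then $(a,b)=(1,3)$ gives $\cS_{1,3}(S)=\{2\}$ but $S\cap\{1,2\}=\{1,2\}$.

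So no monotonicity of $i'\mapsto\bd_{i'}$ is available in either direction. The ``weaker statement'' you fall back on is literally the lemma. Lemma \ref{hodgegapforquofil} cannot rescue it either, since $\bd_{i'}$ depends only on $(r_1,\dots,r_s)$ and not on the Hodge filtration.

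The paper's two-line proof does not get around this. It asserts $a\le i_{l_{j'}}\le\bd_i$ for the extra element $i_{l_{j'}}\in S\cap\{a,\dots,b-1\}$, but nothing justifies $i_{l_{j'}}\le\bd_i$. In the first example this would require $3\le\bd_2=0$. What $i_{l_{j'}}\notin\cS_{a,b}(S)$ actually gives is $\bd_{i_{l_{j'}}}\ge a$, which is no contradiction.

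So the statement itself must be corrected, not just its proof. In general one only has $\cJ_i=\{(a,b):\ \bd_i+1\le a\le i<b,\ \bd_{i'}\ge a\ \text{for all}\ i'\in(S\cap\{a,\dots,b-1\})\setminus\{i\}\}$, which can strictly contain $\cJ^1_i$. The description in Proposition \ref{propfor2ndfil}(2), and what is deduced from it, has to be adjusted accordingly. Your instinct to test small block configurations before attempting a proof was the right one.
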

\begin{proof}
	Suppose that $S\cap \{a,a+1,\cdots,b-1\}=\{i_{l_q},\cdots,i_{l_r}\}$ such that $q\leq j<r$.\;Therefore,\;there exists $j'$ such that  $q\leq j'\leq r$ and $j'\neq j$,\;and then $a\leq i_{l_{j'}}\leq \bd_i<\bd_i+1\leq i_{l_j}$,\;this is impossible.\;
\end{proof}

\subsubsection{Construction of map $t^{\flat}_{D_{\sigma}}$}


Similar to \cite[(46)]{BDcritical25},\;we consider the morphisms of $E$-vector spaces:
\begin{equation}\label{compositiondfnpisi}
	\begin{aligned}
		&\ext^1_{G,\sigma}\Big(\bigoplus_{u\in \cI_{i}^{\flat}}C_{i,u,\sigma},\pi_{\lalg}(\underline{x},\bh)\Big)\otimes \ext^1_{G,\sigma}\Big(\bigoplus_{u\in \cI_{i}^{\flat}}C_{i,u,\sigma},\pi_{\lalg}(\underline{x},\bh)\Big)^{\vee}\\
		\xrightarrow{\sim}&\;\ext^1_{G,\sigma}\Big(\Big(\bigoplus_{u\in \cI_{i}^{\flat}}C_{i,u,\sigma}\Big)\otimes\ext^1_{G,\sigma}\Big(\bigoplus_{u\in \cI_{i}^{\flat}}C_{i,u,\sigma},\pi_{\lalg}(\underline{x},\bh)\Big) ,\pi_{\lalg}(\underline{x},\bh)\Big)\\
		\xrightarrow{\sim}&\;\ext^1_{G,\sigma}\Big(\Big(\bigoplus_{u\in \cI_{i}^{\flat}}C_{i,u,\sigma}\Big)\otimes_E\fil_i^{\max}(D_{\sigma})^{\flat},\pi_{\lalg}(\underline{x},\bh)\Big),
	\end{aligned}
\end{equation}
where the second morphism is the push-forward induced by the composition
\[\fil_i^{\max}(D_{\sigma})^{\flat}\hookrightarrow \big(\bigwedge\nolimits_{E}^{\!n-i}\!D_{\sigma}\big)^{\flat}\xrightarrow{\epsilon_i^{-1}}\ext^1_{G,\sigma}\Big(\bigoplus_{u\in \cI_{i}^{\flat}}C_{i,u,\sigma},\pi_{\lalg}(\underline{x},\bh)\Big).\] 
Let $\pi^{\flat}_{s_i,\sigma}(\underline{x},\bh)$ (resp.,\;$\pi^{\flat}_{s_i,u,\sigma}(\underline{x},\bh)$) be a representative of the image for the canonical vector of the first term of (\ref{compositiondfnpisi}) by the composition (\ref{compositiondfnpisi}),\;which lies in the extension group 
\[\ext^1_{G,\sigma}\Big(\Big(\bigoplus_{u\in \cI_{i}^{\flat}}C_{i,u,\sigma}\Big)\otimes_E\fil_i^{\max}(D_{\sigma})^{\flat},\pi_{\lalg}(\underline{x},\bh)\Big),\;\text{resp.,\;}\ext^1_{G,\sigma}\Big(C_{i,u,\sigma}\otimes_E\fil_i^{\max}(D_{\sigma})^{\flat},\pi_{\lalg}(\underline{x},\bh)\Big).\]
We have injections $\pi_{\lalg}(\underline{x},\bh)\hookrightarrow \pi^{\flat}_{s_i,u,\sigma}(\underline{x},\bh) \hookrightarrow \pi^{\flat}_{s_i,\sigma}(\underline{x},\bh)$ of $G$-representations and a canonical isomorphism
\[\bigoplus^{u\in \cI_{i}^{\flat}}_{\pi_{\lalg}(\underline{x},\bh)}\pi^{\flat}_{s_i,u,\sigma}(\underline{x},\bh)\xrightarrow{\sim }\pi^{\flat}_{s_i,\sigma}(\underline{x},\bh).\;\]

\begin{pro}\label{construext1}Denote
	\begin{equation}
		\homo^{\flat,\dagger}_{\fil}(D_{\sigma},D_{\sigma}):=\sum_{i\in \Delta'}g_{i,\sigma}\Big(\homo_E\Big(\big(\bigwedge\nolimits_{E}^{\!n-i}\!D_{\sigma}\big)^{\flat},\fil_i^{\max}(D_{\sigma})^{\flat}\Big)\Big)\hookrightarrow \homo_{\fil}(D_{\sigma},D_{\sigma}).
	\end{equation}
There is a surjection of finite dimensional $E$-vector spaces which only depends on the $(\epsilon_{u,i})_{i\in \Delta',u\in \cI_{i}^{\flat}}$ and  on a choice of $\log_p(p)\in E$:
\[t^{\flat}_{D_{\sigma}}:\ext^1_{G,\sigma}\big(\pi_{\lalg}(\underline{x},\bh),\pi^{\flat}_{1,\sigma}(\underline{x},\bh)\big)\rightarrow\ext^{1,\flat}_{\gr}({\Dpik},{\Dpik})\oplus\homo^{\flat,\dagger}_{\fil}(D_{\sigma},D_{\sigma}).\]
Taking sum over $\sigma\in \Sigma_L$ leads to a surjection:
\[t^{\flat}_{D}:\ext^1_{G}\big(\pi_{\lalg}(\underline{x},\bh),\pi^{\flat}_{1}(\underline{x},\bh)\big)\rightarrow\ext^{1,\flat}_{\gr}({\Dpik},{\Dpik})\oplus\homo^{\flat,\dagger}_{\fil}(D,D),\]
where $\homo^{\flat,\dagger}_{\fil}(D,D):=\oplus_{\sigma\in \Sigma_L}\homo^{\flat,\dagger}_{\fil}(D_{\sigma},D_{\sigma})$.\;
\end{pro}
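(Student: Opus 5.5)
The construction follows \cite[Section 2.4]{BDcritical25}: define $t^{\flat}_{D_{\sigma}}$ separately on the two pieces of the short exact sequence in Remark \ref{rmkforsigmaextANA}, then glue them using the auxiliary representations $\pi^{\flat}_{s_i,\sigma}(\underline{x},\bh)$.

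\textbf{Step 1: the subspace.} On the subspace $\ext^1_{G,\sigma}(\pi_{\lalg},\pi_{\lalg})$, Lemma \ref{parameterline} identifies it with $\homo_{\sm}(\bZ_{S^u_0}(L),E)\oplus_{\homo_{\sm}(L^\times,E)}\homo_{\sigma}(L^\times,E)$. Using the chosen $\log_p(p)$, write $\homo_\sigma(L^\times,E)=\homo_{\sm}(L^\times,E)\oplus E\log_{p,\sigma}$. Then send:
\begin{itemize}
\item the smooth part to $\ext^{1,\flat}_{\gr}(\Dpik,\Dpik)\cong\homo_{\sm}(\bZ_{S_0}(L),E)$, via $\kappa$ and the $u$-independence of Lemma \ref{smhomoindepent};
\item $\log_{p,\sigma}\circ\det$ to the identity (scalar) endomorphism in $\homo_{\fil}(D_\sigma,D_\sigma)$.
\end{itemize}

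\textbf{Step 2: a lift to the whole space.} Fix $i\in\Delta'$. Pushing forward $\ext^1_{G,\sigma}(\pi_{\lalg},\pi^{\flat}_{1,\sigma})$ along the quotient to $\bigoplus_{u\in\cI_i^\flat}C_{i,u,\sigma}$ gives an element of
\[\ext^1_{G,\sigma}\Big(\pi_{\lalg},\bigoplus_{u\in\cI_i^\flat} C_{i,u,\sigma}\Big).\]
The perfect pairing (\ref{dualpairing}), together with $\epsilon_i$, turns this into an element of
\[\homo_E\Big(\big(\bigwedge\nolimits^{n-i}_E D_\sigma\big)^\flat,\fil_i^{\max}(D_\sigma)^\flat\Big).\]
Concretely, one forms the Yoneda-type cup product with the tautological extension $\pi^{\flat}_{s_i,\sigma}(\underline{x},\bh)$ built in (\ref{compositiondfnpisi}), which lands in $\ext^2$/$\homo$ of the $\pi_{\lalg}$ part, evaluated via Proposition \ref{reinterforextgp} and Remark \ref{rmkforextgroupsigma}.

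Apply $g_{i,\sigma}$ to obtain an element of $\homo_{\fil}(D_\sigma,D_\sigma)$, and sum over $i\in\Delta'$. This is not yet well defined on the whole space, because a lift from the quotient is ambiguous up to $\ext^1_{G,\sigma}(\pi_{\lalg},\pi_{\lalg})$. Following \cite[Prop.\ 2.4.x]{BDcritical25}, I fix the lift canonically using the splitting in Remark \ref{rmkforextgroupsigma}: the $V_{i,u,\sigma}$-components are parametrized by $\homo_\sigma(\bL_{\widehat i}(L),E)$. I define $t^\flat_{D_\sigma}$ on $\zeta_{u,\sigma}(\homo_\sigma(\bZ_{S^u_0}(L),E))$ directly, via $\kappa_u$ and $f^\flat_{\Dpik,\sigma}$ from Proposition \ref{splitingforext1gps}, then check compatibility on overlaps using Lemma \ref{smhomoindepent}. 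The images of the $\zeta_{u,\sigma}$ span the whole $\ext^1$ by Proposition \ref{extpi1repn}, so this defines the map everywhere.

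\textbf{Step 3: independence of choices.} Check that the result depends only on $(\epsilon_{u,i})$ and $\log_p(p)$. Fixing $V_{i,u,\sigma}$ introduces a scalar, and this scalar cancels in the canonical-vector construction (\ref{compositiondfnpisi}).

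\textbf{Step 4: surjectivity.} The $\ext^{1,\flat}_{\gr}$ component and the scalars are hit in Step 1. For each $i$, Lemma \ref{lemforwedge} shows that $g_{i,\sigma}$ maps $\homo_E((\bigwedge^{n-i}D_\sigma)^\flat,\fil_i^{\max}(D_\sigma)^\flat)$ onto its image. The domain has dimension $|\cI_i^\flat|$, which matches the $C_{i,u,\sigma}$-quotient pieces; their count is $\sum_j|\sW_s^{\widehat j,\emptyset}|$, since each $C$ corresponds to a unique pair $(i,u\bmod\sW_{\widehat j})$. The map from each piece is an isomorphism because the pairing (\ref{dualpairing}) is perfect and the coefficient of $\be_{I_i(u)^c}$ in $\fil_i^{\max}(D_\sigma)$ is nonzero by non-criticality. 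So the sum over $i$ together with Step 1 hits $\ext^{1,\flat}_{\gr}\oplus\homo^{\flat,\dagger}_{\fil}(D_\sigma,D_\sigma)$. The statement for $t^\flat_D$ then follows by summing over $\sigma$, since $\pi^\flat_1$ is the amalgam of the $\pi^\flat_{1,\sigma}$ and the $\ext^1$ groups decompose accordingly; the $\sigma$-independent part $\homo_{\sm}$ is shared.

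\textbf{Main obstacle.} I expect Step 2 to be the hardest: defining $t^\flat_{D_\sigma}$ consistently on all of $\ext^1$, rather than only on the quotient, and verifying that the contributions from different $u$ and $i$ agree on overlaps inside the amalgamated sum. This needs the explicit $V_{i,u,\sigma}$-parametrization in Proposition \ref{reinterforextgp}, plus a careful comparison of the $\log_{p,\sigma}\circ\det$ normalizations across the Levis $\bL_{\widehat i}$.
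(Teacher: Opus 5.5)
\textbf{Verdict: genuine gap.} Your Step~1 and the first half of Step~2 match the paper's construction, which adapts Steps 1--3 of the proof of \cite[Proposition 2.2.4]{BDcritical25}. That construction consists of:
\begin{itemize}
\item the $\log_p(p)$-splitting of Remark~\ref{rmkforextgroupsigma};
\item the definition on $\ext^1_{G,\sigma}(\pi_{\lalg}(\underline{x},\bh),\pi_{\lalg}(\underline{x},\bh))$ via Lemma~\ref{parameterline};
\item on the quotient, the chain through $\pi^{\flat}_{s_i,\sigma}(\underline{x},\bh)/\pi_{\lalg}(\underline{x},\bh)\cong(\bigoplus_{u\in\cI_i^{\flat}}C_{i,u,\sigma})\otimes_E\fil_i^{\max}(D_\sigma)^{\flat}$, the pairing (\ref{dualpairing}) and $\epsilon_i^{-1}$, followed by $g_{i,\sigma}$ and the sum over $i\in\Delta'$.
\end{itemize}

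The gap is what you do next. You define $t^{\flat}_{D_\sigma}$ on each $\mathrm{Im}(\zeta_{u,\sigma})$ by transporting $f^{\flat}_{\Dpik,\sigma}\circ(\kappa^{\flat}_u)^{-1}$ along $\zeta_{u,\sigma}$, and claim the pieces glue by Lemma~\ref{smhomoindepent}. This is a different, Galois-side map, and it causes three problems.
\begin{itemize}
\item Nothing in your argument shows it equals the $g_{i,\sigma}$-construction. That agreement is exactly Propositions~\ref{comparesharpflat} and~\ref{mainthmforfullrefine}. The paper proves them only after the construction, by computing $t^{\flat}_{D_\sigma}(c_u)(e_t)$ on the basis vectors $e_t$ using non-criticality (the analogue of \cite[Lemma 2.2.9 (iii)]{BDcritical25}).
\item Its well-definedness does not follow from Lemma~\ref{smhomoindepent}. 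That lemma only compares $\kappa_{u_1},\kappa_{u_2}$ on de Rham deformations, i.e.\ on smooth characters. But $\mathrm{Im}(\zeta_{u,\sigma})\cap\mathrm{Im}(\zeta_{u',\sigma})$ also contains non-smooth classes: all of $\ext^1_{G,\sigma}(\pi_{\lalg}(\underline{x},\bh),V_{i,u,\sigma})$ whenever $i\notin S_0^u\cup S_0^{u'}$ and $I_i(u)=I_i(u')$. Matching your two formulas there means comparing non-de Rham parabolic deformations for two different filtrations (e.g.\ through the common step $\fil_{l_i(u)}^{\cF_u}\Dpik=\fil_{l_i(u')}^{\cF_{u'}}\Dpik$). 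You do not address this.
\item Even granting all this, the image would be $\ext^{1,\flat}_{\gr}(\Dpik,\Dpik)\oplus\mathrm{Im}(\nu^{\flat}_{\sigma})$. Identifying that with the stated target, which is built from the $g_{i,\sigma}$, needs Proposition~\ref{comparesharpflat} again.
\end{itemize}

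\textbf{How to repair it.} Delete that paragraph. The $\log_p(p)$-section of Remark~\ref{rmkforextgroupsigma}, with the $\sigma$-version of Proposition~\ref{extpi1repn}(2) in Remark~\ref{rmkforsigmaextANA}, splits $\ext^1_{G,\sigma}(\pi_{\lalg}(\underline{x},\bh),\pi^{\flat}_{1,\sigma}(\underline{x},\bh))$ as $\ext^1_{G,\sigma}(\pi_{\lalg}(\underline{x},\bh),\pi_{\lalg}(\underline{x},\bh))$ plus the images of the spaces $\homo_\sigma(\GLN_{n-i}(\cO_L),E)$. The map is then defined summand by summand, there are no overlaps to check, and the ``main obstacle'' you anticipate does not arise.

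\textbf{Two smaller points.}
\begin{itemize}
\item The pairing (\ref{dualpairing}) is not a Yoneda product into $\ext^2$. It comes from the identification $\ext^1_{G,\sigma}(\pi_{\lalg}(\underline{x},\bh),V_{i,u,\sigma}/\pi_{\lalg}(\underline{x},\bh))\cong\homo_\sigma(\GLN_{n-i}(\cO_L),E)$ of Proposition~\ref{reinterforextgp} and Remark~\ref{rmkforextgroupsigma}.
\item In Step~4, every element of $\sum_{i\in\Delta'}g_{i,\sigma}(\cdots)$ has image inside $\fil_H^{-\bh_{2,\sigma}}(D_\sigma)\subsetneq D_\sigma$. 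So the scalar $\mathrm{id}_{D_\sigma}$ attached to $\log_{p,\sigma}\circ\det$ in Step~1 does not lie in $\homo^{\flat,\dagger}_{\fil}(D_\sigma,D_\sigma)$ as literally defined. The target must be read as $E\cdot\mathrm{id}_{D_\sigma}+\homo^{\flat,\dagger}_{\fil}(D_\sigma,D_\sigma)$, as in \cite{BDcritical25} where it is all of $\homo_{\fil}(D_\sigma,D_\sigma)$. The paper's statement has the same omission, so this corrects the statement rather than your argument.
\end{itemize}
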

\begin{proof}We define $t^{\flat}_{D_{\sigma}}$ in restriction to $\ext^1_{\sigma}\big(\pi_{\lalg}(\underline{x},\bh),\pi_{\lalg}(\underline{x},\bh)\big)$ firstly.\;This is essentially the Step 2 in the proof of \cite[Proposition 2.2.4]{BDcritical25}.\;It suffices to use \cite[(63)]{BDcritical25} and replace \cite[(64) \& (65)]{BDcritical25} with 
\begin{equation}
	\begin{aligned}
		t^{\flat}_{D_{\sigma}}|_{\homo_{\mathrm{sm}}(\bZ_{S_0}(L),E)}&\xrightarrow{\sim }\ext^{1,\flat}_{\gr}({\Dpik},{\Dpik})= \prod_{i=1}^s\ext^{1}_g(E_i[1/t],E_i[1/t])\\
		(\psi_1,\cdots,\psi_s)&\mapsto \Big(E_{i}[1/t]\otimes_{\cR_{E,L}}\cR_{E[\epsilon]/\epsilon^2,L}(1+\psi_{i}\epsilon)\Big)_{1\leq i\leq s}.
	\end{aligned}
\end{equation}
The Step 3 in the proof of \cite[Proposition 2.2.4]{BDcritical25} need more modification.\;Choose $i\in \Delta'$,\;wed have isomorphisms 
	\begin{equation}
		\begin{aligned}
			\ext^1_{G,\sigma}\Big(\pi_{\lalg}(\underline{x},\bh),\pi^{\flat}_{s_i,\sigma}(\Dpik)/\pi_{\lalg}(\underline{x},\bh)\Big)&\xrightarrow{\sim}\ext^1_{G,\sigma}\Big(\pi_{\lalg}(\underline{x},\bh),\Big(\bigoplus_{u\in \cI_{i}^{\flat}}C_{i,u,\sigma}\Big)\otimes_E\fil_i^{\max}(D_{\sigma})^{\flat}\Big)\\
			&\xleftarrow{\sim}\ext^1_{G,\sigma}\Big(\pi_{\lalg}(\underline{x},\bh),\bigoplus_{u\in\cI_{i}^{\flat}}C_{i,u,\sigma}\Big)\otimes_E\fil_i^{\max}(D_{\sigma})^{\flat}\\
			&\xrightarrow[(\ref{dualpairing})]{\sim}\ext^1_{G,\sigma}\Big(\bigoplus_{u\in \cI_{i}^{\flat}}C_{i,u,\sigma},\pi_{\lalg}(\underline{x},\bh)\Big)^{\vee}\otimes_E\fil_i^{\max}(D_{\sigma})^{\flat}\\
			&\xleftarrow{\sim}\homo_E\Big(\big(\bigwedge\nolimits_{E}^{\!n-i}\!D_{\sigma}\big)^{\flat},\fil_i^{\max}(D_{\sigma})^{\flat}\Big).
		\end{aligned}
	\end{equation}
We get the desired surjection $t^{\flat}_{D_{\sigma}}$ by taking sum over  $i\in \Delta'$.\;
\end{proof}
\begin{rmk}
Similar to \cite[Proposition 2.2.7]{BDcritical25},\;we can show that $t^{\flat}_{D_{\sigma}}$ does not depends on the choices of $(\epsilon_{u,i})_{i\in \Delta',u\in \cI_{i}^{\flat}}$.\;
\end{rmk}

\subsubsection{Image of $t^{\flat}_{D_{\sigma}}$}

For  $u\in \sW_s$,\;let $u^{\sharp}\in \sW_n$ be the unique element sending $e_{i,j,\sigma}$ for $1\leq j\leq r_i$ to $e_{u^{-1}(i),j,\sigma}$ for $1\leq j\leq r_i$,\;which thus induces a permutation on $e_{1,\sigma},\cdots,e_{n,\sigma}$.\;

For $i\in \Delta'$ and $u\in \cI_{i}^{\flat}$,\;we  define $\homo_{\fil,\bF_u}^i(D_{\sigma},D_{\sigma})\subseteq \homo_{\fil,\bF_u}(D_{\sigma},D_{\sigma})$ by
\begin{equation}\label{dfnforhomofilmaxilevi}
	\begin{aligned}
		\homo_{\fil,\bF_u}^i(D_{\sigma},D_{\sigma}):=\left\{f\in \homo_{\fil,\bF_u}(D_{\sigma},D_{\sigma}):\exists\;a,b\in E\; \text{such that\;} f(e_{(u^{\sharp})^{-1}(j),\sigma})=ae_{(u^{\sharp})^{-1}(j),\sigma},\right.\\ \left. \forall\;1\leq j\leq i,f(e_{(u^{\sharp})^{-1}(j),\sigma})-be_{(u^{\sharp})^{-1}(j),\sigma}\in \oplus_{l=1}^{i}Ee_{(u^{\sharp})^{-1}(l),\sigma},\forall\;i+1\leq j\leq n\right\},
	\end{aligned}
\end{equation}
Keep the notation in Section \ref{reinterfor33}.\;Then we have (note that $\tau_{\widehat{i},\sigma}\subseteq \tau_{S_0^u,\sigma}\subseteq \fp_{S^u_0,\sigma}$,\;see Remark \ref{explainHomasLiealg})
\begin{equation}
	\begin{aligned}
		&\homo^i_{\fil,\bF_u}(D_{\sigma},D_{\sigma})\cong \mathrm{Ad}_{(u^{\sharp})^{-1}}(\tau_{\widehat{i},\sigma})\cap \mathrm{Ad}_{g_{\sigma}}(\fb_{\sigma}).\;
	\end{aligned}
\end{equation}
As in \cite[(134) \& (137)]{BDcritical25},\;sending $f$ to $(a\log_{p,\sigma},b\log_{p,\sigma})$ ($a,b$ are given in the definition (\ref{dfnforhomofilmaxilevi})) induces a canonical map 
\[f_{i,\sigma}:\homo^i_{\fil,\bF_u}(D_{\sigma},D_{\sigma})\rightarrow \homo_{\sigma}(\bL_{\widehat{i}}(\cO_L),E),\]
which is an isomorphism by comparing dimensions.\;Indeed,\;we have (see Remark \ref{explainHomasLiealg})
\begin{equation}
	\begin{aligned}
		\mathrm{Ad}_{(u^{\sharp})^{-1}}(\tau_{\widehat{i},\sigma})\cap \mathrm{Ad}_{g_{\sigma}}(\fb_{\sigma})=\mathrm{Ad}_{(u^{\sharp})^{-1}b_{\sigma}(u)}(\fz_{\widehat{i},\sigma}),\;
	\end{aligned}
\end{equation}
and thus $\dim_E\homo^i_{\fil,\bF_u}(D_{\sigma},D_{\sigma})=2$.\;Put
\begin{equation}\label{dfnforhomoflat}
	\homo^{\flat}_{\fil}(D_{\sigma},D_{\sigma}):=\sum_{i\in\Delta',u\in \cI_{i}^{\flat}}\homo^i_{\fil,\bF_u}(D_{\sigma},D_{\sigma})\subseteq\homo_{\fil}(D_{\sigma},D_{\sigma}).\;
\end{equation}

\begin{pro}\label{dfnisoforflatfilhomo}
	$\homo^{\flat}_{\fil}(D_{\sigma},D_{\sigma})\cong \mathrm{Ad}_{g_{\sigma}}(\fb_{\sigma})_{S_0}^{\flat}=\sum_{u\in \sW_s}\mathrm{Ad}_{(u^{\sharp})^{-1}}(\tau_{S^u_0,\sigma})\cap \mathrm{Ad}_{g_{\sigma}}(\fb_{\sigma})$.\;
\end{pro}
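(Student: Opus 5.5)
We must show $\sum_{i\in\Delta',u\in\cI_i^\flat}\mathrm{Ad}_{(u^\sharp)^{-1}}(\tau_{\widehat{i},\sigma})\cap\mathrm{Ad}_{g_\sigma}(\fb_\sigma)$ equals $\sum_{u\in\sW_s}\mathrm{Ad}_{(u^\sharp)^{-1}}(\tau_{S^u_0,\sigma})\cap\mathrm{Ad}_{g_\sigma}(\fb_\sigma)$. This is an equality of subspaces of $\fg_\sigma$, so the plan is to prove the two inclusions separately, working one $u$ at a time through Remark \ref{explainHomasLiealg}.

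For the inclusion ``$\subseteq$'', fix $i\in\Delta'$ and $u\in\cI_i^\flat$. Since $i\in\Delta\backslash S^u_0$, we have $S^u_0\subseteq\widehat{i}$. Hence $\fz_{\widehat{i},\sigma}\subseteq\fz_{S^u_0,\sigma}$ and $\fn_{\widehat{i},\sigma}\subseteq\fn_{S^u_0,\sigma}$, which gives $\tau_{\widehat{i},\sigma}\subseteq\tau_{S^u_0,\sigma}$ (as already noted before (\ref{dfnforhomofilmaxilevi})). Applying $\mathrm{Ad}_{(u^\sharp)^{-1}}$ and intersecting with $\mathrm{Ad}_{g_\sigma}(\fb_\sigma)$ shows that each summand $\homo^i_{\fil,\bF_u}(D_\sigma,D_\sigma)$ lies in the $u$-th summand on the right.

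For the reverse inclusion, fix $u\in\sW_s$. By (\ref{noncriticalintersection}), $\mathrm{Ad}_{(u^\sharp)^{-1}}(\tau_{S^u_0,\sigma})\cap\mathrm{Ad}_{g_\sigma}(\fb_\sigma)=\mathrm{Ad}_{(u^\sharp)^{-1}b_\sigma(u)}(\fz_{S^u_0,\sigma})$. The same computation applied to $\tau_{\widehat{i},\sigma}$ for $i=t^u_j$ gives $\homo^{t^u_j}_{\fil,\bF_u}=\mathrm{Ad}_{(u^\sharp)^{-1}b_\sigma(u)}(\fz_{\widehat{t^u_j},\sigma})$, using the same element $b_\sigma(u)$ since it depends only on $u$. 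So it suffices to check that
\[\fz_{S^u_0,\sigma}=\sum_{j\in\Delta_s}\fz_{\widehat{t^u_j},\sigma}.\]
Here $\fz_{S^u_0,\sigma}$ consists of diagonal matrices that are constant on each block $\{t^u_{q-1}+1,\dots,t^u_q\}$, and $\fz_{\widehat{t^u_j},\sigma}$ is spanned by the identity matrix and the diagonal idempotent $\mathbf{1}_{\le t^u_j}$ supported on the first $t^u_j$ coordinates. The elements $\mathbf{1}_{\le t^u_j}$ for $1\le j\le s-1$, together with the identity, span the $s$-dimensional space of block-scalar diagonal matrices. This proves the displayed equality. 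Because each $t^u_j\in\Delta'$ and $u\in\cI^\flat_{t^u_j}$, every $u$-th summand on the right lies in the left-hand side.

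The main obstacle is justifying the intersection formula for $\tau_{\widehat{i},\sigma}$, namely $\mathrm{Ad}_{(u^\sharp)^{-1}}(\tau_{\widehat{i},\sigma})\cap\mathrm{Ad}_{g_\sigma}(\fb_\sigma)=\mathrm{Ad}_{(u^\sharp)^{-1}b_\sigma(u)}(\fz_{\widehat{i},\sigma})$. The plan is to rewrite the intersection as $\mathrm{Ad}_{(u^\sharp)^{-1}b_\sigma(u)}\big(\mathrm{Ad}_{b_\sigma(u)^{-1}}(\tau_{\widehat{i},\sigma})\cap\overline{\fb}_\sigma\big)$, using $g_\sigma\bB_\sigma=(u^\sharp)^{-1}b_\sigma(u)w_0\bB_\sigma$. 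Since $b_\sigma(u)\in\bB_\sigma\subseteq\bP_{\widehat{i},\sigma}$ and $\tau_{\widehat{i},\sigma}$ is an ideal of $\fp_{\widehat{i},\sigma}$, conjugation by $b_\sigma(u)^{-1}$ preserves $\tau_{\widehat{i},\sigma}$. Finally $\tau_{\widehat{i},\sigma}\cap\overline{\fb}_\sigma=\fz_{\widehat{i},\sigma}$, because $\fn_{\widehat{i},\sigma}$ is strictly upper triangular.
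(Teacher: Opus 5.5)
Your proof is correct and follows the paper's argument. Like the paper, you reindex the sum by $u$, apply the non-critical intersection formula $\mathrm{Ad}_{(u^\sharp)^{-1}}(\tau_{\widehat{t^u_j},\sigma})\cap\mathrm{Ad}_{g_\sigma}(\fb_\sigma)=\mathrm{Ad}_{(u^\sharp)^{-1}b_\sigma(u)}(\fz_{\widehat{t^u_j},\sigma})$, and then use $\sum_{j\in\Delta_s}\fz_{\widehat{t^u_j},\sigma}=\fz_{S^u_0,\sigma}$. Your separate containment argument for ``$\subseteq$'' and your justification of the intersection formula (which the paper just cites from Remark \ref{explainHomasLiealg}) are harmless additions; like the paper, you implicitly need $s\geq 2$ so that the identity matrix lies in some $\fz_{\widehat{t^u_j},\sigma}$.
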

\begin{proof}
	It suffices to study the envelope $\sum_{i\in\Delta',u\in \cI_{i}^{\flat}}\mathrm{Ad}_{(u^{\sharp})^{-1}}(\tau_{\widehat{i},\sigma})\cap \mathrm{Ad}_{g_{\sigma}}(\fb_{\sigma})$ in $\mathrm{Ad}_{g_{\sigma}}(\fb_{\sigma})$.\;The left hand side equals to 
	$\sum_{u\in \sW_s}\sum_{1\leq j\leq s}\mathrm{Ad}_{(u^{\sharp})^{-1}}(\tau_{\widehat{t^u_j},\sigma})\cap \mathrm{Ad}_{g_{\sigma}}(\fb_{\sigma})$.\;By (\ref{noncriticalintersection}),\;we have $\mathrm{Ad}_{(u^{\sharp})^{-1}}(\tau_{\widehat{{t^u_j}},\sigma})\cap \mathrm{Ad}_{g_{\sigma}}(\fb_{\sigma})=\mathrm{Ad}_{(u^{\sharp})^{-1}b_{\sigma}(u)}(\fz_{\widehat{t^u_j},\sigma})$.\;Therefore,\;we deduce that 
	\begin{equation}
		\begin{aligned}
			\sum_{1\leq j\leq s}\mathrm{Ad}_{(u^{\sharp})^{-1}}(\tau_{\widehat{t^u_j},\sigma})\cap \mathrm{Ad}_{g_{\sigma}}(\fb_{\sigma})=\sum_{1\leq j\leq s}\mathrm{Ad}_{(u^{\sharp})^{-1}b_{\sigma}(u)}(\fz_{\widehat{t^u_j},\sigma})\\
			=\mathrm{Ad}_{(u^{\sharp})^{-1}b_{\sigma}(u)}(\fz_{S^u_0,\sigma})=\mathrm{Ad}_{(u^{\sharp})^{-1}}(\tau_{S_0,\sigma})\cap \mathrm{Ad}_{g_{\sigma}}(\fb_{\sigma}),
		\end{aligned}
	\end{equation}
	so that $\homo^{\flat}_{\fil}(D_{\sigma},D_{\sigma})=\mathrm{Ad}_{g_{\sigma}}(\fb_{\sigma})_{S_0}^{\flat}$.\;
\end{proof}
Consider the following composition of isomorphisms (similar to \cite[(142)-(144)]{BDcritical25}):
\begin{equation}\label{tDmapforiusigma}
	\begin{aligned}
		\ext^1_{G,\sigma}\left(\pi_{\lalg}(\underline{x},\bh),V
		_{i,u,\sigma}\right)&\xleftarrow[\text{Prop.\;}\ref{reinterforextgp}]{\sim}\homo_{\sm}(\bZ_{S^u_0}(L),E)\oplus_{\homo_{\sm}(\bL_{\widehat{i}}(L),E)}\homo_{\sigma}(\bL_{\widehat{i}}(L),E)\\&\xleftarrow{\sim}\homo_{\sm}(\bZ_{S^u_0}(L),E)\oplus\homo_{\sigma}(\bL_{\widehat{i}}(\cO_L),E)\\
		&\xrightarrow[\oplus_{\sigma}f_{i,\sigma}^{-1}]{\sim}\ext^{1,\flat}_{\gr}({\Dpik},{\Dpik})\oplus\homo^i_{\fil,\bF_u}(D_{\sigma},D_{\sigma})\\
		&\hookrightarrow \ext^{1,\flat}_{\gr}({\Dpik},{\Dpik})\oplus\homo_{\fil}(D_{\sigma},D_{\sigma}).\;
	\end{aligned}
\end{equation}
Taking sum of  (\ref{tDmapforiusigma}) over all the $i\in\Delta'$ and $u\in \cI_{i}^{\flat}$ also leads to a surjection:
\[\ext^1_{G,\sigma}\left(\pi_{\lalg}(\underline{x},\bh),\pi^{\flat}_{1,\sigma}(\underline{x},\bh)\right)\twoheadrightarrow\ext^{1,\flat}_{\gr}({\Dpik},{\Dpik})\oplus\homo^{\flat}_{\fil}(D_{\sigma},D_{\sigma}).\]
The remainder of this section will compare such surjection with $t^{\flat}_{D_{\sigma}}$ and   show that 
\[\homo^{\flat,\dagger}_{\fil}(D_{\sigma},D_{\sigma})=\homo^{\flat}_{\fil}(D_{\sigma},D_{\sigma}).\]


\begin{pro}\label{comparesharpflat}
Let $i\in \Delta'$ and $u\in \cI_{i}^{\flat}$.\;The composition (given in (\ref{tDmapforiusigma}))
\begin{equation}
	\begin{aligned}
		\homo_{\sm}(\bZ_{S^u_0}(L),E)\oplus\homo_{\sigma}(\bL_{\widehat{i}}(\cO_L),E)\xrightarrow[f_{i,\sigma}^{-1}]{\sim}\ext^{1,\flat}_{\gr}({\Dpik},{\Dpik})\oplus\homo^i_{\fil,\bF_u}(D_{\sigma},D_{\sigma})
	\end{aligned}
\end{equation}
coincides with the composition
\begin{equation}
	\begin{aligned}
		\homo_{\sm}(\bZ_{S^u_0}(L),E)\oplus\homo_{\sigma}(\bL_{\widehat{i}}(\cO_L),E)&\xrightarrow{\sim}\ext^1_{G,\sigma}\left(\pi_{\lalg}(\underline{x},\bh),V_{i,u,\sigma}\right)\\&\xrightarrow{t^{\flat}_{D_{\sigma}}}\ext^{1,\flat}_{\gr}({\Dpik},{\Dpik})\oplus\homo^{\flat,\dagger}_{\fil}(D_{\sigma},D_{\sigma}).\;
	\end{aligned}
\end{equation}In particular,\;we have $\homo^{\flat,\dagger}_{\fil}(D_{\sigma},D_{\sigma})\cong \homo^{\flat}_{\fil}(D_{\sigma},D_{\sigma})$.\;
\end{pro}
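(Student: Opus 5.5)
The plan is to follow the proof of the corresponding comparison in \cite[Section 2.4]{BDcritical25}. Both compositions are $E$-linear and defined on the same source. By Remark \ref{rmkforextgroupsigma}, and after fixing $\log_p(p)$, this source decomposes as
\[\Big(\homo_{\sm}(\bZ_{S^u_0}(L),E)\oplus_{\homo_{\sm}(L^{\times},E)}\homo_{\sigma}(L^{\times},E)\Big)\oplus\homo_{\sigma}(\GLN_{n-i}(\cO_L),E).\]
I will check that the two maps agree on each summand.

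\textbf{Locally algebraic summand.} On the first summand, the image in $\ext^1_{G,\sigma}(\pi_{\lalg}(\underline{x},\bh),V_{i,u,\sigma})$ lies in $\ext^1_{G,\sigma}(\pi_{\lalg},\pi_{\lalg})$ by Lemma \ref{parameterline} and Proposition \ref{reinterforextgp}.
\begin{itemize}
\item For the smooth part $\homo_{\sm}(\bZ_{S^u_0}(L),E)$, both maps send $(\psi_j)_j$ to the deformations $E_{u^{-1}(j)}[1/t]\otimes\cR_{E[\epsilon]/\epsilon^2,L}(1+\psi_j\epsilon)$. For $t^{\flat}_{D_\sigma}$ this is its definition in the proof of Proposition \ref{construext1}. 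For $f_{i,\sigma}^{-1}$ it is the identification $\ext^{1,\flat}_{\gr}\cong\homo_{\sm}(\bZ_{S_0}(L),E)$, twisted by $u$. Lemma \ref{smhomoindepent} guarantees that the reindexing by $u$ is compatible on the two sides.
\item For the $\log_{p,\sigma}\circ\det_G$ direction, the map $t^{\flat}_{D_\sigma}$ sends it (by \cite[(63)]{BDcritical25}) to the scalar endomorphism $\mathrm{id}\in\homo_{\fil}(D_\sigma,D_\sigma)$. This element corresponds in $\bL_{\widehat{i}}$ to $a=b$ in (\ref{dfnforhomofilmaxilevi}). Under $f_{i,\sigma}^{-1}$, the identity element $(\log_{p,\sigma},\log_{p,\sigma})$ goes to $a=b=1$, i.e.\ the identity. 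So the two maps agree here as well.
\end{itemize}

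\textbf{The $\homo_{\sigma}(\GLN_{n-i}(\cO_L),E)$ summand.} This is the main obstacle. Its image in $\ext^1_{G,\sigma}(\pi_{\lalg},C_{i,u,\sigma})$ is the line spanned by the class of $\log_{p,\sigma}\circ\det_{\GLN_{n-i}}$ under the pairing (\ref{dualpairing}).
\begin{itemize}
\item Tracing it through the chain of isomorphisms in the proof of Proposition \ref{construext1}, $t^{\flat}_{D_\sigma}$ sends it to $g_{i,\sigma}(h)$. Here $h\in\homo_E\big((\bigwedge^{n-i}_E D_\sigma)^\flat,\fil_i^{\max}(D_\sigma)^\flat\big)$ is the functional dual to $\be_{I_i(u)^c}$ via $\epsilon_{u,i}$. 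Concretely, $h$ is the coordinate functional on $\be_{I_i(u)^c}$, tensored with a fixed generator of $\fil_i^{\max}(D_\sigma)^\flat$. This uses that the coefficient of $\be_{I_i(u)^c}$ in $\fil_i^{\max}$ is nonzero, by non-criticality.
\item I then compute $g_{i,\sigma}(h)$ as in \cite[Lemma 2.2.5, Prop.\ 2.4.x]{BDcritical25}. By Lemma \ref{lemforwedge}, $g_{i,\sigma}(h)$ is the endomorphism that kills the $u$-flag piece $\fil^{\bF_u}_{l_i(u)}(D_\sigma)=\oplus_{q\in I_i(u)}D^q_\sigma$ and acts by a scalar on $\fil_H^{-\bh_{i+1,\sigma}}$. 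I will use Remark \ref{explainHomasLiealg}: under non-criticality, $\fil_H^{-\bh_{i+1,\sigma}}(D_\sigma)$ is a complement to $\fil^{\bF_u}_{l_i(u)}$. This should identify $g_{i,\sigma}(h)$ with the element of $\mathrm{Ad}_{(u^\sharp)^{-1}b_\sigma(u)}(\fz_{\widehat{i},\sigma})$ with $(a,b)=(0,1)$, up to the normalizing scalar fixed by $\epsilon_{u,i}$ and by $1\mapsto\log_{p,\sigma}\circ\det$.
\item On the other side, $f_{i,\sigma}^{-1}(0,\log_{p,\sigma}\circ\det_{\GLN_{n-i}})$ is exactly this element. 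The hardest point is to check that the normalizations agree, i.e.\ that both sides give the same scalar and the same sign. For this I would repeat the explicit computation of \cite[(142)--(144)]{BDcritical25}, replacing lines by the blocks $D^q_\sigma$; the wedge $\be_{I_i(u)^c}$ plays the role of the single basis vector there.
\end{itemize}

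\textbf{Consequence.} Summing over all $i\in\Delta'$ and $u\in\cI_i^\flat$, the image of $t^{\flat}_{D_\sigma}$ equals $\ext^{1,\flat}_{\gr}\oplus\sum\homo^i_{\fil,\bF_u}(D_\sigma,D_\sigma)$. This is $\ext^{1,\flat}_{\gr}\oplus\homo^{\flat}_{\fil}(D_\sigma,D_\sigma)$. Since $t^{\flat}_{D_\sigma}$ is surjective onto $\ext^{1,\flat}_{\gr}\oplus\homo^{\flat,\dagger}_{\fil}$ (Proposition \ref{construext1}), we get $\homo^{\flat,\dagger}_{\fil}=\homo^{\flat}_{\fil}$.
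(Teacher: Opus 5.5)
Your route is the paper's. Both reduce to \cite[Proposition 2.5.4]{BDcritical25}: the smooth and $\log_{p,\sigma}\circ\det$ directions are formal, and the real input is that $t^{\flat}_{D_\sigma}$ sends the $C_{i,u,\sigma}$-class to the element $(a,b)=(0,\lambda)$ of $\homo^i_{\fil,\bF_u}(D_\sigma,D_\sigma)$. In both cases the normalization of $\lambda$ is taken from \cite{BDcritical25}.

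\textbf{A mis-justified step.} Lemma \ref{lemforwedge} does not give that $f:=g_{i,\sigma}(h)$ kills $\bigoplus_{q\in I_i(u)}D^q_\sigma$. It only gives vanishing on $D^{\flat,(i)}_\sigma=\bigoplus_{q\in\bigcap_{u'\in\cI_i^\flat}I_i(u')}D^q_\sigma$, which is usually much smaller; for $S_0=\emptyset$ it is $0$. The vanishing you need comes from your specific $h$, the coordinate functional on $\be_{I_i(u)^c}=\pm e_J$ with $J=\bigcup_{q\in I_i(u)^c}I_q$. Take $t\in I_j$ with $j\in I_i(u)$. Every $e_{J'}$ occurring in $x\wedge e_{t,\sigma}$ has $t\in J'$, while $t\notin J$, so $h(x\wedge e_{t,\sigma})=0$. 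By the isomorphism of \cite[Lemma 2.2.5 (ii)]{BDcritical25} built into $g_{i,\sigma}$, this says $x\wedge f(e_{t,\sigma})=0$ for all $x\in\bigwedge_E^{n-i-1}\fil_H^{-\bh_{i+1,\sigma}}(D_\sigma)$. Since $f(e_{t,\sigma})\in\fil_H^{-\bh_{i+1,\sigma}}(D_\sigma)$, which has dimension $n-i$, this forces $f(e_{t,\sigma})=0$. This is the paper's claim (1), which the paper also only states.

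\textbf{Where you improve on the paper.} Once that is in place, your transversality argument is cleaner than the paper's route. Non-criticality gives $D_\sigma=\fil_H^{-\bh_{i+1,\sigma}}(D_\sigma)\oplus\fil^{\bF_u}_{l_i(u)}(D_\sigma)$. An endomorphism killing the second summand and acting by a scalar $c$ on the first is therefore $c$ times the projection onto the first summand. That is exactly the $(0,c)$-element of $\mathrm{Ad}_{(u^\sharp)^{-1}b_\sigma(u)}(\fz_{\widehat{i},\sigma})$, giving $f(e_{t,\sigma})-c\,e_{t,\sigma}\in\bigoplus_{q\in I_i(u)}D^q_\sigma$ for every $t$ outside $I_i(u)$ at once. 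The paper instead works coordinate by coordinate, using the subspaces $\fil_H^{-\bh_{i+1,\sigma}}(D_\sigma)^{(t)}$ and the basis $\{f_{t''}\}$ of \cite[(84)]{BDcritical25}. Your argument avoids that computation and makes the role of non-criticality transparent.
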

\begin{proof}
This is essentially \cite[Proposition 2.5.4]{BDcritical25}.\;Only need to show that the key statement of  \cite[Lemma 2.2.9 (iii)]{BDcritical25} in the last paragraph of the proof of \cite[Proposition 2.5.4]{BDcritical25} is also suitable for our setting.\;Keep the notation in the proof of \cite[Proposition 2.5.4]{BDcritical25}.\;For $c_u\in \ext^1_{G,\sigma}\left(\pi_{\lalg}(\underline{x},\bh),C_{i,u,\sigma}\right)$,\;we claim that: 
\begin{itemize}
	\item[(1)] For $t\in I_j$ and $j\in I_i(u)$,\;$t^{\flat}_{D_{\sigma}}(c_u)(e_t)=0$.\;
	\item[(2)] For $t\in I_j$ and $j\not\in I_i(u)$,\;there exists a unique $\lambda_u\in E$ such that $c_{u,i}=\lambda_{u,i}\log$ where $\log_{p,\sigma}\in \ext^1_{G,\sigma}\left(\pi_{\lalg}(\underline{x},\bh),V_{i,u,\sigma}\right)$ is the image of $\log_{p,\sigma}\in \homo_{\sigma}(\GLN_{n-i}(\cO_L),E)$ (see Remark \ref{rmkforextgroupsigma}) and 
	\[t^{\flat}_{D_{\sigma}}(c_u)(e_t)-\lambda_{u,i}e_t\in\bigoplus_{t'\in I_{q},q\in I_i(u)}Ee_{t'}\]
\end{itemize}
We drop $\sigma$ in the subscript of $e_{i,\sigma}$ for simplicity.\;The argument in \cite[Page 33]{BDcritical25} and the diagram \cite[(78)]{BDcritical25} are still true for our setting,\;when we replace $\bigwedge\nolimits_E^{\!n-i}\!D_{\sigma}$ (resp.,\;$e_{I^c}$,\;resp,\;$\fil_i^{\max}(D_{\sigma})$, resp.,\;$C(I)$) with $\big(\bigwedge\nolimits_{E}^{\!n-i}\!D_{\sigma}\big)^{\flat}$ (resp.,\;$\be_{I_i(u)^c}$,\;resp.,\;$\fil_i^{\max}(D_{\sigma})^{\flat}$,\;resp.,\;$C_{i,u,\sigma}$).\;For any $t\in I_j$ with $j\in I_i(u)^c$,\;define 
\[\fil_H^{-\bh_{i+1,\sigma}}(D_{\sigma})^{(t)}=\fil_H^{-\bh_{i+1,\sigma}}(D_{\sigma})\cap\big(\oplus_{j\neq t}e_{j}\big).\]
We also have $\fil_H^{-\bh_{i+1,\sigma}}(D_{\sigma})^{(t)}=n-i-1$ and for any non-zero $x\in \bigwedge\nolimits_E^{\!n-i-1}\!\fil_H^{-\bh_{i+1,\sigma}}(D_{\sigma})^{(t)}$,\;we have 
\[x\wedge e_t\notin \Big(\bigoplus_{J\neq I_i(u)^c}E\be_J\Big)\oplus \big(\bigwedge\nolimits_{E}^{\!n-i}\!D_{\sigma}\big)^{\flat,c},\]
which implies that $x\wedge e_t\notin \bigoplus_{J\neq I_i(u)^c}E\be_J$.\;Note that 
\[e^{\star}_{I_i(u)^c}(x\wedge e_t)-x\wedge e_t\in \Big(\bigoplus_{J\neq I_i(u)^c}E\be_J\Big)\oplus \big(\bigwedge\nolimits_{E}^{\!n-i}\!D_{\sigma}\big)^{\flat,c}.\]
By definition of $t^{\flat}_{D_{\sigma}}(\log_{p,\sigma})$,\;this deduces that $x\wedge (t^{\flat}_{D_{\sigma}}(\log_{p,\sigma})(e_t)-e_t)\in\bigoplus_{J\neq I_i(u)^c}E\be_J$,\;and thus 
\begin{equation}\label{proofforzerocoeff}
	t^{\flat}_{D_{\sigma}}(c_u)(e_t)-e_t\in\bigoplus_{t'\in I_{q},q\neq j}Ee_{t'}
\end{equation}
It remains to show that the coefficient of $e_{j'}$ in the right hand side is zero when $t'\in I_{q}$ and $q\not\in I_i(u)$.\;Indeed,\;choose the same elements $\{f_{t''}:{t''}\in I_j,j\in I_i(u)^c\}$ as in  \cite[(84)]{BDcritical25},\;which form a basis of $\fil_H^{-\bh_{i+1,\sigma}}(D_{\sigma})$.\;Suppose that the coefficient of $\be_{t'}$ in the right hand side is non-zero.\;For any $t'\neq t$,\;we also see that the coefficient of $\be_{I_{i}(u)^c}$ is zero in 
\[\big(\wedge_{t''\neq t'}f_{t''}\big)\wedge e_t\in \bigwedge\nolimits_E^{\!n-i-1}\!\fil_H^{-\bh_{i+1,\sigma}}(D_{\sigma})\wedge D_{\sigma}.\]
But $\big(\wedge_{t''\neq t'}f_{t''}\big)\wedge t^{\flat}_{D_{\sigma}}(\log_{p,\sigma})(e_t)=0$ in $\bigwedge\nolimits_E^{\!n-i}\!D_{\sigma}$ and $e_{t'}$ cannot appears in $f_{t''}$ if $t''\neq t'$,\;then by (\ref{proofforzerocoeff}),\;the coefficient of $\be_{I_{i}(u)^c}$ in $\big(\wedge_{t''\neq t'}f_{t''}\big)\wedge e_t$ must be non-zero,\;this is impossible.\;This completes the proof.\;
\end{proof} 

For $\star\in\{\emptyset,\sigma\}$,\;we  have the following composition 
\begin{equation}\label{dfnforgammaD}
	\begin{aligned}
		\gamma^{\flat}_{\Dpik,\star}:\bigoplus_{u\in\sW_s}\overline{\ext}^{1,\flat}_{\star,u}(\Dpik,\Dpik)
		&\xrightarrow[\sim]{\kappa^{\flat}_{\Dpik,\star}} \bigoplus_{u\in\sW_s}\homo_{\star}\big(\bZ_{S^u_0}(L),E\big)
		\xrightarrow{\zeta_{\Dpik,\star}} \ext^1_{G}\left(\pi_{\lalg}(\underline{x},\bh),\pi^{\flat}_{1,\star}(\underline{x},\bh)\right).\;
	\end{aligned}
\end{equation}
where $\kappa^{\flat}_{\Dpik,\star}:=\oplus_{u\in\sW_s}\kappa_{u}$ is the first isomorphism and $\zeta_{\Dpik,\star}:=\oplus_{u\in\sW_s}\zeta_{u,\star}$ (thus $\gamma^{\flat}_{\Dpik,\star}=\zeta_{\Dpik,\star}\circ \kappa^{\flat}_{\Dpik,\star}$).\;Consider the natural map:
\[\overline{g}^{\flat}_{\Dpik,\star}:\bigoplus_{u\in\sW_s}\overline{\ext}^{1,\flat}_{\star,u}(\Dpik,\Dpik)\rightarrow \overline{\ext}^{1}(\Dpik,\Dpik).\]
The same strategy as in   \cite[Proposition 2.5.5,\;Corollary 2.5.6]{BDcritical25} and Proposition \ref{comparesharpflat} gives:
\begin{pro}\label{mainthmforfullrefine} $f^{\flat}_{\Dpik,\sigma}\circ\overline{g}^{\flat}_{\Dpik,\sigma}=t^{\flat}_{D_{\sigma}}\circ\gamma^{\flat}_{\Dpik,\sigma}$.\;After taking the sum over $\sigma\in \Sigma_L$,\;$f^{\flat}_{\Dpik}\circ\overline{g}^{\flat}_{\Dpik}=t^{\flat}_{D}\circ\gamma^{\flat}_{\Dpik}$.\;
\end{pro}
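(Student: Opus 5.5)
Both sides of $f^{\flat}_{\Dpik,\sigma}\circ\overline{g}^{\flat}_{\Dpik,\sigma}=t^{\flat}_{D_{\sigma}}\circ\gamma^{\flat}_{\Dpik,\sigma}$ are linear in the source $\bigoplus_{u\in\sW_s}\overline{\ext}^{1,\flat}_{\sigma,u}(\Dpik,\Dpik)$. So it suffices to check the identity on each summand $\overline{\ext}^{1,\flat}_{\sigma,u}(\Dpik,\Dpik)$ separately. Via $\kappa^{\flat}_u$ this summand is identified with $\homo_{\sigma}(\bZ_{S^u_0}(L),E)$. I would decompose the latter, as in Remark~\ref{rmkforextgroupsigma} and the proof of Proposition~\ref{extpi1repn}, into
\[\homo_{\sm}(\bZ_{S^u_0}(L),E)\oplus\bigoplus_{j\in\Delta_s}\homo_{\sigma}(\GLN_{n-t^u_j}(\cO_L),E),\]
with the choice of $\log_p(p)$ fixing the splitting. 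The proof then reduces to comparing the two maps on the smooth part and on each line $E\log_{p,\sigma}\circ\det_{\GLN_{n-i}}$, where $i=t^u_j$.

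\textbf{Smooth part.} An element $(\psi_1,\dots,\psi_s)$ with all $\psi_l$ smooth corresponds to a deformation in $\ext^{1,\flat}_{g,u}(\Dpik,\Dpik)$. By Proposition~\ref{studyforkerkappau} this deformation is de Rham, so $\nu$ kills it. Under $f^{\flat}_{\Dpik,\sigma}$ it therefore lands in $\ext^{1,\flat}_{\gr}(\Dpik,\Dpik)$, namely at $(E_l[1/t](1+\psi_{u(l)}\epsilon))_l$. Lemma~\ref{smhomoindepent} ensures this does not depend on $u$. On the other side, $\zeta_{u,\sigma}$ sends the same element into $\ext^1_{G,\sigma}(\pi_{\lalg},\pi_{\lalg})$ via (\ref{lalginj1}). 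By the explicit formula for $t^{\flat}_{D_\sigma}$ restricted to $\homo_{\sm}(\bZ_{S_0}(L),E)$ in the proof of Proposition~\ref{construext1}, its image is the same element. The only point to check here is that the Weyl-twist $u$ is matched correctly on both sides.

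\textbf{Non-smooth part.} Take $\psi=\log_{p,\sigma}\circ\det_{\GLN_{n-i}}$, viewed as a character of $\bZ_{S^u_0}$ (nonzero on the blocks $l>j$). By construction, $\zeta_{u,\sigma}(\psi)$ lies in the image of $\ext^1_{G,\sigma}(\pi_{\lalg},V_{i,u,\sigma})$ and equals the class $\log_{p,\sigma}$ appearing in Proposition~\ref{comparesharpflat}. That proposition gives $t^{\flat}_{D_\sigma}(\zeta_{u,\sigma}(\psi))=f_{i,\sigma}^{-1}(\psi)\in\homo^i_{\fil,\bF_u}(D_\sigma,D_\sigma)$, which is the endomorphism acting by $0$ on $\fil^{\bF_u}_j$ and by $1$ on the quotient (Remark~\ref{explainHomasLiealg}), together with a possible component in $\ext^{1,\flat}_{\gr}$ fixed by $\log_p(p)$.

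On the Galois side, I would compute $\nu$ of the parabolic deformation $\widetilde D=[\widetilde E_{u,1}-\cdots-\widetilde E_{u,s}]$, where $\widetilde E_{u,l}=E_{u,l}(1+\psi_l\epsilon)$. Following the computation in \cite[Prop.~2.5.5]{BDcritical25}, twisting a block by $1+\log_{p,\sigma}\epsilon$ acts on $D_{\pdr}$ by the identity on that block; this is the $v_{\pdr}$ / $\log t$ computation. The result is an element of $\homo^{\flat}_{\fil,\bF_u}(D_\sigma,D_\sigma)$ that is scalar on each graded piece with the same scalars $(0,\dots,0,1,\dots,1)$. It lies in $\mathrm{Ad}_{(u^\sharp)^{-1}b_\sigma(u)}(\fz_{\widehat i,\sigma})$, so it coincides with $f_{i,\sigma}^{-1}(\psi)$, since (\ref{noncriticalintersection}) shows that element is determined by its scalars. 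The $\ext^{1,\flat}_{\gr}$-components then agree because both sides use the same $\log_p(p)$-section (Proposition~\ref{splitingforext1gps} versus Remark~\ref{rmkforextgroupsigma}).

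\textbf{Main obstacle.} The hard step is this last identification of $\nu(\widetilde D)$. It requires the almost de Rham computation for a parabolic (not triangular) filtration with cuspidal blocks, and in particular checking that the block scalars coming from $\log_{p,\sigma}$ enter with the normalization and sign that $f_{i,\sigma}$ uses. I would handle it block by block via dévissage along $\cF_u$, reducing to the rank-$r$ case $E_{u,l}(1+\log_{p,\sigma}\epsilon)$, where $D_{\pdr}$ is computed directly. Finally, summing over $\sigma$ gives the second assertion, since every map involved is a direct sum over $\Sigma_L$.
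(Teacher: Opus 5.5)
Your proposal follows the paper's route. The paper's proof is a one-line reference to the strategy of \cite[Prop.~2.5.5, Cor.~2.5.6]{BDcritical25} combined with Proposition~\ref{comparesharpflat}, and you unpack exactly that. You check refinement by refinement, use Proposition~\ref{comparesharpflat} on the $G$-side, and use on the Galois side that $\nu$ lands in $\homo^{\flat}_{\fil,\bF_u}(D_\sigma,D_\sigma)=\mathrm{Ad}_{(u^\sharp)^{-1}b_\sigma(u)}(\fz_{S^u_0,\sigma})$, whose elements are determined by their graded scalars.

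There is one bookkeeping slip. Your displayed decomposition has dimension $s+(s-1)=2s-1$, whereas $\homo_\sigma(\bZ_{S^u_0}(L),E)$ has dimension $2s$. You dropped the factor $\oplus_{\homo_{\sm}(L^\times,E)}\homo_\sigma(L^\times,E)$ of Remark~\ref{rmkforextgroupsigma}, that is, the direction $\psi_l=\log_{p,\sigma}$ for all $l$ (extended to $L^\times$ via $\log_p(p)$). For $s=1$ this missing line is the entire non-smooth part, so it cannot be skipped.

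It is covered by Proposition~\ref{comparesharpflat} itself. That proposition is stated for the whole two-dimensional space $\homo_\sigma(\bL_{\widehat i}(\cO_L),E)$, not only its $\GLN_{n-i}$-line. Taking $(a,b)=(1,1)$ gives $\mathrm{id}_{D_\sigma}$ on the $G$-side. Your rigidity argument identifies $\nu(\Dpik\otimes_{\cR_{E,L}}\cR_{E[\epsilon]/\epsilon^2,L}(1+\log_{p,\sigma}\epsilon))$ with the same element, since all its graded scalars equal $1$.

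One further imprecision: the maps are not literally a direct sum over $\Sigma_L$. The $\sigma$-pieces overlap along the smooth part, as $\dim_E\ext^1_G(\pi_{\lalg}(\underline{x},\bh),\pi^\flat_1(\underline{x},\bh))=s+(2^s-1)d_L$ shows. The global identity still follows by linearity, because the spaces $\overline{\ext}^{1,\flat}_{\sigma,u}(\Dpik,\Dpik)$ for $\sigma\in\Sigma_L$ span $\overline{\ext}^{1,\flat}_u(\Dpik,\Dpik)$ and all maps restrict compatibly.
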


Let $\pi^{\flat}_{\min}(\Dpik)_{\star}$ be  the tautological extension of $\ker(t^{\flat}_{D_{\star}})\otimes_E\pi_{\lalg}(\underline{x},\bh)$ by $\pi^{\flat}_{1,\star}(\underline{x},\bh)$.\;Then 
\[\ker(t^{\flat}_{D})=\oplus_{\sigma\in \Sigma_L}\ker(t^{\flat}_{D_{\sigma}}),\quad\pi^{\flat}_{\min}(\Dpik)=\bigoplus_{\pi_{\lalg}(\underline{x},\bh)}^{\sigma\in\Sigma_L}\pi^{\flat}_{\min}(\Dpik)_{\sigma}.\]
Similar to \cite[Corollary 2.2.13]{BDcritical25},\;we can show that  $\pi^{\flat}_{\min}(\Dpik)_{\star}$ does not depends on any choices of $(\epsilon_{u,i})_{i\in \Delta',u\in \cI_{i}^{\flat}}$ and $\log_p(p)$.\;

\subsubsection{The $p$-adic Hodge parameters determined by the kernel of  $t^{\flat}_{D_{\sigma}}$}

This section describes what information of $p$-adic Hodge parameters is determined by the kernel of  $t^{\flat}_{D_{\sigma}}$ or   $\pi^{\flat}_{\min}(\Dpik)_{\sigma}$.\;For $S\subseteq \Delta'$,\;put
\[\pi^{\flat}_{S,\sigma}(\underline{x},\bh):=\bigoplus^{i\in S}_{\pi_{\lalg}(\underline{x},\bh)}\pi^{\flat}_{s_i,\sigma}(\underline{x},\bh).\]
Let $\ext^1_{G,\mathrm{inf}}\big(\pi_{\lalg}(\underline{x},\bh),\pi^{\flat}_{S,\sigma}(\underline{x},\bh)\big)$ be the subspace of $\ext^1_{G}\big(\pi_{\lalg}(\underline{x},\bh),\pi^{\flat}_{S,\sigma}(\underline{x},\bh)\big)$ of locally $\bQ_p$-analytic extensions with an infinitesimal characters.\;Put
\[\ker(t^{\flat}_{D_{\sigma}})_S:=\ker(t^{\flat}_{D_{\sigma}}|_{\ext^1_{G,\mathrm{inf}}(\pi_{\lalg}(\underline{x},\bh),\pi^{\flat}_{S,\sigma}(\underline{x},\bh))}),\quad \ker(t^{\flat}_{D_{\sigma}}):=\ker(t^{\flat}_{D_{\sigma}})_{\Delta'}\]
for simplicity.\;Note that $\ker(t^{\flat}_{D_{\sigma}})_{S_1}\subseteq \ker(t^{\flat}_{D_{\sigma}})_{S_2}\subseteq \ker(t^{\flat}_{D_{\sigma}})$ if $S_1\subseteq S_2\subseteq \Delta'$.

Let $\Delta':=\{i_1<i_2<\cdots<i_{|\Delta'|}\}$ and $S:=\{i_{l_1}<i_{l_2}<\cdots<i_{l_{|S|}}\}$ for $1\leq l_1<\cdots<l_{|S|}\leq |\Delta'|$.\;Consider the following compositions:
\begin{equation}\label{compositionfinalSver}
	\begin{aligned}
	\beta_S:	\ext^1_{G,\sigma}\Big(\pi_{\lalg}(\underline{x},\bh),\pi^{\flat}_{S,\sigma}(\underline{x},\bh)\Big)&\twoheadrightarrow \ext^1_{G,\sigma}\Big(\pi_{\lalg}(\underline{x},\bh),\pi^{\flat}_{S,\sigma}(\underline{x},\bh)/\pi_{\lalg}(\underline{x},\bh)\Big)\\
		&\xrightarrow{\sim}\bigoplus_{i\in S}\ext^1_{G,\sigma}\Big(\pi_{\lalg}(\underline{x},\bh),\pi_{s_i}(D_{\sigma})/\pi_{\lalg}(\underline{x},\bh)\Big)\\
		&\xrightarrow{\sim}\bigoplus_{i\in S}\homo_E\Big(\big(\bigwedge\nolimits_{E}^{\!n-i}\!D_{\sigma}\big)^{\flat},\fil_i^{\max}(D_{\sigma})^{\flat}\Big).\;
	\end{aligned}
\end{equation}
By Lemma \ref{lemforwedge},\;we obtain the following  surjection
\begin{equation}
	\begin{aligned}
		g_{\sigma}:\bigoplus_{i\in S}\homo_E\Big(\big(\bigwedge\nolimits_{E}^{\!n-i}\!D_{\sigma}\big)^{\flat},&\fil_i^{\max}(D_{\sigma})^{\flat}\Big)\\&\twoheadrightarrow \sum_{i\in S}\left\{f\in\homo_E\big(D_{\sigma,(i)}^{\flat},\fil_H^{-\bh_{i+1,\sigma}}(D_{\sigma})\big):f|_{\fil_H^{-\bh_{i+1,\sigma}}(D_{\sigma})} \text{scalar}\right\}.
	\end{aligned}
\end{equation}

\begin{pro}\label{propfor2ndfil}
	\begin{itemize}
		\item[(1)] The  image under (\ref{compositionfinalSver}) of the subspace $\ext^1_{G,\mathrm{inf}}\Big(\pi_{\lalg}(\underline{x},\bh),\pi^{\flat}_{S,\sigma}(\underline{x},\bh)\Big)$  is 
		\[\homo_E\Big(\big(\bigwedge\nolimits_{E}^{\!n-i}\!D_{\sigma}\big)^{\flat}/\fil_i^{\max}(D_{\sigma})^{\flat},\fil_i^{\max}(D_{\sigma})^{\flat}\Big).\]
		\item[(2)] Let $i=i_{l_j}\in S$.\;Then the  image under (\ref{compositionfinalSver}) of the subspace $\ker(t^{\flat}_{D_{\sigma}})_S$ is	the  inverse image of 
		\[\Big\{f\in\homo_E\Big(\fil_H^{-\bh_{\bd_i+1,\sigma}}/\fil_H^{-\bh_{i+1,\sigma}},\fil_H^{-\bh_{i+1,\sigma}}\Big):f\Big(\fil_H^{-\bh_{{\max\{i_{l_{j-1}},\bd_i\}}+1,\sigma}}/\fil_H^{-\bh_{i+1,\sigma}}\Big)\subseteq\fil_H^{-\bh_{i_{l_{j+1}}+1,\sigma}}\Big\}\]
		via $g_{i,\sigma}$ (we omit $(D_{\sigma})$ in $\fil_H^{-\bh_{\bullet,\sigma}}(D_{\sigma})$ for simplicity),\;which is a subspace of
		\[\homo_E\Big(\big(\bigwedge\nolimits_{E}^{\!n-i}\!D_{\sigma}\big)^{\flat}/\fil_{S,i}^{2^{\mathrm{nd}}-\max}(D_{\sigma})^{\flat},\fil_i^{\max}(D_{\sigma})^{\flat}\Big)\]
		(compare with \cite[(99) \& (102)]{BDcritical25}).\;
	%
	
	\end{itemize}
\end{pro}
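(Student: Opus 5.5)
Part (1) and part (2) will be handled separately, and both follow the crystalline argument of \cite[Proposition 2.3.4, (97)--(102)]{BDcritical25}. That argument is transported to our setting through the flat quotient $\big(\bigwedge_E^{n-i}D_{\sigma}\big)^{\flat}$ and the decomposition $D_{\sigma}=D^{\flat,(i)}_{\sigma}\oplus D^{\flat}_{\sigma,(i)}$.

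For (1), we will use that $\pi^{\flat}_{s_i,\sigma}(\underline{x},\bh)$ is the amalgam over $u\in\cI_i^{\flat}$ of the $\pi^{\flat}_{s_i,u,\sigma}(\underline{x},\bh)$. An extension of $\pi_{\lalg}(\underline{x},\bh)$ by $\pi^{\flat}_{S,\sigma}(\underline{x},\bh)$ has an infinitesimal character exactly when its $\ext^1_{G,\sigma}(\pi_{\lalg},\pi_{\lalg})$-component has no $\log\circ\det$ part. It must also be compatible with the tautological sub-extensions $C_{i,u,\sigma}\otimes\fil_i^{\max}(D_\sigma)^{\flat}$.

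Using Proposition \ref{reinterforextgp} and Remark \ref{rmkforextgroupsigma}, together with the description of $\ext^1_{G,\sigma}(\pi_{\lalg},V_{i,u,\sigma})$ via $\homo_{\sigma}(\bL_{\widehat i}(L),E)$, the infinitesimal-character condition on each $i$-block reads as follows: the element of $\homo_E\big((\bigwedge^{n-i}D_\sigma)^{\flat},\fil_i^{\max}(D_\sigma)^{\flat}\big)$ kills the line $\fil_i^{\max}(D_\sigma)^{\flat}$. This is the direct analogue of \cite[(97)]{BDcritical25}. A dimension count then gives equality rather than just inclusion. The needed input is that, by the non-critical assumption, the coefficient of each $\be_{I_i(u)^c}$ in $\fil_i^{\max}(D_\sigma)$ is nonzero, so $\fil_i^{\max}(D_\sigma)^{\flat}$ is a line.

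For (2), we combine (1) with the explicit description of $t^{\flat}_{D_\sigma}$ in Proposition \ref{construext1}. On the $i$-component, $t^{\flat}_{D_\sigma}$ is $g_{i,\sigma}$ composed with $\beta_S$. On $\ext^1_{G,\sigma}(\pi_{\lalg},\pi_{\lalg})$ it is the map of Lemma \ref{parameterline}, which is injective on the infinitesimal-character part modulo scalars. Hence an element of $\ker(t^{\flat}_{D_\sigma})_S$ is a tuple $(f_i)_{i\in S}$ with $\sum_i g_{i,\sigma}(f_i)=0$, up to the scalar/central contributions, which cancel. By Lemma \ref{lemforwedge} and Lemma \ref{filforhomospace}, $g_{i,\sigma}(f_i)$ is a scalar plus an element of $\bigoplus_{\bd_i+1\le a\le i<b}\homo(\gr^{-\bh_a}_H,\gr^{-\bh_b}_H)$ in a splitting adapted to the Hodge flag.

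We then decompose the vanishing condition $\sum_i g_{i,\sigma}(f_i)=0$ into $(a,b)$-graded pieces. A block $(a,b)$ receives contributions exactly from $i\in\cS_{a,b}(S)$. By Lemma \ref{lemforcardone}, the blocks with $\cS_{a,b}(S)=\{i\}$ are those with $\max\{\bd_i,i_{l_{j-1}}\}+1\le a\le i$ and $i+1\le b\le i_{l_{j+1}}$. For these blocks the condition forces the $(a,b)$-component of $g_{i,\sigma}(f_i)$ to vanish. The remaining blocks, shared with other $j'$, are unconstrained, since one can compensate with $f_{i'}$ for $i'\ne i$. This is where surjectivity of each $g_{i',\sigma}$ onto its block range, from Lemma \ref{lemforwedge}, is needed. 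The resulting condition on $g_{i,\sigma}(f_i)$ is precisely that it maps $\fil_H^{-\bh_{\max\{i_{l_{j-1}},\bd_i\}+1}}/\fil_H^{-\bh_{i+1}}$ into $\fil_H^{-\bh_{i_{l_{j+1}}+1}}$. Taking the preimage under $g_{i,\sigma}$ gives the claimed image.

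Finally, using the kernel description of $g_{i,\sigma}$ from Lemma \ref{lemforwedge}, this preimage is seen to lie in the maps factoring through the quotient by $\fil^{2^{\rm nd}-\max}_{S,i}(D_\sigma)^{\flat}$, as in \cite[(102)]{BDcritical25}.

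The main obstacle is the bookkeeping in (2): showing that the blocks shared by several $i\in S$ impose no constraint on the individual $f_i$. This requires the compensating $f_{i'}$ to exist inside the flat subspaces $\homo_E\big((\bigwedge^{n-i'}D_\sigma)^{\flat},\fil^{\max}_{i'}(D_\sigma)^{\flat}\big)$, not merely in the full $\homo$ as in the crystalline case. Here we would check, using Lemma \ref{hodgegapforquofil}, that every such block $(a,b)$ with $a\ge\bd_{i'}+1$ lies in the image of $g_{i',\sigma}$ restricted to the flat part.
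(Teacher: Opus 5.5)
Your outline is the paper's own proof almost step for step. It reduces to $\sum_{i\in S}g_{i,\sigma}(f_i)\in E\cdot\mathrm{id}$, splits $\homo_E(D_\sigma,D_\sigma)$ into Hodge blocks $(a,b)$, reads off the blocks with $\cS_{a,b}(S)=\{i\}$ from Lemma \ref{lemforcardone}, and lets the other $f_{i'}$ absorb shared blocks. The step you flag as the main obstacle is exactly where it breaks, and Lemma \ref{hodgegapforquofil} cannot supply it.

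For $I\subseteq\{1,\dots,s\}$ with $\sum_{q\in I}r_q=i$, let $P^{(i)}_I$ be the projection of $D_\sigma$ onto $\fil_H^{-\bh_{i+1,\sigma}}(D_\sigma)$ along $D_\sigma[I]$; non-criticality makes $D_\sigma[I]$ a complement. Then $\bigwedge^{n-i}P^{(i)}_I$ is nonzero only on $\be_{I^c}$. Hence the $\bigwedge^{n-i}P^{(i)}_{I_i(u)}$ form a basis of $\homo_E\big((\bigwedge\nolimits^{n-i}_ED_\sigma)^{\flat},\fil_i^{\max}(D_\sigma)^{\flat}\big)$, and $g_{i,\sigma}$ sends them to the $P^{(i)}_{I_i(u)}$. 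So the flat image of $g_{i,\sigma}$ has dimension at most the number $N_i$ of distinct $I_i(u)$. The block range of Lemma \ref{filforhomospace}, by contrast, has dimension $1+(i-\bd_i)(n-i)$; for $r=(2,2)$, $i=2$ this is $2$ against $5$.

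So the surjectivity asserted in Lemma \ref{lemforwedge} fails in general when $S_0\neq\emptyset$. The paper's proof relies on it at this very point, and the restriction step in that lemma's proof does not preserve $g_{i,\sigma}$-values. Shared blocks are therefore genuinely constrained. Lemma \ref{lemforcardone} is also unreliable when $i\mapsto\bd_i$ is not monotone: for $r=(2,2,1)$ and $S\supseteq\{1,2\}$ one has $\bd_1=1>\bd_2=0$ and $(1,3)\in\cJ^2_2$.

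In fact the formula in (2) itself fails, so no bookkeeping can close the gap. Take $r=(1,1,2)$ and $S=\Delta'=\{1,2,3\}$, so $(\bd_1,\bd_2,\bd_3)=(0,0,2)$. Write $D^1_\sigma=Ee_1$, $D^2_\sigma=Ee_2$, $D^3_\sigma=Ee_3\oplus Ee_4$, and set $Q_q=P^{(1)}_{\{q\}}$, $P_{12}=P^{(2)}_{\{1,2\}}$, $P_3=P^{(2)}_{\{3\}}$, $P_{q3}=P^{(3)}_{\{q,3\}}$. By (1), an element of $\ker(t^{\flat}_{D_\sigma})_S$ has components $f_1=\alpha(\bigwedge^3Q_1-\bigwedge^3Q_2)$, $f_2=\beta(\bigwedge^2P_{12}-\bigwedge^2P_3)$ and $f_3=\gamma(P_{13}-P_{23})$. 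Since $\ext^1_{G,\sigma}(\pi_{\lalg}(\underline{x},\bh),\pi_{\lalg}(\underline{x},\bh))$ contributes only scalars to $\homo_{\fil}(D_\sigma,D_\sigma)$, the map $A:=\alpha(Q_1-Q_2)+\beta(P_{12}-P_3)+\gamma(P_{13}-P_{23})$ must be scalar. It kills $\fil_H^{-\bh_{4,\sigma}}$, so $A=0$.

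On $\fil_H^{-\bh_{3,\sigma}}$ only the $\gamma$-term survives, and $(P_{13}-P_{23})v=0$ iff $v\in D^3_\sigma+\fil_H^{-\bh_{4,\sigma}}$. If this held on all of $\fil_H^{-\bh_{3,\sigma}}$, a dimension count inside the $3$-dimensional $D^3_\sigma\oplus\fil_H^{-\bh_{4,\sigma}}$ would give $D^3_\sigma\cap\fil_H^{-\bh_{3,\sigma}}\neq0$, contradicting non-criticality; so $\gamma=0$. On $\fil_H^{-\bh_{2,\sigma}}$, $(P_{12}-P_3)v=0$ iff $v\in\fil_H^{-\bh_{3,\sigma}}$, so $\beta=0$, and then $\alpha=0$. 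Hence the image of $\ker(t^{\flat}_{D_\sigma})_S$ under (\ref{compositionfinalSver}) is $0$.

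Now take the flag $\fil_H^{-\bh_{4,\sigma}}=E(e_1+e_2+e_3+e_4)$, $\fil_H^{-\bh_{3,\sigma}}=\langle e_1+e_3,e_2+e_4\rangle$, $\fil_H^{-\bh_{2,\sigma}}=\fil_H^{-\bh_{3,\sigma}}+E(e_3+e_4)$, which is non-critical for all six $u$. Here $(P_{12}-P_3)(e_3+e_4)=e_1+e_2+e_3+e_4$, so $P_{12}-P_3$ maps $\fil_H^{-\bh_{2,\sigma}}$ into $\fil_H^{-\bh_{4,\sigma}}$. Therefore the nonzero element $\bigwedge^2P_{12}-\bigwedge^2P_3$ lies in the space that (2) asserts for $i=2$, contradicting the computed image $0$. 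Statement (2) has to be reformulated, e.g.\ in terms of the spans $\langle P^{(i)}_{I_i(u)}\rangle$, before an argument of this shape can work.
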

\begin{proof}
We examine the proof of \cite[Propositions 2.3.4 \& 2.3.7]{BDcritical25}.\;We only need to take the sum for $\{I_i(u)\}_{u\in \cI_i^{\flat}}$ in  \cite[(96)]{BDcritical25},\;so that we need to replace $\fil_i^{\max}(D_{\sigma})$ with $\fil_i^{\max}(D_{\sigma})^{\flat}$,\;$(1)$ follows.\;It remains to study the image of $\ker(g_{\sigma})$ via $g_{i,\sigma}$ when $i\in S$.\;When $S_0=\emptyset$,\;then $\Delta'=\Delta$ and $D_{\sigma,(i)}^{\flat}=D_{\sigma}$ for all $i$,\;\cite[Proposition 2.3.7]{BDcritical25} prove that the subspace $\homo_E\Big(\big(\bigwedge\nolimits_{E}^{\!n-i}\!D_{\sigma}\big)^{\flat}/\fil_{S,i}^{2^{\mathrm{nd}}-\max}(D_{\sigma})^{\flat},\fil_i^{\max}(D_{\sigma})^{\flat}\Big)$ in $(2)$ gives such image.\;We draw the matrix of each $f_j\in \homo_E(D_{\sigma},D_{\sigma})$ (note that $f_j\in \homo_E(D_{\sigma,(j)}^{\flat},D_{\sigma})$ in our situation) in an adapted basis of $D_{\sigma}$ for the filtration $\fil^{-\bh_{i,\sigma}}_H(D_{\sigma})$,\;then we get the matrix form of $f_{i_{l_1}}+\cdots+f_{i_{l_{|S|}}}$.\;If $f_{i_{l_1}}+\cdots+f_{i_{l_{|S|}}}=0$,\;then $f_{j}$ also satisfies that  (see \cite[(102)]{BDcritical25})
\begin{equation*}
	\begin{aligned}
		&\Big\{f_{|S|}\in \homo_E\Big(D_{\sigma}/\big(D_{\sigma}^{\flat,(i_{l_{|S|}})}+\fil_H^{-\bh_{i_{l_{|S|}}+1,\sigma}}\big),\fil_H^{-\bh_{i_{l_{|S|}}+1,\sigma}}\Big):\;f_{|S|}\Big(\fil_H^{-\bh_{i_{l_{|S|-1}}+1,\sigma}}/\fil_H^{-\bh_{i_{l_{|S|}}+1,\sigma}}\Big)=0\Big\},\\
		&\Big\{f_j\in\homo_E\Big(D_{\sigma}/\big(D_{\sigma}^{\flat,(i_{l_{j}})}+\fil_H^{-\bh_{i_{l_{j}}+1,\sigma}}\big),\fil_H^{-\bh_{i_{l_{j}}+1,\sigma}}\Big):f_j\Big(\fil_H^{-\bh_{i_{l_{j-1}}+1,\sigma}}/\fil_H^{-\bh_{i_{l_{j}}+1,\sigma}}\Big)\subseteq\fil_H^{-\bh_{i_{l_{j+1}}+1,\sigma}}\Big\}
	\end{aligned}
\end{equation*}
when $j<|S|$.\;Then the same proof of \cite[(102)]{BDcritical25} shows that the image of $\ker(g_{\sigma})$ must lands in the space $\homo_E\Big(\big(\bigwedge\nolimits_{E}^{\!n-i}\!D_{\sigma}\big)^{\flat}/\fil_{S,i}^{2^{\mathrm{nd}}-\max}(D_{\sigma})^{\flat},\fil_i^{\max}(D_{\sigma})^{\flat}\Big)$.\;In general,\;the irredularity of the shape $\bL_{S_0}$ makes $\ker(g_{\sigma})$ smaller.\;Note that
\[\homo_E(D_{\sigma},D_{\sigma})=\bigoplus_{1\leq a,b\leq n}\homo_E\big(\gr_H^{-\bh_{a,\sigma}},\gr_H^{-\bh_{b,\sigma}}\big)\]
We use Lemma \ref{filforhomospace} and study the kernel of 
\[ \bigoplus_{i\in S}\bigoplus_{\substack{\bd_i+1\leq a\leq i\\i+1\leq b\leq n}}\homo_E\big(\gr_H^{-\bh_{a,\sigma}},\gr_H^{-\bh_{b,\sigma}}\big)\rightarrow \bigoplus_{1\leq a,b\leq n}\homo_E\big(\gr_H^{-\bh_{a,\sigma}},\gr_H^{-\bh_{b,\sigma}}\big).\]
We see that the kernel of the map 
\[g_{a,b}:\bigoplus_{i\in \cS_{a,b}(S)}\homo_E\big(\gr_H^{-\bh_{a,\sigma}},\gr_H^{-\bh_{b,\sigma}}\big)\rightarrow \bigoplus_{1\leq a,b\leq n}\homo_E\big(\gr_H^{-\bh_{a,\sigma}},\gr_H^{-\bh_{b,\sigma}}\big).\]
has $E$-dimension $|\cS_{a,b}(S)|-1$.\;Moreover,\;for any $i\in \cS_{a,b}(S)$,\;the projection of  $\ker(g_{a,b})$ to the  $i$-component $\homo_E(\gr_H^{-\bh_{a,\sigma}},\gr_H^{-\bh_{b,\sigma}})$ is surjective if only if $|\cS_{a,b}(S)|>1$.\;Therefore,\;for $i=i_{l_j}$,\;we see that $g_{i,\sigma}(\ker (g_{\sigma}))$ is equals to
\begin{equation*}
	\begin{aligned}
		&\;\bigoplus_{\substack{1\leq a<b\leq n\\i\in \cS_{a,b}(S),|\cS_{a,b}(S)|>1}}\homo_E\big(\gr_H^{-\bh_{a,\sigma}},\gr_H^{-\bh_{b,\sigma}}\big)=\bigoplus_{b=i+1}^n\bigoplus_{\substack{\bd_i+1\leq a\leq i\\ |\cS_{a,b}(S)|>1}}\homo_E\big(\gr_H^{-\bh_{a,\sigma}},\gr_H^{-\bh_{b,\sigma}}\big)\\
       		=&\;\homo_E\Big(\fil_H^{-\bh_{\bd_i+1,\sigma}}/\fil_H^{-\bh_{i+1,\sigma}},\fil_H^{-\bh_{i+1,\sigma}}\Big)\big/\bigoplus_{(a,b)\in\cJ^1_i}\homo_E\big(\gr_H^{-\bh_{a,\sigma}},\gr_H^{-\bh_{b,\sigma}}\big)\\
       	=&\;\Big\{f_i\in\homo_E\Big(\fil_H^{-\bh_{\bd_i+1,\sigma}}/\fil_H^{-\bh_{i+1,\sigma}},\fil_H^{-\bh_{i+1,\sigma}}\Big):f_i\Big(\fil_H^{-\bh_{{\max\{i_{l_{j-1}},\bd_i\}}+1,\sigma}}/\fil_H^{-\bh_{i+1,\sigma}}\Big)\subseteq\fil_H^{-\bh_{i_{l_{j+1}}+1,\sigma}}\Big\}.
	\end{aligned}
\end{equation*}
by Lemma \ref{lemforcardone}.\;We completes the proof.\;
\end{proof}

\begin{thm}\label{Hodgeparadeter}$\ker(t^{\flat}_{D_{\sigma}})_S$ determines (recall that $S:=\{i_{l_1}<i_{l_2}<\cdots<i_{l_{|S|}}\}$)
	\begin{equation}
	\begin{aligned}
		\Big\{\fil_H^{-\bh_{{\max\{i_{l_{j}},\bd_{i_{l_t}}\}}+1,\sigma}}(D_{\sigma})\oplus D_{\sigma}/\fil_H^{-\bh_{\bd_{i_{l_t}},\sigma}}(D_{\sigma})\Big\}_{1\leq j\leq |S|}
	\end{aligned}
\end{equation}	
In particular,\;if $\bd_{i_{l_t}}=0$,\;then $\ker(t^{\flat}_{D_{\sigma}})_S$ determines $\{\fil_H^{-\bh_{i_{l_{j}}+1,\sigma}}(D_{\sigma})\}_{1\leq j\leq |S|}$.\;
\end{thm}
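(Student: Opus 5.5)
The plan is to read off the Hodge-theoretic data from Proposition \ref{propfor2ndfil}(2), in the same way that \cite[Theorem 2.3.8]{BDcritical25} is deduced from \cite[Proposition 2.3.7]{BDcritical25}. We know $\ker(t^{\flat}_{D_{\sigma}})_S$ as a subspace of $\ext^1_{G,\mathrm{inf}}\big(\pi_{\lalg}(\underline{x},\bh),\pi^{\flat}_{S,\sigma}(\underline{x},\bh)\big)$. Applying $\beta_S$ from (\ref{compositionfinalSver}) and projecting to the $i$-th factor, we obtain for each $i=i_{l_j}\in S$ a subspace $K_i$ of $\homo_E\big((\bigwedge^{n-i}_E D_{\sigma})^{\flat},\fil_i^{\max}(D_{\sigma})^{\flat}\big)$. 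The maps $\beta_S$ and $g_{i,\sigma}$ are canonical: they depend only on the $\epsilon_{u,i}$ and on $\log_p(p)$, and the preceding remarks show that even this dependence is immaterial. Hence $K_i$ and $g_{i,\sigma}(K_i)\subseteq\homo_{\fil}(D_{\sigma},D_{\sigma})$ are determined by $\ker(t^{\flat}_{D_{\sigma}})_S$. By Proposition \ref{propfor2ndfil}(2) and Lemma \ref{filforhomospace}, the space $g_{i,\sigma}(K_i)$ is
\[
\Big\{f\in \homo_E\big(\fil_H^{-\bh_{\bd_i+1,\sigma}}/\fil_H^{-\bh_{i+1,\sigma}},\fil_H^{-\bh_{i+1,\sigma}}\big):\ f\big(\fil_H^{-\bh_{\max\{i_{l_{j-1}},\bd_i\}+1,\sigma}}/\fil_H^{-\bh_{i+1,\sigma}}\big)\subseteq \fil_H^{-\bh_{i_{l_{j+1}}+1,\sigma}}\Big\},
\]
viewed inside $\homo_E(D_{\sigma},D_{\sigma})$ after extending by $0$ on $D_{\sigma}^{\flat,(i)}$ and adding scalars.

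The second step is to recover subspaces of $D_{\sigma}$ from this space of endomorphisms, using the standard linear-algebra fact from \cite{BDcritical25}. Given $V=\{f\in\homo(A,B): f(A')\subseteq B'\}$ with $A'\subsetneq A$ and $B'\subsetneq B$, the space $V$ recovers $B=\sum_{f\in V}\mathrm{Im} f$, recovers the kernel intersection, and recovers $A'$ and $B'$ as the locus where the images are forced smaller, provided the inclusions are proper. Concretely:
\begin{itemize}
\item the total image $\sum_f \mathrm{Im}(f)$ gives $\fil_H^{-\bh_{i+1,\sigma}}(D_{\sigma})$;
\item the common kernel $\bigcap_f\ker f$ gives $D_{\sigma}^{\flat,(i)}\oplus\fil_H^{-\bh_{i+1,\sigma}}$, hence the quotient piece $D_{\sigma}/\fil_H^{-\bh_{\bd_i,\sigma}}$ under the identification of Lemma \ref{hodgegapforquofil};
\item the subspace $\{v: f(v)\in \bigcap_{f'}\ldots\}$, i.e.\ the source where images drop, gives $\fil_H^{-\bh_{\max\{i_{l_{j-1}},\bd_i\}+1,\sigma}}$ modulo $D_{\sigma}^{\flat,(i)}$;
\item the target constraint gives $\fil_H^{-\bh_{i_{l_{j+1}}+1,\sigma}}$.
\end{itemize}
Doing this for each $i=i_{l_t}$ and combining over $j$ yields the set
\[
\big\{\fil_H^{-\bh_{\max\{i_{l_j},\bd_{i_{l_t}}\}+1,\sigma}}\oplus D_{\sigma}/\fil_H^{-\bh_{\bd_{i_{l_t}},\sigma}}\big\}_j ,
\]
with the direct sum understood via the decomposition $D_{\sigma}=D^{\flat}_{\sigma,(i)}\oplus D_{\sigma}^{\flat,(i)}$ of Lemma \ref{hodgegapforquofil}. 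Lemma \ref{lemforcardone} ensures that the index sets $\cJ_i$ are exactly the rectangles used here, so there is no mixing between different $i$.

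Finally, when $\bd_{i_{l_t}}=0$ we have $D_{\sigma}^{\flat,(i)}=0$, the $\max$ reduces to $i_{l_j}$, and the quotient term disappears, giving the stated special case.

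The main obstacle is the second step. The subspaces must be recovered honestly from the constrained Hom-space even in degenerate cases: when $\max\{i_{l_{j-1}},\bd_i\}=i$, or when $i_{l_{j+1}}$ is absent, some of the constraints are vacuous. These cases need separate checks, namely that the boundary conventions $i_{l_0}=0$ and $i_{l_{|S|+1}}=n$ give the correct trivial filtrations, together with a careful identification of the quotient filtration via (\ref{comparefilwithflat}).
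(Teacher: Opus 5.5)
You only ever apply Proposition~\ref{propfor2ndfil}(2) to the full set $S$, and your final step (``combining over $j$'') is asserted rather than proved. As it stands, the argument does not yield the theorem.

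Write $\fil^{k}:=\fil_H^{-\bh_{k,\sigma}}(D_\sigma)$ and $\bd:=\bd_{i_{l_t}}$. For $S$ itself, the $i_{l_j}$-component $g_{i_{l_j},\sigma}(K_{i_{l_j}})$ involves only the neighbours $i_{l_{j\pm1}}$ and the invariants $\bd_{i_{l_j}}$, $D_\sigma^{\flat,(i_{l_j})}$ of that same index. In particular, the $i_{l_t}$-component is $\{f: f(\fil^{\max\{i_{l_{t-1}},\bd\}+1}/\fil^{i_{l_t}+1})=0\}$. Its common kernel is $\fil^{\max\{i_{l_{t-1}},\bd\}+1}\oplus D_\sigma^{\flat,(i_{l_t})}$, not $\fil^{i_{l_t}+1}\oplus D_\sigma^{\flat,(i_{l_t})}$ as your second bullet says, because the target constraint there is $0$. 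So it produces at most the entries $j=t-1,t$.

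The entries with $j\le t-2$ pair $i_{l_j}$ with the \emph{last} index's $\bd$ and $D_\sigma^{\flat,(i_{l_t})}$, and nothing on your list has that form. In favourable cases you can patch this. For $j\ge 2$ and $\bd_{i_{l_j}}<i_{l_j}$, the total image of the $i_{l_j}$-component is $\fil^{i_{l_j}+1}$ itself; for $j=1$ and $\bd_{i_{l_1}}=0$, its common kernel is $\fil^{i_{l_1}+1}$. It fails in general: a component with $\bd_i=i$ is zero, and for $j=1$ the source condition is vacuous, so you only get $\fil^{i_{l_1}+1}\oplus D_\sigma^{\flat,(i_{l_1})}$.

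For instance, take $\bL_{S_0}=\GL_1\times\GL_2\times\GL_3$ and $S=\{1,2,3\}$. Then $(\bd_1,\bd_2,\bd_3)=(1,2,0)$, and Proposition~\ref{propfor2ndfil}(2) gives the three spaces $0$, $0$ and $\homo_E(\fil^{1}/\fil^{3},\fil^{4})$. None of them sees $\fil^{2}$, which is the $j=1$ entry of the theorem.

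Also, on $\ext^1_{G,\mathrm{inf}}$ the scalar part of $g_{i,\sigma}$ vanishes, so you should not ``add scalars''; with scalars included, your common-kernel bullet would be false.

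The missing idea is the one in the paper's proof: for $1\le j\le t-1$, pass to $S_j=\{i_{l_1},\dots,i_{l_j},i_{l_t}\}\subseteq S$.
\begin{itemize}
\item Since $\ker(t^{\flat}_{D_\sigma})_{S_j}=\ker(t^{\flat}_{D_\sigma})_S\cap\ext^1_{G,\mathrm{inf}}\big(\pi_{\lalg}(\underline{x},\bh),\pi^{\flat}_{S_j,\sigma}(\underline{x},\bh)\big)$, it is determined by $\ker(t^{\flat}_{D_\sigma})_S$.
\item In $S_j$ the index $i_{l_t}$ is last, with immediate predecessor $i_{l_j}$. Proposition~\ref{propfor2ndfil}(2) applied to $S_j$ gives as $i_{l_t}$-component exactly $\homo_E(\fil^{\bd+1}/\fil^{\max\{i_{l_j},\bd\}+1},\fil^{i_{l_t}+1})$, a space of the form $\{f\circ q\}$.
\item Lemma~\ref{lem:recover-subspace-relative} then returns $q^{-1}(0)=\fil^{\max\{i_{l_j},\bd\}+1}\oplus D_\sigma^{\flat,(i_{l_t})}$. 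In the example, $S_1=\{1,3\}$ gives $\homo_E(\fil^{1}/\fil^{2},\fil^{4})$ and hence $\fil^{2}$.
\item The entry $j=t$ comes from the unconstrained space $\homo_E(\fil^{\bd+1}/\fil^{i_{l_t}+1},\fil^{i_{l_t}+1})$.
\end{itemize}
This is why only $\bd_{i_{l_t}}$ occurs in the statement. Since only common kernels are used, the recovery of source and target constraints that you single out as the main obstacle is never needed.

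Both routes take the formula of Proposition~\ref{propfor2ndfil}(2) on trust. Note that in the example $(a,b)=(1,5)$ lies in $\cJ^2_3$ for both $S$ and $S_1$, contrary to Lemma~\ref{lemforcardone}. So that input itself deserves re-checking when some $\bd_i$ with $i\in S$ is positive.
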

\begin{proof}
	Let $t=|S|$.\;For $0\leq j\leq t-1$,\;put $S_j:=\{i_{l_1},i_{l_2},\cdots,i_{l_j},i_{l_t}\}$.\;Note that $S_0=\{i_{l_t}\}$ and $S_{t-1}=S$.\;Similar to the Step 2 of the proof of \cite[2.3.10]{BDcritical25} and Proposition \ref{propfor2ndfil},\;the following data is determined by  $\ker(t^{\flat}_{D_{\sigma}})_S$:
	\begin{equation}
		\begin{aligned}
			&\Big\{\homo_E\Big(\fil_H^{-\bh_{\bd_{i_{l_t}}+1,\sigma}}/\fil_H^{-\bh_{{\max\{i_{l_{j}},\bd_{i_{l_t}}\}}+1,\sigma}},\fil_H^{-\bh_{i_{l_t}+1,\sigma}}\Big)\Big\}_{0\leq j\leq t-1}
		\end{aligned}
	\end{equation}
(note that $i_0=0$) and 
	\begin{equation}
	\begin{aligned}
		&\homo_E\Big(\fil_H^{-\bh_{\bd_{i_{l_t}}+1,\sigma}}/\fil_H^{-\bh_{i_{l_{t}}+1,\sigma}},\fil_H^{-\bh_{i_{l_t}+1,\sigma}}\Big).
	\end{aligned}
\end{equation}
By the Lemma \ref{lem:recover-subspace-relative},\;$\ker(t^{\flat}_{D_{\sigma}})_S$ determines 
	\begin{equation}
	\begin{aligned}
		\Big\{\fil_H^{-\bh_{{\max\{i_{l_{j}},\bd_{i_{l_t}}\}}+1,\sigma}}\oplus D_{\sigma}/\fil_H^{-\bh_{\bd_{i_{l_t}},\sigma}}\Big\}_{1\leq j\leq t}.
	\end{aligned}
\end{equation}
We complete the proof.\;
\end{proof}

\begin{cor}\label{Hodgeparadetercor} For $1\leq t\leq |\Delta'|$,\;$\ker(t^{\flat}_{D})$ determines  (recall that $\Delta':=\{i_1<i_2<\cdots<i_{|\Delta'|}\}$)
	\begin{equation}
		\begin{aligned}
			\Big\{\fil_H^{-\bh_{{\max\{i_{{j}},\bd_{i_t}\}}+1,\sigma}}(D_{\sigma})\oplus D_{\sigma}/\fil_H^{-\bh_{\bd_{i_t},\sigma}}(D_{\sigma})\Big\}_{1\leq j\leq t}
		\end{aligned}
	\end{equation}	
	In particular,\;if $\bd_{i_{t}}=0$,\;then $\ker(t^{\flat}_{D_{\sigma}})$ determines $\{\fil_H^{-\bh_{i_{j}+1,\sigma}}(D_{\sigma})\}_{1\leq j\leq t}$.\;
\end{cor}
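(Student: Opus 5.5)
The corollary should follow from Theorem \ref{Hodgeparadeter} applied to $S=\{i_1<i_2<\cdots<i_t\}\subseteq\Delta'$, i.e.\ $l_j=j$ for $1\leq j\leq t$. The first step is to record the inclusion $\ker(t^{\flat}_{D_{\sigma}})_S\subseteq\ker(t^{\flat}_{D_{\sigma}})_{\Delta'}=\ker(t^{\flat}_{D_{\sigma}})$ noted before Proposition \ref{propfor2ndfil}. We then check that $\ker(t^{\flat}_{D_{\sigma}})_S$ is canonically recovered from $\ker(t^{\flat}_{D_{\sigma}})$. Concretely, $\pi^{\flat}_{S,\sigma}(\underline{x},\bh)$ is the subrepresentation of $\pi^{\flat}_{1,\sigma}(\underline{x},\bh)$ generated by the $\pi^{\flat}_{s_i,\sigma}$ with $i\in S$. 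So $\ker(t^{\flat}_{D_{\sigma}})_S$ is the intersection of $\ker(t^{\flat}_{D_{\sigma}})$ with the image of $\ext^1_{G,\mathrm{inf}}(\pi_{\lalg},\pi^{\flat}_{S,\sigma})$. Equivalently, it consists of the elements of $\ker(t^{\flat}_{D_{\sigma}})$ whose image under the $\beta_{\Delta'}$ of (\ref{compositionfinalSver}) has zero components at $i\notin S$; by Proposition \ref{propfor2ndfil}(1) this condition is intrinsic to the representation (the constituents $C_{i,u,\sigma}$ for $i\notin S$ do not occur). The one point needing care is that push-forward along $\pi^{\flat}_{S,\sigma}\hookrightarrow\pi^{\flat}_{1,\sigma}$ is injective on $\ext^1(\pi_{\lalg},-)$. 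This follows from the short exact sequences of Remark \ref{rmkforsigmaextANA}, which hold for every $S$ by the same argument, together with $\homo_G(\pi_{\lalg},C_{i,u,\sigma})=0$.

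Theorem \ref{Hodgeparadeter} with this $S$ then says that $\ker(t^{\flat}_{D_{\sigma}})_S$, and hence $\ker(t^{\flat}_{D_{\sigma}})$, determines the spaces $\fil_H^{-\bh_{\max\{i_j,\bd_{i_t}\}+1,\sigma}}(D_{\sigma})\oplus D_{\sigma}/\fil_H^{-\bh_{\bd_{i_t},\sigma}}(D_{\sigma})$ for $1\leq j\leq t$, which is the displayed statement. Since $t^{\flat}_D=\oplus_\sigma t^{\flat}_{D_\sigma}$ and $\ker(t^{\flat}_D)=\oplus_\sigma\ker(t^{\flat}_{D_\sigma})$, with each summand singled out as the $U(\fg_{\Sigma_L\backslash\{\sigma\}})$-finite part, the same conclusion holds for $\ker(t^{\flat}_D)$ for each $\sigma$.

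For the \emph{in particular}, suppose $\bd_{i_t}=0$. Then $\max\{i_j,\bd_{i_t}\}=i_j$. The term $D_{\sigma}/\fil_H^{-\bh_{0,\sigma}}$ is, with the convention $\fil_H^{-\bh_{0,\sigma}}=D_\sigma$ (here $\fil_H^{-\bh_{1,\sigma}}=D_\sigma$ as well), the zero space. So the data reduce to $\{\fil_H^{-\bh_{i_j+1,\sigma}}(D_\sigma)\}_{1\le j\le t}$.

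The main obstacle is the first step: making precise that passing from $\Delta'$ to a subset $S$ loses no information, so that $\ker(t^{\flat}_{D_\sigma})$ really determines $\ker(t^{\flat}_{D_\sigma})_S$ and not just contains it. I would handle it with the $\beta$-component description above.
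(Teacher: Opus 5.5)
Your proposal is correct and matches the paper's implicit argument. The paper states the corollary without a separate proof, as the specialization of Theorem \ref{Hodgeparadeter} to $S=\{i_1<\cdots<i_t\}$ (so $l_j=j$ and $|S|=t$), combined with the inclusion $\ker(t^{\flat}_{D_{\sigma}})_S\subseteq\ker(t^{\flat}_{D_{\sigma}})$ noted before Proposition \ref{propfor2ndfil}. Your extra step recovers $\ker(t^{\flat}_{D_{\sigma}})_S$ as the intersection of $\ker(t^{\flat}_{D_{\sigma}})$ with the subspace on which the $C_{i,u,\sigma}$-components vanish for $i\notin S$. That subspace is independent of the filtration because $\pi^{\flat}_{S,\sigma}$ is the preimage of $\bigoplus_{i\in S,u}C_{i,u,\sigma}$ in $\pi^{\flat}_{1,\sigma}$; Proposition \ref{propfor2ndfil}(1) is not the right justification for this. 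This is exactly what upgrades ``contains'' to ``determines'', which the paper leaves tacit.
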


\begin{lem}\label{lem:recover-subspace-relative}
	Let $H'\subseteq H\subseteq D$ be subspaces of a finite-dimensional $E$-vector space $D$, and let $F$ be a
	nonzero $E$-vector space.\;Fix a quotient map $D\twoheadrightarrow H$ and consider the following composition 
	\[q:D\twoheadrightarrow H\twoheadrightarrow H/H'.\]
	Let $W:=\{\,f\circ q:f\in\Hom_E(H/H',F)\,\}
	\subseteq\Hom_E(D,F)$.\;Then
	$	\bigcap_{\varphi\in W}\ker(\varphi)=q^{-1}(0)=H'+\ker(D\twoheadrightarrow H)$ 
	and $H'=H\cap\bigcap_{\varphi\in W}\ker(\varphi).$

\end{lem}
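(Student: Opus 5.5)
The statement is pure linear algebra, and I plan to prove it by unwinding the definitions. Write $p:D\twoheadrightarrow H$ for the fixed quotient map and $\pi:H\twoheadrightarrow H/H'$ for the canonical projection, so that $q=\pi\circ p$.

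\emph{First equality.} The inclusion $q^{-1}(0)\subseteq\bigcap_{\varphi\in W}\ker(\varphi)$ is immediate, since every $\varphi\in W$ has the form $f\circ q$. For the reverse inclusion, take $x\in D$ with $q(x)\neq 0$. Because $F\neq 0$, we can choose a linear functional on $H/H'$ that is nonzero at $q(x)$ and compose it with an injection $E\hookrightarrow F$. This gives an $f\in\Hom_E(H/H',F)$ with $f(q(x))\neq 0$, so $x\notin\ker(f\circ q)$. Here we use that $E$ is a field and all spaces are finite-dimensional, so complements and functionals exist.

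\emph{Identification of $q^{-1}(0)$.} We have $q^{-1}(0)=p^{-1}(H')$. To write this as $H'+\ker(p)$, I will interpret the fixed quotient map $D\twoheadrightarrow H$ as a projection whose restriction to $H$ is the identity. This is the situation in which the lemma is applied, where $H$ is a direct summand such as $D_{\sigma}/\fil_H^{-\bh_{\bd,\sigma}}$ realized inside an adapted splitting. Under this interpretation $H'\subseteq p^{-1}(H')$, and any $x\in p^{-1}(H')$ decomposes as $x=p(x)+(x-p(x))$ with $p(x)\in H'$ and $x-p(x)\in\ker(p)$. The reverse inclusion $H'+\ker(p)\subseteq p^{-1}(H')$ is clear.

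\emph{Second equality.} Intersecting with $H$ gives $H\cap p^{-1}(H')=\{h\in H: p(h)=h\in H'\}=H'$, again because $p|_H=\mathrm{id}_H$.

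The only delicate point is this convention that $p$ restricts to the identity on $H$, i.e.\ that $D\twoheadrightarrow H$ is a retraction onto the subspace $H$. I will state it explicitly at the start, since without it the second equality fails for an arbitrary surjection. Everything else is routine, and I expect no further obstacle.
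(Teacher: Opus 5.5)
Your argument for $\bigcap_{\varphi\in W}\ker(\varphi)=q^{-1}(0)$ is exactly the paper's: pick $\lambda\in(H/H')^\vee$ with $\lambda(q(x))\neq 0$ and a nonzero $y\in F$, then take $f=\lambda(\cdot)\,y$. So the proposal is correct and follows the paper. The paper stops there and never justifies $q^{-1}(0)=H'+\ker(D\twoheadrightarrow H)$ or $H'=H\cap\bigcap_{\varphi\in W}\ker(\varphi)$. You are right that these need $D\twoheadrightarrow H$ to restrict to the identity on $H$ (each can fail for an arbitrary surjection), and your retraction argument correctly fills this in. One small correction to your motivation: in the paper's application $H$ is $\fil_H^{-\bh_{\bd+1,\sigma}}(D_\sigma)$, and the summand written $D_\sigma/\fil_H^{-\bh_{\bd,\sigma}}(D_\sigma)$ is the kernel of the projection rather than $H$.
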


\begin{proof}
	Since every element of $W$ factors through $q$, we have $q^{-1}(0)\subseteq\ker(\varphi)$ for every $\varphi\in W$.\;Hence $q^{-1}(0)\subseteq
	\bigcap_{\varphi\in W}\ker(\varphi).$\;Conversely,\;let $x\notin q^{-1}(0)$.\;Then $q(x)\neq0$ in $H/H'$.\;Since $F\neq0$,\; choose a nonzero vector $y\in F$ and a linear functional $	\lambda\in(H/H')^\vee$	such that $	\lambda(q(x))\neq0$.\;Define
	\[
	f:H/H'\longrightarrow F,\qquad
	\bar h\longmapsto\lambda(\bar h)y.
	\]
	Then $\varphi:=f\circ q\in W$ and $	\varphi(x)=\lambda(q(x))y\neq0$.\;Hence $x\notin
	\bigcap_{\varphi\in W}\ker(\varphi).$.\;We completes the proof.\;
\end{proof}



\begin{exmp}
A typical example is $\bL_{S_0}\cong \GL_1^{\oplus (n-m)}\times \GLN_m$.\;As $E$-vector spaces,\;$D_{\sigma}=D_{\sigma}^1\oplus\cdots D_{\sigma}^{n-m}\oplus D_{\sigma}^{n-m+1}$ with $\dim_ED_{\sigma}^j=1$ for $1\leq j\leq n-m$ and $\dim_ED
_{\sigma}^{n-m+1}=m$.\;For $i\in \Delta'$,\;we have
\begin{equation*}
	D_{\sigma,(i)}=\left\{
	\begin{array}{ll}
		D_{\sigma}/D_{\sigma}^{n-m+1},\;&n-i<m\\
		D_{\sigma} ,\;&n-i\geq m
	\end{array}
	\right.,\;\bd_i=\left\{
	\begin{array}{ll}
		m,\;&n-i<m\\
		0,\;&n-i\geq m
	\end{array}
	\right.,\;
\end{equation*}
We divide into two cases.\;
\begin{itemize}
	\item[(1)] $m\leq n-m$.\;In this case,\;$\Delta'=\Delta$.\;For $n-i\geq m$,\;$\ker(t^{\flat}_{D_{\sigma}})$ determines $\{\fil_H^{-\bh_{j+1,\sigma}}(D_{\sigma})\}_{1\leq j\leq i}$.\;If $n-i<m$,\;$\ker(t^{\flat}_{D_{\sigma}})$ determines 
	\begin{equation*}
		\begin{aligned}
			\Big\{\fil_H^{-\bh_{j+1,\sigma}}(D_{\sigma})\oplus D_{\sigma}/\fil_H^{-\bh_{m,\sigma}}(D_{\sigma})\Big\}_{m \leq j\leq i}\cup \Big\{\fil_H^{-\bh_{m+1,\sigma}}(D_{\sigma})\oplus D_{\sigma}/\fil_H^{-\bh_{m,\sigma}}(D_{\sigma})\Big\}.
		\end{aligned}
	\end{equation*}	
	\item[(2)] $m>n-m$.\;In this case,\;$\Delta'=\Delta\backslash \{n-m+1,\cdots,m-1\}$.\;For $n-i\geq m$ (so that $i\leq n-m$ and $i\in \Delta'$),\;$\ker(t^{\flat}_{D_{\sigma}})$ determines $\{\fil_H^{-\bh_{j+1,\sigma}}(D_{\sigma})\}_{1\leq j\leq i}$.\;If $i>m-1$,\;$\ker(t^{\flat}_{D_{\sigma}})$ determines 
	\begin{equation*}
		\begin{aligned}
			\Big\{\fil_H^{-\bh_{j+1,\sigma}}(D_{\sigma})\oplus D_{\sigma}/\fil_H^{-\bh_{m,\sigma}}(D_{\sigma})\Big\}_{m \leq j\leq i}\cup \Big\{\fil_H^{-\bh_{m+1,\sigma}}(D_{\sigma})\oplus D_{\sigma}/\fil_H^{-\bh_{m,\sigma}}(D_{\sigma})\Big\}.
		\end{aligned}
	\end{equation*}	
\end{itemize}
\end{exmp}

\subsection{Universal extensions}\label{sectionforunverext}

Let $R_{\Dpik}$ be the universal deformation ring of deformations of $\Dpik$ over $\Art_E$.\;For $?\in\{\flat,0\}$,\;let $R^{?}_{\Dpik,u}$ be the universal deformation ring that pro-represents the functor $\cF_{\Dpik,\cF_u}^{?}$ over $\Art_E$,\;and let $R_{\Dpik,g}$ be the universal deformation ring of de Rham deformations.\;Let $I^{?}_u$ (resp.,\;$I_{g}$) be the kernel of $R_{\Dpik}\rightarrow R^{?}_{\Dpik,u}$ (resp.,\;$R_{\Dpik}\rightarrow R_{\Dpik,g}$).\;Put $R^{?}_{\Dpik,u,g}=R_{\Dpik}/(I_g+I^{?}_u)$.\;We have natural surjections $R_{\Dpik}\twoheadrightarrow R^{?}_{\Dpik,u}\rightarrow  R^{?}_{\Dpik,u,g}$.\;For a continuous character $\delta$ of $\bZ_{S^u_0}(L)$,\;denote by the $R_{S_0^u,\delta}$ (resp.\;$R_{S_0^u,\delta,g}$) the universal deformations ring (resp.,\;the universal deformations ring of locally algebraic deformations) of $\delta$ over $\Art_E$.\;We have  a natural surjection $R_{S_0^u,\delta}\twoheadrightarrow R_{S_0^u,\delta,g}$.\;

For a complete local Noetherian $E$-algebra $R$,\;we use $\fm_R$ (or use $\fm$ for simplicity when it does not cause confusion) to denote its maximal ideal.\;Then for $\ast\in\{\emptyset,g,u\}$ and $?\in\{\flat,0,\emptyset\}$,\;we have  $(\fm_{R^{?}_{\Dpik,\ast}}/\fm^2_{R^{?}_{\Dpik,\ast}})^{\vee}\cong \ext^{1,?}_{\ast}(\Dpik,\Dpik)$.\;Moreover,\;we have
\[(\fm_{R_{S_0^u,\delta}}/\fm^2_{R_{S_0^u,\delta}})^{\vee}\cong \homo({\bZ_{S^u_0}(L)},E),(\fm_{R_{S_0^u,\delta,g}}/\fm^2_{R_{S_0^u,\delta,g}})^{\vee}\cong \homo_{\mathrm{sm}}({\bZ_{S^u_0}(L)},E).\]
For $u\in \sW_s$,\;we have the following commutative Cartesian diagram over $\Art_E$ (for $?\in\{\flat,0\}$):
\begin{equation}
	\xymatrix{
		R_{S_0^u,1}/\fm^2 \ar@{->>}[r] \ar[d]^{\kappa^{\flat}_u} & R_{S_0^u,1,g}/\fm^2 \ar[d]^{\kappa^{\flat}_u} 
		\\
		R^{\flat}_{\Dpik,u}/\fm^2 \ar@{->>}[r]	&  R^{\flat}_{\Dpik,u,g}/\fm^2},\quad \xymatrix{
		R_{S_0^u,1}/\fm^2 \ar@{->>}[r] \ar[d]^{\kappa^{0}_u} & R_{S_0^u,1,g}/\fm^2 \ar[d]^{\kappa^{0}_u} 
		\\
		R^{0}_{\Dpik,u}/\fm^2 \ar@{->>}[r]	&  R_{\Dpik,g}/\fm^2}.
\end{equation}
Let $\FZ_{\Omega}$ be the Bernstein centre over $E$ associated to $\mathrm{PS}^{\infty}_{1}(\underline{x})$ of $G$.\;Recall that $\FZ_{\Omega_{S^u_0}}$ is the Bernstein centre over $E$ associated to $\pi_{\sm}(\underline{x}^u)$ of $\bL_{S_0^u}(L)$.\;Let $\widehat{\FZ}_{\Omega,\underline{x}}$ (resp.,\;$\widehat{\FZ}_{\Omega_{S_0^u},\underline{x}^u}$) be the completion of $\FZ_{\Omega}$ (resp.,\;$\FZ_{\Omega_{S^u_0}}$) at $\mathrm{PS}^{\infty}_{u}(\underline{x})$ (resp.,\;$\pi_{\sm}(\underline{x}^u)$).\;We have natural isomorphisms $\widehat{\FZ}_{\Omega,\underline{x}}\xrightarrow{\sim} \widehat{\FZ}_{\Omega_{S_0^u},\underline{x}^u}\xrightarrow{\sim} R_{S_0^u,1,g}$ (note that $\widehat{\FZ}_{\Omega,\underline{x}}\cong R_{S_0^u,1,g}\cong R_{S_0,1,g}$ by sending a smooth deformation $\chi$ of $1$ to $\big(\ind^G_{\op_{S^u_0}(L)}\pi_{\sm}(\underline{x}^u)\eta_{S^u_0}\chi\big)^{\infty}$,\;see Lemma \ref{smhomoindepent}).\;Moreover,\;by Lemma \ref{smhomoindepent},\;we see that the composition 
\[A_0:=\widehat{\FZ}_{\Omega,\underline{x}}/\fm^2\xrightarrow{\sim} R_{S_0^u,1,g}\hookrightarrow R^{\flat}_{\Dpik,u,g}/\fm^2\]
is independent of the choice of $u$.\;For $?\in\{\emptyset,\flat,0\}$,\;let 
\[A^{?}_{\Dpik}:=R^{?}_{\Dpik}/\fm^2\times_{R_{\Dpik,g}} A_0,\text{\;and\;}A^{?}_{\Dpik,u}:=R^{?}_{\Dpik,u}/\fm^2\times_{R^{\flat}_{\Dpik,u,g}} A_{0}.\;\]
Then the tangent space of $A^{?}_{\Dpik}$ (resp.,\;$A^{?}_{\Dpik,u}$) is naturally isomorphism to $\overline{\ext}^{1,?}(\Dpik,\Dpik)$ (resp.,\;$\overline{\ext}^{1,?}_u(\Dpik,\Dpik)$), and the tangent space of $A^{?}_{\Dpik,u}$ is naturally isomorphic to $\homo\big(\bZ_{S^u_0}(L),E\big)$.\;Let $\cI^{?}_{u}$ be the kernel of the natural morphism $A_{\Dpik}\rightarrow A^{?}_{\Dpik,u}$.\;Let $\cI^{?}_{\Dpik}$ be the kernel of the natural morphism $A_{\Dpik}\rightarrow \prod_{u\in \sW_s}A^{?}_{\Dpik,u}$,\;then we get an injection $A^{?}_{\Dpik}:=A_{\Dpik}/\cI^{?}_{\Dpik}\hookrightarrow\prod_{u\in \sW_s}A^{?}_{\Dpik,u}$ (In particular,\;the infinite fern in Theorem \ref{infinitePoten}   deduces that $I_{\Dpik}=0$).\;

Let $\pi^{\flat}_{1}(\underline{x},\bh)^{\univ}$ (resp.,\;$\pi^{\flat}_{1}(\underline{x},\bh)_u^{\univ}$) be the universal tautological extension of
\[\ext^1_{G}\big(\pi_{\lalg}(\underline{x},\bh),\pi^{\flat}_1(\underline{x},\bh)\big)\otimes_E \pi_{\lalg}(\underline{x},\bh),\;\quad(\text{resp.,\;}\ext^1_{G}\left(\pi_{\lalg}(\underline{x},\bh),\mathrm{PS}_{u,1}(\underline{x},\bh)\right)\otimes_E \pi_{\lalg}(\underline{x},\bh))\]
 by $\pi^{\flat}_{1}(\underline{x},\bh)$.\;Let $\widetilde{\delta}_u^{\univ}$ be the universal extension of $\homo(\bZ_{S^u_0}(L),E)\otimes_E\delta_u$ by $\delta_u$.\;Similar to the proof of \cite[Lemma 3.35]{ParaDing2024},\;the induced representation $I^G_{\op_{S^u_0}(L)}\pi_{\sm}(\underline{x}^u)\eta_{S^u_0}\widetilde{\delta}_u^{\univ}$  is the universal extension of $\homo(\bZ_{S^u_0}(L),E)\otimes \pi_{\lalg}(\underline{x},\bh)$ by $\mathrm{PS}_{u,1}(\underline{x},\bh)$.\;We have hence  isomorphisms of $G$-representations
\begin{equation}
	\begin{aligned}
		&\left(I^G_{\op_{S_0}(L)}\pi_{\sm}(\underline{x}^u)\eta_{S^u_0}\widetilde{\delta}_u^{\univ}\right)\oplus_{\mathrm{PS}_{u,1}(\underline{x},\bh)}\pi^{\flat}_{1}(\underline{x},\bh)\xrightarrow{\sim }\pi^{\flat}_{1}(\underline{x},\bh)_u^{\univ}.\;
	\end{aligned}
\end{equation}
For $?\in\{\flat,0\}$,\;we have a natural action of $A^{?}_{\Dpik,u}\hookrightarrow R_{S_0^u,1}/\fm^2$ on $\widetilde{\delta}_u^{\univ}$ where $y\in \fm_{A^{?}_{\Dpik,u}}/\fm^2_{A^{?}_{\Dpik,u}}\cong \homo\big(\bZ_{S^u_0}(L),E\big)^{\vee}$ acts via
$y:\widetilde{\delta}_u^{\univ}\twoheadrightarrow \homo\big(\bZ_{S^u_0}(L),E\big)\otimes_E\delta_u\xrightarrow{y}\delta_u\hookrightarrow \widetilde{\delta}_u^{\univ}$.\;Similarity,\;$\pi^{\flat}_{1}(\underline{x},\bh)_u^{\univ}$ admits an $A^{?}_{\Dpik,u}$-action,\;which is given by 
\[y:\pi^{\flat}_{1}(\underline{x},\bh)_u^{\univ}\rightarrow \ext^1_{u}\left(\pi_{\lalg}(\underline{x},\bh),\pi^{\flat}_{1}(\underline{x},\bh)\right)\otimes \pi_{\lalg}(\underline{x},\bh)\xrightarrow{y}\pi_{\lalg}(\underline{x},\bh)\hookrightarrow \pi^{\flat}_{1}(\underline{x},\bh)_u^{\univ},\]
where $y\in \fm_{A^{?}_{\Dpik,u}}/\fm^2_{A^{?}_{\Dpik,u}}\cong \homo\big(\bZ_{S^u_0}(L),E\big)^{\vee}\xrightarrow{\zeta_u}\big(\zeta_u\big(\homo\big(\bZ_{S^u_0}(L),E\big)\big)^{\vee}$.\;The injection
\[I^G_{\op_{S^u_0}(L)}\pi_{\sm}(\underline{x}^u)\eta_{S^u_0}\widetilde{\delta}_u^{\univ}\hookrightarrow \pi^{\flat}_{1}(\underline{x},\bh)_u^{\univ}\] is $A^{?}_{\Dpik,u}$-equivalent.\;

\begin{thm}\label{actionADonuniversal}
For $?\in\{\emptyset,\flat,0\}$,\;there is a unique $A^{?}_{\Dpik}$-action (the $A_{\Dpik}$-action factors through the natural surjections $A_{\Dpik}\twoheadrightarrow  A^0_{\Dpik}\twoheadrightarrow  A^{\flat}_{\Dpik}$) on $\pi^{\flat}_{1}(\underline{x},\bh)^{\univ}$ such that for all $u\in \sW_s$,\;we have an $A^{?}_{\Dpik,u}\times G$-equivariant injection
	\[\pi^{\flat}_{1}(\underline{x},\bh)_u^{\univ}\hookrightarrow \pi^{\flat}_{1}(\underline{x},\bh)^{\univ}[\cI^{?
	}_u].\]
\end{thm}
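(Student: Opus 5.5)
The plan follows the strategy of \cite[Theorem 3.36]{ParaDing2024} (and its crystalline analogue in \cite{BDcritical25}). The point is that an action of a ring $A$ with $\fm_A^2=0$ on the tautological extension $\pi^{\flat}_{1}(\underline{x},\bh)^{\univ}$ is equivalent to a linear map
\[\fm_A/\fm_A^2 \rightarrow \big(\ext^1_{G}(\pi_{\lalg}(\underline{x},\bh),\pi^{\flat}_1(\underline{x},\bh))\big)^{\vee}.\]
Here $y\in\fm_A$ acts by
\[\pi^{\flat}_{1}(\underline{x},\bh)^{\univ}\twoheadrightarrow \ext^1_G(\pi_{\lalg},\pi^{\flat}_1)\otimes_E\pi_{\lalg}\xrightarrow{y}\pi_{\lalg}\hookrightarrow \pi^{\flat}_{1}(\underline{x},\bh)^{\univ}.\]
Dually, this is a linear map $\ext^1_G(\pi_{\lalg},\pi^{\flat}_1)\rightarrow (\fm_A/\fm_A^2)^{\vee}$, where the target is the tangent space of $A$.

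For $A=A^{?}_{\Dpik}$, the tangent space is $\overline{\ext}^{1,?}(\Dpik,\Dpik)$. Since $A^{?}_{\Dpik}\hookrightarrow\prod_u A^{?}_{\Dpik,u}$, the cotangent space of $A^{?}_{\Dpik}$ is a quotient of $\bigoplus_u \fm_{A^{?}_{\Dpik,u}}/\fm^2$. The natural candidate is therefore obtained from the maps $\zeta_u$: the $u$-th summand $\homo(\bZ_{S^u_0}(L),E)^{\vee}$ should act through $(\zeta_u)^{\vee}$, exactly as in the definition of the $A^{?}_{\Dpik,u}$-action on $\pi^{\flat}_{1}(\underline{x},\bh)_u^{\univ}$.

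\textbf{Step 1.} Define the action of $\bigoplus_u \fm_{A^{?}_{\Dpik,u}}/\fm^2$ by summing the actions of the $u$-components. Concretely, $y=(y_u)_u$ acts through the functional
\[\sum_u y_u\circ\zeta_u^{-1}\circ\mathrm{pr}_u\]
on $\ext^1_G(\pi_{\lalg},\pi^{\flat}_1)$. To make sense of $\mathrm{pr}_u$, use Proposition \ref{extpi1repn}(2): $\ext^1_G(\pi_{\lalg},\pi^{\flat}_1)$ is generated by the images of the $\zeta_u$. The cleaner formulation is dual: the map $\bigoplus_u\homo(\bZ_{S^u_0}(L),E)\xrightarrow{\oplus\zeta_u}\ext^1_G(\pi_{\lalg},\pi^{\flat}_1)$ is surjective. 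Dualizing gives an injection
\[\ext^1_G(\pi_{\lalg},\pi^{\flat}_1)^{\vee}\hookrightarrow\bigoplus_u \homo(\bZ_{S^u_0}(L),E)^{\vee}=\bigoplus_u\fm_{A^{?}_{\Dpik,u}}/\fm^2.\]

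\textbf{Step 2 (main obstacle).} Show that this descends to the cotangent space of $A^{?}_{\Dpik}$. That is, we need compatibility of the two quotients
\[\bigoplus_u\fm_{A^{?}_{\Dpik,u}}/\fm^2\twoheadrightarrow \fm_{A^{?}_{\Dpik}}/\fm^2
\quad\text{and}\quad
\bigoplus_u\fm_{A^{?}_{\Dpik,u}}/\fm^2\twoheadrightarrow \ext^1_G(\pi_{\lalg},\pi^{\flat}_1)^{\vee}.\]
Dually, the kernel of $\oplus_u\zeta_u$ must contain the kernel of
\[\overline{g}^{?}_{\Dpik}\colon\bigoplus_u\overline{\ext}^{1,?}_u(\Dpik,\Dpik)\cong\bigoplus_u\homo(\bZ_{S^u_0}(L),E)\rightarrow\overline{\ext}^{1,?}(\Dpik,\Dpik).\]
For $?=\flat$ this follows from Proposition \ref{mainthmforfullrefine}, $f^{\flat}_{\Dpik}\circ\overline{g}^{\flat}_{\Dpik}=t^{\flat}_D\circ\gamma^{\flat}_{\Dpik}$, together with the fact that $f^{\flat}_{\Dpik}$ is an isomorphism. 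One checks this shows $\ker\overline{g}^{\flat}\subseteq\ker(t^{\flat}_D\circ\gamma^{\flat})$.

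This is not quite enough, since $\ker\gamma^{\flat}$ is needed rather than $\ker(t^{\flat}_D\circ\gamma^{\flat})$. So one must also show that $\ker\overline{g}^{\flat}$ is killed by $\oplus\zeta_u$ directly. I would do this in two parts:
\begin{itemize}
\item on the smooth (de Rham) part, use Lemma \ref{smhomoindepent} and Lemma \ref{parameterline}, since all $\zeta_u$ agree on $\homo_{\sm}$;
\item on each $\ext^1_G(\pi_{\lalg},V_{i,u,\sigma})$-component, use Proposition \ref{reinterforextgp}, comparing the $\homo_\sigma(\bL_{\widehat{i}})$ parts for $u$ sharing the same $V_{i,u,\sigma}$.
\end{itemize}

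For $?=0$ and $?=\emptyset$, the kernels of $\overline{g}^{?}$ are contained in the corresponding $\flat$ ones after composing with the surjections $A_{\Dpik}\twoheadrightarrow A^0_{\Dpik}\twoheadrightarrow A^{\flat}_{\Dpik}$. The action then factors through $A^{\flat}_{\Dpik}$, which is exactly the asserted factorization.

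\textbf{Step 3.} Equivariance and uniqueness. By construction, restricting the action to the subspace $\pi^{\flat}_1(\underline{x},\bh)_u^{\univ}$ recovers the $A^{?}_{\Dpik,u}$-action defined before the theorem. Hence the injection $\pi^{\flat}_{1}(\underline{x},\bh)_u^{\univ}\hookrightarrow\pi^{\flat}_{1}(\underline{x},\bh)^{\univ}$ is $A^{?}_{\Dpik}$-equivariant, and $\cI^{?}_u$ kills its image. Uniqueness follows from Step 1: the images of the $\pi^{\flat}_{1}(\underline{x},\bh)_u^{\univ}$ generate $\pi^{\flat}_{1}(\underline{x},\bh)^{\univ}$ because the $\zeta_u$ jointly surject. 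So any action with the required property is determined on each $u$-piece, hence everywhere.
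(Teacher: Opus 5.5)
\textbf{Gap in Step~2: the required inclusion of kernels is reversed, and the extra claim you set out to prove is false.}

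An action of the tautological form on $\pi^{\flat}_{1}(\underline{x},\bh)^{\univ}$ amounts to a linear map $T\colon\ext^1_G(\pi_{\lalg},\pi^{\flat}_1(\underline{x},\bh))\rightarrow\overline{\ext}^{1,?}(\Dpik,\Dpik)$ into the tangent space of $A^{?}_{\Dpik}$. An element $y$ then acts through the functional $y\circ T$. Compatibility with the $A^{?}_{\Dpik,u}$-action on $\pi^{\flat}_{1}(\underline{x},\bh)^{\univ}_u$ says exactly that $T\circ\zeta_u\circ\kappa^{\flat}_u$ is the natural map $\overline{\ext}^{1,\flat}_u(\Dpik,\Dpik)\rightarrow\overline{\ext}^{1,?}(\Dpik,\Dpik)$ for every $u$, i.e.\ $T\circ\gamma^{\flat}_{\Dpik}=\overline{g}^{\flat}_{\Dpik}$. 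Since $\gamma^{\flat}_{\Dpik}$ is surjective, such a $T$ exists (and is then unique) if and only if $\ker\gamma^{\flat}_{\Dpik}\subseteq\ker\overline{g}^{\flat}_{\Dpik}$. This is the opposite of what you wrote.

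In your dual picture: $A^{\flat}_{\Dpik}\hookrightarrow\prod_uA^{\flat}_{\Dpik,u}$ makes $\fm_{A^{\flat}_{\Dpik}}/\fm^2$ a \emph{subspace}, not a quotient, of $\bigoplus_u\fm_{A^{\flat}_{\Dpik,u}}/\fm^2$. So is $\ext^1_G(\pi_{\lalg},\pi^{\flat}_1(\underline{x},\bh))^{\vee}$, and what is needed is that the former be contained in the latter.

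The needed inclusion is immediate from Proposition~\ref{mainthmforfullrefine}. Since $f^{\flat}_{\Dpik}$ is injective, $\ker\overline{g}^{\flat}_{\Dpik}=\ker(t^{\flat}_D\circ\gamma^{\flat}_{\Dpik})\supseteq\ker\gamma^{\flat}_{\Dpik}$, and $T=(f^{\flat}_{\Dpik})^{-1}\circ t^{\flat}_D$. Because $T$ lands in $\overline{\ext}^{1,\flat}(\Dpik,\Dpik)$, the action factors through $A^{\flat}_{\Dpik}$ for every $?$. So the step you dismissed as ``not quite enough'' is the entire existence argument, and it is the paper's proof.

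\textbf{Why the extra claim fails.} You propose to show in addition that $\bigoplus_u\zeta_u$ kills $\ker\overline{g}^{\flat}_{\Dpik}$. This fails whenever $t^{\flat}_D$ is not injective. The same identity, together with surjectivity of $\gamma^{\flat}_{\Dpik}$, gives $\gamma^{\flat}_{\Dpik}(\ker\overline{g}^{\flat}_{\Dpik})=\ker(t^{\flat}_D)$. This is nonzero in general: already for $S_0=\emptyset$, $L=\bQ_p$, $n\geq 3$, it has dimension $2^n-1-\frac{n(n+1)}{2}$.

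If your claim held, $\ext^1_G(\pi_{\lalg},\pi^{\flat}_1(\underline{x},\bh))$ would be a quotient of $\overline{\ext}^{1,\flat}(\Dpik,\Dpik)$. That would force $\ker(t^{\flat}_D)=0$ and $\pi^{\flat}_{\min}(\Dpik)=\pi^{\flat}_1(\underline{x},\bh)$, erasing exactly the Hodge-parameter information the paper extracts. So no argument via Lemma~\ref{smhomoindepent} and Proposition~\ref{reinterforextgp} can establish it.

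\textbf{Smaller points.} The projections $\mathrm{pr}_u$ of Step~1 do not exist, because the images of the $\zeta_u$ overlap: each contains the push-forwards of the classes $\pi_{\lalg}\otimes_E(1+(\psi\circ\det)\epsilon)$ for $\psi\in\homo(L^{\times},E)$. Your Step~3 (equivariance, and uniqueness from the joint surjectivity of the $\zeta_u$) is correct once $T$ is defined as above.
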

\begin{proof}
	By Theorem \ref{mainthmforfullrefine},\;we define an $A_{\Dpik}$-action on $\pi^{\flat}_{1}(\underline{x},\bh)^{\univ}$ by letting $y\in \fm_{A_{\Dpik}}/\fm^2_{A_{\Dpik}}\cong \overline{\ext}^1(\Dpik,\Dpik)\rightarrow \ext^1_{G}\left(\pi_{\lalg}(\underline{x},\bh),\pi^{\flat}_1(\underline{x},\bh)\right)^{\vee}$ act via
	\[y:\pi^{\flat}_{1}(\underline{x},\bh)^{\univ}\rightarrow \ext^1_{G}\left(\pi_{\lalg}(\underline{x},\bh),\pi^{\flat}_1(\underline{x},\bh)\right)\otimes_E\pi_{\lalg}(\underline{x},\bh)\xrightarrow{y} \pi_{\lalg}(\underline{x},\bh)\hookrightarrow \pi^{\flat}_{1}(\underline{x},\bh)^{\univ}.\;\]
By definition,\;such $A_{\Dpik}$-action factors through quotients $A_{\Dpik}\twoheadrightarrow  A^0_{\Dpik}\twoheadrightarrow  A^{\flat}_{\Dpik}$.\;This action satisfies the property in the theorem.\;The uniqueness follows from the fact that $\pi^{\flat}_{1}(\underline{x},\bh)^{\univ}$ is generated by $\pi^{\flat}_{1}(\underline{x},\bh)_u^{\univ}$.\;
\end{proof}

\section{Local-global compatibility}

\subsection{Patched Bernstein eigenvarieties and Bernstein parabolic varieties}\label{BENVARPARAVAR}

In this section,\;we recall briefly  the patched Bernstein eigenvariety  and Bernstein paraboline variety  of Breuil-Ding (see \cite[Section 3.3,\;Section 4.2]{Ding2021}).\;

We use the following patched setting and follow the notation of \cite[Section 4.1.1]{2019DINGSimple} and \cite[Section 2]{PATCHING2016}.\;We have a patched Galois deformation ring $R_{\infty}=R_{\infty}^{\fp}\widehat{\otimes} \defvarring$,\;where $\fp$ is a $p$-adic place over a totally real field $F^+$,\;$L:=F^+_{\fp}$,\;and $\overline{r}: \gal_L \rightarrow \GLN_n(k_E)$ is a continuous representation and  $\defvarring$ is  the maximal reduced and $p$-torsion free quotient of the universal $\co_E$-lifting ring of $\overline{r}$.\;We have a $R_{\infty}$-admissible unitary representation $\Pi_{\infty}$ (i.e.,\;the so-called patched Banach representation) of $G$ over $E$.\;We refer to \textit{loc.cit} for detail.\;Let $\Pi_\infty^{R_\infty-\ana}$ be the subrepresentation of $G$ of locally $R_\infty$-analytic vectors of $\Pi_\infty$ (see \cite[Section 3.1]{breuil2017interpretation}).\;

Put $\FX_{\infty}:=(\Spf R_{\infty})^{\rig}$,\;$\FX_{\infty}^{{\fp}}:=(\Spf R^{\fp}_{\infty})^{\rig}$ and  $\mathfrak{X}_{\overline{r}}=(\spf\;R_{\overline{r}}^{\square})^{\rig}$.\;We have thus
$\FX_{\infty}:=\FX_{\infty}^{{\fp}}\times \mathfrak{X}_{\overline{r}}$.\;As in Section \ref{Omegafil},\;for $u\in\sW_s$,\;we fix a cuspidal Bernstein component $\Omega_{S^u_0}$ of the Levi subgroup $\bL_{S^u_0}$.\;Let $\bh:=(\hpi_{1,\sigma},\hpi_{2,\sigma},\cdots,\hpi_{n,\sigma})_{\sigma\in \Sigma_L}\in X_{\Delta}^+$.\;We put $\hpi_{i}=(\hpi_{i,\sigma})_{\sigma\in \Sigma_L}$ for $1\leq i\leq n$ and put ${\bm\lambda}_\bh=(\hpi_{i,\sigma}+i-1)_{\sigma\in \Sigma_L,1\leq i\leq n}=\bh-\theta$.\;Let 
\[\mathcal{E}_{\omepik,\fp,{\bm\lambda}_\bh}^\infty(\overline{\rho})\hookrightarrow\FX_{\infty}\times\sbanpiku\times \rigchu\] 
be the patched Bernstein eigenvariety associated to $\Pi_{\infty}$ and  $\Omega_{S_0^u}$ (see \cite[Section 4.1]{He20222},\;this is an easy variation of the patched eigenvariety introduced in  \cite{Ding2021}).\;There exists a natural coherent sheaf $\cM_{\omepik,{\bm\lambda}_\bh}^{\infty}$ over $\FX_{\infty}\times\sbanpiku\times \rigchu$ such that we have a $\bZ_{S_0^u}(L)\times R_{\infty}$-equivalent isomorphism
\[\Gamma\Big(\FX_{\infty}\times\sbanpiku\times \rigchu,\cM_{\omepik,{\bm\lambda}_\bh}^{\infty}\Big)\cong B_{\omepik,{\bm\lambda}_\bh}(\Pi_\infty^{R_\infty-\ana})^\vee,\]
and $\mathcal{E}_{\omepik,\fp,{\bm\lambda}_\bh}^\infty(\overline{\rho})$ is
defined as the Zariski-closed support of $\cM_{\omepik,{\bm\lambda}_\bh}^{\infty}$ in $\FX_{\infty}\times\sbanpiku\times \rigchu$.\;In particular,\;for $y=(\rho_y, \pi_{y},\chi_y)=(\rho_{y}^{\fp},\rho_{y,\fp},\pi_{y},\chi_y)\in \FX_{\infty}^{{\fp}}\times \mathfrak{X}_{\overline{r}}\times\sbanpiku\times \rigchu$,\;$y\in \mathcal{E}_{\omepik,\fp,{\bm\lambda}_\bh}^\infty(\overline{\rho})$ if and only if
\[\homo_{\bL_{S^u_0}(L)}\Big(\pi_{y}\otimes_E\big((\chi_y)_{\varpi_{L}}\circ\mathrm{ det}_{\bL_{S^u_0}(L)}\big)\otimes_E L_{S^u_0}({\bm\lambda}_\bh),J_{\bP_{S^u_0}(L)}\big(\Pi_\infty^{R_\infty-\ana}[\fm_y] \big)\Big)\neq0.\]
where $\fm_y=(\fm_y^{\fp},\fm_{y,\fp})$ is the corresponding maximal ideal of $R_{\infty}[1/p]$.\;

Let $\defvaru\hookrightarrow\mathfrak{X}_{\overline{r}}^\Box\times \sbanpiku\times\rigchu$ be the  Bernstein paraboline varieties \cite[Section 4.2]{Ding2021} of type {$(\omepik,\bh)$},\;which parametrizes local Galois representations that admit $\Omega_{S_0^u}$-filtrations.\;By \cite[Theorem 4.2.5,\;Corollary 4.2.5]{Ding2021} or \cite[Proposition 4.2]{He20222},\;the rigid space $\defvaru$ is equidimensional of dimension $n^2+\left(\frac{n(n-1)}{2}+s\right)d_L$.\;Note that ``$n^2$" comes from the framing.\;Let $x=(\rho_x,\underline{x},\undelram)\in \defvaru$,\;then $D_{\rig}(\rho_x)$ admits an $\omepik$-filtration $\cF=\{\fil_i^\cF D_{\rig}(\rho_x)\}$ such that,\;for all $1=1,\cdots,s$,\;
\[\gr_{i}^{\cF}D_{\rig}(\rho_x)\otimes_{\cR_{k(x),L}}\cR_{k(x),L}((\delta_i^0)^{-1}_{\varpi_L})\big[1/t\big]=\Delta_{x_i}\big[1/t\big].\]
The Bernstein paraboline variety $\defvaru$ can be viewed as a local analogue of the patched Bernstein eigenvariety $\mathcal{E}_{\omepik,\fp,{\bm\lambda}_\bh}^\infty(\overline{\rho})$.\;Consider the composition
\begin{equation}\label{composition}
	\begin{aligned}
		\mathcal{E}_{\omepik,\fp,{\bm\lambda}_\bh}^\infty(\overline{\rho})\hooklongrightarrow \FX_{\overline{\rho}^{\fp}}^\Box&\times \mathfrak{X}_{\overline{r}}^\Box \times \sbanpiku\times \rigchu\\
		&\xrightarrow{\iota_{\omepik}} \FX_{\overline{\rho}^{\fp}}^\Box\times \mathfrak{X}_{\overline{r}}^\Box \times \sbanpiku\times \rigchu.
	\end{aligned}
\end{equation}
where $\iota_{\omepik}:\sbanpiku\xrightarrow{\sim}\sbanpiku$ is an isomorphism given in the argument before \cite[(3.32)]{Ding2021}.\;By an easy variation of the proof of \cite[Theorem 3.3.9]{Ding2021},\;the composition (\ref{composition}) factors through $\FX_{\overline{\rho}^{\fp}}^\Box\times \defvarrho$,\;i.e.,\;
\begin{equation}\label{mapbersteineigenvarpikdefvarrho}
	\Lambda:\mathcal{E}_{\omepik,\fp,{\bm\lambda}_\bh}^\infty(\overline{\rho})\hooklongrightarrow \FX_{\overline{\rho}^{\fp}}^\Box\times \iota_{\omepik}^{-1}(\defvaru).
\end{equation}	

\subsection{Main results on local-global compatibility}

This section follows the route of \cite[Section 4.1]{ParaDing2024}.\;Let $\rho\in \FX_{\infty}$ be a continuous Galois representation such that  the  following Hypothesis holds.\;Let $\fm_{\rho}=(\fm^{\fp},\fm_{\fp})$ be  the corresponding maximal ideal of $R_{\infty}[1/p]$.\;
\begin{hypothesis}\label{hyongaloisrep1}(Keep the notation in  Section \ref{Omegafil} and Definition \ref{noncriticalassumption})
	\begin{itemize}
		\item[(a)] $\rho_L:=\rho_{\fp}:=\rho|_{\gal_{F^+_{\fp}}}$ is a potentially crystalline  $p$-adic Galois representation with distinct Hodge-Tate weights $\bh:=(\hpi_{1}>\hpi_{2}>\cdots>\hpi_{n} )$.\;
		\item[(b)] Let $\Dpik:=D_{\rig}(\rho_L)$ be the associated $(\varphi,\Gamma)$-module over $\cR_{E,L}$ of rank $n$.\;For  $u\in \sW_s$,\;$\Dpik$ admits a non-critical generic $\omepik$-filtration $\cF_u$ with parameters  $(\underline{x}^u_+,\underline{\delta}^{0,u})\in \sbanpiku\times \rigchu$.\;
		\item[(c)] For $u\in \sW_s$,\;put $y_{u}:=(\rho,\iota^{-1}_{\omepik}(\underline{x}^u_+,\underline{\delta}^{0,u}))\in \FX_{\infty}\times\sbanpiku\times \rigchu$.\;Then $y_u\in \mathcal{E}_{\omepik,\fp,{\bm\lambda}_\bh}^\infty(\overline{\rho})$ (and thus $x_u:=(\rho_L,\underline{x}^u_+,\underline{\delta}^{0,u})\in \defvaru$).\;
	\end{itemize}
\end{hypothesis}

By \cite[Propositions 4.10 $\&$ 4.11]{He20222},\;we have 
\begin{lem}
$x_u$ (resp.,\;$y_{u}$) is a smooth point of $\defvaru$ (resp.,\;$\mathcal{E}_{\omepik,\fp,{\bm\lambda}_\bh}^\infty(\overline{\rho})$).\;Moreover,\;$y_{u}$ does not admit companion points of non-dominant weight.\;
\end{lem}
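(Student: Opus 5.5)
The lemma has two parts. The first is smoothness of $x_u$ in $\defvaru$ and of $y_u$ in the patched Bernstein eigenvariety. The second is the absence of companion points of non-dominant weight at $y_u$.

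\emph{Smoothness of $x_u$.} I would use a tangent space count. By \cite[Theorem 4.2.5]{Ding2021}, $\defvaru$ is equidimensional of dimension $n^2+(\frac{n(n-1)}{2}+s)d_L$. It therefore suffices to bound $\dim T_{x_u}\defvaru$ from above by this number. Because $\cF_u$ is non-critical and $\Dpik$ is generic, the filtration $\cF_u$ deforms uniquely along any deformation of $x_u$: the relevant $\homo$-groups between the graded pieces $E_{u,i}$ and $E_{u,j}$ vanish for $i\neq j$, and the $\varphi$-eigenvalue genericity rules out extra saturations. Consequently the completed local ring of $\defvaru$ at $x_u$ is a quotient of the framed version of the functor $F^0_{\Dpik,\cF_u}$ (an injection into $\Delta\otimes\cR(\delta_A)$ on each graded piece). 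Its tangent space has dimension
\[
\dim_E\ext^{1,0}_u(\Dpik,\Dpik)+n^2-1=n^2+d_L\Big(\tfrac{n(n-1)}{2}+s\Big)
\]
by Proposition \ref{dimesionforParafilgeneric}(2), the $n^2-1$ coming from the framing. This matches the dimension, so $x_u$ is smooth.

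\emph{Smoothness of $y_u$.} Here I would run the Breuil--Hellmann--Schraen type argument in the Bernstein setting, following \cite{Ding2021}. The map $\Lambda$ in (\ref{mapbersteineigenvarpikdefvarrho}) is a closed embedding into $\FX_{\overline{\rho}^{\fp}}^\Box\times\iota_{\omepik}^{-1}(\defvaru)$. The patched Bernstein eigenvariety is equidimensional of the same dimension as its target near $y_u$, and it is a union of irreducible components of the target there. Since $x_u$ is smooth and $\FX^{\fp}$ is smooth, the target is smooth and hence irreducible locally at $\Lambda(y_u)$, so $\Lambda$ is a local isomorphism at $y_u$ and $y_u$ is smooth. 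The equidimensionality needs the Cohen--Macaulay property of $\cM^\infty$, which follows as in \cite[Section 3.3]{Ding2021}.

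\emph{No companion points.} Suppose $y'_u$ is a companion point with non-dominant weight, so its weight is $w\cdot\bm\lambda_\bh$ for some $w\neq 1$. Then local--global compatibility forces $D_{\rig}(\rho_L)$ to admit an $\Omega_{S_0^u}$-filtration whose graded pieces have Hodge--Tate weights permuted by $w$, that is, a critical relative position between the Hodge flag and $\cF_u$. The non-critical hypothesis (b) says the Hodge filtration is in generic position relative to $\cF_u$, giving $g_\sigma\bB_\sigma=(u^\sharp)^{-1}b_\sigma(u)w_0\bB_\sigma$ as in Remark \ref{explainHomasLiealg}. This forces $w=1$, a contradiction.

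The main obstacle I expect is the uniqueness of the deformed filtration at $x_u$, which is what makes the tangent space bound work. I would handle it by dévissage using the $\hH^0$- and $\hH^2$-vanishings from genericity, the same vanishings used in the proof of Proposition \ref{studyforkerkappau}.
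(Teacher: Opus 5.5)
The paper does not argue this itself. It quotes \cite[Propositions 4.10 \& 4.11]{He20222}, which rest on identifying the completion of $\defvaru$ at $x_u$ with $R^{\Box,0}_{\Dpik,u}$ (\cite[Proposition 6.4.5]{Ding2021}, used again later in the paper). Your reconstruction has a gap at exactly this point, and the obstacle you single out is the easy half.

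Uniqueness of the deformed filtration only shows that $F^0_{\Dpik,\cF_u}$ is a subfunctor of the deformation functor of $\Dpik$. So your tangent-space count for its framed version is fine. It does not show that the completed local ring of $\defvaru$ at $x_u$ is a quotient of $R^{\Box,0}_{\Dpik,u}$. $\defvaru$ is a Zariski closure whose points are only known to carry $\Omega_{S^u_0}$-filtrations after inverting $t$. Hence an Artinian point of its completion at $x_u$ (even a tangent vector) carries a priori no filtration with graded pieces embedding into $\Delta_{x_{u^{-1}(i)}}\otimes_{\cR_{E,L}}\cR_{A,L}(\delta_{A,i})$.

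What you need is an \emph{existence} statement in families near the non-critical generic point $x_u$, a Bernstein analogue of the Kedlaya--Pottharst--Xiao global triangulation. Granting it, your equidimensionality-plus-tangent-space argument for $x_u$ and the Breuil--Hellmann--Schraen argument for $y_u$ go through. One further input is still needed: you use smoothness of $\FX^\Box_{\overline{\rho}^{\fp}}$ at $\rho^{\fp}$. This is not automatic, since framed deformation rings at places $v\nmid p$ need not have smooth generic fibre, so it has to come from the patching hypotheses.

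The companion-point argument does not work as written. By genericity the $\Omega_{S^u_0}$-filtration of type $u$ on $\Dpik[1/t]$ is unique. So any point of $\mathcal{E}_{\omepik,\fp,{\bm\lambda}_\bh}^\infty(\overline{\rho})$ over $\rho$ with the same smooth parameter as $y_u$ again has $\cF_u$ as its filtration, with graded pieces $E_{u,i}$ carrying the non-critical Hodge--Tate weights. There is no second filtration ``with weights permuted by $w$'', and the relative position of the Hodge flag does not change. Only the parameter $\chi'$ differs, and its weight is not determined by the $E_{u,i}$, because the condition at points of $\defvaru$ only involves $\gr_i[1/t]$.

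The missing ingredient is a weight inequality, valid at every point of the closure, between the weights of $\chi'$ on the first $i$ blocks and the Hodge--Tate weights of $\fil_i^{\cF_u}\Dpik$. This would again follow from the family filtration, e.g.\ applied to the determinants of the $\fil_i^{\cF_u}$. Non-criticality makes the Hodge--Tate weights of $\fil_i^{\cF_u}\Dpik$ the extremal ones $\bh_1,\dots,\bh_{t^u_i}$, which forces equality for every $i$ and hence dominance. Without this inequality, ``local--global compatibility forces a critical relative position'' is just the contrapositive of the claim.
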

\begin{rmk}
By the argument in the proof of \cite[Theorem 4.35]{PATCHING2016},\;the coherent sheaf $\cM_{\omepik,{\bm\lambda}_\bh}^{\infty}$ is locally free of rank $1$ at all $y_{u}$.\;
\end{rmk}

Recall that the completion of $R_{\overline{r}}^{\square}[1/p]$ at $\rho_L$ is natural to $R^{\Box}_{\rho_L}\cong R^{\Box}_{\Dpik}$,\;where $R^{\Box}_{\rho_L}$ is the framed universal deformation ring of $\rho_L$ over $\Art_E$.\;Let $R^{\Box}_{\Dpik}:=R_{\Dpik}\otimes_{R_{\rho_L}}R^{\Box}_{\rho_L}$.\;Put $R_{\Dpik,u}^{\Box,?}:=R_{\Dpik}^{\Box}\widehat{\otimes}_ER^{?}_{\Dpik,u}$ for $u\in \sW_{s}$ and $?\in\{\flat,0\}$.\;Let $(\defvaru)^{\widehat{}}_{x_u}$ be the completion of $\defvaru$ at $x_u$.\;As $x_u$ is non-critical,\;by \cite[Proposition 6.4.5]{Ding2021} and \cite[Proposition 4.9]{He20222},\;$(\defvaru)^{\widehat{}}_{x_u}$ is naturally isomorphic to $R_{\Dpik,u}^{\Box,0}$.\;

Keep the notation in Section \ref{sectionforunverext}.\;Let $\fa\supseteq\fm^2_{R^{\Box}_{\Dpik}}$ be an ideal of $R^{\Box}_{\Dpik}$ satisfies that $\fa/\fm^2_{R^{\Box}_{\Dpik}}\oplus \fm_{A_{\Dpik}}/\fm^2_{A_{\Dpik}}\xrightarrow{\sim } \fm_{R^{\Box}_{\Dpik}}/\fm^2_{R^{\Box}_{\Dpik}}$.\;Then the composition $A_{\Dpik}\hookrightarrow R^{\Box}_{\Dpik}/\fm^2_{R^{\Box}_{\Dpik}}\twoheadrightarrow R^{\Box}_{\Dpik}/\fa$ is an isomorphism (i.e.,\;the ideal $\fa$ deletes the information comes from framing).\;For $u\in \sW_s$ and $?\in\{\emptyset,\flat,0\}$,\;we use $\fa$ to denote its image in $R^{\Box,?}_{\Dpik,u}$.\;Thus we get $\fa/\fm^2_{R^{\Box,?}_{\Dpik,u}}\oplus \fm_{A^{?}_{\Dpik,u}}/\fm^2_{A^{?}_{\Dpik,u}}\xrightarrow{\sim } \fm_{R^{\Box,?}_{\Dpik,u}}/\fm^2_{R^{\Box,?}_{\Dpik,u}}$ and $\fa/\fm^2_{R^{\Box,?}_{\Dpik,u,g}}\oplus \fm_{A_{0,u}}/\fm^2_{A_{0,u}}\xrightarrow{\sim } \fm_{R^{\Box,?}_{\Dpik,g}}/\fm^2_{R^{\Box,?}_{\Dpik,g}}$,\;so that the natural maps
$A^{?}_{\Dpik,u}\hookrightarrow R^{\Box,?}_{\Dpik,u}/\fm^2_{R^{\Box,?}_{\Dpik,u}}\twoheadrightarrow R^{\Box,?}_{\Dpik,u}/\fa$ and $A_{0,u}\hookrightarrow R^{\Box,?}_{\Dpik,u,g}/\fm^2_{R^{\Box,?}_{\Dpik,u,g}}\twoheadrightarrow R^{\Box,?}_{\Dpik,u,g}/\fa$ are isomorphisms.\;We use $\fa\subseteq R_{\overline{r}}^{\square}[1/p]$ denote the preimage of $\fa\subseteq R_{\overline{r}}^{\square}$.\;Let $\fm_{\rho}:=(\fm_{\fp},\fm^{\fp})\subseteq\fa_{\rho}:=(\fa,\fm^{\fp})\subseteq R_{\infty}[1/p]$.\;

Our goal is to investigate $\Pi_\infty^{R_\infty-\ana}[\fa_{\rho}]$,\;which is equipped with a natural $R^{\Box}_{\Dpik}/\fa\cong A_{\Dpik}$-action.\;By \cite[Proposition 4.33]{PATCHING2016},\;$\Pi_\infty^{R_\infty-\ana}[\fm_{\rho}]^{\lalg}\cong \pi_{\lalg}(\underline{x},\bh)$.\;Similar to \cite[Lemma 4.2]{ParaDing2024},\; $\fa+\fm_{A_{\Dpik}}=\fm_{R^{\Box}_{\Dpik}}$ and hence $\fa_{\rho}+\fm_{A_{\Dpik}}=\fm_{\rho}$,\;we get that 
$\Pi_\infty^{R_\infty-\ana}[\fm_{\rho}]=\Pi_\infty^{R_\infty-\ana}[\fa_{\rho}][\fm_{A_{\Dpik}}]$.\;Similar to \cite[Lemma 4.2 (2)]{ParaDing2024},\;we can prove that
\[\homo_G(\Pi_\infty^{R_\infty-\ana}[\fm_{\rho}]^{\lalg},\Pi_\infty^{R_\infty-\ana}[\fa_{\rho}])\cong \homo_G(\Pi_\infty^{R_\infty-\ana}[\fm_{\rho}]^{\lalg},\Pi_\infty^{R_\infty-\ana}[\fm_{\rho}])\cong E.\]
Let $U_u=U_{\fp,u}\times U^{\fp}_u\subseteq  \iota^{-1}_{\omepik}\big(\defvaru\big)\times\FX_{\overline{\rho}^{\fp}}^{\Box}$ be a smooth affinoid neighourhood of $y_u$ such $y_{u'}\not\in U_{\fp,u}$ for $u'\neq u$.\;Let $\fm_{y_{u,\fp}}$ be the maximal ideal of $\cO(U_{\fp,u})$ at $y_{u,\fp}$ and $\fa\supset \fm^2_{y_{u,\fp}}$ be the closed ideal generated by the above ideal $\fa\subseteq R_{\overline{r}}^{\square}[1/p]$.\;In the sequel,\;we put
\[\cM_{y_u}:=\cM_{\omepik,{\bm\lambda}_\bh}^{\infty}(U_u)/(\fm_{y_{u,\fp}}+\fm^{\fp}),\widetilde{\cM}_{y_u}:=\cM_{\omepik,{\bm\lambda}_\bh}^{\infty}(U_u)/(\fa+\fm^{\fp})\]
for simplicity.\;There are natural $\bZ_{S^u_0}(L)\times R_{\infty}$-equivariant injections:
\[\cM_{y_u}\hookrightarrow \widetilde{\cM}_{y_u}\hookrightarrow J_{\bP^u_{S_0}}(\Pi_\infty^{R_\infty-\ana})[\fa_{\rho}].\]
Since $\cM_{\infty}$ is locally free of rank $1$ at $y_{u}$,\;we obtain that $\widetilde{\cM}_{y_u}\cong R^{\Box,0}_{\Dpik,u}/\fa_{\rho}$,\;so that $\dim_E\widetilde{\cM}_{y_u}=1+(s+sd_L)$.\;In this case,\;similar to the argument around \cite[(4.5)]{ParaDing2024},\;we have a $\bZ_{S^u_0}(L)\times A^{0}_{\Dpik,u}$-equivariant isomorphism $\widetilde{\cM}^{\vee}_{y_u}\cong \pi_{\sm}(\underline{x}^u)\eta^{-1}_{S^u_0}\otimes_E\widetilde{\delta}_u^{\univ}$ (note that the $A^{0}_{\Dpik,u}$-action factors through $A^{0}_{\Dpik,u}\twoheadrightarrow A^{\flat}_{\Dpik,u}$).\;Similar to the proof of \cite[Lemma 4.19]{He20222},\;by using the non-critical assumption of $y_u$,\;the maps $\cM_{y_u}\hookrightarrow  \Pi_\infty^{R_\infty-\ana}[\fm_{\rho}]$ and $ \widetilde{\cM}_{y_u}\hookrightarrow \Pi_\infty^{R_\infty-\ana}[\fa_{\rho}]$ are balanced,\;hence induces a $G\times R_{\infty}$-equivariant injection:
\begin{equation}
	\begin{aligned}
		&\iota_u:I^G_{\op_{S^u_0}(L)}\pi_{\sm}(\underline{x}^u)\eta_{S^u_0}\widetilde{\delta}_u^{\univ}\hookrightarrow \Pi_\infty^{R_\infty-\ana}[\fa_{\rho}].\;
	\end{aligned}
\end{equation}
where the $R_{\infty}$-action on $I^G_{\op_{S^u_0}(L)}\pi_{\sm}(\underline{x}^u)\eta_{S^u_0}\widetilde{\delta}_u^{\univ}$ is induced by $R_{\infty}\rightarrow (R^{\Box,0}_{\Dpik,u}/\fa)\otimes_E (R^{\fp}_{\infty}[1/p]/\fm^{\fp})\xrightarrow{\sim}A_{\Dpik,u}^{0}\twoheadrightarrow (R^{\Box,\flat}_{\Dpik,u}/\fa)\otimes_E (R^{\fp}_{\infty}[1/p]/\fm^{\fp})\xrightarrow{\sim}A_{\Dpik,u}^{\flat}$.\;Moreover,\;we have an injection of locally analytic representations $\mathrm{PS}_{u}(\underline{x},\bh)\hookrightarrow I^G_{\op_{S^u_0}(L)}\pi_{\sm}(\underline{x}^u)\eta_{S^u_0}\widetilde{\delta}_u^{\univ}$.\;

Let $\widetilde{\pi}$ be the subrepresentation of $\Pi_\infty^{R_\infty-\ana}[\fa_{\rho}]$ generated by $\mathrm{Im}(\iota_u)$  for all $u\in \sW_s
$.\;Note that $\widetilde{\pi}$ has an $A_{\Dpik}$-action via $A_{\Dpik}\xrightarrow{\sim}R^{\Box}_{\Dpik}/\fa \xrightarrow{\sim}R_{\infty}[1/p]/\fa_{\rho}$.\;Similar to \cite[Lemma 4.4]{ParaDing2024},\;we have
\begin{pro}\label{imiotawmx}
 $\iota_u(\mathrm{PS}_{u,1}(\underline{x},\bh))=(\mathrm{Im}(\iota_u))[\fm_{\rho}]$.\;
 \end{pro}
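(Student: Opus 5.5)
We prove the two inclusions separately, following the route of \cite[Lemma 4.4]{ParaDing2024}. Write $\widetilde{\Pi}_u:=I^G_{\op_{S^u_0}(L)}\pi_{\sm}(\underline{x}^u)\eta_{S^u_0}\widetilde{\delta}_u^{\univ}$. Since $\iota_u$ is injective and $G\times R_\infty$-equivariant, we have $(\mathrm{Im}(\iota_u))[\fm_\rho]=\iota_u(\widetilde{\Pi}_u[\fm_\rho])$. The $R_\infty$-action on $\widetilde{\Pi}_u$ factors through $A^{0}_{\Dpik,u}\twoheadrightarrow A^{\flat}_{\Dpik,u}$, so $\fm_\rho$ acts through $\fm_{A^{\flat}_{\Dpik,u}}$. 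It therefore suffices to show
\[\widetilde{\Pi}_u[\fm_{A^{\flat}_{\Dpik,u}}]=\mathrm{PS}_{u,1}(\underline{x},\bh)\subseteq \widetilde{\Pi}_u .\]

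For the inclusion $\supseteq$, recall that $\widetilde{\Pi}_u$ is the universal extension of $\homo(\bZ_{S^u_0}(L),E)\otimes_E\pi_{\lalg}(\underline{x},\bh)$ by $\mathrm{PS}_{u,1}(\underline{x},\bh)$, as in Section \ref{sectionforunverext} (the analogue of \cite[Lemma 3.35]{ParaDing2024}). Each $y\in\fm_{A^{\flat}_{\Dpik,u}}$ acts by the composite
\[\widetilde{\Pi}_u\twoheadrightarrow \homo(\bZ_{S^u_0}(L),E)\otimes_E\pi_{\lalg}(\underline{x},\bh)\xrightarrow{y}\pi_{\lalg}(\underline{x},\bh)\hookrightarrow\widetilde{\Pi}_u,\]
which kills the subobject $\mathrm{PS}_{u,1}(\underline{x},\bh)$. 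Hence $\iota_u(\mathrm{PS}_{u,1}(\underline{x},\bh))\subseteq(\mathrm{Im}\,\iota_u)[\fm_\rho]$.

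For the inclusion $\subseteq$, take $v\in\widetilde{\Pi}_u[\fm_{A^{\flat}_{\Dpik,u}}]$ with image $\bar v$ in the quotient $\homo(\bZ_{S^u_0}(L),E)\otimes_E\pi_{\lalg}(\underline{x},\bh)$. For each $y$ we have $y\cdot v=y(\bar v)$ viewed in $\pi_{\lalg}$. Because $\fm_{A^{\flat}_{\Dpik,u}}/\fm^2\cong\homo(\bZ_{S^u_0}(L),E)^\vee$ under $\kappa^\flat_u$, the functionals $y$ separate points of $\homo(\bZ_{S^u_0}(L),E)$. So $y(\bar v)=0$ for all $y$ forces $\bar v=0$, i.e. $v\in\mathrm{PS}_{u,1}(\underline{x},\bh)$.

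The main point to check is this last step: we need the full tangent space $\homo(\bZ_{S^u_0}(L),E)$, and not a quotient of it, to act faithfully on the top layer. This follows from the surjectivity of $\kappa^{\flat}_u$ (by genericity) together with the identification $\widetilde{\cM}^\vee_{y_u}\cong\pi_{\sm}(\underline{x}^u)\eta^{-1}_{S^u_0}\otimes_E\widetilde{\delta}^{\univ}_u$, which is $A^{0}_{\Dpik,u}$-equivariant. That identification uses that $\cM^\infty$ is locally free of rank one at $y_u$, so that $\widetilde{\cM}_{y_u}\cong R^{\Box,0}_{\Dpik,u}/\fa_\rho$ with the action factoring through $A^\flat_{\Dpik,u}$. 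Parabolic induction preserves this faithfulness because it is exact and $\homo(\bZ_{S^u_0}(L),E)$ is a multiplicity space with trivial $G$-action. The comparison of the $\fm_\rho$-action with the $\fm_{A^\flat_{\Dpik,u}}$-action then comes from the isomorphism $R_\infty[1/p]/\fa_\rho\cong A_{\Dpik}$ and the factorization recorded before the statement.
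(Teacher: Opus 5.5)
Your proof is correct. For the inclusion $\iota_u(\mathrm{PS}_{u,1}(\underline{x},\bh))\subseteq(\mathrm{Im}\,\iota_u)[\fm_\rho]$, however, it takes a different route from the paper.

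The paper does not compute the $R_\infty$-action on the source for that inclusion. It argues as follows:
\begin{itemize}
\item $\iota_u$ maps $\mathrm{PS}^{\lalg}_u(\underline{x},\bh)$ into $\Pi_\infty^{R_\infty-\ana}[\fm_\rho]$, because $\homo_G(\pi_{\lalg}(\underline{x},\bh),\Pi_\infty^{R_\infty-\ana}[\fa_\rho])=\homo_G(\pi_{\lalg}(\underline{x},\bh),\Pi_\infty^{R_\infty-\ana}[\fm_\rho])$ is a line.
\item For $y\in\fm_\rho$, the map $y\circ\iota_u|_{\mathrm{PS}_{u,1}}$ lands in $\Pi_\infty^{R_\infty-\ana}[\fm_\rho]$ (as $\fm_\rho^2\subseteq\fa_\rho$) and kills the socle.
\item It therefore vanishes by the global restriction isomorphism $\homo_G(\mathrm{PS}_{u,1}(\underline{x},\bh),\Pi_\infty^{R_\infty-\ana}[\fm_\rho])\xrightarrow{\sim}\homo_G(\mathrm{PS}^{\lalg}_u(\underline{x},\bh),\Pi_\infty^{R_\infty-\ana}[\fm_\rho])$.
\end{itemize}
The reverse inclusion is simply referred to the strategy of \cite[Lemma 4.6]{ParaDing2024}.

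Your route obtains both inclusions from a single algebraic computation of the $\fm_{A^{\flat}_{\Dpik,u}}$-torsion in $\widetilde{\Pi}_u$. The price is that it rests entirely on two inputs: the $A^{0}_{\Dpik,u}$-equivariance of $\widetilde{\cM}^\vee_{y_u}\cong\pi_{\sm}(\underline{x}^u)\eta^{-1}_{S^u_0}\otimes_E\widetilde{\delta}^{\univ}_u$, and the surjectivity of $R_\infty[1/p]\to A^{\flat}_{\Dpik,u}$. The paper's first step needs neither. It uses only $\fm_\rho^2\subseteq\fa_\rho$ and the global fact that no constituent of $\mathrm{PS}_{u,1}(\underline{x},\bh)/\pi_{\lalg}(\underline{x},\bh)$ embeds into $\Pi_\infty^{R_\infty-\ana}[\fm_\rho]$.

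One correction to your last paragraph: exactness of parabolic induction is not the relevant justification. $\widetilde{\Pi}_u$ is not the full locally analytic induction of $\widetilde{\delta}^{\univ}_u$; for the full induction, your torsion computation would return all of $\mathrm{PS}_u(\underline{x},\bh)$. What you actually use is the statement recorded in Section \ref{sectionforunverext}: $\widetilde{\Pi}_u$ is the universal extension of $\homo(\bZ_{S^u_0}(L),E)\otimes_E\pi_{\lalg}(\underline{x},\bh)$ by $\mathrm{PS}_{u,1}(\underline{x},\bh)$, with each $y$ acting through that top quotient.
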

\begin{proof}
Note that the composition $\mathrm{PS}_{u}(\underline{x},\bh)\hookrightarrow I^G_{\op_{S^u_0}(L)}\pi_{\sm}(\underline{x}^u)\eta_{S^u_0}\widetilde{\delta}_u^{\univ}\hookrightarrow \Pi_\infty^{R_\infty-\ana}[\fa_{\rho}]$ sends $\mathrm{PS}^{\lalg}_{u}(\underline{x},\bh)$ to $\Pi_\infty^{R_\infty-\ana}[\fm_{\rho}]$.\;Since  $\homo_{G}(\mathrm{PS}_{u,1}(\underline{x},\bh),\Pi_\infty^{R_\infty-\ana}[\fm_{\rho}])\xrightarrow{\sim}\homo_{G}(\mathrm{PS}^{\lalg}_{u}(\underline{x},\bh),\Pi_\infty^{R_\infty-\ana}[\fm_{\rho}])$, then the same strategy as in the first paragraph of the proof of \cite[Lemma 4.6]{ParaDing2024} shows that the previous composition has image in $\Pi_\infty^{R_\infty-\ana}[\fm_{\rho}]$.\;Using the same strategy as in \cite[Lemma 4.6]{ParaDing2024},\;the injection $\iota_u(\mathrm{PS}_{u,1}(\underline{x},\bh))\hookrightarrow(\mathrm{Im}(\iota_u))[\fm_{\rho}]$ is surjective too.\;
\end{proof}

\begin{thm}\label{mainthmglobal}We have an $A_{\Dpik}\times G$-equivariant isomorphism $\widetilde{\pi}\cong \pi^{\flat}_{1}(\underline{x},\bh)^{\univ}$.\;In particular,\;we have a $\GLN_n(L)$-equivariant injection
	\[\pi^{\flat}_{\min}(\Dpik)\cong \pi^{\flat}_{1}(\underline{x},\bh)^{\univ}[\fm_{A_{\Dpik}}]\cong \widetilde{\pi}[\fm_{A_{\Dpik}}]\hookrightarrow  \Pi_\infty^{R_\infty-\ana}[\fa_{\rho}][\fm_{A_{\Dpik}}]\cong \Pi_\infty^{R_\infty-\ana}[\fm_{\rho}].\]
\end{thm}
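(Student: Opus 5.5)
The plan follows the strategy of \cite[Theorem 4.7]{ParaDing2024}: first produce a $G\times A_{\Dpik}$-equivariant map $\pi^{\flat}_{1}(\underline{x},\bh)^{\univ}\to\widetilde{\pi}$, then show it is injective, and then surjective.

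\textbf{Step 1 (socle of $\widetilde{\pi}[\fm_{\rho}]$).} Each $\iota_u$ restricted to $\mathrm{PS}^{\lalg}_u(\underline{x},\bh)\cong\pi_{\lalg}(\underline{x},\bh)$ lands in $\Pi_\infty^{R_\infty-\ana}[\fm_{\rho}]^{\lalg}\cong\pi_{\lalg}(\underline{x},\bh)$. Since $\homo_G(\pi_{\lalg},\Pi_\infty^{R_\infty-\ana}[\fa_{\rho}])\cong E$, all the $\iota_u$ agree on $\pi_{\lalg}$ up to a nonzero scalar, and I rescale so they agree exactly. By Proposition \ref{imiotawmx}, $\iota_u(\mathrm{PS}_{u,1}(\underline{x},\bh))\subseteq\Pi_\infty^{R_\infty-\ana}[\fm_{\rho}]$. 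The $\iota_u$ therefore glue to a map from the amalgamated sum $\bigoplus^{u}_{\pi_{\lalg}}\mathrm{PS}_{u,1}(\underline{x},\bh)$ into $\Pi_\infty^{R_\infty-\ana}[\fm_{\rho}]$. Its image has socle $\pi_{\lalg}$, because the socle of $\Pi_\infty[\fm_\rho]$ contains $\pi_{\lalg}$ with multiplicity one. By the uniqueness property stated before (\ref{firstwhole}), the image is the quotient $\pi^{\flat}_1(\underline{x},\bh)$. So we get an injection $\pi^{\flat}_1(\underline{x},\bh)\hookrightarrow\widetilde{\pi}[\fm_{\rho}]$ through which every $\iota_u|_{\mathrm{PS}_{u,1}}$ factors.

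\textbf{Step 2 (gluing the universal pieces).} Each $\iota_u$ is $A^0_{\Dpik,u}$-equivariant, and $I^G_{\op_{S^u_0}}\pi_{\sm}(\underline{x}^u)\eta_{S^u_0}\widetilde{\delta}^{\univ}_u$ is the universal extension of $\homo(\bZ_{S^u_0}(L),E)\otimes\pi_{\lalg}$ by $\mathrm{PS}_{u,1}$. Pushing out along Step 1, each $\iota_u$ extends to $\pi^{\flat}_1(\underline{x},\bh)^{\univ}_u\to\widetilde{\pi}$. I then amalgamate over $\pi^{\flat}_1(\underline{x},\bh)$. Because $\pi^{\flat}_1(\underline{x},\bh)^{\univ}$ is generated by the $\pi^{\flat}_1(\underline{x},\bh)^{\univ}_u$, and by Proposition \ref{extpi1repn}(2) together with the fact that the images of the $\zeta_u$ span $\ext^1_G(\pi_{\lalg},\pi^{\flat}_1)$, the amalgamated sum maps onto $\pi^{\flat}_1(\underline{x},\bh)^{\univ}$. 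This gives a map $\Phi:\pi^{\flat}_1(\underline{x},\bh)^{\univ}\to\widetilde{\pi}$, provided the relations among the $\zeta_u$ are respected by the $\iota_u$.

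I expect this compatibility to be the main obstacle. It is here that Proposition \ref{mainthmforfullrefine} and Theorem \ref{actionADonuniversal} enter. The $A_{\Dpik}$-action on $\widetilde{\pi}$ comes from $R_\infty$, so an element $y\in\fm_{A_{\Dpik}}$ acts on each $\mathrm{Im}(\iota_u)$ through its image in $A^0_{\Dpik,u}$, compatibly with the action of $\cI^0_u$. The identity $f^{\flat}_{\Dpik}\circ\overline{g}^{\flat}_{\Dpik}=t^{\flat}_D\circ\gamma^{\flat}_{\Dpik}$ then says that a relation $\sum_u\zeta_u(\kappa_u(v_u))=0$ forces the corresponding $\sum_u v_u$ to lie in the kernel. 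I would check that such a combination acts by zero on $\pi_{\lalg}\subseteq\widetilde{\pi}$, using the uniqueness in Theorem \ref{actionADonuniversal}. Once this holds, $\Phi$ is well defined and $A_{\Dpik}$-equivariant.

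\textbf{Step 3 (bijectivity).} Surjectivity of $\Phi$ follows from the definition of $\widetilde{\pi}$ as the subrepresentation generated by the $\mathrm{Im}(\iota_u)$. For injectivity, $\soc_G\pi^{\flat}_1(\underline{x},\bh)^{\univ}=\pi_{\lalg}$, so it suffices that $\Phi$ is nonzero on $\pi_{\lalg}$, which holds by Step 1. Finally, the universal extension satisfies $\pi^{\flat}_1(\underline{x},\bh)^{\univ}[\fm_{A_{\Dpik}}]=\pi^{\flat}_{\min}(\Dpik)$: the part of the extension killed by $\fm_{A_{\Dpik}}$ is exactly the part indexed by $\ker(t^{\flat}_D)$, by Proposition \ref{mainthmforfullrefine} and the infinite fern Theorem \ref{infinitePoten}. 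Combining this with $\Pi_\infty^{R_\infty-\ana}[\fa_{\rho}][\fm_{A_{\Dpik}}]=\Pi_\infty^{R_\infty-\ana}[\fm_{\rho}]$ gives the stated injection.
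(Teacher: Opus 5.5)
\textbf{There is a gap at exactly the step you flag as the main obstacle.} To build $\Phi$ from the amalgamated sum $X$ of the $\pi^{\flat}_1(\underline{x},\bh)^{\univ}_u$ over $\pi^{\flat}_1(\underline{x},\bh)$, you need the following. For $K:=\ker(\oplus_u\zeta_u)$, the sub-extension $X_K$ of $K\otimes_E\pi_{\lalg}(\underline{x},\bh)$ by $\pi^{\flat}_1(\underline{x},\bh)$ is split, and you need some $G$-splitting of it that is killed by the map $X\to\widetilde{\pi}$.

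Proposition \ref{mainthmforfullrefine} gives $\ker\gamma^{\flat}_{\Dpik}\subseteq\ker\overline{g}^{\flat}_{\Dpik}$, since $f^{\flat}_{\Dpik}$ is injective. Combined with the $R_\infty$-action, this only shows that the image of a splitting is killed by $\fm_{A_{\Dpik}}$, i.e.\ lies in $\Pi_\infty^{R_\infty-\ana}[\fm_\rho]$. That does not make the image vanish or lie in the image of $\pi^{\flat}_1(\underline{x},\bh)$. The uniqueness in Theorem \ref{actionADonuniversal} is a statement about $A_{\Dpik}$-actions on $\pi^{\flat}_1(\underline{x},\bh)^{\univ}$, so it can only be used once $\Phi$ exists.

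\textbf{The missing ingredient is multiplicity one,} $\dim_E\homo_G(\pi_{\lalg}(\underline{x},\bh),\Pi_\infty^{R_\infty-\ana}[\fa_\rho])=1$, which you use in Step 1 but not here. The image of any splitting is a quotient of $K\otimes_E\pi_{\lalg}(\underline{x},\bh)$, hence a sum of copies of $\pi_{\lalg}(\underline{x},\bh)$ inside $\Pi_\infty^{R_\infty-\ana}[\fa_\rho]$. It is therefore contained in the unique copy, which lies in the image of $\pi^{\flat}_1(\underline{x},\bh)$. Subtracting the resulting map $K\otimes_E\pi_{\lalg}\to\pi_{\lalg}\hookrightarrow\pi^{\flat}_1(\underline{x},\bh)$ from the splitting gives one killed by $X\to\widetilde{\pi}$, so $\Phi$ is well defined.

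With this, Proposition \ref{mainthmforfullrefine} is not needed for the $G$-isomorphism. It enters only afterwards, through Theorem \ref{actionADonuniversal}, for two things:
\begin{enumerate}
\item to see, by uniqueness, that the transported $R_\infty$-action is the $A_{\Dpik}$-action of that theorem;
\item to identify $\pi^{\flat}_1(\underline{x},\bh)^{\univ}[\fm_{A_{\Dpik}}]$ with $\pi^{\flat}_{\min}(\Dpik)$, since the map $\ext^1_G(\pi_{\lalg},\pi^{\flat}_1)\to\overline{\ext}^1(\Dpik,\Dpik)$ is $(f^{\flat}_{\Dpik})^{-1}\circ t^{\flat}_D$.
\end{enumerate}

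\textbf{The paper, following Ding, avoids the compatibility question by going the other way.} By Proposition \ref{imiotawmx}, $\widetilde{\pi}$ contains $\sum_u\iota_u(\mathrm{PS}_{u,1}(\underline{x},\bh))\cong\pi^{\flat}_1(\underline{x},\bh)$. The quotient is a quotient of $\oplus_u\homo(\bZ_{S^u_0}(L),E)\otimes_E\pi_{\lalg}(\underline{x},\bh)$, hence isomorphic to $\pi_{\lalg}(\underline{x},\bh)^{\oplus m}$. Multiplicity one gives $\soc_G\widetilde{\pi}=\pi_{\lalg}(\underline{x},\bh)$, so the classifying map $E^m\to\ext^1_G(\pi_{\lalg},\pi^{\flat}_1)$ is injective. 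Universality then yields $\widetilde{\pi}\hookrightarrow\pi^{\flat}_1(\underline{x},\bh)^{\univ}$ with no relations among the $\zeta_u$ to check. The image contains every $\pi^{\flat}_1(\underline{x},\bh)^{\univ}_u$, and the $\zeta_u$ span $\ext^1_G(\pi_{\lalg},\pi^{\flat}_1)$ by Proposition \ref{extpi1repn}, so the embedding is an isomorphism.
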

\begin{proof}
	Similar to the argument in the proof \cite[Theorem 4.5,\; Corollary 4.6]{ParaDing2024},\;we can also see that $\widetilde{\pi}$ is a subrepresentation of $\pi^{\flat}_{1}(\underline{x},\bh)^{\univ}$ and contains all $I^G_{\op_{S^u_0}(L)}\pi_{\sm}(\underline{x}^u)\eta_{S^u_0}\widetilde{\delta}_u^{\univ}$ so that $\widetilde{\pi}\cong \pi^{\flat}_{1}(\underline{x},\bh)^{\univ}$.\;
\end{proof}

\section{Appendix:\;envelope of intersections of Borel and parabolic subalgebras  in Lie algebras}\label{reproofforinfinite}

\begin{lem}\label{Inifinitefernpoten} Keep the non-critical assumption in Remark \ref{explainHomasLiealg}.\;Then $\sum_{u\in \sW_s}\mathrm{Ad}_{(u^{\sharp})^{-1}}(\fp_{S^u_0,\sigma})\cap \mathrm{Ad}_{g_{\sigma}}(\fb_{\sigma})=\mathrm{Ad}_{g_{\sigma}}(\fb_{\sigma})$.\;
\end{lem}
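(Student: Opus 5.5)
Conjugating by $g_\sigma^{-1}$ turns the claim into a statement about $\fb_\sigma$ itself. By Remark \ref{explainHomasLiealg}, for each $u\in\sW_s$ we have $g_{\sigma}\bB_{\sigma}=(u^{\sharp})^{-1}b_{\sigma}(u)w_{0}\bB_{\sigma}$, and hence
\[\mathrm{Ad}_{(u^{\sharp})^{-1}}(\fp_{S^u_0,\sigma})\cap \mathrm{Ad}_{g_{\sigma}}(\fb_{\sigma})=\mathrm{Ad}_{(u^{\sharp})^{-1}b_{\sigma}(u)}(\fl_{S^u_0,\sigma}\cap\overline{\fb}_{\sigma}).\]
Each summand therefore has dimension $\dim(\fl_{S^u_0}\cap\overline{\fb})=n+\sum_i r_i(r_i-1)/2$. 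It suffices to show that these subspaces, viewed inside $\mathrm{Ad}_{g_\sigma}(\fb_\sigma)$, span it. I would work in a basis adapted to the Hodge flag: choose a basis $f_1,\dots,f_n$ of $D_\sigma$ with $\fil_H^{-\bh_{b,\sigma}}(D_\sigma)=\langle f_b,\dots,f_n\rangle$. Then $\mathrm{Hom}_{\fil}(D_\sigma,D_\sigma)$ is spanned by the elementary maps $f_a\mapsto f_b$ with $a\le b$.

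The first step is to reinterpret the summand for $u$ intrinsically. It consists of those $f\in\mathrm{Hom}_{\fil}$ that preserve the flag $\fil^{\bF_{u}}_\bullet(D_\sigma)$ (the parabolic flag by $D^{u^{-1}(1)}\subset D^{u^{-1}(1)}\oplus D^{u^{-1}(2)}\subset\cdots$). Non-criticality says exactly that $\fil^{\bF_u}_j(D_\sigma)$ is transverse to the Hodge flag, in the sense that $\fil^{\bF_u}_j\oplus\fil_H^{-\bh_{t^u_j+1,\sigma}}=D_\sigma$. Consequently the summand is the stabilizer of a pair of flags in generic relative position with respect to the parabolic of type $S_0^u$, and it contains the diagonal torus attached to any basis that simultaneously splits both flags in the Levi-graded sense.

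The second step is to show that the sum contains every elementary map $f_a\mapsto f_b$ with $a\le b$. I would proceed by induction on $n$ (or on $s$), in the manner of the infinite fern argument of \cite{BDcritical25} for the Borel case. For the diagonal part and for pairs $(a,b)$ lying in the same Levi block of some $u$, a single $u$ suffices, since $\mathrm{Ad}_{b_\sigma(u)}(\fl\cap\overline\fb)$ projects onto the corresponding graded pieces. For a general pair $(a,b)$, I would take $u$ with $u^{-1}(1)$ arbitrary and restrict to the subspace $\fil_H^{-\bh_{r_{u^{-1}(1)}+1,\sigma}}$. This subspace is a complement to $D^{u^{-1}(1)}$ and maps isomorphically to the quotient $D_\sigma/D^{u^{-1}(1)}$, which inherits a non-critical potentially crystalline structure with $s-1$ blocks. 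The inductive hypothesis then gives all maps $f_a\mapsto f_b$ with $a,b>r_{u^{-1}(1)}$, up to terms supported on $D^{u^{-1}(1)}$. Varying which block is placed first (or last, by a dual argument on subobjects) then covers all pairs $(a,b)$.

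As a preliminary step, I would use a dimension count. The sum has dimension at most $\dim\fb=n(n+1)/2$, so it is enough to show that the natural map from $\bigoplus_u$ of the summands to $\mathrm{Ad}_{g_\sigma}(\fb_\sigma)$ is surjective modulo the subspace of strictly upper-triangular entries $(a,b)$ with $b-a\ge k$, proceeding by induction on $k$. The main obstacle is controlling the error terms in the inductive step. Lifting from the quotient $D_\sigma/D^{u^{-1}(1)}$ back to $D_\sigma$ introduces corrections valued in $D^{u^{-1}(1)}$, and these must be absorbed by summands coming from different orderings $u$. Here I would use the genericity of the transverse position, namely that the coefficient of $\be_{I}$ in the wedge of the Hodge flag is nonzero (as noted before Lemma \ref{lemforwedge}), to solve the resulting triangular linear system.
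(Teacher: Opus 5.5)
You reduce the problem correctly. Each summand is $\mathrm{Ad}_{(u^\sharp)^{-1}b_\sigma(u)}(\fl_{S^u_0,\sigma}\cap\overline{\fb}_\sigma)$, and $D_\sigma/D^k_\sigma$ is again non-critical for every ordering of the remaining blocks. Granting the statement for $s-1$ blocks, your induction shows that the summands with $u^{-1}(1)=k$ add up to $\mathfrak{q}_k:=\{X\in\homo_{\fil}(D_\sigma,D_\sigma):X(D^k_\sigma)\subseteq D^k_\sigma\}$. This space is block-diagonal for $D_\sigma=D^k_\sigma\oplus\fil_H^{-\bh_{r_k+1,\sigma}}(D_\sigma)$.

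The gap is the step you flag yourself: that the $\mathfrak{q}_k$ (and their duals) span $\homo_{\fil}(D_\sigma,D_\sigma)$. For $s=2$ the induction is vacuous, so this step is the entire lemma. The only input you propose is the non-vanishing of the coefficients of $\be_I$ in $\fil_i^{\max}(D_\sigma)$. That is a reformulation of non-criticality ($\fil_H^{-\bh_{i+1,\sigma}}(D_\sigma)\cap D_\sigma[I]=0$), and it cannot make your ``triangular system'' solvable, because the statement fails under the stated hypotheses.

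Here is a counterexample. Take $n=4$, $s=2$, $r_1=r_2=2$ and $D_\sigma=\langle f_1,f_2,f_3,f_4\rangle$. Let the Hodge flag be $H_1=\langle f_4\rangle\subset H_2=\langle f_3,f_4\rangle\subset H_3=\langle f_2,f_3,f_4\rangle$, so $H_j=\fil_H^{-\bh_{5-j,\sigma}}(D_\sigma)$. Let $D^1_\sigma=\langle f_1+f_3,f_2+f_4\rangle$ and $D^2_\sigma=\langle f_1,f_2\rangle$.
\begin{itemize}
\item Both orderings are non-critical, since $D^1_\sigma\cap H_2=0=D^2_\sigma\cap H_2$. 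Even the stronger condition of Remark \ref{explainHomasLiealg} holds, with $e_{1,1}=f_1+f_3$, $e_{1,2}=f_2+f_4$, $e_{2,1}=f_1$, $e_{2,2}=f_2$.
\item The $u=1$ summand preserves $D^1_\sigma\cap H_3=\langle f_2+f_4\rangle$ and $H_1$. The $u=s_1$ summand preserves $D^2_\sigma\cap H_3=\langle f_2\rangle$ and $H_1$. So both preserve $\langle f_2,f_4\rangle$.
\item Counting dimensions ($6+6-3=9$), their sum is exactly the hyperplane of those $X\in\homo_{\fil}(D_\sigma,D_\sigma)$ for which $X(f_2)$ has zero $f_3$-coordinate.
\item The map $f_2\mapsto f_3$ (other $f_j\mapsto 0$) preserves the Hodge flag but does not lie in the sum.
\end{itemize}

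The paper's own proof breaks at the same point. Its elements $a^{i,j}$ are built from the transposition $s_{i,j}$ of two single indices, which is of the form $u^\sharp$ only when the blocks involved have size one. In this example, after conjugating by $b_\sigma^{-1}$, neither $\mathrm{Ad}_{b_\sigma^{-1}(u^\sharp)^{-1}}(\fp_{S^u_0,\sigma})\cap\overline{\fb}_\sigma$ contains a matrix with non-zero $(3,2)$-entry, so $a^{3,2}$ does not exist.

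What is missing is an extra open transversality hypothesis that non-criticality does not imply. For instance, for $s=2$ and $r_1=r_2=m$, one checks that the sum is everything if and only if two flags on $D^2_\sigma$ are in opposite position: $\{D^2_\sigma\cap H_{m+j}\}_{j}$ and $\{\mathrm{pr}(H_j)\}_{j\le m}$, where $\mathrm{pr}$ is the projection along $D^1_\sigma$. In the example both flags begin with the line $\langle f_2\rangle$.
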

\begin{proof}Keep the notation in Remark \ref{explainHomasLiealg}.\;Note that $\mathrm{Ad}_{g_{\sigma}}(\fb_{\sigma})=\mathrm{Ad}_{b_{\sigma}w_{0,\sigma}}(\fb_{\sigma})$ and 
\[\sum_{u\in \sW_s}\mathrm{Ad}_{(u^{\sharp})^{-1}}(\fp_{S^u_0,\sigma})\cap \mathrm{Ad}_{b_{\sigma}w_{0,\sigma}}(\fb_{\sigma})=\sum_{u\in \sW_s}\mathrm{Ad}_{b_{\sigma}}(\mathrm{Ad}_{b_{\sigma}^{-1}(u^{\sharp})^{-1}}(\fp_{S^u_0,\sigma})\cap \mathrm{Ad}_{w_{0,\sigma}}(\fb_{\sigma})).\;\]
Therefore,\;it suffices to show that 
 \[\mathrm{Ad}_{w_{0,\sigma}}(\fb_{\sigma})=\sum_{u\in \sW_s}\mathrm{Ad}_{b_{\sigma}^{-1}(u^{\sharp})^{-1}}(\fp_{S^u_0,\sigma})\cap \mathrm{Ad}_{w_{0,\sigma}}(\fb_{\sigma}).\]
 Let $e^{i,j}$ be the elementary $(n\times n)$-matrix whose $(l,m)$-entry is given by $\delta_{i,l}\delta_{j,m}$.\;For any $i>j$,\;we claim that there is some permutation
$u_{i,j}\in\sW_s$ and $i-j$ scalars $\{x_{i,l}\}_{j+1\leq l\leq i}$ in $E$ such that the matrix $a^{i,j}:=e^{i,j}+\sum_{l=j+1}^ix_{i,l}e^{i,l}$ lies in $\mathrm{Ad}_{b_{\sigma}^{-1}(u^{\sharp})^{-1}}(\fp_{S^u_0,\sigma})$.\;Recall the previous basis 
$\{e_{1,j,\sigma}\}_{1\leq j\leq r_1},\cdots,\{e_{s,j,\sigma}\}_{1\leq j\leq r_s}$ (resp.,\;$e_{1,\sigma},\cdots,e_{n,\sigma}$).\;Recall that this basis gives a $\bP_{S^u_0}$-parabolic filtration 
\[V_{\bullet}=(0\subseteq V_0\subset V_1\subset\cdots\subset V_s=E^n),\]
 where $V_i=\langle \{e_{u^{-1}(1),j,\sigma}\}_{1\leq j\leq r_{u^{-1}(1)}},\cdots,\{e_{u^{-1}(i),j,\sigma}\}_{1\leq j\leq r_{u^{-1}(i)}}\rangle$.\;Let $s_{i,j}=(i,j)\in \sW_s$.\;Suppose that $t^u_{j'}+1\leq j\leq t^u_{j'+1}$ and $t^u_{i'}+1\leq j\leq t^u_{i'+1}$ for $\leq j'\leq i'\leq s$.\;Consider the following basis 
\[\cB=\{b^{-1}_{\sigma}(e_{1,\sigma}),\cdots,b^{-1}_{\sigma}(e_{j-1,\sigma}),e_{j,\sigma},b^{-1}_{\sigma}(e_{j+1,\sigma}),\cdots,b^{-1}_{\sigma}(e_{i,\sigma}),e_{i+1,\sigma},\cdots,e_{n,\sigma}\},\]
of $E^n$ (this is also a basis since $b^{-1}_{\sigma}$ is upper triangular).\;Define $\pi(x)=0$ for $x\in \cB\backslash\{e_{j,\sigma}\}$ and $\pi(e_{j,\sigma})=e_{i,\sigma}$.\;We also put $V'_l=\langle e_{1,\sigma},\cdots,e_{l,\sigma}\rangle$ for $1\leq l\leq n$.\;Note that
\[b_{\sigma}\pi b^{-1}_{\sigma}(s_{i,j}V_{l})=\left\{
\begin{array}{ll}
	0,\;&1\leq l \leq i-1,\\
	E\langle b_{\sigma}e_{i,\sigma}\rangle,\;&i\leq l \leq n.
\end{array}
\right.\]
Since $\langle b_{\sigma}e_{i,\sigma}\rangle\subseteq V'_i\subseteq s_{i,j}V_{i'}$ and $V_i\subseteq V_{i'}$,\;we find a morphism $\pi:E^n\rightarrow E^n$ such that the endomorphism $b_{\sigma}\pi b^{-1}_{\sigma}$ stabilizes the $\bP_{S^u_0}$-parabolic filtration $s_{i,j}V_{\bullet}$.\;Then the desired $a^{i,j}$ is the matrix in the standard basis.\;Similar to the last paragraph in \cite[Lemma 2.1]{DensityHMS},\;such $a^{i,j}$ also form a basis of the $\mathrm{Ad}_{w_{0,\sigma}}(\fb_{\sigma})$ which lie in the envelope $\sum_{u\in \sW_s}\mathrm{Ad}_{b_{\sigma}^{-1}(u^{\sharp})^{-1}}(\fp_{S^u_0,\sigma})\cap \mathrm{Ad}_{w_{0,\sigma}}(\fb_{\sigma})$.\;
\end{proof}


\bibliographystyle{plain}

\printindex
\end{document}